\documentclass[reqno]{amsart}

\usepackage{mathabx}
\usepackage{tikz-cd}
\usepackage{verbatim}
\usepackage{fullpage,dsfont}
\usepackage{hyperref} 
\usepackage{parskip}
\usepackage[nobysame,alphabetic,initials,msc-links]{amsrefs}
\usepackage[final]{pdfpages}
\usepackage{enumitem}
\usepackage{multirow}
\usepackage{stmaryrd}
\usepackage{array}
\usepackage[bbgreekl]{mathbbol}			
\usepackage{stackengine} 

\makeatletter 
\def\thm@space@setup{%
 \thm@preskip=\parskip \thm@postskip=0pt
}
\def\th@remark{%
  \thm@headfont{\itshape}%
  \normalfont 
  \thm@preskip\parskip \thm@postskip=0pt
}
\makeatother

\DefineSimpleKey{bib}{how}
\DefineSimpleKey{bib}{mrclass}
\DefineSimpleKey{bib}{mrnumber}
\DefineSimpleKey{bib}{fjournal}
\DefineSimpleKey{bib}{mrreviewer}

\renewcommand{\PrintDOI}[1]{%
  \href{http://dx.doi.org/#1}{{\tt DOI:#1}}%
}
\renewcommand{\eprint}[1]{#1}
\BibSpec{book}{%
    +{}  {\PrintPrimary}                {transition}
    +{.} { \PrintDate}                  {date}
    +{.} { \textit}                     {title}
    +{.} { }                            {part}
    +{:} { \textit}                     {subtitle}
    +{,} { \PrintEdition}               {edition}
    +{}  { \PrintEditorsB}              {editor}
    +{,} { \PrintTranslatorsC}          {translator}
    +{,} { \PrintContributions}         {contribution}
    +{,} { }                            {series}
    +{,} { \voltext}                    {volume}
    +{,} { }                            {publisher}
    +{,} { }                            {organization}
    +{,} { }                            {address}
    +{,} { }                            {status}
    +{,} { \PrintDOI}                   {doi}
    +{,} { \PrintISBNs}                 {isbn}
    +{}  { \parenthesize}               {language}
    +{}  { \PrintTranslation}           {translation}
    +{;} { \PrintReprint}               {reprint}
    +{.} { }                            {note}
    +{.} {}                             {transition}
    +{}  {\SentenceSpace \PrintReviews} {review}
}
\BibSpec{article}{%
    +{}  {\PrintAuthors}                {author}
    +{,} { \textit}                     {title}
    +{.} { }                            {part}
    +{:} { \textit}                     {subtitle}
    +{,} { \PrintContributions}         {contribution}
    +{.} { \PrintPartials}              {partial}
    +{,} { }                            {journal}
    +{}  { \textbf}                     {volume}
    +{}  { \PrintDatePV}                {date}
    +{,} { \issuetext}                  {number}
    +{,} { \eprintpages}                {pages}
    +{,} { }                            {status}
    +{,} { \PrintDOI}                   {doi}
    +{,} { \eprint}        {eprint}
    +{}  { \parenthesize}               {language}
    +{}  { \PrintTranslation}           {translation}
    +{;} { \PrintReprint}               {reprint}
    +{.} { }                            {note}
    +{.} {}                             {transition}
    +{}  {\SentenceSpace \PrintReviews} {review}
}
\BibSpec{collection.article}{%
    +{}  {\PrintAuthors}                {author}
    +{,} { \textit}                     {title}
    +{.} { }                            {part}
    +{:} { \textit}                     {subtitle}
    +{,} { \PrintContributions}         {contribution}
    +{,} { \PrintConference}            {conference}
    +{}  {\PrintBook}                   {book}
    +{,} { }                            {booktitle}
    +{,} { \PrintDateB}                 {date}
    +{,} { pp.~}                        {pages}
    +{,} { }                            {publisher}
    +{,} { }                            {organization}
    +{,} { }                            {address}
    +{,} { }                            {status}
    +{,} { \PrintDOI}                   {doi}
    +{,} { \eprint}        {eprint}
    +{}  { \parenthesize}               {language}
    +{}  { \PrintTranslation}           {translation}
    +{;} { \PrintReprint}               {reprint}
    +{.} { }                            {note}
    +{.} {}                             {transition}
    +{}  {\SentenceSpace \PrintReviews} {review}
}
\BibSpec{misc}{%
  +{}{\PrintAuthors}  {author}
  +{,}{ \textit}      {title}
  +{.}{ }             {how}
  +{}{ \parenthesize} {date}
  +{,} { available at \eprint}        {eprint}
  +{,}{ available at \url}{url}
  +{,}{ }             {note}
  +{.}{}              {transition}
}
\usepackage{amssymb, amsfonts, amsxtra, amsmath}
\usepackage{mathrsfs}
\usepackage{mathdots}
\usepackage{wasysym}
\usepackage[all]{xy}
\usepackage{bbm}
\usepackage{calc}
\usepackage{accents}

\numberwithin{equation}{section}

\DeclareSymbolFontAlphabet{\mathbb}{AMSb}	
\DeclareSymbolFontAlphabet{\mathbbl}{bbold}	

\newtheorem{Theorem}{Theorem}[section]
\newtheorem*{Theorem*}{Theorem}
\newtheorem{Def}[Theorem]{Definition}
\newtheorem*{Def*}{Def}
\newtheorem{Lem}[Theorem]{Lemma}
\newtheorem{Prop}[Theorem]{Proposition}
\newtheorem{Cor}[Theorem]{Corollary}

\newtheorem{Rem}[Theorem]{Remark}

\newtheorem{Exa}[Theorem]{Example}

\newcommand\bp{\begin{proof}}
\newcommand\ep{\end{proof}}

\mathchardef\mhyph="2D

\DeclareMathOperator{\ad}{\mathrm{ad}}

\DeclareMathOperator{\id}{\mathrm{id}}

\DeclareMathOperator{\Imm}{\mathrm{Im}}

\DeclareMathOperator{\Ker}{\mathrm{Ker}}

\DeclareMathOperator{\cb}{\mathrm{cb}}
\DeclareMathOperator{\CB}{\mathcal{CB}}
\DeclareMathOperator{\CP}{\mathcal{CP}}
\DeclareMathOperator{\M}{M}
\DeclareMathOperator{\vN}{vN}

\newcommand{\NMod}{\operatorname{NMod}}
\newcommand{\F}{\mathcal{F}}
\newcommand{\mcF}{\mathcal{F}}
\newcommand{\mcH}{\mathcal{H}}

\newcommand{\mcK}{\mathcal{K}}

\newcommand{\C}{\mathbb{C}}
\newcommand{\Z}{\mathbb{Z}}
\newcommand{\G}{\mathbb{G}}
\newcommand{\Hh}{\mathbb{H}}

\newcommand{\N}{\mathbb{N}}

\newcommand{\R}{\mathbb{R}}

\newcommand{\Mod}{\mathrm{Mod}}
\newcommand{\eps}{\varepsilon}

\newcommand{\Prod}{\prod}
\newcommand{\ovot}{\bar{\otimes}}
\newcommand{\oon}{\operatorname}

\newcommand{\I}{1}
\newcommand{\wh}{\widehat}
\newcommand{\vp}{\varphi}

\DeclareMathOperator{\B}{B}
\DeclareMathOperator{\A}{A}
\newcommand{\LL}{L}
\DeclareMathOperator{\lin}{span}
\DeclareMathOperator{\Sat}{Sat}
\DeclareMathOperator{\Fix}{Fix}

\newcommand{\uu}{\mathrm{U}}

\newcommand{\vv}{\mathrm{V}}
\newcommand{\Vv}{\mathds{V}}
\newcommand{\vV}{\text{\reflectbox{$\Vv$}}\:\!}

\newcommand{\ww}{\mathrm{W}}
\newcommand{\WW}{{\mathds{V}\!\!\text{\reflectbox{$\mathds{V}$}}}}
\newcommand{\Ww}{\mathds{W}}
\newcommand{\wW}{\text{\reflectbox{$\Ww$}}\:\!}
\newcommand{\Uu}{\mathds{U}}
\newcommand{\uU}{\text{\reflectbox{$\Uu$}}\:\!}
\renewcommand{\H}{\mathbb{H}}

\allowdisplaybreaks

\newcommand{\pten}{\hat{\otimes}}
\newcommand{\oten}{\bar{\otimes}}
\newcommand{\la}{\langle}
\newcommand{\ra}{\rangle}
\newcommand{\ten}{\otimes}
\newcommand{\om}{\omega}
\newcommand{\BH}{B(\mathcal{H})}
\newcommand{\BK}{B(\mathcal{K})}
\providecommand{\normof}[1]{\lVert#1\rVert}

\author{Jason Crann, Joeri De Ro, Jacek Krajczok}

\begin{document}
\title{Actions of quantum groups on dual operator spaces and their crossed products}

\address{School of Mathematics and Statistics, Carleton University, Ottawa, ON, Canada H1S
5B6}
\email{jasoncrann@cunet.carleton.ca}

\address{Institute of Mathematics of the Polish Academy of Sciences, Warsaw, Poland}
\email{jdero@impan.pl}

\address{Vrije Universiteit Brussel, Department of Mathematics and Data Science, Brussels, Belgium}

\email{jacek.krajczok@vub.be}

\begin{abstract}
 We study the category of dual operator spaces equipped with an action of a locally compact quantum group $\G$. The Fubini crossed product functor $-\rtimes^\mcF \G$ and the weak$^*$-crossed product functor $-\bar{\rtimes}\G$ are shown to be equal if and only if $\G$ has the approximation property of Haagerup and Kraus. Using the natural isomorphism $-\rtimes^\mcF\G\cong {}_{L^1(\G)}\CB(B(L^2(\G))_*, -)$, this leads to a characterization of the approximation property of $\G$ via an $L^1(\G)$-module approximation property for $B(L^2(\G))_*$. Finally, exactness of the Fubini crossed product functor is investigated and related to amenability properties of $\G$.
\end{abstract}
\maketitle

\section{Introduction}

Throughout the literature, the notion of an action of a locally compact (quantum) group $\G$ on an operator space $X$ has mainly been studied from two points of view:
\begin{enumerate}[noitemsep]
    \item Via the notion of an $L^1(\G)$-operator module, which is an operator space $X$ together with a complete contraction $\rhd: L^1(\G)\hat{\otimes}X\to X$ satisfying the module property \cites{Rua95, Rua96, RX97, Spr02, Woo02, Ari04, ARS04,  Daw10, CLR15, CN16, Cra17, CT17, Cra19, Cra21}.
    \item Via the notion of a $\G$-operator space, which is an operator space $X$ together with a complete isometry $\alpha: X\to X\ovot_\mcF L^\infty(\G)$ satisfying the coaction property $(\id \otimes \Delta)\alpha= (\alpha\otimes \id)\alpha$ \cites{Ham11, SS17, An21, CN22, An23, DH24, DR26}.
\end{enumerate}
These above two approaches of studying actions of locally compact quantum groups on operator spaces are intimately related: every $\G$-operator space $(X, \alpha)$ leads to an $L^1(\G)$-operator module via \begin{equation}\label{connnection}
    \omega\rhd x =  (\id \otimes \omega)\alpha(x), \quad x\in X, \quad \omega\in L^1(\G).
\end{equation}  Conversely, for every $L^1(\G)$-operator module $(X, \rhd)$, there is a unique completely contractive map $\alpha: X\to X\ovot_\mcF L^\infty(\G)$ satisfying \eqref{connnection}, but it should be emphasized that $\alpha$ need not be completely isometric in general. We write ${}_{L^1(\G)}\Mod$ (resp.\ ${}_{L^1(\G)}\NMod$) for the category of (resp.\ dual) $L^1(\G)$-operator
modules and designate the superscript $\|\cdot\|$
to indicate the full subcategory consisting of modules whose associated
coaction $\alpha$ is completely isometric. See Section \ref{subsecFSF} for a more thorough discussion on these categories.

A central tool in the study of an action of a locally compact quantum group $\G$ on an operator space is the crossed product construction, which aims to encode both the operator space and the $\G$-action on it in one single operator space. There are two relevant constructions:
\begin{enumerate}[nosep]
    \item Given $X\in {}_{L^1(\G)}\Mod$ with induced coaction $\alpha: X\to X\ovot_\mcF L^\infty(\G)$, we define the \emph{Fubini crossed product}
$$X\rtimes_\alpha^\mcF\G := \{z\in X\ovot_\mcF B(L^2(\G))\mid (\alpha\otimes \id)(z) = (\id \otimes \Delta_l)(z)\}.$$
This construction can be interpreted as the cotensor product $X\stackrel{\G}\square B(L^2(\G))$ or alternatively as the fixed point space of a natural action $X\ovot_\mcF B(L^2(\G))\curvearrowleft \G$. The notion of the Fubini crossed product has appeared in the literature, albeit only in the setting of $\G$-operator spaces \cites{Ham11,An21, CN22, An23, DH24,DR26}.
\item If $(X, \alpha)\in {}_{L^1(\G)}\NMod$, one can also define the \emph{weak$^*$-crossed product}
$$
X\bar{\rtimes}_\alpha\G := [\alpha(X)(1\otimes L^\infty(\check{\G}))]^{w *}\subseteq X\ovot_\mcF B(L^2(\G)).
$$
\end{enumerate}
It is easy to see that $X\bar{\rtimes}_\alpha\G\subseteq X\rtimes_\alpha^\mcF\G$ for all $(X, \alpha)\in {}_{L^1(\G)}\NMod$. The possibility that this inclusion is strict is directly related to an analytic property of the locally compact quantum group $\G$, which is called the \emph{approximation property (AP)}:

\textbf{Theorem \ref{main}.} \textit{Let $\G$ be a locally compact quantum group. Then $\G$ has the AP if and only if $X\bar{\rtimes}_\alpha\G= X\rtimes_\alpha^\mcF\G$ for all $(X, \alpha)\in{}_{L^1(\G)}\NMod^{\|\cdot\|}$.
}

The AP was introduced by Haagerup and Krauss in the case of classical groups \cite{HK94} and was afterwards generalized and studied in the quantum setting \cites{KR99, Cra19, DKV24} of Kac algebras and general locally compact quantum groups. Loosely speaking, the AP can be seen as a very weak form of amenability. It carries important information about the quantum group and still has useful implications to properties of $\G$ concerning harmonic analysis and dynamical systems. Our work can be seen as further exploration of such relations. In the case of classical locally compact groups, the forward implication of Theorem \ref{main} was first proven in \cite{CN22}, while its converse was proven in \cite{An23}. For discrete quantum groups, the forward implication was proven in \cite{CKN25}. 

The Fubini crossed product $-\rtimes^\mcF\G$ is a functor from the category of $L^1(\G)$-operator modules to the category of operator spaces in a natural way. A crucial conceptual observation is that it is \emph{representable}: if we view the predual $B(L^2(\G))_*$ with its natural $L^1(\G)$-operator module structure, then there is a natural isomorphism
\begin{equation}\label{representthis}
    -\rtimes^\mcF \G \cong {}_{L^1(\G)}\CB(B(L^2(\G))_*, -)
\end{equation}
of (covariant) functors. This places the Fubini crossed product directly within the framework of homological algebra. Its homological properties are then reflected in $L^1(\G)$-module properties of $B(L^2(\G))_*$. 

Naturally, one then wishes to investigate if the AP of $\G$ can be seen as an appropriate $L^1(\G)$-module approximation property of $B(L^2(\G))_*$. Theorem \ref{main} therefore motivates the task to characterize the weak$^*$-crossed product $X\bar{\rtimes}_\alpha\G$ inside the Fubini crossed product $X\rtimes_\alpha^\mcF\G\cong {}_{L^1(\G)}\CB(B(L^2(\G))_*,X)$ for $X\in {}_{L^1(\G)}\NMod^{\|\cdot\|}$. To do this, let us write ${}_{L^1(\G)}\mathcal{DF}(B(L^2(\G))_*,X)$ for the space of completely bounded $L^1(\G)$-linear maps that have finite $L^1(\G)$-rank (Definition \ref{d1})  and which are decomposable via completely positive maps in an appropriate way (Definition \ref{d2}). We then find:

\textbf{Theorem \ref{crossedproductfiniterank}.} \textit{For any $(X, \alpha)\in {}_{L^1(\G)} \operatorname{NMod}^{\|\cdot\|}$, we have
        $X\bar{\rtimes}_\alpha \G\subseteq \overline{{}_{L^1(\G)}\mathcal{DF}(B(L^2(\G))_*,X)}^{w*}.$
The converse inclusion holds if $\G$ is strongly regular.}

To prove Theorem \ref{crossedproductfiniterank}, we use an equivariant version of the Stinespring theorem (Theorem \ref{equivariant Stinespring}), which is of independent interest. Combining Theorem \ref{main} and Theorem \ref{crossedproductfiniterank}, leads to the following:

\textbf{Definition \ref{DAP}.} \textit{Let $(M, \alpha)$ be a left $\G$-$W^*$-algebra. We say that $M_*\in {}_{L^1(\G)}\Mod$ has the \emph{$L^1(\G)$-decomposable approximation property} ($L^1(\G)$-DAP) if 
$\overline{{}_{L^1(\G)}\mathcal{DF}(M_*, X)}^{w*} = {}_{L^1(\G)}\CB(M_*, X)$
for every $X\in {}_{L^1(\G)}\NMod^{\|\cdot\|}$.}

\textbf{Theorem \ref{BDAP}.}\textit{
     Let $\G$ be a locally compact quantum group. If $\G$ has the AP, then $B(L^2(\G))_*$ has the $L^1(\G)$-DAP. The converse holds if $\G$ is strongly regular.}

Thus, the $L^1(\G)$-DAP provides the desired module-approximation property. To further strengthen the analogy, we also prove:

\textbf{Theorem \ref{LDAP}.}\textit{
     $\G$ has the AP if and only if $L^1(\hat{\G})$ has the $L^1(\hat{\G})$-DAP.}

To prove Theorem \ref{LDAP}, our main strategy is to understand exactly when ${}_{L^1(\G)}\mathcal{DF}(L^1(\G),X)= \alpha(X)$ for $(X, \alpha)\in {}_{L^1(\G)}\NMod^{\|\cdot\|}$. We prove that this happens precisely when the coaction $\alpha: X\to X\ovot_\mcF L^\infty(\G)$ admits a \emph{universal lift} $\alpha^u: X\to X\ovot_\mcF C_0^u(\G)^{**}$ (Proposition \ref{prop5}) (see Section \ref{SectionDAP} for precise definitions). In the setting of $\G$-$W^*$-algebras, the notion of the universal lift was first introduced and studied in \cite{DCK24}; in particular it then always exists. For general operator spaces however, the question of its existence for all modules is intimately related to the AP, as the following result shows:

\textbf{Theorem \ref{thm1}.}\textit{
Let $\G$ be a locally compact quantum group. The following conditions are equivalent:
\begin{enumerate}[noitemsep]
\item $\G$ has the AP.
\item Every $X\in {}_{L^1(\hat\G\times \hat\G)}\NMod^{\|\cdot\|}$ admits the universal lift.
\end{enumerate}}

Finally, we also treat the question of $1$-exactness of the Fubini crossed product functor. Through the natural isomorphism \eqref{representthis}, this question naturally leads to the investigation of $L^1(\G)$-projectivity of $B(L^2(\G))_*$, which turns out to be equivalent with finiteness of $\G$ (Corollary \ref{projectivitytraceclass}). When passing to appropriate subcategories of ${}_{L^1(\G)}\Mod$, $1$-exactness of the Fubini crossed product functor turns out to be equivalent with amenability of $\G$:

\textbf{Theorem \ref{exactnness}.} \textit{ Let $\G$ be a locally compact quantum group.
    \begin{enumerate}[noitemsep]
        \item $-\rtimes^\mcF \G$ is $1$-exact on ${}_{L^1(\G)}\Mod$ if and only if $\G$ is finite.
        \item $-\rtimes^\mcF \G$ is $1$-exact on ${}_{L^1(\G)}\NMod$ if and only if $\G$ is amenable.
        \item If $\G$ is amenable, then $-\rtimes^\mcF \G$ is $1$-exact on ${}_{L^1(\G)}\NMod^{\|\cdot\|}$. The converse holds if $\hat{\G}$ is amenable.
    \end{enumerate}}

\section{Preliminaries}\label{SectionPreliminaries}

All vector spaces in this paper are defined over the field $\C$. The symbol $\odot$ denotes the (algebraic) tensor product (over $\C$). We assume that inner products of pre-Hilbert spaces are anti-linear in the first variable. Given a subset $S$ of a normed linear space $V$, we write $[S]$ for the norm-closure of the linear span of $S$. More generally, if $(V, \tau)$ is a topological vector space and $S\subseteq V$, we write $[S]^\tau$ for the $\tau$-closure of the linear span of $S$ inside $V$. The multiplier $C^*$-algebra of a $C^*$-algebra $C$ is denoted by $\M(C)$. We let $\ten$ simultaneously denote the tensor product of Hilbert spaces and minimal tensor product of $C^*$-algebras; which one being clear from context. Given weak$^*$-closed operator spaces $X\subseteq\BH$ and $Y\subseteq\BK$, the weak$^*$-closure of $X\odot Y$ inside $B(\mathcal{H}\ten\mathcal{K})$ is denoted $X\oten Y$. In particular, $M\oten N$ denotes the von Neumann algebra tensor product of $M$ and $N$. Operator spaces are always assumed to be complete. The operator space projective tensor product of operator spaces $X$ and $Y$ is denoted $X\pten Y$.

\subsection{Operator spaces and modules}\label{operatormodules} Let $A$ be a completely contractive Banach algebra, that is, a Banach algebra $A$ with an operator space structure for which multiplication extends to a complete contraction $m_A:A\pten A\rightarrow A$. Equivalently,
$$\normof{[a_{ij}b_{kl}]}_{mn}\leq\normof{[a_{ij}]}_{m}\normof{[b_{kl}]}_n, \ \ \ [a_{ij}]\in M_m(A), \quad [b_{kl}]\in M_n(A).$$
An operator space $X$ is a left $A$-\textit{operator module} if it is a left Banach $A$-module such that the module map $m_X:A\pten X\rightarrow X$ is completely contractive. We will often write $m_X(a\otimes x)= a\rhd x$ for $a\in A$ and $x\in X$.  Right $A$-operator modules are defined similarly, and the module action is then denoted by $\lhd$. We denote the category of left (resp. right) $A$-operator modules with completely bounded left (resp. right) $A$-linear maps by ${}_A\Mod$ (resp. $\Mod_A$). The morphism spaces are denoted ${}_A\CB(X,Y)$ (resp. $\CB_A(X,Y)$).
We denote by ${}_{A}\NMod$ the subcategory of ${}_{A}\Mod$ with objects $X\in {}_{A}\Mod$ for which $X$ is a dual operator space (with fixed predual, determining its weak$^*$-topology) such that the maps $X\ni x \mapsto a\rhd x \in X$ are weak$^*$-continuous for all $a\in A$. For example, if $X\in\Mod_A$, then $X^*\in{}_A\NMod$ via
$$\la a\rhd f,x\ra=\la f,x\lhd a\ra, \ \ \ a\in A, \ f\in X^*, \ x\in X.$$  If $X\in \NMod_A$, we write $X_*$ for the space of weak$^*$-continuous functionals $X\to \C$, which is an $A$-submodule of $X^*$.
The morphisms of ${}_{A}\NMod$ are the weak$^*$-continuous completely bounded $A$-linear maps, denoted ${}_{A}\CB^\sigma(X,Y)$. Similarly for $\NMod_{A}$.

The identification $ A_+:= A  \oplus_1\C$ turns the unitization of $ A  $ into a unital completely contractive Banach algebra, and any $X\in{}_{A}\Mod$ becomes a \emph{unital} $A_+$-operator module via the extended action
$$(a,\lambda)\rhd x=a\rhd x +\lambda x, \ \ \ a\in A, \ \lambda\in\C, \ x\in X.$$
Here, unitality means that the unit $(0,1)$ of $A_+$ acts as the identity on $X$. When $A=M_*$ is the predual of a von Neumann bialgebra $(M, \Delta)$, the notation $A_+$ should not be confused with the normal positive linear functionals on $M$, which we will denote by $M_*^+$.

Let $X\in\Mod_A$ and $Y\in {}_A\Mod$. The \emph{$ A$-module tensor product} of $X$ and $Y$ is the quotient space $X\pten_{ A  }Y:=(X\pten Y)/N$, where
$$N=[(x\lhd a)\ten y- x\otimes (a\rhd y) \mid x\in X, \ y\in Y, \ a\in A].$$
There are canonical completely isometric isomorphisms $(X\pten_A Y)^*\cong\CB_A(X,Y^*)\cong {}_A\CB(Y,X^*)$ \cite[Corollary 3.5.10]{BLM04}. If, in addition, $B$ is another completely contractive Banach algebra and $Y\in{}_A\Mod_B$, meaning it is both a left $A$-operator and right $B$-operator module and the module actions commute, then for any $Z\in {}_B\Mod$ there is a completely isometric isomorphism 
\begin{equation}\label{associativity}
    (X\pten_{A}Y)\pten_{B}Z\cong X\pten_{A}(Y\pten_{B}Z)
\end{equation}
mapping $(x\ten_A y)\ten_B z$ to $x\ten_A (y \ten_B z)$ for all $x\in X$, $y\in Y$, and $z\in Z$ \cite[Theorem 3.4.10]{BLM04}, where $x\ten_A y$ denotes the image of $x\ten y$ in the module tensor product $X\pten_A Y$. (The proof follows from the universal properties of tripartite module tensor products (see \cite[Theorem 2.6]{BMP00} for the analogous proof for module Haagerup tensor products).)

We call $X\in {}_A \Mod$ \emph{$A$-projective} if for every $Y,Z \in {}_A \Mod$, every $A$-linear complete quotient map $q: Y \twoheadrightarrow Z$, every morphism  $\varphi: X \to Z$ and every $\epsilon > 0$, there exists a morphism
$\tilde{\varphi}_\epsilon: X \to Y$ such that $\|\tilde{\varphi}_\epsilon\|_{\cb}< \|\varphi\|_{\cb}+ \epsilon$ and $q\circ \tilde{\varphi}_\epsilon= \varphi$, i.e., the following diagram commutes:
\begin{equation*}
\begin{tikzcd}
                     &Y \arrow[d, two heads, "q"]\\
X \arrow[ru, dotted, "\widetilde{\varphi}_\varepsilon"] \arrow[r, "\varphi"] &Z
\end{tikzcd}
\end{equation*}

We call $X\in {}_A \Mod$ \emph{$A$-injective} if for every $Y,Z \in {}_A \Mod$, every completely isometric $A$-linear map $\iota:Y\hookrightarrow Z$, and every morphism $\varphi:Y\rightarrow X$, there exists a morphism $\widetilde{\varphi}:Z\rightarrow X$ such that $\normof{\widetilde{\varphi}}_{\cb}=\normof{\varphi}_{\cb}$ and $\widetilde{\varphi}\circ \iota=\varphi$, that is, the following diagram commutes:
\begin{equation*}
\begin{tikzcd}
Z \arrow[rd, dotted, "\widetilde{\varphi}"]\\
Y \arrow[u, hook, "\iota"] \arrow[r, "\varphi"] &X
\end{tikzcd}
\end{equation*}

Our notions of projective and injective modules are the operator module analogues of extremely projective/injective Banach modules from \cite{Hel11,Hel13}. They coincide with the 1-projective/1-injective modules from \cite{Cra17,Cra21}, and recover the usual notions from operator space theory \cite{Bl} when $A=\mathbb{C}$. In that vein, the proof of \cite[Theorem 3.5]{Bl} can be adapted to the operator module setting to show that $X\in {}_{A}\Mod$ is $A$-projective implies $X^*\in \Mod_A$ is $A$-injective.

A sequence of the form
$$\begin{tikzcd}
0 \arrow[r] & X \arrow[r, "\iota"] & Y \arrow[r, "\phi"] & Z
\end{tikzcd}$$
in ${}_A\Mod$ is \textit{1-exact} if $\Ker(\phi)= \Imm(\iota)$ and $\iota$ is a complete isometry. A short exact sequence 
$$\begin{tikzcd}
0 \arrow[r] & X \arrow[r, "\iota"] & Y \arrow[r, "\phi"] & Z \arrow[r] & 0
\end{tikzcd}$$
is \textit{$1$-exact} if $\Ker(\phi)= \Imm(\iota)$, $\iota$ is a complete isometry and $\phi$ is a complete quotient map. The latter metric strengthening of a short exact sequence is commonly featured in operator space theory as it captures important properties such as nuclearity \cite[Theorem 14.6.1]{ER00}, 1-exactness \cite[Theorem 14.4.1]{ER00} (see also \cite[Remark \S1]{Pis95}) and the local lifting property \cite[Theorem 3.3]{Dong09}. 

If $B$ is another completely contractive Banach algebra, a covariant linear functor $\mathcal{F}:\Mod_A\to\Mod_B$ is \textit{left 1-exact} (resp. \textit{1-exact}) if for every 1-exact sequence of the form
$$\begin{tikzcd}
0 \arrow[r] & X \arrow[r, "\iota"] & Y \arrow[r, "\phi"] & Z
\end{tikzcd}$$
(resp. short 1-exact sequence), the image sequence
$$\begin{tikzcd}
0 \arrow[r] & \mathcal{F}(X) \arrow[r, "\mathcal{F}(\iota)"] & \mathcal{F}(Y) \arrow[r, "\mathcal{F}(\phi)"] & \mathcal{F}(Z)
\end{tikzcd}$$
is 1-exact (resp. short 1-exact). We call $X\in {}_A \Mod$ $\emph{$A$-flat}$ if the functor $(-)\pten_A X:\Mod_A\to\Mod_{\mathbb{C}}$ is 1-exact. Note that $(-)\pten_A X $ automatically preserves complete quotient maps (by a factorization argument and the analogous operator space property of $(-)\pten X$). Standard arguments show that $X$ is $A$-flat if and only if $X^*$ is injective in $\Mod_A$ (see, e.g., \cite{Hel93}*{Theorem VII.1.42} or \cite[Proposition 3.5.9]{Wo99} for the relative homology case). 

We call $X\in {}_A \operatorname{NMod}$  \emph{weak$^*$-injective} if for every weak$^*$-continuous $A$-linear complete isometry $\iota: Y\hookrightarrow Z$, every weak$^*$-continuous completely bounded $A$-linear map $\phi: Y \to X$ and every $\epsilon > 0$, there exists a weak$^*$-continuous $A$-linear completely bounded map $\phi_\epsilon: Z \to X$ such that $\|\phi_\epsilon\|_{\operatorname{cb}}< \|\phi\|_{\cb} + \epsilon$ and such that $\phi_\epsilon\circ \iota= \phi$. Standard duality arguments show that $X\in {}_A \Mod$ is $A$-projective if and only if $X^*\in\NMod_{A}$ is weak$^*$-injective. Note that $A$-injectivity in ${}_A\NMod$, defined similarly to that in ${}_A\Mod$ but with additional continuity assumptions on the morphisms, is a strictly stronger property. Indeed, $A=\C$ is weak$^*$-injective in ${}_{\C}\NMod$, but not injective in ${}_{\C}\NMod$ (if it were, every closed subspace of every Banach space would be proximinal). 

The following Proposition is well-known in the setting of relative Banach (resp. operator module) homology (see \cite[Proposition III.1.14]{Hel89} (resp. \cite[Theorem 3.3.4]{Wo99})). We include a proof in our setting for completeness.

\begin{Prop}\label{exactness homfunctor} Let $A$ be a completely contractive Banach algebra and let $M\in {}_A \Mod$.
\begin{enumerate}[noitemsep]
    \item The functor ${}_A\CB(M,-)$ is left 1-exact. 
\item ${}_A \CB(M,-)$ preserves complete quotient mappings if and only if $M$ is $A$-projective.
\item  $M$ is $A$-projective if and only if the covariant functor ${}_A \CB(M,-)$ is $1$-exact.
\end{enumerate}
\end{Prop}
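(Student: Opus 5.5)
For (1), take a 1-exact sequence $0\to X\xrightarrow{\iota}Y\xrightarrow{\phi}Z$ in ${}_A\Mod$ and look at the induced maps $\iota_*=\iota\circ(-)$ and $\phi_*=\phi\circ(-)$ on ${}_A\CB(M,-)$. First, $\iota_*$ is a complete isometry. Indeed, ${}_A\CB(M,X)$ sits completely isometrically inside $\CB(M,X)$, with $M_n(\CB(M,X))\cong\CB(M,M_n(X))$. Composing with the complete isometry $\iota$ preserves cb-norms at every matrix level, since $\|(\id_{M_m}\otimes\iota)\circ T\|=\|T\|$ for every $T$. Second, $\phi_*\iota_*=(\phi\iota)_*=0$. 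Third, suppose $T\in{}_A\CB(M,Y)$ satisfies $\phi T=0$. Then $T(M)\subseteq\Ker\phi=\iota(X)$, so $S:=\iota^{-1}\circ T$ is well defined. It is completely bounded because $\iota^{-1}$ is completely isometric on $\iota(X)$, and it is $A$-linear because $\iota$ is injective and $A$-linear: $\iota(S(a\rhd m))=a\rhd T(m)=\iota(a\rhd S(m))$. Hence $\Ker\phi_*=\Imm\iota_*$.

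For (2), the backward direction is by definition. Fix a complete quotient map $q:Y\to Z$ and write $\tilde q=q_*$. Projectivity says that every $\varphi$ with $\|\varphi\|_{\cb}<1$ lifts to some $\tilde\varphi$ with $\|\tilde\varphi\|_{\cb}<1$, so $\tilde q$ is a quotient map at the first matrix level. To get the complete statement I apply projectivity to the amplified quotient $M_n(Y)\to M_n(Z)$. This map is again an $A$-linear complete quotient map, where $M_n(Y)$ carries the entrywise module action; one checks it is an operator module because $A\pten M_n(Y)\to M_n(Y)$ is completely contractive. The identification $M_n({}_A\CB(M,Y))\cong{}_A\CB(M,M_n(Y))$ then turns level-$n$ quotient behaviour of $\tilde q$ into liftings, so $\tilde q$ is a complete quotient map.

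The forward direction of (2) needs care. If $\tilde q$ is a complete quotient map, then its level-one open unit ball maps onto the open unit ball. So any $\varphi$ with $\|\varphi\|_{\cb}<1$ has a lift of cb-norm $<1$, and scaling gives lifts of norm $<\|\varphi\|_{\cb}+\epsilon$. Note that a quotient map sends the open ball onto the open ball, but a norm-exact lift is not guaranteed. That is exactly why the definition of projectivity allows $\epsilon$, and it matches. The case $\varphi=0$ is trivial.

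For (3), suppose $M$ is projective and $0\to X\to Y\to Z\to0$ is short 1-exact. Part (1) gives left 1-exactness, and part (2) shows that $\phi_*$ is a complete quotient map, so the image sequence is short 1-exact. Conversely, suppose the functor is 1-exact and $q:Y\to Z$ is an $A$-linear complete quotient map. Then $0\to\Ker q\to Y\to Z\to0$ is short 1-exact in ${}_A\Mod$, since $\Ker q$ is a closed submodule and the inclusion is a complete isometry. Applying the functor shows that $q_*$ is a complete quotient map, and part (2) gives projectivity.

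The main obstacle I expect is the matrix-level bookkeeping in (2). Specifically, I must verify that $M_n(Y)$ with the entrywise action lies in ${}_A\Mod$ and that $q\otimes\id_{M_n}$ is still a complete quotient map. Both are standard: the first via the canonical complete contraction $A\pten M_n(Y)\to M_n(A\pten Y)$, the second from the definition of complete quotient maps.
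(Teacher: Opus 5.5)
Your proof is correct and takes essentially the same route as the paper: $s=\iota^{-1}\circ t$ for left 1-exactness; the matrix-level part of (2) handled by applying projectivity to the amplified complete quotient map $q_n\colon M_n(Y)\to M_n(Z)$ through $M_n({}_A\CB(M,Y))\cong{}_A\CB(M,M_n(Y))$; and (3) obtained from (1) and (2) via the short 1-exact sequence $0\to\Ker q\to Y\to Z\to 0$. One quibble on wording: calling the implication from projectivity to preservation of complete quotient maps ``by definition'' undersells it, since, as you go on to show, the matrix-level amplification is genuinely needed.
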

\begin{proof} (1) Let $\begin{tikzcd}
0 \arrow[r] & X \arrow[r, "\iota"] & Y \arrow[r, "\phi"] & Z
\end{tikzcd}$
be 1-exact, and let $\iota_*:={}_A\CB(M,\iota)$, and $\phi_*:={}_A\CB(M,\phi)$. It is clear that $\iota_*$ is also a complete isometry, so it suffices to show that $\Ker(\phi_*)= \Imm(\iota_*)$. If $t \in {}_A \CB(M,Y)$ with $\phi_*(t) = 0$, then $\phi(t(m)) = 0$ for all $m\in M$, and thus $t(m)= \iota(x_m)$ for some unique $x_m\in X$. We can then well-define the map
    $s: M \to X: m \mapsto x_m.$
    By construction, it satisfies $\iota \circ s = t$. From this, it follows that $s$ is $A$-linear and $\|s\|_{\cb}=\|t\|_{\cb} <\infty$.

    (2)  Assume first that ${}_A \CB(M,-)$ preserves complete quotient maps. Let $Y,Z \in {}_A \Mod$, $q: Y \to Z$ an $A$-linear complete quotient map, $\varphi: M \to Z$ an $A$-linear completely bounded map and $\epsilon > 0$. By assumption, $q_*: {}_A \CB(M,Y)\to {}_A \CB(M,Z)$ is also a (complete) quotient map. Since $\varphi \in {}_A \CB(M,Z)_{<\|\varphi\|_{\cb} + \epsilon}$, it follows that there exists $\tilde{\varphi}_\epsilon\in {}_A \CB(M,Y)_{< \|\varphi\|_{\cb}+\epsilon}$ with $\varphi= q_*(\tilde{\varphi}_\epsilon)$, i.e. $q\circ \tilde{\varphi}_\epsilon=\varphi$ and $\|\tilde{\varphi}_\epsilon\|< \|\varphi\|_{\cb}+ \epsilon$. 

    Conversely, assume that $M$ is $A$-projective and that $q: Y \to Z$ is an $A$-linear complete quotient mapping. We need to show that for every $n \ge 1$, the map $(q_*)_n: M_n({}_A \CB(M, Y))\to M_n({}_A \CB(M,Z))$ is a quotient map. Note however that we have a commutative diagram 
$$\begin{tikzcd}
{M_n({}_A \CB(M,Y))} \arrow[rr, "(q_*)_n"] \arrow[d, "\cong"] &  & {M_n({}_A \CB(M,Z))} \arrow[d, "\cong"] \\
{{}_A \CB(M, M_n(Y))} \arrow[rr, "(q_n)_*"]                   &  & {{}_A \CB(M, M_n(Z))}                  
\end{tikzcd}$$
where the vertical arrows are completely isometric isomorphisms. It thus suffices to show that $(q_n)_*$ is a quotient map for every $n \ge 1$. 

Given $\varphi \in {}_A \CB(M, M_n(Z))$ with $\|\varphi\|_{\cb}< 1$ and noting that 
$q_n: M_n(Y)\to M_n(Z)$ is a complete quotient map, the $A$-projectivity of $M$ (applied with $\epsilon:= 1-\|\varphi\|_{\cb}$) yields $\psi \in {}_A \CB(M, M_n(Y))$ with $q_n \circ \psi = \varphi$ and $\|\psi\|_{\cb}< \|\varphi\|_{\cb}+\epsilon=1$, whence $(q_n)_*$ is a quotient map.

(3) This follows immediately from $(1)$ and $(2)$.
\end{proof}

\begin{Prop}\label{flatness}
     Let $A$ be a completely contractive Banach algebra and $M \in {}_A \Mod$. The functor ${}_A \CB(M,-)$ is $1$-exact on ${}_A \NMod$ if and only if $M$ is flat in ${}_A \Mod$.
\end{Prop}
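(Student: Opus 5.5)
The plan is to reduce everything to the duality $(X\pten_A M)^*\cong {}_A\CB(M,X^*)$ from \cite[Corollary 3.5.10]{BLM04}, applied with $X$ a right $A$-operator module. The first step is to note that every $Y\in{}_A\NMod$ is of the form $Y=(Y_*)^*$. Here $Y_*\subseteq Y^*$ is a right $A$-submodule, because the maps $y\mapsto a\rhd y$ are weak$^*$-continuous. Moreover, the given left action on $Y$ is exactly the dual action of this right module. Hence there is a completely isometric identification
$$
{}_A\CB(M,Y)\cong (Y_*\pten_A M)^*,
$$
and it is natural in $Y$ with respect to weak$^*$-continuous $A$-linear maps. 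Concretely, for a morphism $\phi\in{}_A\CB^\sigma(Y,Z)$ the map $\phi_*={}_A\CB(M,\phi)$ is the adjoint of $\phi_{\ast}\otimes_A\id_M\colon Z_*\pten_A M\to Y_*\pten_A M$, where $\phi_\ast$ is the preadjoint of $\phi$. So, up to natural isomorphism, the functor ${}_A\CB(M,-)$ restricted to ${}_A\NMod$ is the composite of three operations: take the predual, apply $(-)\pten_A M$, and dualize.

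The second step is a duality dictionary for sequences. A short sequence $0\to X\xrightarrow{\iota}Y\xrightarrow{\phi}Z\to0$ in ${}_A\NMod$, with weak$^*$-continuous maps, should be $1$-exact exactly when the preadjoint sequence $0\to Z_*\xrightarrow{\phi_*}Y_*\xrightarrow{\iota_*}X_*\to0$ is $1$-exact in $\Mod_A$. The facts needed are the following.
\begin{itemize}
\item A weak$^*$-continuous complete isometry has weak$^*$-closed range, by Krein--Smulian. Its preadjoint is therefore a complete quotient map.
\item A weak$^*$-continuous complete quotient map has a completely isometric preadjoint.
\item $\Ker\phi=\Imm\iota$ is equivalent to $\Imm(\phi_*)=\Ker(\iota_*)$, by taking annihilators and using that the ranges involved are closed.
\end{itemize}
Conversely, every short $1$-exact sequence $0\to U\to V\to W\to0$ in $\Mod_A$ dualizes to a short $1$-exact sequence $0\to W^*\to V^*\to U^*\to0$ in ${}_A\NMod$ with weak$^*$-continuous maps. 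This is because dual modules lie in ${}_A\NMod$, as noted in Section \ref{operatormodules}. The usual Banach-space duality between short exact sequences with closed ranges gives the same statement for the metric (matricial) version. Every short $1$-exact sequence in $\Mod_A$ arises this way, as the predual sequence of its own dual.

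With this in hand, the two directions follow. Since $(-)\pten_A M$ always preserves complete quotient maps, as remarked before the definition of flatness, $M$ is flat if and only if $(-)\pten_A M$ carries short $1$-exact sequences in $\Mod_A$ to short $1$-exact sequences. By the second step applied to the image sequences, this holds if and only if the dual sequences $0\to (W\pten_A M)^*\to (V\pten_A M)^*\to (U\pten_A M)^*\to 0$ are $1$-exact. By the first step these are exactly ${}_A\CB(M,-)$ applied to short $1$-exact sequences of ${}_A\NMod$, and every such sequence arises in this way.

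The main obstacle I expect is the duality step in the middle position. One must ensure that $\iota$ being a complete isometry (not merely isometric at level $1$) corresponds precisely to its preadjoint being a complete quotient map, which requires working at each matrix level $M_n$. One must also ensure that kernel/image equality transfers in both directions without extra closedness assumptions. Left $1$-exactness of ${}_A\CB(M,-)$ from Proposition \ref{exactness homfunctor}(1) already handles the injective end and the middle on the dual side. So the real content is the surjectivity of the last map, which corresponds to $1$-injectivity of $Z_*\pten_A M\to Y_*\pten_A M$, i.e.\ exactly the nontrivial half of flatness. I would verify this matrix-level duality carefully via $M_n(V^*)\cong\CB(V,M_n)$.
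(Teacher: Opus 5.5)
Your proposal is correct and follows the paper's proof. You identify ${}_A\CB(M,Y)\cong (Y_*\pten_A M)^*$ naturally. For one direction you apply flatness to the predual sequence $0\to Z_*\to Y_*\to X_*\to 0$; for the other you dualize a short $1$-exact sequence in $\Mod_A$ into ${}_A\NMod$, apply the functor, and pass back to preadjoints.

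Your explicit duality dictionary for short $1$-exact sequences (weak$^*$-closed ranges, annihilators, matrix-level norms) fills in steps that the paper uses without comment.
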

\begin{proof}
Assume first that $M$ is flat in ${}_A \Mod$. Given a $1$-exact sequence $$\begin{tikzcd}
0 \arrow[r] & X \arrow[r, "\iota"] & Y \arrow[r, "\phi"] & Z \arrow[r] & 0
\end{tikzcd}$$
in ${}_{A} \operatorname{NMod}$, the fact that $M$ is flat in ${}_A\Mod$ entails that
$$ \begin{tikzcd}
0 \arrow[r] & Z_*\pten_{A} M \arrow[r, "\phi_*\ten\id"] & Y_*\pten_{A} M \arrow[r, "\iota_*\ten\id"] & X_*\pten_{A} M \arrow[r] & 0
\end{tikzcd}$$
is $1$-exact. Consequently, its dual sequence 
$$ \begin{tikzcd}
0 \arrow[r] & {}_{A} \CB(M, X) \arrow[r] & {}_{A} \CB(M, Y) \arrow[r] & {}_{A} \CB(M, Z) \arrow[r] & 0
\end{tikzcd}$$
is 1-exact. 

Conversely, if ${}_A \CB(M, -)$ is 1-exact on ${}_{A} \operatorname{NMod}$, then  for any 1-exact sequence 
$$\begin{tikzcd}
0 \arrow[r] & X \arrow[r, "\iota"] & Y \arrow[r, "\phi"] & Z \arrow[r] & 0
\end{tikzcd}$$
in $\operatorname{Mod}_{A}$, the sequence
$$ \begin{tikzcd}
0 \arrow[r] & {}_{A} \CB(M, Z^*) \arrow[r] & {}_{A} \CB(M, Y^*) \arrow[r] & {}_{A} \CB(M, X^*) \arrow[r] & 0
\end{tikzcd}$$
is 1-exact. The pre-adjoint sequence
$$ \begin{tikzcd}
0 \arrow[r] & X\pten_{A} M \arrow[r, "\iota \otimes \id"] & Y\pten_{A} M \arrow[r, "\phi\ten\id"] & Z\pten_{A} M \arrow[r] & 0
\end{tikzcd}$$
is 1-exact, entailing the $A$-flatness of $M$.    
\end{proof}

For operator spaces $X\subseteq\BH$ and $Y\subseteq\BK$ with $Y$ weak$^*$-closed, their \emph{Fubini tensor product}  \cite{Ham11} is defined as
$$X\oten_{\mathcal{F}} Y:=\{z\in B(\mathcal{H}\ten\mathcal{K})\mid (\id\ten\om)z\in X, \ (\rho\ten\id)z\in Y \ \forall \ \om\in\BK_*, \ \rho\in\BH_*\}.$$
Let $X_i\subseteq B(\mathcal{H}_i)$, and $Y_i\subseteq B(\mathcal{K}_i)$, $i=1,2$, be operator spaces with $Y_1$ and $Y_2$ weak$^*$-closed. If $\phi:X_1\to X_2$ and $\psi:Y_1\to Y_2$ are complete contractions with $\psi$ weak$^*$-continuous, the amplifications 
$$(\phi\ten\id_{B(\mathcal{K}_i)}):X_1\oten_{\mathcal{F}} B(\mathcal{K}_i)\to X_2\oten_{\mathcal{F}} B(\mathcal{K}_i), \ \ \ \ 
(\id_{B(\mathcal{H}_i)}\ten\psi):B(\mathcal{H}_i)\oten_{\mathcal{F}} Y_1\to B(\mathcal{H}_i)\oten_{\mathcal{F}} Y_2$$
are well-defined complete contractions such that
$$(\phi\ten\id_{B(\mathcal{K}_2)})\circ(\id_{B(\mathcal{H}_1)}\ten\psi)|_{X_1\oten_{\mathcal{F}} Y_1}=(\id_{B(\mathcal{H}_2)}\ten\psi)\circ(\phi\ten\id_{B(\mathcal{K}_1)})|_{X_1\oten_{\mathcal{F}} Y_1}$$
defines a complete contraction $\phi\ten\psi:=X_1\oten_{\mathcal{F}} Y_1\to X_2\oten_{\mathcal{F}} Y_2$ \cite[Proposition 1.1]{Ham11} (see also \cite[3.8, 3.9]{Ham82}).
If, in addition, $\phi$ and $\psi$ are surjective complete isometries, then $\phi\ten\psi$ is a completely isometric isomorphism $X_1\oten_{\mathcal{F}} Y_1\cong X_2\oten_{\mathcal{F}} Y_2$ \cite[Proposition 1.1]{Ham11}. It follows that the Fubini tensor product is (1) independent of the inclusions $X\subseteq\BH$ and $Y\subseteq\BK$ (up to completely isometric isomorphism), and (2) injective in the sense that if $\phi: X_1\hookrightarrow X_2$ and $\psi:Y_1\hookrightarrow Y_2$ are complete isometries (the latter weak$^*$-continuous) then $\phi\ten\psi:X_1\oten_{\mathcal{F}} Y_1\hookrightarrow X_2\oten_{\mathcal{F}} Y_2$ is a complete isometry. Furthermore, we have $$(X_1\oten_{\mathcal{F}}Y_1)\cap (X_2\oten_{\mathcal{F}}Y_2)=(X_1\cap X_2)\oten_{\mathcal{F}}(Y_1\cap Y_2).$$

When both $X\subseteq\BH$ and $Y\subseteq\BH$ are weak$^*$-closed and hence dual operator spaces with operator preduals $X_*$ and $Y_*$, there is a canonical completely isometric isomorphism 
$X\oten_{\mathcal{F}} Y\cong (X_*\pten Y_*)^*(\cong\CB(Y_*,X))$ \cite[Proposition 3.3]{Ru92}. In this case, $X\oten_{\mathcal{F}} Y$ is a weak$^*$-closed subspace of $B(\mathcal{H}\ten\mathcal{K})$ containing $X\odot Y$, so that $X\oten Y\subseteq X\oten_{\mathcal{F}} Y$. Equality holds when $X=M$ and $Y=N$ are von Neumann algebras \cite{Tom70}, but, more generally, we have:

\begin{Lem}\label{slicemapproperty}
    If $X,Y$ are dual operator spaces, then $X\bar\otimes Y=X\bar\otimes_{\F} Y$ provided:
    \begin{enumerate}[noitemsep]
        \item $Y$ is an injective von Neumann algebra, or
        \item $Y$ has $w^*$OAP.
    \end{enumerate}
    \end{Lem}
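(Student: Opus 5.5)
We identify $X\oten_{\F} Y$ with $\CB(Y_*,X)$ (equivalently $(X_*\pten Y_*)^*$), using the canonical isomorphism recalled above. Under this identification, $X\odot Y$ corresponds to the weak$^*$-continuous finite rank maps $Y_*\to X$, so $X\oten Y$ is the weak$^*$-closure of these in $\CB(Y_*,X)$. The claim is therefore that every $z\in X\oten_{\F} Y$ lies in the weak$^*$-closure of $X\odot Y$. In both cases the plan is the same: produce a net of weak$^*$-continuous maps $\Phi_i:Y\to Y$ converging to $\id_Y$ in a suitable weak$^*$ sense, with $\id_X\ten\Phi_i$ mapping $X\oten_{\F}Y$ into $X\oten Y$, and then show $(\id\ten\Phi_i)(z)\to z$ weak$^*$.

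For (2), the $w^*$OAP provides a net of weak$^*$-continuous finite rank maps $\Phi_i$ on $Y$ with $\id\ten\Phi_i\to\id$ in the stable point-weak$^*$ topology. Concretely, for every Hilbert space $\mathcal H$ and $w\in B(\mathcal H)\oten Y$, we have $(\id\ten\Phi_i)(w)\to w$ weak$^*$. Write $\Phi_i=\sum_k \omega_k(\cdot)\,y_k$ with $\omega_k\in Y_*$ and $y_k\in Y$. Then
\[(\id\ten\Phi_i)(z)=\sum_k(\id\ten\omega_k)(z)\ten y_k.\]
The slices $(\id\ten\omega_k)(z)$ lie in $X$ by definition of the Fubini product, so this element lies in $X\odot Y$. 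Now embed $X\subseteq\BH$, so that $z\in B(\mathcal H)\oten_{\F}Y=B(\mathcal H)\oten Y$. The latter equality holds because $B(\mathcal H)\oten_\F Y$ is the Fubini product of von Neumann algebra/weak$^*$-closed space; more precisely, I would use $B(\mathcal H)\oten Y=\{w\in B(\mathcal H)\oten B(\mathcal K): (\rho\ten\id)w\in Y\}$, which is Tomiyama's slice result, valid because $B(\mathcal H)$ is injective. The stable convergence then gives $(\id\ten\Phi_i)(z)\to z$ weak$^*$, and hence $z\in X\oten Y$.

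For (1), $Y$ is an injective von Neumann algebra and hence semidiscrete (Connes). There is therefore a net of normal finite rank unital completely positive maps $\Phi_i:Y\to Y$ with $\Phi_i\to\id$ point-weak$^*$. These factor through matrix algebras, and by standard arguments $\id\ten\Phi_i\to\id$ point-weak$^*$ on $B(\mathcal H)\oten Y$; this is the stable convergence, which follows from complete positivity and uniform boundedness through density of the algebraic tensor product. The argument then proceeds exactly as in (2). In other words, injective von Neumann algebras have the $w^*$CPAP, hence the $w^*$OAP, and (1) reduces to (2).

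The main obstacle is the step $z\in B(\mathcal H)\oten Y$, needed so that stable convergence applies. I would handle it by noting that $B(\mathcal H)\oten_{\F}Y\cong\CB(Y_*,B(\mathcal H))$ always coincides with $B(\mathcal H)\oten Y$, since $B(\mathcal H)$ has the $w^*$OAP (via finite-rank compressions). Equivalently, I would apply the symmetric argument, slicing with finite-rank projections on $\mathcal H$.
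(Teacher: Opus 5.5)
Your proposal is correct and follows the paper's route. You reduce (1) to (2) via injective $\Rightarrow$ $w^*$CPAP $\Rightarrow$ $w^*$OAP, and for (2) you write out the standard slicing argument behind Kraus's theorem, which the paper simply cites instead of reproving. One small correction: the identity $B(\mathcal H)\oten_{\F}Y=B(\mathcal H)\oten Y$ for a weak$^*$-closed subspace $Y$ is not Tomiyama's theorem, since $Y$ need not be an algebra. Your finite-rank compression argument proves it directly: $(p_F\ten 1)z(p_F\ten 1)\in B(\mathcal H)\odot Y$ and these elements converge weak$^*$ to $z$.
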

    
    \begin{proof}
        A von Neumann algebra is injective if and only if it has the $w^*$CPAP, hence $(1)\implies (2)$. If $(2)$ holds, then $X\ovot Y = X\ovot_\mcF Y$ by \cite{Kr91}*{Theorem 2.6}.
    \end{proof}

Let $X\subseteq\BH$ be an operator space and let $Y$ be a dual operator space with predual $Y_*$. Since
$$\BH\oten_{\mathcal{F}} Y\ni z\mapsto (\omega\mapsto (\id\ten\omega)z)\in \CB(Y_*,\BH),$$
is a completely isometric isomorphism,
its restriction to $X\oten_{\mathcal{F}} Y$ yields a canonical completely isometric isomorphism $X\oten_{\mathcal{F}} Y\cong\CB(Y_*,X)$ (by the definition of the Fubini tensor product).  Combined with the universal property of the projective tensor product $\pten$ \cite[(1.5.11)]{BLM04}, it follows that
$$\CB(Y_*\pten Z,X)\cong\CB(Z,\CB(Y_*,X))\cong\CB(Z,X\oten_{\mathcal{F}} Y)$$
completely isometrically for any operator space $Z$.

Frequently, we will use the fact that if $S,T$ are Banach spaces and $f: S^*\to T^*$ is a weak$^*$-continuous linear isometry, then $f(S^*)$ is weak$^*$-closed in $T^*$ \cite{Rud91}*{Theorem 4.14}. In particular, if $X,Y$ are dual operator spaces and $f: X\to Y$ is a weak$^*$-continuous (complete) isometry, then $f(X)$ is weak$^*$-closed in $Y$.

\subsection{Locally compact quantum groups}\label{prelim:locallycompactquantumgroups}
Our main objects of study are locally compact quantum groups, whose modern definition was proposed by Kustermans-Vaes \cites{KV00,KV03}. They are generalizations of locally compact (Hausdorff) groups, in the spirit of non-commutative topology and non-commutative measure theory. By definition, a \emph{locally compact quantum group} $\G$ is described by a von Neumann algebra $L^{\infty}(\G)$, a comultiplication $\Delta_{\G}: L^{\infty}(\G)\rightarrow L^{\infty}(\G)\bar\otimes L^{\infty}(\G)$, which is a  normal, unital $*$-homomorphism satisfying $(\Delta_{\G}\otimes\id)\Delta_{\G}=(\id\otimes\Delta_{\G})\Delta_{\G}$ and two normal semifinite faithful weights $\varphi,\psi$ (Haar integrals) which satisfy the condition of left (respectively, right) invariance. As is common in the field, we denote the GNS Hilbert space for $\varphi$ by $L^2(\G)$ and treat $L^{\infty}(\G)$ as acting on $L^2(\G)$. The GNS Hilbert space for $\psi$ is identified with $L^2(\G)$. An important result states that with any locally compact quantum group $\G$ we can associate its dual $\hat{\G}$, which is also a locally compact quantum group and $L^{\infty}(\hat{\G})\subseteq B(L^2(\G))$. The dual of $\hat{\G}$ is equal to $\G$ and in fact the operation $\G\mapsto \hat{\G}$ extends the Pontryagin duality of locally compact, abelian groups. The predual space of $L^{\infty}(\G)$ is denoted $L^1(\G)$; it carries a completely contractive Banach algebra structure with multiplication $L^1(\G)\hat{\otimes}L^1(\G)\ni \omega\otimes\nu\mapsto \omega\star\nu=(\omega\otimes\nu)\Delta_{\G}\in L^1(\G)$.

With the quantum group $\G$, we can associate its \emph{opposite} $\G^{\operatorname{op}}$ and \emph{commutant} $\G'$ \cite[Section 4]{KV03}. By definition, $L^{\infty}(\G^{\operatorname{op}})=L^{\infty}(\G)$ and $\Delta_{\G^{\operatorname{op}}}(x)= \Delta_{\G}(x)_{21}\,(x\in L^{\infty}(\G))$ using the leg numbering notation. Consider the canonical anti-isomorphism $j_{\G}: L^{\infty}(\G)\rightarrow L^{\infty}(\G)'$ given by $j_{\G}(x)=J_{\vp} x^* J_{\vp}$, where $J_{\vp}$ is the modular conjugation of the weight $\vp$. Then $\G'$ is defined by $L^{\infty}(\G')=L^{\infty}(\G)'$ and $\Delta_{\G'}(j_{\G}(x))=(j_{\G}\otimes j_{\G})\Delta_{\G}(x)$ for $x\in L^{\infty}(\G)$. We will frequently use the quantum group $\check{\G}=(\hat{\G})'$, which should be thought of as a right version of the dual of $\G$. Let us also introduce a canonical self-adjoint unitary $u_{\G}=\nu_{\G}^{i/8} J_{\vp}J_{\hat{\vp}}$, where $\nu_{\G}>0$ is the \emph{scaling constant}.

A fundamental construction gives us the left \emph{Kac-Takesaki operator} $\ww^{\G}\in L^{\infty}(\G)\bar\otimes L^{\infty}(\hat{\G})$, which is the unitary operator uniquely characterized by
\[
((\omega\otimes\id)\ww^{\G *}) \Lambda_{\vp}(x)=\Lambda_{\vp}((\omega\otimes\id)\Delta_{\G}(x)), \quad \omega \in L^1(\G), \quad x\in \mathfrak{N}_{\vp}=\{x\in L^{\infty}(\G) \;|\; \vp(x^*x)<+\infty\}.
\]
There is also a right Kac-Takesaki operator; the unitary $\vv^{\G}\in L^{\infty}(\check{\G})\bar\otimes L^{\infty}(\G)$ uniquely determined by
\[
((\id\otimes\omega)\vv^{\G})\Lambda_{\psi}(x)=
\Lambda_{\psi}((\id\otimes\omega)\Delta_{\G}(x)),\quad  \omega\in L^1(\G), \quad x\in \mathfrak{N}_{\psi}.
\]
The Kac-Takesaki operators implement the comultiplication via
\[
\Delta_{\G}(x)=\ww^{\G *}(\I\otimes x ) \ww^{\G} = \vv^{\G}(x\otimes \I) \vv^{\G *}, \quad x\in L^\infty(\G). \]
The slices of these unitary operators generate respective algebras, in the sense that
\[
L^{\infty}(\G)=[(\id\otimes\omega)\ww^{\G} \;|\;
\omega\in L^1(\hat{\G})]^{w*},\quad 
L^{\infty}(\hat\G)= [(\omega\otimes\id)\ww^{\G} \;|\;
\omega\in L^1(\G)]^{w*}
\]
and similarly for $\vv^{\G}$. Taking norm-closures instead, we obtain non-degenerate C$^*$-subalgebras
\[
C_0(\G)=[ (\id\otimes\omega)\ww^{\G} \;|\;
\omega\in L^1(\hat{\G})],\quad 
C_0(\hat\G)=[(\omega\otimes\id)\ww^{\G} \;|\;
\omega\in L^1(\G)].
\]
The comultiplication $\Delta_{\G}$ restricts to a non-degenerate $*$-homomorphism $C_0(\G)\rightarrow \M(C_0(\G)\otimes C_0(\G))$, denoted with the same symbol. By definition, one says that $\G$ is \emph{compact} if $1\in C_0(\G)$ and $\G$ is \emph{discrete} if $\hat{\G}$ is compact. The operators $\ww^{\G},\vv^{\G}$ satisfy a number of properties, let us mention
\[
\ww^{\hat{\G}}=\ww^{\G *}_{21},\quad
\ww^{\check{\G}}=\vv^{\G},\quad
(\Delta_{\G}\otimes\id)\ww^{\G}=
\ww^{\G}_{13}\ww^{\G}_{23},\quad 
(\id\otimes\Delta_{\G})\vv^{\G}=
\vv^{\G}_{12} \vv^{\G}_{13}
\]
and refer to the literature \cite{KV00, KV03, VD14} for more. The operators $\ww^{\G},\vv^{\G}$ give us two canonical ways of extending the comultiplication to $B(L^2(\G))$:
\[\begin{split}
\Delta_{\G,l}&: B(L^2(\G))\ni x\mapsto \ww^{\G *}(\I\otimes x)\ww^{\G} \in 
L^{\infty}(\G)\bar\otimes B(L^2(\G)),\\
\Delta_{\G,r}&: B(L^2(\G))\ni y\mapsto \vv^{\G }(y\otimes \I)\vv^{\G *} \in 
B(L^2(\G))\bar\otimes L^{\infty}(\G).
\end{split}\]
These maps can be seen as describing actions $\G\curvearrowright B(L^2(\G))$ and $B(L^2(\G))\curvearrowleft \G$ (see subsection \ref{actions}).

We will need the notion of a (right/left) unitary representation of $\G$. By definition, a right unitary representation of $\G$ on a Hilbert space $\mcH$ is a unitary operator $\uu\in B(\mcH)\bar\otimes L^{\infty}(\G)$ satisfying $(\id\otimes \Delta_{\G})\uu=\uu_{12} \uu_{13}$. Similarly, a unitary operator $\widetilde{\uu}\in L^{\infty}(\G)\bar\otimes B(\mcH)$ is a left unitary representation if $(\Delta_{\G}\otimes\id) \widetilde{\uu}=\widetilde{\uu}_{13} \widetilde{\uu}_{23}$. We will use only unitary representations, hence we will sometimes skip this adjective. Note that $\uu\in B(\mcH)\ovot L^\infty(\G)$ is a right unitary $\G$-representation on $\mcH$ if and only if $\tilde{\uu}= \uu_{21}$ is a left unitary $\G$-representation on $\mcH$. Operator $\vv^{\G}$ is a right $\G$-representation on $L^2(\G)$ (the right regular representation) and $\ww^{\G}$ is a left representation on $L^2(\G)$ (the left regular representation). 

The C$^*$-algebra $C_0(\G)$ comes also with its \emph{universal} variant $C_0^u(\G)$ \cite{K01}. It is a C$^*$-algebra equipped with a surjective $*$-homomorphism $\Lambda_{\G}: C_0^u(\G)\rightarrow C_0(\G)$ (\emph{reducing map}) and a comultiplication, i.e.~a co-associative, non-degenerate $*$-homomorphism $\Delta^u_{\G}: C_0^u(\G)\rightarrow \M(C_0^u(\G)\otimes C_0^u(\G))$. Also the left and right Kac-Takesaki operators $\ww^{\G},\vv^{\G}$ admit universal lifts, which come in several variants. There is a unique unitary bicharacter $\WW^{\G}\in \M(C_0^u(\G)\otimes C_0^u(\hat{\G}))$ satisfying $(\Lambda_{\G}\otimes \Lambda_{\hat{\G}})(\WW^{\G})=\ww^{\G}$. Applying the reducing map to one of the legs, we obtain $\wW^{\G}$ and $\Ww^{\G}$. Of particular importance is the operator $\wW^{\G}\in \M(C_0(\G)\otimes C_0^u(\hat{\G}))$, because of its fundamental property: for any Hilbert space $\mcH$ and unitary operator $\uu\in L^{\infty}(\G)\bar\otimes B(\mcH)$, $\uu$ is a left unitary representation if and only if there is a non-degenerate $*$-homomorphism $\pi: C_0^u(\hat{\G})\rightarrow B(\mcH)$ such that $\uu=(\id\otimes \pi)(\wW^{\G})$ (similar properties hold for $\vv^{\G}$). Most of the analytical structure admits a lift from the reduced to universal level, see \cite[sections 8, 9]{K01}. Using the half-lifted Kac-Takesaki operators, we can introduce the half-lifted comultiplications $\Delta_{\G}^{u,r}: C_0(\G)\rightarrow \M(C_0^u(\G)\otimes C_0(\G))$ and $\Delta_{\G}^{r,u}:C_0(\G)\rightarrow \M(C_0(\G)\otimes C_0^u(\G))$ via
\[
\Delta_{\G}^{u,r}(x)=\Ww^{\G *}(1\otimes x)\Ww^{\G},\quad 
\Delta_{\G}^{r,u}(x)=\vV^{\G }(x\otimes 1)\vV^{\G *},\quad x\in C_0(\G),
\]
see e.g.~\cite[Section 2]{DKV25}.

For any locally compact quantum group $\G$, we define $\lambda_{\G}: L^1(\G)\ni \omega\mapsto (\omega\otimes \id)\ww^{\G}\in  C_0(\hat{\G})$ and $\lambda_{\G}^u: C_0^u(\G)^*\ni \mu\mapsto (\mu\otimes\id)\Ww^{\G}\in M(C_0(\hat{\G}))$. With these (injective) maps, we define the \emph{Fourier algebra} as $\A(\G)=\lambda_{\hat{\G}}(L^1(\hat{\G}))$ and \emph{Fourier-Stieltjes algebra} $\B(\G)=\lambda^u_{\hat{\G}}(C_0^u(\hat{\G})^*)$. Together with the multiplication of $L^{\infty}(\G)$ and the operator space structure pulled from $L^1(\hat{\G})$ (resp.~$C_0^u(\hat{\G})^*$), both are completely contractive Banach algebras. Next, we say that $b\in L^{\infty}(\G)$ is a \emph{left completely bounded multiplier} of $\A(\G)$ if $ba\in \A(\G)$ for all $a\in \A(\G)$, and the associated map $\A(\G)\rightarrow \A(\G)$ is completely bounded. The set of such elements is denoted $\M^l_{\cb}(\A(\G))$ and we have $\A(\G)\subseteq \B(\G)\subseteq \M^l_{\cb}(\A(\G))\subseteq \M(C_0(\G))$, with $\A(\G)$ being an ideal in $B(\G)$. Under the (completely isometric) identification $\A(\G)\cong L^1(\hat{\G})$, any element $b\in \M^l_{\cb}(\A(\G))$ gives a completely bounded map $\Theta^l(b)_* \in \CB( L^1(\hat{\G}))$, we denote its dual by $\Theta^l(b)=(\Theta^l(b)_*)^*\in \CB^\sigma(L^{\infty}(\hat{\G}))$. Together with the product of $L^{\infty}(\G)$ and the norm $\|b\|_{\cb}=\|\Theta^l(b)\|_{\cb}$, the space $\M^l_{\cb}(\A(\G))$ becomes a Banach algebra.

The restriction of functionals gives an injective map $L^1(\G)\rightarrow \M^l_{\cb}(\A(\G))^*$ and we define $Q^l(\A(\G))\subseteq \M^l_{\cb}(\A(\G))^*$ as the norm-closure of its image. It turns out that $\M^l_{\cb}(\A(\G))\cong Q^l(\A(\G))^*$ via the canonical pairing \cite[Theorem 3.4]{HuNeufangRuan}. Whenever we speak of the weak$^*$-topology on $\M^l_{\cb}(\A(\G))$, it is this predual that we have in mind. For a more thorough discussion see \cite{JNR09, HuNeufangRuan, Cra19, DKV24,DKV25} and references therein.

Using the above objects, one can introduce several approximation properties (see \cite{BT03, Bra17, DKV24, Cra19} and references therein). One says that a locally compact group $\G$ is \emph{strongly amenable} (or that $\hat{\G}$ is \emph{coamenable}) if the Banach algebra $\A(\G)$ has a bounded approximate unit. A significant weakening of this property is \emph{the approximation property (AP)}: $\G$ has the AP if there is a net $(a_i)_{i\in I}$ in the Fourier algebra $\A(\G)$ which converges to $1$ in the weak$^*$-topology of $\M^l_{\cb}(\A(\G))$. By \cite[Theorem 4.4]{DKV24}, the AP admits an equivalent characterization: $\G$ has the AP if and only if there is a net $(a_i)_{i\in I}$ in $\A(\G)$ such that $(\Theta^l(a_i))_{i\in I}$ converges to the identity in the stable point-weak$^*$ topology of $\CB^\sigma(L^{\infty}(\hat{\G}))$. Finally, we say that $\G$ is \emph{amenable} if there is a state $m\in L^{\infty}(\G)^*$ which is left invariant, in the sense that
\[
m((\omega\otimes\id)\Delta_{\G}(x))=m(x)\omega(1), \quad x\in L^{\infty}(\G),\quad \omega\in L^1(\G).
\] It is known that strong amenability of $\G$ implies amenability and the AP of $\G$. In the case of classical groups strong amenability is equivalent to amenability, however whether this equivalence holds for all locally compact quantum groups is an important open problem (see also \cite{Cra19}*{Theorem 7.2}).

Whenever convenient and clear from the context, we will simplify the notation and write e.g.~$\Delta$ instead of $\Delta_{\G}$. 

\subsection{Actions and crossed products}\label{actions} Let $\G$ be a locally compact quantum group. A \emph{right $\G$-$W^*$-algebra} $(M, \alpha)$ consists of a von Neumann algebra $M$ together with a unital, normal, isometric $*$-homomorphism $\alpha: M \to M \ovot L^\infty(\G)$ satisfying the coaction property $(\id \otimes \Delta)\alpha= (\alpha \otimes \id)\alpha$. We will often employ the more intuitive notation $M\stackrel{\alpha}\curvearrowleft\G$ and refer to $\alpha$ as a \emph{$\G$-action}.
The \emph{Podleś density condition} 
\begin{equation}\label{Podles}
    [\alpha(M)(1\otimes L^\infty(\G))]^{\text{w}^*} = M \ovot L^\infty(\G)
\end{equation}
is automatically satisfied \cite{KaS15}*{Proposition 2.9}. Moreover, we have \cite{Vae01}*{Theorem 2.7}
\begin{equation}\label{fixed}
    \alpha(M)= \{z\in M \ovot L^\infty(\G)\mid (\alpha\otimes \id)(z) = (\id \otimes \Delta)(z)\}.
\end{equation}
We write $\Fix(M, \alpha):= \{m\in M\mid\alpha(m)= m\otimes 1\}$ for the space of \emph{fixed points} of the action. For example, $(L^\infty(\G), \Delta)$ and $(B(L^2(\G)), \Delta_r)$ are right $\G$-$W^*$-algebras and $\Fix(L^\infty(\G),\Delta) = \C1$ and $\Fix(B(L^2(\G)), \Delta_r)= L^\infty(\hat{\G})$. Given two right $\G$-$W^*$-algebras $(M, \alpha)$ and $(N, \beta)$, a completely bounded map (not necessarily normal) $\varphi: M \to N$ is called \emph{$\G$-equivariant} if $(\varphi\otimes \id)\circ \alpha= \beta \circ \varphi$. 

Of course, similar definitions and results hold for \emph{left} $\G$-$W^*$-algebras, and we shall need both left and right $\G$-actions in this paper. For simplicity, we keep working with right $\G$-$W^*$-algebras in this subsection.

For future use, we also record the following equivariant version of the Stinespring theorem (see \cite[Theorem 2.1]{Pau82} for the case of group $C^*$-dynamical systems). Its proof is inspired by the proof of \cite{DCDR24}*{Proposition 5.6}.

\begin{Theorem}[Equivariant Stinespring theorem]\label{equivariant Stinespring}  Let $\G$ be a locally compact quantum group, let $(M, \alpha)$ be a right $\G$-$W^*$-algebra, let $\uu \in B(\mcH)\ovot L^\infty(\G)$ be a unitary $\G$-representation and let $\varphi: M \to B(\mcH)$ be a normal $\G$-equivariant completely positive map, in the sense that
$$(\varphi\otimes \id)(\alpha(m)) = \uu(\varphi(m)\otimes 1)\uu^*, \quad m \in M.$$
There exists a Hilbert space $\mathcal{H}_\varphi$, a bounded operator $T_\varphi: \mcH\to \mcH_\varphi$, a normal unital $*$-representation $\pi_\varphi: M \to B(\mcH_\varphi)$ and a unitary $\G$-representation $\uu_\varphi\in B(\mcH_\varphi)\ovot L^\infty(\G)$ such that $(T_\varphi\otimes  1)\uu = \uu_\varphi(T_\varphi\otimes 1)$ and
$$\varphi(m)= T_\varphi^* \pi_\varphi(m) T_\varphi, \quad (\pi_\varphi\otimes \id)\alpha(m) = \uu_\varphi(\pi_\varphi(m)\otimes 1)\uu_\varphi^*, \quad m\in M.$$
\end{Theorem}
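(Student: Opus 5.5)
Start from the minimal Stinespring dilation of $\varphi$, chosen so that the unitary $\uu_\varphi$ can be defined on a dense subspace and checked to be isometric. Let $(\pi_\varphi,\mcH_\varphi,T_\varphi)$ be the normal Stinespring dilation: $\mcH_\varphi$ is the separated completion of $M\odot\mcH$ for the semi-inner product
$$\langle m\otimes \xi, n\otimes\eta\rangle=\langle \xi,\varphi(m^*n)\eta\rangle.$$
Set $\pi_\varphi(m)[n\otimes\eta]=[mn\otimes\eta]$ and $T_\varphi\xi=[1\otimes\xi]$. Normality of $\varphi$ gives normality of $\pi_\varphi$, and $\varphi(m)=T_\varphi^*\pi_\varphi(m)T_\varphi$ holds by construction.

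To build $\uu_\varphi$, amplify with $L^2(\G)$. Define an operator $\vv$ on the dense subspace of $\mcH_\varphi\otimes L^2(\G)$ spanned by $(\pi_\varphi\otimes\id)(\alpha(m))(T_\varphi\otimes 1)\uu\,\zeta$, $\zeta\in \mcH\otimes L^2(\G)$, by
$$\vv\colon (\pi_\varphi\otimes\id)(\alpha(m))(T_\varphi\otimes1)\uu\,\zeta\;\mapsto\;(\pi_\varphi(m)\otimes 1)(T_\varphi\otimes 1)\zeta .$$
This is the inverse of the intended $\uu_\varphi^*$, since the desired relation $(\pi_\varphi\otimes\id)\alpha(m)\,\uu_\varphi=\uu_\varphi(\pi_\varphi(m)\otimes1)$ together with $\uu_\varphi(T_\varphi\otimes1)=(T_\varphi\otimes 1)\uu$ forces $\uu_\varphi(\pi_\varphi(m)T_\varphi\otimes1)\zeta=(\pi_\varphi\otimes\id)\alpha(m)(T_\varphi\otimes1)\uu\zeta$. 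So it is cleaner to define $\uu_\varphi$ directly on the vectors $(\pi_\varphi(m)T_\varphi\otimes 1)\zeta$. These span a dense subspace, because $\pi_\varphi(M)T_\varphi\mcH$ is dense in $\mcH_\varphi$. Isometry follows from the computation
$$\langle (\pi_\varphi\otimes\id)\alpha(m)(T_\varphi\otimes1)\uu\zeta,\ (\pi_\varphi\otimes\id)\alpha(n)(T_\varphi\otimes1)\uu\eta\rangle
=\langle \uu\zeta,(\varphi\otimes\id)\alpha(m^*n)\uu\eta\rangle
=\langle\zeta,(\varphi(m^*n)\otimes1)\eta\rangle,$$
where the second equality is the equivariance hypothesis. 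The right-hand side equals $\langle(\pi_\varphi(m)T_\varphi\otimes1)\zeta,(\pi_\varphi(n)T_\varphi\otimes1)\eta\rangle$, so $\uu_\varphi$ extends to an isometry; sums of such vectors are handled by the same computation.

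Next, surjectivity. The range contains $(\pi_\varphi\otimes\id)(\alpha(m))(T_\varphi\otimes 1)(\mcH\otimes L^2(\G))$, since $\uu$ is unitary. The Podleś density condition \eqref{Podles} gives $[\alpha(M)(1\otimes L^\infty(\G))]^{w*}=M\ovot L^\infty(\G)$. The range is invariant under $1\otimes L^\infty(\G)$: indeed $\uu_\varphi$ commutes with $1\otimes L^\infty(\G)'$, and one has to arrange the argument accordingly. Hence the closed range contains $(\pi_\varphi\otimes\id)(M\ovot L^\infty(\G))(T_\varphi\otimes1)(\mcH\otimes L^2(\G))$, which is dense. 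So $\uu_\varphi$ is unitary.

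The remaining properties are then routine checks.
\begin{itemize}
\item $\uu_\varphi\in B(\mcH_\varphi)\ovot L^\infty(\G)$: $\uu_\varphi$ commutes with $1\otimes x'$ for $x'\in L^\infty(\G)'$, because $\uu$ does and the defining formula is compatible with such operators.
\item Intertwining: taking $m=1$ gives $\uu_\varphi(T_\varphi\otimes 1)=(T_\varphi\otimes1)\uu$.
\item Covariance: $\uu_\varphi(\pi_\varphi(n)\otimes 1)=(\pi_\varphi\otimes\id)\alpha(n)\,\uu_\varphi$, checked on the dense vectors using multiplicativity of $\alpha$.
\item Representation property: check $(\id\otimes\Delta)\uu_\varphi=(\uu_\varphi)_{12}(\uu_\varphi)_{13}$ on vectors $(\pi_\varphi(m)T_\varphi\otimes1\otimes1)\zeta$. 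Use $\Delta(y)=\vv^\G(y\otimes1)\vv^{\G*}$ to write $(\id\otimes\Delta)$ spatially, then apply the coaction identity for $\alpha$ and the representation property of $\uu$.
\end{itemize}

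I expect the main obstacle to be surjectivity together with membership in $B(\mcH_\varphi)\ovot L^\infty(\G)$, since density via Podleś requires showing that the range is invariant under the correct leg. If this proves awkward, the fallback is to define $\uu_\varphi$ instead through the unitary implementation of $\alpha$ on the standard form and transport it to the dilation.
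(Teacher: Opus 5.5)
Your construction is the paper's proof in concrete form. The paper identifies $\mcH_\varphi\otimes L^2(\G)$ with the Stinespring space of $\varphi\otimes\id$ and sets $\uu_\varphi((m\otimes_\varphi\xi)\otimes\eta)=\alpha(m)\otimes_{\varphi\otimes\id}\uu(\xi\otimes\eta)$. That is exactly your formula $\uu_\varphi(\pi_\varphi(m)T_\varphi\otimes 1)\zeta=(\pi_\varphi\otimes\id)(\alpha(m))(T_\varphi\otimes 1)\uu\zeta$. Your isometry computation, membership in $B(\mcH_\varphi)\ovot L^\infty(\G)$, intertwining, covariance and the representation identity all go through as you sketch.

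The one step whose justification is wrong is surjectivity. Commutation of $\uu_\varphi$ with $1\otimes L^\infty(\G)'$ says nothing about invariance under $1\otimes L^\infty(\G)$, so that sentence proves nothing. You also do not need any left invariance of the range. What is needed is absorption on the right: for $g\in L^\infty(\G)$, since $1\otimes g$ commutes with $T_\varphi\otimes 1$ and $\uu$ is unitary,
\[
(\pi_\varphi\otimes\id)\bigl(\alpha(m)(1\otimes g)\bigr)(T_\varphi\otimes 1)\zeta=\uu_\varphi(\pi_\varphi(m)T_\varphi\otimes 1)\,\uu^*(1\otimes g)\zeta .
\]
This is precisely the paper's identity $\alpha(m)\otimes_{\varphi\otimes\id}(\xi\otimes g\eta)=\alpha(m)(1\otimes g)\otimes_{\varphi\otimes\id}(\xi\otimes\eta)$. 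Then the Podle\'s condition in the form $[\alpha(M)(1\otimes L^\infty(\G))]^{w*}=M\ovot L^\infty(\G)$ applies. Together with normality of $\pi_\varphi\otimes\id$ (weak$^*$-limits become weak limits of vectors, and closed subspaces are weakly closed), it shows the range contains the dense set $(\pi_\varphi\otimes\id)(M\ovot L^\infty(\G))(T_\varphi\otimes 1)(\mcH\otimes L^2(\G))$. So $\uu_\varphi$ is unitary, and no fallback is needed.
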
 
\begin{proof} We adapt the usual proof of the Stinespring theorem to the equivariant setting.  Write $\mcH_\varphi:= M\otimes_\varphi \mcH$ for the separation-completion of the space $M \odot \mcH$ endowed with the semi-inner product uniquely determined by $\langle x \otimes \xi, y \otimes \eta\rangle_\varphi := \langle \xi, \varphi(x^*y)\eta\rangle$ for $x,y \in M$ and $\xi, \eta \in \mcH$. The image of $x\otimes \xi$ under the natural map $M \odot \mcH \to \mcH_\varphi$ will be denoted by $x\otimes_\varphi \xi$. Define $T_\varphi: \mcH\to \mcH_\varphi$ by
$T_\varphi(\xi)=1 \otimes_\varphi \xi$, so $T_\varphi$ is bounded with $\|T_\varphi\|^2 = \|\varphi(1)\|$. Define also a unital $*$-representation 
$$\pi_\varphi: M\to B(\mcH_\varphi), \quad \pi_\varphi(m)(x\otimes_\varphi \xi) = mx\otimes_\varphi\xi, \quad m,x\in M, \quad \xi \in \mcH.$$
Since
$\omega_{x\otimes_\varphi \xi, x'\otimes_\varphi \xi'}(\pi_\varphi(m))= \langle \xi, \varphi(x^*mx')\xi'\rangle$ for $m,x,x'\in M$ and $\xi, \xi'\in \mcH,$ we see that
the normality of $\varphi$ implies the normality of $\pi_\varphi$. It is clear that $\varphi(m)= T_\varphi^* \pi_\varphi(m)T_\varphi$ for $m\in M$.  

Note that we have a unitary
   $$(M\otimes_\varphi \mcH)\otimes L^2(\G)\cong (M \ovot L^\infty(\G))\otimes_{\varphi\otimes \id} (\mcH \otimes L^2(\G)): (m\otimes_\varphi \xi)\otimes \eta \mapsto (m\otimes 1)\otimes_{\varphi\otimes\id}(\xi \otimes \eta).$$
This, together with the $\G$-equivariance of $\varphi$, allows us to define the isometry
$$\uu_\varphi: (M\otimes_\varphi \mcH)\otimes L^2(\G)\to (M\ovot L^\infty(\G))\otimes_{\varphi\otimes  \id}(\mcH\otimes L^2(\G))\cong (M\otimes_\varphi \mcH)\otimes L^2(\G)$$
by the formula 
$$\uu_\varphi((m\otimes_\varphi \xi)\otimes \eta)= \alpha(m)\otimes_{\varphi\otimes \id} \uu(\xi \otimes \eta), \quad m\in M, \quad \xi \in \mcH, \quad \eta \in L^2(\G).$$
It follows from the Podleś condition \eqref{Podles}, the normality of $\varphi$, and the identity
$$\alpha(m)\otimes_{\varphi\otimes \id} (\xi \otimes g\eta)= \alpha(m)(1\otimes g) \otimes_{\varphi\otimes \id}(\xi \otimes \eta), \quad m\in M, \quad \xi \in \mcH, \quad \eta \in L^2(\G), \quad g \in L^\infty(\G),$$
that $\uu_\varphi \in B(\mcH_\varphi \otimes L^2(\G))$ is a unitary. Similarly as in the proof of \cite{DCDR24}*{Proposition 5.6}, one verifies next that $\uu_\varphi \in B(\mcH_\varphi)\ovot L^\infty(\G)$ is a unitary $\G$-representation such that
$$(\pi_\varphi\otimes \id)\alpha(m)= \uu_\varphi(\pi_\varphi(m)\otimes 1)\uu_\varphi^*, \quad m\in M.$$
If $\xi \in \mcH$ and $\eta \in L^2(\G)$, we find
\begin{align*}
    (T_\varphi\otimes 1)\uu(\xi \otimes \eta)&\cong (1\otimes 1)\otimes_{\varphi\otimes \id} \uu(\xi \otimes \eta) \cong \uu_\varphi((1 \otimes_\varphi \xi)\otimes \eta) = \uu_\varphi(T_\varphi\otimes 1)(\xi \otimes \eta),
\end{align*}
from which we conclude that $(T_\varphi\otimes 1)\uu = \uu_\varphi(T_\varphi\otimes 1)$.
\end{proof}

Given a right $\G$-$W^*$-algebra $(M, \alpha)$, we define the \emph{crossed product}
$M\bar{\rtimes}_\alpha \G := [\alpha(M)(1\otimes L^\infty(\check{\G}))]^{\text{w}^*}.$  It is a von Neumann subalgebra of $M\ovot B(L^2(\G))$ which is also described by
\begin{equation}\label{FubiniGWstar}
    M\bar{\rtimes}_\alpha \G = \{z\in M \ovot B(L^2(\G))\mid (\alpha \otimes \id)(z) = (\id \otimes \Delta_l)(z)\}.
\end{equation}
Define the normal isometric $*$-homomorphisms
\begin{align*}
    \alpha^{\rtimes}: M\bar{\rtimes}_\alpha \G \to (M\bar{\rtimes}_\alpha \G)\ovot L^\infty(\check{\G})&: z \mapsto (\id \otimes \check{\Delta}_r)(z)=\check{\vv}_{23}z_{12}\check{\vv}_{23}^*,\\
    \alpha_{\ad}: M\bar{\rtimes}_\alpha \G \to (M\bar{\rtimes}_\alpha \G)\ovot L^\infty(\G)&: z \mapsto (\id \otimes  \Delta_r)(z)= \vv_{23}z_{12}\vv_{23}^*.
\end{align*}
Then $(M\bar{\rtimes}_\alpha \G, \alpha^\rtimes)$ is a $\check{\G}$-$W^*$-algebra and  $(M\bar{\rtimes}_\alpha \G, \alpha_{\ad})$ is a $\G$-$W^*$-algebra. Moreover,
\begin{equation}
    \alpha(M)= \Fix(M\bar{\rtimes}_\alpha \G,\alpha^\rtimes).
\end{equation}
As an example, we note that the $*$-isomorphism
\begin{equation}\label{crossed}
    \Delta_l: B(L^2(\G))\to L^\infty(\G)\bar{\rtimes}_\Delta \G
\end{equation} is both $\G$ and $\check{\G}$-equivariant. We call $M\stackrel{\alpha}\curvearrowleft\G$ \emph{inner} if there exists a unitary $\uu \in M \ovot L^\infty(\G)$ such that $(\id \otimes \Delta)(\uu)= \uu_{12}\uu_{13}$ and $\alpha(m)= \uu(m\otimes 1)\uu^*$ for all $m\in M$. In that case,  the map
\begin{equation}\label{innercrossed}
    (M\bar{\rtimes}_\alpha \G, \alpha^\rtimes)\to (M\ovot L^\infty(\check{\G}), \id \otimes \check{\Delta}):  z \mapsto \uu^*z\uu
\end{equation}
is a $\check{\G}$-equivariant $*$-isomorphism \cite{DR26}*{Lemma 6.1}. As an example, note that $M\bar{\rtimes}_\alpha \G\stackrel{\alpha_{\ad}}\curvearrowleft \G$ is an inner action, so that the map
\begin{equation}\label{crossed2}
 ((M\bar{\rtimes}_\alpha \G)\bar{\rtimes}_{\alpha_{\ad}}\G, (\alpha_{\ad})^\rtimes) \to ((M\bar{\rtimes}_\alpha \G)\ovot L^\infty(\check{\G}), \id_{M\bar{\rtimes} \G}\otimes \check{\Delta}): z \mapsto \vv_{23}^* z \vv_{23}
\end{equation}
is a $\check{\G}$-equivariant $*$-isomorphism which maps $(\alpha\otimes \id)(z)=(\id \otimes \Delta_l)(z)\mapsto \alpha^\rtimes(z)$ for all $z\in M\bar{\rtimes}_\alpha \G$. On the other hand, the map
\begin{equation}\label{TTconcrete}
    (M\ovot B(L^2(\G)), \id \otimes \check{\Delta}_r)\to ((M\bar{\rtimes}_\alpha \G)\bar{\rtimes}_{\alpha^\rtimes}\check{\G}, (\alpha^{\rtimes})_{\ad}): z \mapsto \vv_{23}^*(\alpha \otimes \id)(z)\vv_{23}
\end{equation}
is a $\check{\G}$-equivariant $*$-isomorphism, which is known as the \emph{Takesaki-Takai duality} \cite{Vae01}*{Theorem 2.6}. This $*$-isomorphism  acts on generators by
$$\alpha(m)\mapsto \alpha(m)\otimes 1, \quad 1\otimes \check{x}\mapsto 1\otimes \check{\Delta}(\check{x}), \quad 1\otimes y'\mapsto 1 \otimes 1 \otimes y', \quad m \in M, \quad \check{x}\in L^\infty(\check{\G}), \quad y'\in L^\infty(\G)'.$$ In particular, $z\mapsto \alpha^\rtimes(z)$ for $z\in M\bar{\rtimes}_\alpha\G$.

\section{Equivariant operator spaces and crossed products}\label{SectionEquivariantOperatorSpaces}

Throughout this section, we fix a locally compact quantum group $\G$. Then $L^1(\G)$, together with the convolution product $\mu\star\nu:= (\mu\otimes\nu)\circ \Delta$, becomes a completely contractive Banach algebra. In this section, we make a careful study of some relevant concepts concerning its $L^1(\G)$-operator modules. More precisely, given such an $L^1(\G)$-module, we define and study the associated saturation space, Fubini crossed product and fixed point space. Restricting to the subcategory of dual $L^1(\G)$-operator-modules, we also study the notions of weak$^*$-crossed product and non-degeneracy. 

This section is strongly inspired by the articles \cite{An21, An23}, which consider only (completely isometric) coactions of classical locally compact groups and their duals on (dual) operator spaces. We develop a unified theory in the framework of locally compact quantum groups which also deals with the case where the coactions are not completely isometric.  A classical locally compact group $G$ is coamenable, and in particular $\hat{G}$ has the AP. This fact underlies many statements and proofs in \cite{An21, An23} and is the primary obstacle when extending results from the classical to the quantum setting. Nevertheless, almost all results of \cite{An21, An23} admit suitable analogues for locally compact quantum groups. Although we always indicate the corresponding earlier result, our proofs often differ substantially from those in \cite{An21, An23}. 

\subsection{Fubini crossed product, saturation and fixed points}\label{subsecFSF} Let $X$ be an operator space.
Using the canonical completely isometric identification
$\CB(L^1(\G)\hat{\otimes} X, X)\cong \CB(X, X\bar{\otimes}_\mcF L^\infty(\G))$,
a complete contraction $\rhd: L^1(\G)\hat{\otimes} X \to X$ corresponds to a complete contraction
$\alpha= \alpha_\rhd : X \to X \ovot_\mcF L^\infty(\G)$ via
$$(\id \otimes \omega)\alpha(x)= \omega\rhd x, \quad \omega\in L^1(\G), \quad x\in X.$$
The module property $\omega \rhd (\omega'\rhd x)= (\omega\star\omega')\rhd x$ is easily seen to be equivalent with the coaction identity $(\alpha\otimes \id)\alpha= (\id \otimes \Delta)\alpha.$ Thus, an $L^1(\G)$-operator module $(X, \rhd)$ is the same thing as an operator space $X$ together with a complete contraction $\alpha: X\to X\ovot_\mcF L^\infty(\G)$ satisfying the coaction identity.
We will often employ the more intuitive notation $X\stackrel{\alpha}\curvearrowleft \G$, and refer  to $\alpha$ as a \emph{$\G$-action} on $X$. We will frequently write $(X,\alpha)\in {}_{L^1(\G)}\Mod$ or simply $X\in {}_{L^1(\G)}\Mod$. If also $Y\in {}_{L^1(\G)}\Mod$ with associated $\G$-action $Y\stackrel{\beta}\curvearrowleft \G$, then a completely bounded map $\phi: X\to Y$ is an $L^1(\G)$-linear map if and only it is $\G$-equivariant in the sense that $(\phi\otimes \id)\alpha(x)= \beta(\phi(x))$ for all $x\in X$. Sometimes, we denote such a map by $\phi: (X, \alpha)\to (Y, \beta)$.

It is important to emphasize that the coaction $\alpha$ does not need to be injective, and non-trivial examples of such coactions appear naturally throughout the theory of operator modules:

\begin{Exa}\label{interestingex} Through the completely isometric isomorphism  $$\Phi: L^\infty(\G)\oplus_\infty\C\cong\CB(L^1(\G)_+, \C) =  L^1(\G)_+^*, \quad \Phi(x, \mu)(\omega + \lambda 1)= \omega(x) + \lambda \mu,$$
the von Neumann algebra $X:=L^\infty(\G)\oplus_{\infty} \C$ obtains a natural left $L^1(\G)$-operator module structure. It is explicitly given by
$$\omega\rhd (x, \lambda)= ((\id \otimes \omega)\Delta(x), \omega(x)), \quad \omega \in L^1(\G), \quad x\in L^\infty(\G), \quad \lambda\in \C.$$
The associated coaction $\alpha_{\rhd}: X \to X \ovot_\mcF L^\infty(\G)$ is not injective, since
$(\id\otimes\omega)\alpha_{\rhd}(0,1)=
\omega\rhd (0,1)= (0,0)$ for all $\omega\in L^1(\G)$, so that $\alpha_{\rhd}(0,1)=0$.
\end{Exa}

We will be interested in several subcategories of ${}_{L^1(\G)}\Mod$. Let us start by describing ${}_{L^1(\G)}\NMod$ in terms  of the coaction $\alpha$:

\begin{Lem}
    Let $(X, \rhd)\in {}_{L^1(\G)}\Mod$ with the associated coaction $\alpha: X\to X\ovot_\mcF L^\infty(\G)$. Assume moreover that $X$ is a dual operator space. Then $\alpha$ is weak$^*$-continuous if and only if $(X, \rhd)\in {}_{L^1(\G)}\NMod$. 
\end{Lem}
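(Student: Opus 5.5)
The plan is to work entirely through the duality $X\ovot_\mcF L^\infty(\G)\cong (X_*\pten L^1(\G))^*$ recalled in the preliminaries. Under this identification the weak$^*$-topology on $X\ovot_\mcF L^\infty(\G)$ is the one induced by the predual $X_*\pten L^1(\G)$, and the pairing with elementary tensors is
$$\la \alpha(x), \rho\otimes\omega\ra = \rho((\id\otimes\omega)\alpha(x)) = \rho(\omega\rhd x),\qquad \rho\in X_*,\ \omega\in L^1(\G).$$
Both directions then reduce to checking continuity against elementary tensors and extending by density and boundedness.

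For the forward direction, assume $\alpha$ is weak$^*$-continuous and fix $\omega\in L^1(\G)$. The slice map $\id\otimes\omega: X\ovot_\mcF L^\infty(\G)\to X$ is weak$^*$-continuous, since for $\rho\in X_*$ we have $\rho\circ(\id\otimes\omega)=\la\,\cdot\,,\rho\otimes\omega\ra$ and $\rho\otimes\omega$ lies in the predual. Therefore $x\mapsto\omega\rhd x=(\id\otimes\omega)\alpha(x)$ is a composition of weak$^*$-continuous maps. This gives $(X,\rhd)\in{}_{L^1(\G)}\NMod$.

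For the converse, assume each map $x\mapsto\omega\rhd x$ is weak$^*$-continuous. For $\rho\in X_*$ and $\omega\in L^1(\G)$, the functional $x\mapsto\la\alpha(x),\rho\otimes\omega\ra=\rho(\omega\rhd x)$ is then weak$^*$-continuous on $X$, i.e.\ it lies in $X_*$. By linearity the same holds for every $u\in X_*\odot L^1(\G)$. For a general $u\in X_*\pten L^1(\G)$, choose $u_n$ in the algebraic tensor product with $u_n\to u$ in norm. Then $\alpha^*(u_n)\to\alpha^*(u)$ in norm in $X^*$, since $\|\alpha\|\le1$. Because $X_*$ is norm-closed in $X^*$, we get $\alpha^*(u)\in X_*$. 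Hence $\alpha^*$ maps the predual $X_*\pten L^1(\G)$ into $X_*$, which is exactly weak$^*$-continuity of $\alpha$.

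The main point needing care is the identification of the predual of $X\ovot_\mcF L^\infty(\G)$ with $X_*\pten L^1(\G)$ compatibly with slice maps, i.e.\ that the weak$^*$-topology used on the Fubini tensor product is the one coming from this predual. This is \cite[Proposition 3.3]{Ru92} as quoted earlier, using that $X$ is a dual operator space and can be realised weak$^*$-homeomorphically as a weak$^*$-closed subspace of some $\BH$. Everything else is a routine density argument.
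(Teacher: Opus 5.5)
Your proof is correct and follows the same route as the paper: identify the predual of $X\ovot_\mcF L^\infty(\G)$ with $X_*\pten L^1(\G)$, handle elementary tensors using the weak$^*$-continuity of $x\mapsto\omega\rhd x$, and pass to a general predual element by norm approximation using $\|\alpha\|\le 1$. The only difference is in the last step. You conclude directly from the norm-closedness of $X_*$ in $X^*$, whereas the paper first reduces, via a Krein--Smulian-type result, to bounded nets in the unit ball and then runs an $\epsilon/3$ estimate; your version is slightly more economical.
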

\begin{proof} If $\alpha$ is weak$^*$-continuous and $\omega\in L^1(\G)$, then $X\ni x\mapsto \omega\rhd x= (\id \otimes \omega)\alpha(x)\in X$ is trivially weak$^*$-continuous, so $(X, \rhd)\in {}_{L^1(\G)}\NMod$. Conversely, assume that $(X, \rhd)\in {}_{L^1(\G)}\NMod$. We need to show that $\alpha$ is weak$^*$-continuous. Recalling that $(X\ovot_\mcF L^\infty(\G))_*= X_*\hat{\otimes} L^1(\G)$, it is enough to show that $\Omega \circ\alpha$ is a weak$^*$-continuous functional on $X$ for every $\Omega\in X_*\hat{\otimes}L^1(\G)$. In fact, by \cite[Corollary 2.5.11]{Ped89} it is enough to show that $\Omega\circ \alpha$ is weak$^*$-continuous on the closed unit ball of $X$. So let $(x_i)_{i\in I}$ be a net which weak$^*$-converges to $x\in X$, where $\|x\|\le 1$ and $\|x_i\|\le 1$ for all $i\in I$. We will show that $\Omega(\alpha(x_i)) \to \Omega(\alpha(x))$. To this end, fix $\epsilon >0$. Choose $\tilde{\Omega}= \sum_{j=1}^n \mu_j\otimes \omega_j\in X_*\odot L^1(\G)$ such that $\|\Omega-\tilde{\Omega}\|\le \epsilon$. We then have
\begin{align*}
    |\tilde{\Omega}(\alpha(x_i))- \tilde{\Omega}(\alpha(x))| &\le \sum_{j=1}^n |\mu_j(\omega_j \rhd x_i- \omega_j \rhd x)| \xrightarrow[i\in I]{} 0,
\end{align*}
where the weak$^*$-continuity of the maps $X\ni x\mapsto \omega_j\rhd x\in X$ was used. Consequently, we may choose $i_0\in I$ such that $|\tilde{\Omega}(\alpha(x_i))-\tilde{\Omega}(\alpha(x))| \le \epsilon$ for $i\ge i_0$. For $i\ge i_0$, we then have $|\Omega\circ \alpha(x_i)-\Omega\circ \alpha(x)| \le 
3\epsilon$, finishing the proof.
\end{proof}

In the sequel, we will also consider the following subcategories of ${}_{L^1(\G)}\Mod$:
\begin{itemize}
    \item ${}_{L^1(\G)}\Mod^{\|\cdot\|}$ is the full subcategory of ${}_{L^1(\G)}\Mod$ with objects
$X \in {}_{L^1(\G)}\Mod$ for which the natural map
$$j_X: X \to \CB(L^1(\G), X), \quad j_X(x)(\omega)  =  \omega \rhd x, \quad x\in X, \quad  \omega\in L^1(\G)$$
is a complete isometry (equivalently, $\alpha_\rhd: X\to X\ovot_\mcF L^\infty(\G)$ is a complete isometry).
\item ${}_{L^1(\G)}\NMod^{\|\cdot\|} = {}_{L^1(\G)}\NMod\cap {}_{L^1(\G)}\Mod^{\|\cdot\|}$. 
\end{itemize}

Given $(X, \alpha)\in {}_{L^1(\G)}\Mod$, we associate several relevant spaces:
\begin{align*}
    \operatorname{Fix}(X, \alpha)&:=\{x\in X\mid \alpha(x)= x\otimes 1\}= \{x\in X\mid \forall \omega \in L^1(\G): \omega\rhd x = \omega(1)x\}, \\
    \Sat(X, \alpha) &:= \{z \in X \ovot_\mcF L^\infty(\G)\mid (\alpha\otimes \id)(z)= (\id \otimes  \Delta)(z)\},\\
    X\rtimes^\mcF_\alpha \G &:= \{z\in X \ovot_\mcF B(L^2(\G))\mid (\alpha\otimes \id)(z)=(\id \otimes \Delta_l)(z)\}.
\end{align*}

Suppose that $\phi: (X, \alpha)\to (Y, \beta)$ is a completely isometric $L^1(\G)$-module isomorphism and that $X\subseteq B(\mcH)$ and $Y\subseteq B(\mcK)$ are embeddings implementing $X\ovot_\mcF L^\infty(\G)\subseteq B(\mcH\otimes L^2(\G))$ and $Y\ovot_\mcF L^\infty(\G)\subseteq B(\mcK\otimes L^2(\G))$. Then $(\phi\otimes \id)\Sat(X, \alpha)= \Sat(Y, \beta)$, so $\Sat(X, \alpha)$ is unique up to completely isometric isomorphism. A similar statement is true for $X\rtimes_\alpha^\mcF \G$ (cf.\ \cite{An23}*{Proposition 3.8}). The operator space $\operatorname{Fix}(X, \alpha)$ is called the space of \emph{fixed points} of $X$. We  call $\Sat(X, \alpha)$  the \emph{saturation space} of $X$. It always holds that $\alpha(X)\subseteq \Sat(X, \alpha)$ and we call $(X,\alpha)$ \emph{saturated} if $\alpha(X)= \Sat(X, \alpha)$. It follows from \eqref{fixed} that right $\G$-$W^*$-algebras (when viewed as left $L^1(\G)$-modules) are always saturated. The operator space $X\rtimes_\alpha^\mcF \G$ is called the \emph{Fubini crossed product} of $X$ with respect to the $\G$-action $X\stackrel{\alpha}\curvearrowleft \G$. It is clear that $\Sat(X,\alpha)\subseteq X\rtimes_\alpha^\mcF \G$, and that $(1\otimes L^\infty(\check{\G}))(X\rtimes_\alpha^\mcF \G)(1\otimes L^\infty(\check{\G}))\subseteq X\rtimes_\alpha^\mcF \G$. This means that $X\rtimes_\alpha^\mcF \G$ is an (algebraic) $L^\infty(\check{\G})$-$L^\infty(\check{\G})$-bimodule in a natural way.

\begin{Rem}
To understand the claim $(1\otimes L^\infty(\check{\G}))(X\rtimes_\alpha^\mcF \G)(1\otimes L^\infty(\check{\G}))\subseteq X\rtimes_\alpha^\mcF \G$, one needs to consider an embedding $X\subseteq B(\mcH)$, so that $X\rtimes_\alpha^\mcF \G\subseteq B(\mcH)\ovot B(L^2(\G))$. The expression $(1\otimes L^\infty(\check{\G}))(X\rtimes_\alpha^\mcF \G)(1\otimes L^\infty(\check{\G}))$ can then be understood to be a multiplication in the algebra $B(\mcH)\ovot B(L^2(\G))$, where it makes sense. The end result lands in $X\rtimes_\alpha^\mcF \G$, and does not depend on the choice of embedding $X\subseteq B(\mcH)$. Similar comments concerning such multiplications also apply to other places in this paper (for example in Proposition \ref{relations}, Lemma \ref{lemma3} Proposition \ref{TT1} and many more), and will not be explicitly commented on anymore.
\end{Rem}
We can view $\Fix(-)$, $\Sat(-)$ and $-\rtimes^\mcF \G$ as functors ${}_{L^1(\G)}\Mod\to {}_{\C} \Mod$ in the obvious way. We will now prove that these functors are \emph{representable}. For this, we recall that if $(M, \alpha)$ is a left $\G$-$W^*$-algebra, then $M_*$ becomes a left $L^1(\G)$-operator module via
$$\omega \rhd \eta:= (\omega\otimes \eta)\circ  \alpha, \quad \omega\in L^1(\G), \quad \eta\in M_*.$$
In particular, the left $\G$-$W^*$-algebras $(\C, \tau)$, $(L^\infty(\G), \Delta)$ and $(B(L^2(\G)), \Delta_l)$ lead to left $L^1(\G)$-operator module structures on $\C, L^1(\G)$ and $B(L^2(\G))_*$. 

\begin{Prop}\label{p:natural} There are natural isomorphisms
    $\Fix(-)\cong {}_{L^1(\G)}\CB(\C, -)$, $\Sat(-)\cong {}_{L^1(\G)}\CB(L^1(\G), -)$  and $-\rtimes^\mcF \G \cong {}_{L^1(\G)}\CB(B(L^2(\G))_*, -)$ of covariant functors ${}_{L^1(\G)}\Mod\to {}_{\C}\Mod$.
\end{Prop}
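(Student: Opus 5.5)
The plan is to prove all three isomorphisms at once, since they share a single mechanism. Let $(M,\gamma)$ be any left $\G$-$W^*$-algebra, with $M_*\in{}_{L^1(\G)}\Mod$ via $\omega\rhd\eta=(\omega\otimes\eta)\gamma$. We will show that for every $(X,\alpha)\in{}_{L^1(\G)}\Mod$ there is a completely isometric isomorphism
$$\Psi_X:\{z\in X\ovot_\mcF M\mid (\alpha\otimes\id)(z)=(\id\otimes\gamma)(z)\}\;\longrightarrow\;{}_{L^1(\G)}\CB(M_*,X),\qquad \Psi_X(z)(\eta)=(\id\otimes\eta)(z),$$
and that $\Psi_X$ is natural in $X$. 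Specializing then gives the three statements. For $(M,\gamma)=(\C,\tau)$ with trivial action $\tau(c)=c1$, the left side is $\Fix(X,\alpha)$, since $X\ovot_\mcF\C=X$ and the condition becomes $\alpha(x)=x\otimes1$. For $(L^\infty(\G),\Delta)$ the left side is $\Sat(X,\alpha)$. For $(B(L^2(\G)),\Delta_l)$ it is $X\rtimes^\mcF_\alpha\G$. (One should check the leg conventions: $(\id\otimes\gamma)$ maps $X\ovot_\mcF M$ into $X\ovot_\mcF L^\infty(\G)\ovot M$, matching $(\alpha\otimes\id)$, which is consistent with the definitions.)

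\textbf{Step 1.} Use the completely isometric identification $X\ovot_\mcF M\cong\CB(M_*,X)$, $z\mapsto(\eta\mapsto(\id\otimes\eta)z)$, recalled in the preliminaries. It then remains only to show that $z$ satisfies the intertwining relation if and only if $T_z:=\Psi_X(z)$ is $L^1(\G)$-linear.

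\textbf{Step 2.} Compute both sides of the module property on $\eta\in M_*$ and $\omega\in L^1(\G)$:
$$T_z(\omega\rhd\eta)=(\id\otimes((\omega\otimes\eta)\gamma))(z)=(\id\otimes\omega\otimes\eta)(\id\otimes\gamma)(z),$$
$$\omega\rhd T_z(\eta)=(\id\otimes\omega)\alpha((\id\otimes\eta)z)=(\id\otimes\omega\otimes\eta)(\alpha\otimes\id)(z).$$
Both elements $(\id\otimes\gamma)(z)$ and $(\alpha\otimes\id)(z)$ lie in $X\ovot_\mcF(L^\infty(\G)\ovot M)$, which is identified with $\CB((L^\infty(\G)\ovot M)_*,X)=\CB(L^1(\G)\pten M_*,X)$. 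Slices by elementary tensors $\omega\otimes\eta$ span a dense subspace of $L^1(\G)\pten M_*$, so the two elements agree if and only if these slices agree. This gives the equivalence.

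\textbf{Step 3 (expected main obstacle).} The delicate point is justifying the slice identities above when $\alpha$ only lands in the Fubini tensor product and $X$ is a non-dual operator space. Concretely one must show that $(\alpha\otimes\id)$ and $(\id\otimes\gamma)$ are well-defined complete contractions on $X\ovot_\mcF M$ with values in $X\ovot_\mcF L^\infty(\G)\ovot M$, and that the slice formulas commute with them. For $(\id\otimes\gamma)$ this follows from Hamana's Proposition 1.1, quoted in the preliminaries, since $\gamma$ is normal. For $(\alpha\otimes\id)$ the cleanest route is the identification $\CB(M_*,X)\to\CB(M_*,X\ovot_\mcF L^\infty(\G))$, $T\mapsto\alpha\circ T$. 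Then $(\id\otimes\omega\otimes\eta)(\alpha\otimes\id)(z)=(\id\otimes\omega)\alpha(T_z\eta)$ holds essentially by definition, and I will adopt this as the meaning of $\alpha\otimes\id$ if needed.

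\textbf{Step 4.} Naturality. For an $L^1(\G)$-linear $\phi:X\to Y$, the induced map on the left is $\phi\otimes\id$, and on the right it is $T\mapsto\phi\circ T$. The identity $(\id\otimes\eta)(\phi\otimes\id)z=\phi((\id\otimes\eta)z)$ shows the square commutes, which completes the proof.
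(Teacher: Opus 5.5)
Your proposal is correct and is essentially the paper's proof. The paper likewise uses the completely isometric identification $X\ovot_\mcF B(L^2(\G))\cong\CB(B(L^2(\G))_*,X)$ together with the same chain of slice equivalences to show that $\Phi_z$ is $L^1(\G)$-linear if and only if $z\in X\rtimes_\alpha^\mcF\G$, and it dismisses the $\Fix$ and $\Sat$ cases as similar. Your treatment of a general left $\G$-$W^*$-algebra $(M,\gamma)$ just handles all three cases at once.
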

\begin{proof} Given $(X, \alpha)\in {}_{L^1(\G)}\Mod$, consider the completely isometric isomorphism
$$X\ovot_\mcF B(L^2(\G))\to \CB(B(L^2(\G))_*,X):  z \mapsto \Phi_z: (B(L^2(\G))_*\ni \eta \mapsto (\id \otimes \eta)(z)\in X).$$

  If $z\in X \bar\otimes_\mcF B(L^2(\G))$, we have that 
    \begin{align*}
        &\Phi_z\in {}_{L^1(\G)} \CB(B(L^2(\G))_*, X)\\
        \iff&\forall \omega \in L^1(\G), \forall \eta\in B(L^2(\G))_*:  \Phi_z(\omega \rhd \eta)= \omega \rhd \Phi_z(\eta)\\
         \iff &\forall \omega \in L^1(\G), \forall \eta\in B(L^2(\G))_*: (\id \otimes \omega \otimes \eta)(\id \otimes \Delta_l)(z)= (\id \otimes \omega) \alpha((\id \otimes \eta)(z))\\
        \iff &\forall \omega \in L^1(\G), \forall \eta\in B(L^2(\G))_*: (\id \otimes \omega \otimes \eta)(\id \otimes \Delta_l)(z)= (\id \otimes \omega \otimes \eta)(\alpha\otimes \id)(z)\\
        \iff &z \in X\rtimes_\alpha^\mcF \G.
    \end{align*}
    Thus, by restriction, we obtain the completely isometric isomorphism
    $X\rtimes_\alpha^\mcF \G \cong {}_{L^1(\G)}\CB(B(L^2(\G))_*,X).$ These isomorphisms are clearly natural in $X$,  so we obtain the natural isomorphism ${}_{L^1(\G)}\CB(B(L^2(\G))_*,-)\cong -\rtimes^\mcF \G.$ The natural isomorphisms  $\Fix(-)\cong {}_{L^1(\G)}\CB(\C, -)$, $\Sat(-)\cong {}_{L^1(\G)}\CB(L^1(\G), -)$ are proven in a similar way.
\end{proof}

The following result relates the functors $\Fix(-), \Sat(-)$ and $-\rtimes^\mcF \G$.
\begin{Prop}[cf.\ \cite{An23}*{Proposition 3.2, Definition 3.3, Proposition 3.10}]\label{relations} 
    Fix $(X, \alpha)\in {}_{L^1(\G)}\Mod$.
    \begin{enumerate}[noitemsep]
        \item There is a natural action $X \ovot_\mcF B(L^2(\G))\stackrel{\tilde{\alpha}}\curvearrowleft\G$ given by
        \[
\tilde{\alpha}: X\bar\otimes_\mcF B(L^2(\G))
\rightarrow (X\bar\otimes_\mcF B(L^2(\G)))\bar\otimes_{\F} L^{\infty}(\G):
z\mapsto
\ww_{32} (\alpha\otimes\id)(z)_{132}\ww^*_{32}.
\]
Then $(X\ovot_\mcF B(L^2(\G)), \tilde{\alpha})\in {}_{L^1(\G)}\Mod$ and
$\Fix(X\ovot_\mcF B(L^2(\G)), \tilde{\alpha}) = X\rtimes_\alpha^\mcF \G.$
\item There is a natural action $X\rtimes_\alpha^\mcF \G\stackrel{\alpha^\rtimes}\curvearrowleft \check{\G}$ given by
$$\alpha^\rtimes: X\rtimes_\alpha^\F \G \to (X\rtimes_\alpha^\F \G) \ovot_\F L^\infty(\check{\G}): z \mapsto \check{\vv}_{23}z_{12}\check{\vv}_{23}^*.$$
Then $(X\rtimes_\alpha^\mcF \G, \alpha^\rtimes)\in {}_{L^1(\check{\G})}\Mod^{\|\cdot\|}$ and $\Fix(X\rtimes_\alpha^\mcF \G, \alpha^\rtimes) = \Sat(X, \alpha)$.
    \end{enumerate}
\end{Prop}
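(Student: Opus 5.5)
Both parts rest on one observation: the relevant maps are compositions of the amplification $\alpha\otimes\id$ (a complete contraction $X\ovot_\mcF Y\to X\ovot_\mcF L^\infty(\G)\ovot_\mcF Y$, by the Fubini tensor product functoriality recalled in the preliminaries) with conjugation by unitaries, or with normal $*$-homomorphisms, in the remaining legs. Throughout I would fix an embedding $X\subseteq B(\mcH)$, so that all expressions live in $B(\mcH)\ovot B(L^2(\G))\ovot B(L^2(\G))$, and check membership in the appropriate Fubini tensor products slice by slice.

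\textbf{Part (1).} First I show that $\tilde\alpha$ lands in $(X\ovot_\mcF B(L^2(\G)))\ovot_\mcF L^\infty(\G)$.
\begin{itemize}
\item Since $\ww_{32}$ acts only on legs 2 and 3, slicing the third leg by $\rho\in B(L^2(\G))_*$ gives a map of the form $(\id\otimes\Psi)(\alpha\otimes\id)(z)$, where $\Psi$ is a normal complete contraction acting on the last two legs.
\item Such maps preserve $X$ in the first leg, so the slice lies in $X\ovot_\mcF B(L^2(\G))$.
\item To see that the third leg lies in $L^\infty(\G)$, I use $\ww\in L^\infty(\G)\ovot L^\infty(\hat\G)$. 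This gives $\ww_{32}(1\otimes y\otimes 1)\ww_{32}^*$-type expressions which, for $y\in B(L^2(\G))$, yield $\Delta_{\G,l}^{\mathrm{op}}$-like elements in $B\ovot L^\infty(\G)$.
\item Concretely, the element is $(\id\otimes\sigma\Delta_l)$ applied after $\alpha\otimes\id$. Slicing legs 1–2 by arbitrary normal functionals therefore gives elements of $L^\infty(\G)$, since $\alpha(x)\in X\ovot_\mcF L^\infty(\G)$ and $\Delta_l$ maps into $L^\infty(\G)\ovot B$.
\end{itemize}
Complete contractivity is then clear. The coaction identity $(\tilde\alpha\otimes\id)\tilde\alpha=(\id\otimes\Delta)\tilde\alpha$ follows from three ingredients: the coaction identity of $\alpha$, the pentagonal relation $(\Delta\otimes\id)\ww=\ww_{13}\ww_{23}$, and the commutation of $\ww_{32}$ with leg-1 operations. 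It suffices to check this on slices, reducing to $X$ elementwise.

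For the fixed points, $\tilde\alpha(z)=z\otimes1$ means $(\alpha\otimes\id)(z)_{132}=\ww_{32}^*z_{12}\ww_{32}$. After flipping legs, this is exactly $(\alpha\otimes\id)(z)=(\id\otimes\Delta_l)(z)$, because $\Delta_l(y)=\ww^*(1\otimes y)\ww$. Hence $\Fix(X\ovot_\mcF B(L^2(\G)),\tilde\alpha)=X\rtimes^\mcF_\alpha\G$.

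\textbf{Part (2).} I would take $\alpha^\rtimes=(\id\otimes\check\Delta_r)|_{X\rtimes^\mcF_\alpha\G}$, where $\check\Delta_r(y)=\check\vv(y\otimes1)\check\vv^*$ is a normal injective $*$-homomorphism $B(L^2(\G))\to B(L^2(\G))\ovot L^\infty(\check\G)$.
\begin{itemize}
\item \emph{Complete isometry.} The amplification $\id\otimes\check\Delta_r$ is a complete isometry on $X\ovot_\mcF B(L^2(\G))$, by injectivity of the Fubini product (the isometry $\check\Delta_r$ is weak$^*$-continuous). So $\alpha^\rtimes$ is completely isometric, which gives membership in ${}_{L^1(\check\G)}\Mod^{\|\cdot\|}$ once everything else is checked.
\item \emph{Codomain.} I must show that $\alpha^\rtimes(z)$ lies in $(X\rtimes^\mcF_\alpha\G)\ovot_\mcF L^\infty(\check\G)$, i.e.\ that slices $(\id\otimes\id\otimes\check\omega)\alpha^\rtimes(z)$ remain in the Fubini crossed product. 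This holds because $\Delta_l$ and $\check\Delta_r$ commute: $\ww\in L^\infty(\G)\ovot L^\infty(\hat\G)$ and $\check\vv\in L^\infty(\check\G)\ovot L^\infty(\G')$, with $L^\infty(\hat\G)'=L^\infty(\check\G)$, so $\ww_{12}$ commutes with $\check\vv_{23}$. Equivalently, slices of $\check\vv$ lie in $L^\infty(\check\G)$ and one uses the bimodule property noted before the statement.
\item \emph{Coaction identity.} This follows from $(\check\Delta_r\otimes\id)\check\Delta_r=(\id\otimes\check\Delta)\check\Delta_r$.
\end{itemize}

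For the fixed points, $\alpha^\rtimes(z)=z\otimes1$ means that $z$ commutes with $1\otimes\check\vv$ in the appropriate sense. By the identity $\Fix(B(L^2(\G)),\check\Delta_r)=L^\infty(\check{\check\G})=L^\infty(\G)$ (the analogue of $\Fix(B(L^2(\G)),\Delta_r)=L^\infty(\hat\G)$ for $\check\G$), this is equivalent to $z\in X\ovot_\mcF B(L^2(\G))\cap B(\mcH)\ovot L^\infty(\G)=X\ovot_\mcF L^\infty(\G)$. Here I use the intersection formula for Fubini products together with $\Fix$ being taken slicewise in leg 1. Intersecting with the crossed-product condition gives exactly $\Sat(X,\alpha)$, since $\Delta_l$ restricts to $\Delta$ on $L^\infty(\G)$.

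The main obstacle is the leg bookkeeping in part (1), namely verifying that $\tilde\alpha$ genuinely maps into the iterated Fubini product. The coaction property is comparatively formal.
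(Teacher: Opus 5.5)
Your plan follows the paper's route, and most of it is fine. The paper treats (1) as routine, cites [DR26] for the well-definedness of $\alpha^\rtimes$, and identifies the fixed points using $\check{\vv}\in L^\infty(\G)'\ovot L^\infty(\check{\G})$. The following parts of your argument are correct:
the fixed-point computation in (1);
the complete isometry and coaction identity for $\alpha^\rtimes$;
the reduction of the codomain claim in (2) to the identity $(\Delta_l\otimes\id)\check{\Delta}_r=(\id\otimes\check{\Delta}_r)\Delta_l$;
the fixed-point computation in (2).
However, your justification of that identity is wrong.

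\textbf{The gap.} You place $\check{\vv}$ in $L^\infty(\check{\G})\ovot L^\infty(\G)'$, but the legs are the other way round: $\check{\vv}=\vv^{\check{\G}}\in L^\infty(\check{\check{\G}})\ovot L^\infty(\check{\G})=L^\infty(\G)'\ovot L^\infty(\check{\G})$. So in the second tensor factor, $\ww_{12}$ contributes elements of $L^\infty(\hat{\G})$ while $\check{\vv}_{23}$ contributes elements of $L^\infty(\G)'$. These algebras do not commute, and in fact $\ww_{12}\check{\vv}_{23}\neq\check{\vv}_{23}\ww_{12}$ in general. For a nontrivial classical group, write $(\check{\vv}\zeta)(s,\cdot)=u_s\zeta(s,\cdot)$ with $s\mapsto u_s$ an injective family of right translations. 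Then $(\ww_{12}\check{\vv}_{23}\xi)(a,b,\cdot)=u_{a^{-1}b}\,\xi(a,a^{-1}b,\cdot)$, whereas $(\check{\vv}_{23}\ww_{12}\xi)(a,b,\cdot)=u_b\,\xi(a,a^{-1}b,\cdot)$.

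Your ``equivalent'' bimodule argument fails for the same reason. Slicing the third leg of $\check{\vv}_{23}z_{12}\check{\vv}_{23}^*$ gives $\sum_i(1\otimes a_i)z(1\otimes b_i)$, where $a_i,b_i$ are slices of the \emph{first} leg of $\check{\vv}$ and hence lie in $L^\infty(\G)'$. But $X\rtimes^{\mcF}_\alpha\G$ is not an $L^\infty(\G)'$-bimodule: for the trivial action on $\C$ it equals $L^\infty(\check{\G})$.

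\textbf{The fix.} The identity you need is nevertheless true, but as an identity of normal $*$-homomorphisms, to be checked on generators rather than via commuting implementing unitaries.
On $x\in L^\infty(\G)$, we have $\check{\Delta}_r(x)=x\otimes 1$ because the first leg of $\check{\vv}$ lies in $L^\infty(\G)'$. Hence both sides give $\Delta(x)\otimes 1$.
On $\check{y}\in L^\infty(\check{\G})=L^\infty(\hat{\G})'$, we have $\Delta_l(\check{y})=1\otimes\check{y}$. Hence both sides give $1\otimes\check{\Delta}(\check{y})$.
Since $L^\infty(\G)$ and $L^\infty(\check{\G})$ generate $B(L^2(\G))$, the two maps coincide. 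Your computation $(\alpha\otimes\id\otimes\id)(\id\otimes\check{\Delta}_r)(z)=(\id\otimes\Delta_l\otimes\id)(\id\otimes\check{\Delta}_r)(z)$ then goes through. This is exactly how the paper checks the relation when it later proves that $X\rtimes^{\mcF}_\alpha\G$ is saturated.

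\textbf{Two smaller slips.}
In (1), $\tilde{\alpha}(z)=\bigl(\ww_{23}(\alpha\otimes\id)(z)\ww_{23}^*\bigr)_{132}$ is not ``$\id\otimes\sigma\Delta_l$ applied after $\alpha\otimes\id$''. The conjugation is by $\ww$, not $\ww^*$, and it acts on a two-leg element. The third leg lies in $L^\infty(\G)$ simply because both $\ww_{32}$ and $(\alpha\otimes\id)(z)_{132}$ commute with $1\otimes 1\otimes L^\infty(\G)'$.
In (2), $L^\infty(\check{\check{\G}})=L^\infty(\G)'$, not $L^\infty(\G)$. The fixed-point algebra of $\check{\Delta}_r$ is its commutant $L^\infty(\G)$, so your conclusion there is correct.
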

\begin{proof} (1) is proven using routine verifications.

(2) The statement that $\alpha^\rtimes$ determines a well-defined action $X\rtimes_\alpha^\mcF \G\curvearrowleft \check{\G}$ was observed in \cite{DR26}*{Proposition 4.2} (the proof does not use that $\alpha$ is a complete isometry).

By definition, $\Sat(X, \alpha)= (X\rtimes_\alpha^\mcF \G)\cap (X\ovot_\mcF L^\infty(\G))$. Given $z\in X\rtimes_\alpha^\mcF \G$, it follows from the fact that $\check{\vv}\in L^\infty(\G)'\ovot L^\infty(\check{\G})$ that we have $\alpha^\rtimes(z) = z\otimes 1$ if and only if $z\in X\ovot_\mcF L^\infty(\G)$. The equality $ \Sat(X, \alpha)=\Fix(X\rtimes_\alpha^ \mcF \G, \alpha^\rtimes)$ is then clear.
\end{proof}

\begin{Lem}[cf.\ \cite{An23}*{Lemma 4.11}]\label{saturationsaturated}
    If $(X, \alpha)\in {}_{L^1(\G)}\Mod$, then $(\Sat(X, \alpha), \id \otimes \Delta)\in {}_{L^1(\G)}\Mod^{\|\cdot\|}$ and $(\Sat(X, \alpha), \id \otimes \Delta)$ is saturated.
\end{Lem}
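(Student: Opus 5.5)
We prove the three assertions in turn: $\id\otimes\Delta$ maps $\Sat(X,\alpha)$ into $\Sat(X,\alpha)\ovot_\mcF L^\infty(\G)$; it is a complete isometry satisfying the coaction identity; and it is saturated.

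\emph{Well-definedness.} Fix $z\in\Sat(X,\alpha)$. Since $\Delta$ is a normal unital $*$-homomorphism, hence a complete isometry, $\id\otimes\Delta$ maps $X\ovot_\mcF L^\infty(\G)$ completely isometrically into $X\ovot_\mcF L^\infty(\G)\ovot L^\infty(\G)$. We must check that $(\id\otimes\Delta)(z)$ lies in $\Sat(X,\alpha)\ovot_\mcF L^\infty(\G)$. By the definition of the Fubini tensor product, it suffices to show that for every $\omega\in L^1(\G)$ the slice $(\id\otimes\id\otimes\omega)(\id\otimes\Delta)(z)$ lies in $\Sat(X,\alpha)$. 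This slice equals $(\id\otimes\Phi_\omega)(z)$, where
$$\Phi_\omega:=(\id\otimes\omega)\Delta\colon L^\infty(\G)\to L^\infty(\G)$$
is normal, completely bounded and satisfies $\Delta\circ\Phi_\omega=(\Phi_\omega\otimes\id)\circ\Delta$ by coassociativity. Since slice maps in distinct legs commute, we get
$$(\alpha\otimes\id)(\id\otimes\Phi_\omega)(z)=(\id\otimes\id\otimes\Phi_\omega)(\id\otimes\Delta)(z)=(\id\otimes\Delta)(\id\otimes\Phi_\omega)(z),$$
so $(\id\otimes\Phi_\omega)(z)\in\Sat(X,\alpha)$. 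The coaction identity $(\id\otimes\Delta\otimes\id)(\id\otimes\Delta)=(\id\otimes\id\otimes\Delta)(\id\otimes\Delta)$ is just coassociativity. Complete isometry follows from injectivity of the Fubini tensor product, since $\Sat(X,\alpha)\ovot_\mcF L^\infty(\G)\subseteq X\ovot_\mcF L^\infty(\G)\ovot_\mcF L^\infty(\G)$ completely isometrically. Hence $(\Sat(X,\alpha),\id\otimes\Delta)\in{}_{L^1(\G)}\Mod^{\|\cdot\|}$.

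\emph{Saturation.} This is the main step. Let
$$w\in\Sat(X,\alpha)\ovot_\mcF L^\infty(\G)\subseteq X\ovot_\mcF L^\infty(\G)\ovot L^\infty(\G)$$
satisfy $\bigl((\id\otimes\Delta)\otimes\id\bigr)(w)=(\id\otimes\id\otimes\Delta)(w)$. We must produce $z\in\Sat(X,\alpha)$ with $w=(\id\otimes\Delta)(z)$. The natural approach is to apply the $W^*$-algebra saturation \eqref{fixed} to $(L^\infty(\G),\Delta)$ in the last two legs. For each $\mu\in X^*$ (or a predual-type functional obtained via an embedding $X\subseteq B(\mcH)$ and $\rho\in B(\mcH)_*$), the slice $(\mu\otimes\id\otimes\id)(w)\in L^\infty(\G)\ovot L^\infty(\G)$ satisfies
$$(\Delta\otimes\id)(\cdot)=(\id\otimes\Delta)(\cdot),$$
so by \eqref{fixed} it equals $\Delta(y_\mu)$ for a unique $y_\mu\in L^\infty(\G)$.

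To avoid slicing altogether, apply the same identification at the level of $B(\mcH)\ovot L^\infty(\G)$, regarded as a right $\G$-$W^*$-algebra via $\id\otimes\Delta$. The coaction $\id\otimes\Delta$ is saturated there (again by \eqref{fixed}, applied to that von Neumann algebra), so $w=(\id\otimes\Delta)(z)$ for some $z\in B(\mcH)\ovot L^\infty(\G)$. Slicing with $\id\otimes\id\otimes\epsilon$ is unavailable in general, so instead we recover $z$ via
$$(\id\otimes\id\otimes\omega)(w)=(\id\otimes\Phi_\omega)(z).$$
It remains to show that $z\in X\ovot_\mcF L^\infty(\G)$ and that $(\alpha\otimes\id)(z)=(\id\otimes\Delta)(z)$.

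For the first claim, we need $(\id\otimes\nu)(z)\in X$ for all $\nu\in L^1(\G)$. Since $w\in X\ovot_\mcF L^\infty(\G)\ovot L^\infty(\G)$, we have $(\id\otimes\Delta)(z)\in X\ovot_\mcF(L^\infty(\G)\ovot L^\infty(\G))$. Now $\Delta$ is injective with weak$^*$-closed range, and the Fubini product is compatible with such complete isometries, which gives $(\mathcal{X}\ovot_\mcF N)\cap(B(\mcH)\ovot M)=\mathcal{X}\ovot_\mcF M$ under the identification $M\cong\Delta(M)$. This yields $z\in X\ovot_\mcF L^\infty(\G)$; equivalently, each $(\id\otimes\nu)(z)$ is a norm-limit or weak$^*$-limit of slices of $w$. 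I expect this to require care: the point is that $\Delta$ has a normal left inverse only when $\G$ is compact. The fix is to use the intersection property of the Fubini tensor product stated in the preliminaries, together with $\Delta(L^\infty(\G))=\Sat(L^\infty(\G),\Delta)$.

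For the second claim, apply $\id\otimes\Delta\otimes\id$ to the identity. Using that $w$ has legs $1,2$ in $\Sat(X,\alpha)$, we get $(\alpha\otimes\id\otimes\id)(w)=(\id\otimes\Delta\otimes\id)(w)$. Rewriting $w=(\id\otimes\Delta)(z)$ turns this into
$$(\id\otimes\id\otimes\Delta)\bigl((\alpha\otimes\id)(z)\bigr)=(\id\otimes\id\otimes\Delta)\bigl((\id\otimes\Delta)(z)\bigr).$$
Injectivity of $\id\otimes\id\otimes\Delta$ then yields $(\alpha\otimes\id)(z)=(\id\otimes\Delta)(z)$, so $z\in\Sat(X,\alpha)$, which completes the proof.
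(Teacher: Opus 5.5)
Your proposal is correct and is essentially the paper's argument: you write the element as $(\id\otimes\Delta)(z)$ via \eqref{fixed}, use the intersection property of Fubini tensor products (with the normal inverse of $\Delta$ on $\Delta(L^\infty(\G))$, so no left inverse on all of $L^\infty(\G)\ovot L^\infty(\G)$ is needed) to get $z\in X\ovot_\mcF L^\infty(\G)$, and conclude by injectivity of $\id\otimes\id\otimes\Delta$. The differences are minor: you apply \eqref{fixed} to $(B(\mcH)\ovot L^\infty(\G),\id\otimes\Delta)$ instead of slicing the first leg, and for invariance of $\Sat(X,\alpha)$ the paper simply observes $(\id\otimes\id\otimes\omega)(\id\otimes\Delta)(z)=\alpha((\id\otimes\omega)(z))\in\alpha(X)$; also, the relation your display actually uses is $\Delta\circ\Phi_\omega=(\id\otimes\Phi_\omega)\circ\Delta$, not $(\Phi_\omega\otimes\id)\circ\Delta$ as you stated.
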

\begin{proof} If $z \in \Sat(X, \alpha)$ and $\omega \in L^1(\G)$, we have $(\id \otimes \id \otimes \omega)(\id \otimes \Delta)(z) = \alpha((\id \otimes \omega)(z))\in \alpha(X)\subseteq \Sat(X, \alpha)$, so $(\id \otimes \Delta)\Sat(X,\alpha)\subseteq \Sat(X, \alpha)\ovot_\mcF L^\infty(\G)$. It is thus clear that $(\Sat(X, \alpha), \id \otimes \Delta)\in {}_{L^1(\G)}\Mod^{\|\cdot\|}$.

Fix now an inclusion $X\subseteq B(\mcH)$. If $s\in \Sat(\Sat(X, \alpha), \id \otimes \Delta)\subseteq X\ovot_\mcF L^\infty(\G)\ovot_\mcF L^\infty(\G)$ and $\omega\in B(\mcH)_*$, we have
\begin{align*}
    (\id \otimes \Delta)((\omega\otimes \id\otimes \id)(s)) &= (\omega \otimes \id \otimes \id \otimes \id)(\id \otimes \id \otimes \Delta)(s)\\
    &= (\omega\otimes \id \otimes \id \otimes \id)(\id \otimes \Delta\otimes \id)(s)\\
    &= (\Delta \otimes \id)((\omega \otimes \id \otimes \id)(s)),
\end{align*}
so it follows from \eqref{fixed} that 
$s\in (B(\mcH)\ovot_\mcF \Delta(L^\infty(\G)))\cap (X\ovot_\mcF L^\infty(\G)\ovot_\mcF L^\infty(\G))= X \ovot_\mcF \Delta(L^\infty(\G)).$ Thus, there is $z\in X\ovot_\mcF L^\infty(\G)$  such that $s=(\id \otimes \Delta)(z)$. Using that $s\in \Sat(X, \alpha)\ovot_\mcF L^\infty(\G)$, we then find
\begin{align*}
    (\id \otimes \id \otimes \Delta)(\alpha\otimes \id)(z)&= (\alpha\otimes\id \otimes \id)(s) =  (\id \otimes \Delta\otimes \id)(s) = (\id \otimes \id \otimes \Delta)(\id \otimes \Delta)(z),
\end{align*}
from which we infer that $z\in \Sat(X, \alpha)$. Thus, $s\in (\id \otimes \Delta)(\Sat(X, \alpha))$.
\end{proof}

\begin{Prop}[cf.\ \cite{An23}*{Corollary 3.28}]\label{prop1'}
If $X\in {}_{L^1(\G)}\Mod$, then $(X\rtimes^{\F}_\alpha \G,\alpha^{\rtimes})$ is saturated.
\end{Prop}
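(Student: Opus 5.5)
The plan is to imitate the proof of Lemma \ref{saturationsaturated}, with the right $\check{\G}$-$W^*$-algebra $(B(L^2(\G)),\check{\Delta}_r)$ playing the role that $(L^\infty(\G),\Delta)$ played there. Fix an embedding $X\subseteq B(\mcH)$, so that
$$\Sat(X\rtimes_\alpha^\mcF\G,\alpha^\rtimes)\subseteq (X\rtimes^\mcF_\alpha\G)\ovot_\mcF L^\infty(\check{\G})\subseteq B(\mcH)\ovot B(L^2(\G))\ovot L^\infty(\check{\G}).$$
The inclusion $\alpha^\rtimes(X\rtimes^\mcF_\alpha\G)\subseteq \Sat(X\rtimes^\mcF_\alpha\G,\alpha^\rtimes)$ holds for every module. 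So it suffices to take $s\in \Sat(X\rtimes_\alpha^\mcF\G,\alpha^\rtimes)$ and produce $z\in X\rtimes_\alpha^\mcF\G$ with $s=\alpha^\rtimes(z)=(\id\otimes\check{\Delta}_r)(z)$.

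\emph{Step 1 (reduce to the von Neumann algebra case).} The saturation condition reads $\check{\vv}_{23}s_{124}\check{\vv}_{23}^*=(\id\otimes\id\otimes\check{\Delta})(s)$. Slicing the first leg by $\omega\in B(\mcH)_*$ gives $t=(\omega\otimes\id\otimes\id)(s)\in B(L^2(\G))\ovot L^\infty(\check{\G})$ with $(\check{\Delta}_r\otimes\id)(t)=(\id\otimes\check{\Delta})(t)$. By \eqref{fixed}, applied to the right $\check{\G}$-$W^*$-algebra $(B(L^2(\G)),\check{\Delta}_r)$, this gives $t\in\check{\Delta}_r(B(L^2(\G)))$. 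Hence
$$s\in B(\mcH)\ovot_\mcF\check{\Delta}_r(B(L^2(\G)))=B(\mcH)\ovot\check{\Delta}_r(B(L^2(\G))),$$
where we use that both factors are von Neumann algebras, so Fubini and spatial tensor products agree. Next, $\id\otimes\check{\Delta}_r$ is a normal injective $*$-homomorphism on $B(\mcH)\ovot B(L^2(\G))$, so its range is weak$^*$-closed and equals this tensor product. Thus $s=(\id\otimes\check{\Delta}_r)(z)$ for a unique $z\in B(\mcH)\ovot B(L^2(\G))$.

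\emph{Step 2 ($z$ lies in $X\ovot_\mcF B(L^2(\G))$).} From $\check{\Delta}_r(b)=\check{\vv}(b\otimes 1)\check{\vv}^*$ we get $z\otimes 1=\check{\vv}_{23}^*\,s\,\check{\vv}_{23}$. Here $s\in X\ovot_\mcF B(L^2(\G))\ovot B(L^2(\G))$, and multiplication on legs $2,3$ by operators preserves the property that all slices of the first leg lie in $X$. Slicing by $\rho\otimes\theta$ with $\theta(1)=1$ therefore shows $(\id\otimes\rho)(z)\in X$ for all $\rho$, i.e.\ $z\in X\ovot_\mcF B(L^2(\G))$.

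\emph{Step 3 (the crossed product condition).} Since $s\in (X\rtimes^\mcF_\alpha\G)\ovot_\mcF L^\infty(\check{\G})$, slicing the last leg gives $(\alpha\otimes\id\otimes\id)(s)=(\id\otimes\Delta_l\otimes\id)(s)$. Substituting $s=(\id\otimes\check{\Delta}_r)(z)$, this becomes
$$(\id\otimes\id\otimes\check{\Delta}_r)(\alpha\otimes\id)(z)=(\id\otimes\Delta_l\otimes\id)(\id\otimes\check{\Delta}_r)(z).$$
I would then use the commutation $(\Delta_l\otimes\id)\check{\Delta}_r=(\id\otimes\check{\Delta}_r)\Delta_l$ on $B(L^2(\G))$. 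This holds because $\Delta_l$ is implemented by $\ww$ acting on legs $1,2$ with $\ww\in L^\infty(\G)\ovot L^\infty(\hat{\G})$, while $\check{\Delta}_r$ is implemented by $\check{\vv}\in L^\infty(\G)'\ovot L^\infty(\check{\G})$ on legs $2,3$, and the second legs of the two unitaries commute since $L^\infty(\check{\G})=L^\infty(\hat{\G})'$. The right-hand side then equals $(\id\otimes\id\otimes\check{\Delta}_r)(\id\otimes\Delta_l)(z)$. Injectivity of $\id\otimes\id\otimes\check{\Delta}_r$ yields $(\alpha\otimes\id)(z)=(\id\otimes\Delta_l)(z)$, i.e.\ $z\in X\rtimes^\mcF_\alpha\G$ and $s=\alpha^\rtimes(z)$.

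\emph{Main obstacle.} I expect the main care to be needed in Step 3, and in justifying manipulations with Fubini tensor products of the possibly non-weak$^*$-closed space $X$. The point is to make sense of $\id\otimes\id\otimes\check{\Delta}_r$ on $X\ovot_\mcF L^\infty(\G)\ovot_\mcF B(L^2(\G))$, which is done via the amplification result of \cite{Ham11}. Injectivity should be checked after embedding everything into $B(\mcH\otimes L^2(\G)^{\otimes 3})$. The commutation of $\Delta_l$ and $\check{\Delta}_r$ is the key structural input; if the leg bookkeeping proves delicate, I would verify it on the generators $L^\infty(\G)$ and $L^\infty(\check{\G})$ of $B(L^2(\G))$.
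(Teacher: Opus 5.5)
Your strategy is the paper's. The paper also applies \eqref{fixed} to the right $\check{\G}$-$W^*$-algebra $(B(L^2(\G)),\check{\Delta}_r)$, repeats the argument of Lemma~\ref{saturationsaturated}, and concludes with $(\Delta_l\otimes\id)\check{\Delta}_r=(\id\otimes\check{\Delta}_r)\Delta_l$. Your Steps~1--2 recover $z\in X\ovot_\mcF B(L^2(\G))$ from $z\otimes 1=\check{\vv}_{23}^*s\check{\vv}_{23}$ instead of from the intersection formula for Fubini tensor products; this is a harmless variant. The remaining manipulations in Step~3 are also fine.

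However, the justification you give first for the key identity is wrong. Written out, the two sides are
\[
(\Delta_l\otimes\id)\check{\Delta}_r(x)=\ww_{12}^*\check{\vv}_{23}x_2\check{\vv}_{23}^*\ww_{12}
\quad\text{and}\quad
(\id\otimes\check{\Delta}_r)\Delta_l(x)=\check{\vv}_{23}\ww_{12}^*x_2\ww_{12}\check{\vv}_{23}^*.
\]
The unitaries $\ww_{12}$ and $\check{\vv}_{23}$ overlap only on leg~$2$. There $\ww$ contributes its $L^\infty(\hat{\G})$-leg, while $\check{\vv}$ contributes its $L^\infty(\G)'$-leg. The $L^\infty(\check{\G})$-leg of $\check{\vv}$ sits on leg~$3$, so the fact that $L^\infty(\check{\G})=L^\infty(\hat{\G})'$ is irrelevant here.

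In fact $\ww_{12}$ and $\check{\vv}_{23}$ do not commute unless $\G$ is trivial. If they did, slicing legs $1$ and $3$ would make $L^\infty(\hat{\G})$ commute with $L^\infty(\G)'$, the algebra generated by the first-leg slices of $\check{\vv}$. That means $L^\infty(\hat{\G})\subseteq L^\infty(\G)$, which forces $\G$ to be trivial; classically it would say $L(G)$ commutes with $L^\infty(G)$. What the identity actually requires is only that $\ww_{12}\check{\vv}_{23}^*\ww_{12}^*\check{\vv}_{23}$ commute with $1\otimes B(L^2(\G))\otimes 1$, and this cannot be read off from where the legs live.

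The identity is nevertheless true, and your fallback is exactly the paper's argument. Both sides are normal unital $*$-homomorphisms, and $L^\infty(\G)\cup L^\infty(\check{\G})$ generates $B(L^2(\G))$, so it suffices to check generators.
\begin{itemize}
\item For $x\in L^\infty(\G)$: since $\check{\vv}\in L^\infty(\G)'\ovot L^\infty(\check{\G})$, one has $\check{\Delta}_r(x)=x\otimes 1$, so both sides give $\Delta(x)\otimes 1$.
\item For $\check{y}\in L^\infty(\check{\G})$: one has $\Delta_l(\check{y})=1\otimes\check{y}$, so both sides give $1\otimes\check{\Delta}(\check{y})$.
\end{itemize}
With this replacement your proof is complete.
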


\begin{proof} One can prove this by slightly adapting the proofs/statements of \cite{An23}*{Lemma 2.11, Corollary 3.28}. However, one can argue more directly that \eqref{fixed} (applied to the right $\check{\G}$-$W^*$-algebra $(B(L^2(\G)), \check{\Delta}_r)$) shows that $$\check{\Delta}_r(B(L^2(\G)))= \{z\in B(L^2(\G))\ovot L^\infty(\check{\G})\mid (\check{\Delta}_r\otimes \id)(z) = (\id \otimes \check{\Delta})(z)\}.$$
The proof then proceeds in exactly the same way as the proof of Lemma \ref{saturationsaturated}, by making use of the relation $(\Delta_l\otimes \id)\check{\Delta}_r=(\id \otimes \check{\Delta}_r)\Delta_l$, which is easy to check on $L^{\infty}(\G)$ and $L^{\infty}(\check{\G})$.
\end{proof}

We now discuss that when restricting to the subcategory ${}_{L^1(\G)}\Mod^{\|\cdot\|}$, the saturation space $\Sat(X, \alpha)$ can be seen as a sort of completion of $(X, \alpha)$.
To this end, recall from \cite{DR26}*{Definition 4.3} the following definition:

\begin{Def}\label{def1}
  We call $(X,\alpha)\in {}_{L^1(\G)}\Mod^{\|\cdot\|}$ $\G$-complete if for every $(Y, \beta)\in {}_{L^1(\G)}\Mod^{\|\cdot\|}$ for which $X\subseteq Y$ and $\beta\vert_X=\alpha$ holds and for every $y\in Y$, the condition $L^1(\G)\rhd y \subseteq X$ implies that $y\in X$.
\end{Def} 

Combining \cite{DR26}*{Proposition 4.4 (2)} and Proposition \ref{relations} (2), it is immediately clear that $(X, \alpha)$ is $\G$-complete if and only if $(X, \alpha)$ is saturated. 

In \cite{Ham11}*{Definition 5.5}, Hamana considers\footnote{Hamana works in the context of classical locally compact groups, but the definition readily extends to the quantum setting.} the notion of the $\G$-completion of $(X,\alpha)\in {}_{L^1(\G)}\Mod^{\|\cdot\|}$ : it consists of a $\G$-complete $(Y, \beta)\in {}_{L^1(\G)}\Mod^{\|\cdot\|}$ together with a $\G$-equivariant complete isometry $\iota: (X, \alpha)\to (Y, \beta)$ which satisfies the following universal property: if $\iota': (X, \alpha)\to (Y', \beta')$ is a $\G$-equivariant complete isometry where $(Y', \beta')\in {}_{L^1(\G)}\Mod^{\|\cdot\|}$ is $\G$-complete, then there exists a unique $\G$-equivariant complete isometry $j: Y\to Y'$ such that $j\circ \iota = \iota'$. This universal property determines $(Y, \beta)$ up to $\G$-equivariant completely isometric isomorphism. Hamana proceeds to show that the $\G$-completion of $(X, \alpha)$ always exists by realizing it explicitly as an $L^1(\G)$-submodule of the $\G$-injective envelope of $X$ \cite{Ham11}*{Proposition 5.6} and his proof works also (upon making minor modifications) in the quantum setting. Using the notion of saturation, we can now give a much more efficient and elementary proof.

\begin{Prop}\label{Gcompletion} Given $(X, \alpha)\in {}_{L^1(\G)}\Mod^{\|\cdot\|}$,
    $(\Sat(X, \alpha), \id \otimes \Delta)$ together with the natural $\G$-equivariant complete isometry $\alpha: X\to \Sat(X, \alpha)$ is the $\G$-completion of $(X, \alpha)$.
\end{Prop}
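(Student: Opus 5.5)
We have to check three things: that $(\Sat(X,\alpha),\id\otimes\Delta)$ is an admissible target, that $\alpha\colon X\to\Sat(X,\alpha)$ is a $\G$-equivariant complete isometry, and the universal property.

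\emph{Admissibility and equivariance.} By Lemma \ref{saturationsaturated}, $(\Sat(X,\alpha),\id\otimes\Delta)\in{}_{L^1(\G)}\Mod^{\|\cdot\|}$ and it is saturated. By the remark after Definition \ref{def1} (combining \cite{DR26}*{Proposition 4.4 (2)} with Proposition \ref{relations}(2)), it is therefore $\G$-complete. The map $\alpha\colon X\to\Sat(X,\alpha)$ is a complete isometry because $X\in{}_{L^1(\G)}\Mod^{\|\cdot\|}$. Its equivariance, $(\alpha\otimes\id)\alpha=(\id\otimes\Delta)\alpha$, is the coaction identity.

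\emph{Existence of $j$.} Let $\iota'\colon(X,\alpha)\to(Y',\beta')$ be a $\G$-equivariant complete isometry with $Y'$ $\G$-complete, hence saturated, so $\beta'(Y')=\Sat(Y',\beta')$. We use functoriality of $\Sat$. The map $\iota'\otimes\id\colon X\ovot_\mcF L^\infty(\G)\to Y'\ovot_\mcF L^\infty(\G)$ is a complete isometry by injectivity of the Fubini tensor product. Using equivariance of $\iota'$, it sends $\Sat(X,\alpha)$ into $\Sat(Y',\beta')=\beta'(Y')$: for $z\in\Sat(X,\alpha)$,
\[
(\beta'\otimes\id)(\iota'\otimes\id)z=(\iota'\otimes\id\otimes\id)(\alpha\otimes\id)z=(\id\otimes\Delta)(\iota'\otimes\id)z.
\]
Since $\beta'$ is a complete isometry onto its image, we may define
\[
j:=\beta'^{-1}\circ(\iota'\otimes\id)|_{\Sat(X,\alpha)}\colon\Sat(X,\alpha)\to Y',
\]
which is a complete isometry. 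Equivariance of $j$ amounts to $(j\otimes\id)(\id\otimes\Delta)=\beta'\circ j$. This follows by applying $\beta'\otimes\id$ to both sides, which is injective by injectivity of Fubini tensor products, and using the displayed identity together with $(\beta'\otimes\id)\beta'=(\id\otimes\Delta)\beta'$. Finally, $j\circ\alpha=\beta'^{-1}(\iota'\otimes\id)\alpha=\beta'^{-1}\beta'\iota'=\iota'$.

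\emph{Uniqueness.} This is the step needing care. Suppose $j_1,j_2$ are equivariant maps with $j_k\circ\alpha=\iota'$, and let $s\in\Sat(X,\alpha)$ and $\omega\in L^1(\G)$. As in the proof of Lemma \ref{saturationsaturated}, $\omega\rhd s=(\id\otimes\id\otimes\omega)(\id\otimes\Delta)(s)=\alpha((\id\otimes\omega)s)$. Hence
\[
\omega\rhd j_k(s)=j_k(\omega\rhd s)=\iota'((\id\otimes\omega)s)
\]
for $k=1,2$, so $\beta'(j_1(s))=\beta'(j_2(s))$ after slicing. Since $\beta'$ is injective, $j_1=j_2$. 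The only subtle points are that slices determine elements of $Y'\ovot_\mcF L^\infty(\G)$ and that the relevant amplifications are well defined on Fubini products; both are covered by the preliminaries.
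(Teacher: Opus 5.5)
Your proposal is correct and follows essentially the same route as the paper. In both, $j$ is defined as $\beta'^{-1}\circ(\iota'\otimes\id)|_{\Sat(X,\alpha)}$, using $\Sat(Y',\beta')=\beta'(Y')$, and uniqueness comes from the slice identity $\omega\rhd s=\alpha((\id\otimes\omega)s)$ together with injectivity of $\beta'$. You additionally write out the checks that $j$ is equivariant and that $j\circ\alpha=\iota'$, which the paper leaves implicit.
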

\begin{proof} We verify explicitly the universal property. By Lemma \ref{saturationsaturated}, $(\Sat(X, \alpha), \id \otimes \Delta)$ is $\G$-complete. Fix a $\G$-equivariant complete isometry $\iota': (X, \alpha)\to (Y', \beta')$, where $(Y', \beta') \in {}_{L^1(\G)}\Mod^{\|\cdot\|}$ is $\G$-complete, and recall that $\iota'\otimes\id$ is completely isometric. Then the composition
$$\Sat(X,\alpha)\cong (\iota'\otimes \id)(\Sat(X,\alpha)) = \Sat(\iota'(X), \beta')\subseteq \Sat(Y', \beta')=\beta'(Y')\cong Y'$$
is a required $\G$-equivariant complete isometry $j: \Sat(X, \alpha)\to Y'$. Its uniqueness follows from the fact that if $\tilde{j}: \Sat(X, \alpha)\to Y'$ is another $\G$-equivariant complete isometry satisfying $\tilde{j}\circ \alpha = \iota'$, then for $\omega \in L^ 1(\G)$ and $z\in \Sat(X, \alpha)$,
$$\omega\rhd j(z) = j(\omega\rhd z) = j(\alpha((\id \otimes \omega)(z)))= \tilde{j}(\alpha((\id \otimes \omega)(z))) =\tilde{j}(\omega\rhd z) = \omega\rhd \tilde{j}(z),$$
so that $j = \tilde{j}$ (since $\beta'$ is completely isometric).
\end{proof}

\subsection{Non-degeneracy and weak*-crossed product} 

We start with the following lemma, whose proof is quite standard.

\begin{Lem}
    Let $(X, \alpha)\in {}_{L^1(\G)}\NMod$. Then the following equalities are true:
    \begin{align}
        \label{j}&[\alpha(X)(1\otimes L^\infty(\check{\G}))]^{w*} = [(1\otimes L^\infty(\check{\G}))\alpha(X)]^{w*}= [(1\otimes L^\infty(\check{\G}))\alpha(X)(1\otimes L^\infty(\check{\G}))]^{w*},\\
        \label{k}&[\alpha(X)(1\otimes B(L^2(\G)))]^{w*} = [(1\otimes B(L^2(\G)))\alpha(X)]^{w*}= [(1\otimes B(L^2(\G)))\alpha(X)(1\otimes B(L^2(\G)))]^{w*}.
    \end{align}
\end{Lem}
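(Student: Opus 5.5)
The strategy is the standard commutation argument used for crossed products of von Neumann algebras. We first show that $\alpha(x)(1\otimes \check y)$ lies in the weak$^*$-closed span of $(1\otimes L^\infty(\check\G))\alpha(X)$, and conversely. Fix $X\subseteq B(\mcH)$ weak$^*$-closed, so that $\alpha(X)\subseteq X\ovot_\mcF L^\infty(\G)\subseteq B(\mcH)\ovot L^\infty(\G)$. The key identity comes from coassociativity. Since $\vv=\check{\vv}$-type unitaries implement $\Delta$ and $\check{\vv}\in L^\infty(\G)'\ovot L^\infty(\check\G)$, the natural move is to use the right regular representation $\vv\in L^\infty(\check\G)\ovot L^\infty(\G)$. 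Its first-leg slices $(\id\otimes\omega)\vv$, $\omega\in L^1(\G)$, are weak$^*$-dense in $L^\infty(\check\G)$. We then compute
$$\alpha(x)(1\otimes (\id\otimes\omega)\vv) = (\id\otimes\id\otimes\omega)\big(\alpha(x)_{12}\vv_{23}\big),$$
and rewrite $\alpha(x)_{12}\vv_{23} = \vv_{23}\,\vv_{23}^*\alpha(x)_{12}\vv_{23}$. Now $\vv_{23}^*(\cdot)_{12}\vv_{23}$ acts on the $L^\infty(\G)$-leg, and there it relates to $(\id\otimes\Delta)$: since $\Delta(a)=\vv(a\otimes 1)\vv^*$, we get $\vv^*\Delta(a)\vv=a\otimes1$. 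Hence
$$\vv_{23}^*(\id\otimes\Delta)(\alpha(x))\vv_{23}=\alpha(x)\otimes 1,$$
and by the coaction identity this equals $\vv_{23}^*(\alpha\otimes\id)(\alpha(x))\vv_{23}$. This gives
$$\alpha(x)_{12}\vv_{23}=\vv_{23}\,(\alpha\otimes\id)(\alpha(x))\,\vv_{23}^{*}\cdot\vv_{23}\ \text{(suitably rearranged)}.$$

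Concretely, the plan is to prove
$$(1\otimes(\id\otimes\omega)\vv)\,\alpha(x) \in [\alpha(X)(1\otimes L^\infty(\check\G))]^{w*}$$
by writing $\vv_{23}\alpha(x)_{12} = \vv_{23}\alpha(x)_{12}\vv_{23}^*\,\vv_{23} = (\id\otimes\Delta)(\alpha(x))\vv_{23} = (\alpha\otimes\id)(\alpha(x))\vv_{23}$. Slicing with $\id\otimes\id\otimes\omega$ then gives an element of the form $(\id\otimes\id\otimes\omega)\big((\alpha\otimes\id)(\alpha(x))\vv_{23}\big)$. Next we approximate $\alpha(x)\in X\ovot_\mcF L^\infty(\G)$ and use weak$^*$-continuity of $\alpha$ (the earlier Lemma, valid since $X\in{}_{L^1(\G)}\NMod$). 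Writing $\omega$ as a limit of finite sums of vector functionals, $(\id\otimes\id\otimes\omega_{\xi,\eta})(A_{13}\vv_{23})=\sum_k (\id\otimes\id\otimes\omega_{e_k,\eta})(A_{13})\,(1\otimes(\id\otimes\omega_{\xi,e_k})\vv)$ with $A=(\alpha\otimes\id)\alpha(x)$. Each term $(\id\otimes\id\otimes\omega')(\alpha\otimes\id)\alpha(x)=\alpha((\id\otimes\omega')\alpha(x))\in\alpha(X)$, and the series converges weak$^*$ (strongly, on bounded sets). This yields $(1\otimes L^\infty(\check\G))\alpha(X)\subseteq[\alpha(X)(1\otimes L^\infty(\check\G))]^{w*}$. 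The reverse inclusion follows by taking the same computation with $\vv^*$, or by symmetry using $\alpha(x)_{12}\vv_{23}=\vv_{23}(\alpha\otimes\id)\alpha(x)$, expanding on the other side. The three-sided set is then trivially squeezed between them, since each contains both and the two one-sided spans are $L^\infty(\check\G)$-invariant on both sides.

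For the second identity we note that $B(L^2(\G))$ is the weak$^*$-closed span of $L^\infty(\check\G)L^\infty(\G)'$, or of $L^\infty(\check\G) L^\infty(\G)$. Elements $1\otimes y'$ with $y'\in L^\infty(\G)'$ commute with $\alpha(X)\subseteq B(\mcH)\ovot L^\infty(\G)$. So $\alpha(x)(1\otimes \check y y')=\alpha(x)(1\otimes\check y)(1\otimes y')$, and we can move $\check y$ to the left by the first identity and $y'$ freely. This gives the equalities, using separate weak$^*$-continuity of multiplication.

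The main obstacle is the interchange of the infinite expansion of the slice with weak$^*$-closures; this is where I would be careful, controlling it via boundedness of partial sums in the Hilbert-space expansion.
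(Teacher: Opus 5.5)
Your route is essentially the paper's. You generate $L^\infty(\check{\G})$ by the slices $(\id\otimes\omega)(\vv)$ and use $\vv_{23}\alpha(x)_{12}=(\id\otimes\Delta)(\alpha(x))\vv_{23}=(\alpha\otimes\id)(\alpha(x))\vv_{23}$. An orthonormal-basis expansion of the third-leg slice then gives $(1\otimes L^\infty(\check{\G}))\alpha(X)\subseteq[\alpha(X)(1\otimes L^\infty(\check{\G}))]^{w*}$. You get the $B(L^2(\G))$-version from $[L^\infty(\check{\G})L^\infty(\G)']^{w*}=B(L^2(\G))$ and the fact that $1\otimes L^\infty(\G)'$ commutes with $\alpha(X)$.

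\textbf{Correction.} Your first paragraph arrives at the identity $\alpha(x)_{12}\vv_{23}=\vv_{23}(\alpha\otimes\id)(\alpha(x))$, and your ``by symmetry'' alternative for the reverse inclusion relies on it. This identity is false. Test it on $X=L^\infty(G)$, $\alpha=\Delta$, for a non-trivial classical group $G$. After moving $\vv_{23}$ to the right, one side multiplies by $f(rs)$ and the other by $f(rst^2)$.

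The relation $\Delta=\vv(\cdot\otimes 1)\vv^*$ only lets you move $\vv$ from the left of $\alpha(x)_{12}$ to its right, or $\vv^*$ from the right to the left. It never lets you move $\vv$ from right to left. So the reverse inclusion must be done with $\vv^*$-slices, via $\alpha(x)_{12}\vv_{23}^*=\vv_{23}^*(\alpha\otimes\id)(\alpha(x))$. This gives
$\alpha(x)\bigl(1\otimes(\id\otimes\omega_{\xi,\eta})(\vv^*)\bigr)=\sum_i\bigl(1\otimes(\id\otimes\omega_{\xi,e_i})(\vv^*)\bigr)\alpha(\omega_{e_i,\eta}\rhd x)$.
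You also need that the slices $(\id\otimes\omega)(\vv^*)$ are weak$^*$-dense in $L^\infty(\check{\G})$. This holds because $L^\infty(\check{\G})$ is self-adjoint and the adjoint is weak$^*$-continuous.

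That is your first option (``the same computation with $\vv^*$''), and it is exactly what the paper does. Keep it and discard the symmetry alternative.

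\textbf{Minor point.} The weak$^*$-continuity of $\alpha$ you invoke is not actually needed here. The identity $(\id\otimes\id\otimes\omega')(\alpha\otimes\id)(z)=\alpha((\id\otimes\omega')z)$ holds by the very construction of $\alpha\otimes\id$ on the Fubini tensor product.
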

\begin{proof} If $\xi,\eta\in \LL^2(\G)$ and  $x\in X$, we compute
\[\begin{split}
\quad\;
(1\otimes (\id\otimes \omega_{\xi,\eta})(\vv))\alpha(x)&=
(\id\otimes\id\otimes\omega_{\xi,\eta})(\vv_{23} \alpha(x)_{12} )\\
&=
(\id\otimes\id\otimes\omega_{\xi,\eta})((\id\otimes\Delta)(\alpha(x))\vv_{23}) \\
&=
(\id\otimes\id\otimes\omega_{\xi,\eta})((\alpha\otimes \id)(\alpha(x))\vv_{23}) =
\sum_{i\in I}\alpha( \omega_{\xi,e_i}\rhd x)
(1\otimes (\id\otimes \omega_{e_i,\eta})(\vv)),
\end{split}\]
where $\{e_i\}_{i\in I}$ is an orthonormal basis for $\LL^2(\G)$ and where the sum converges in the weak$^*$-topology. This shows the inclusion 
$[(1\otimes L^\infty(\check{\G}))\alpha(X)]^{w*}\subseteq [\alpha(X)(1\otimes L^\infty(\check{\G}))]^{w*}$. The converse inclusion follows similarly, by using that
$$\alpha(x)(1\otimes (\id \otimes \omega_{\xi, \eta})(\vv^*))= \sum_{i\in I} (1\otimes (\id \otimes \omega_{\xi, e_i})(\vv^*))\alpha(\omega_{e_i, \eta}\rhd x), \quad x\in X, \quad \xi, \eta \in L^2(\G).$$
The equalities \eqref{j} are then clear. On the other hand, using that $[L^\infty(\check{\G})L^\infty(\G)']^{w*}= B(L^2(\G))$ and using the equalities \eqref{j}, we find
\begin{align*}
    [\alpha(X)(1\otimes B(L^2(\G)))]^{w*} &= [\alpha(X)(1\otimes L^\infty(\check{\G}))(1\otimes L^\infty(\G)')]^{w*}\\
    &= [(1\otimes L^\infty(\check{\G}))\alpha(X)(1\otimes L^\infty(\G)')]^{w*}\\
    &= [(1\otimes L^\infty(\check{\G}))(1\otimes L^\infty(\G)')\alpha(X)]^{w*}= [(1\otimes B(L^2(\G)))\alpha(X)]^{w*}
\end{align*}
and the equalities \eqref{k} are also clear.
\end{proof}

\begin{Def}
    Given $(X, \alpha)\in {}_{L^1(\G)}\NMod$, the weak$^*$-crossed product $X\bar{\rtimes}_\alpha \G$ is the dual operator space given by \eqref{j}.
\end{Def}

It is evident that 
$X\bar{\rtimes}_\alpha \G \subseteq X\rtimes_\alpha^\F \G$. This becomes an equality if $(X,\alpha)$ is a right $\G$-$W^*$-algebra (cf.\ \eqref{FubiniGWstar}), but we will see that the inclusion can be strict in the more general case of dual operator spaces. It is  clear that $\alpha^\rtimes(X\bar{\rtimes}_\alpha \G)\subseteq (X\bar{\rtimes}_\alpha \G)\ovot L^\infty(\check{\G})$, so that $(X\bar{\rtimes}_\alpha \G, \alpha^\rtimes)\in {}_{L^1(\check{\G})}\NMod^{\|\cdot\|}$. Note that, by a slight abuse of notation, we write $\alpha^{\rtimes}$ for the dual action of $\check{\G}$ on both $X\rtimes^{\mcF}_\alpha \G$ and $X \bar{\rtimes}_\alpha \G$.

\begin{Prop}[cf.\ \cite{An23}*{Proposition 2.3, Lemma 3.9, Corollary 3.24}]\label{lemma3}
Let $(X, \alpha) \in {}_{L^1(\G)}\NMod$. The following properties are equivalent:
\begin{enumerate}[noitemsep]
     \item $X\bar\otimes \LL^\infty(\G)=[(1\otimes y)\alpha(x)\mid y\in \LL^\infty(\G),x\in X]^{w*},$
    \item $X\bar\otimes \LL^\infty(\G)=[\alpha(x)(1\otimes y)\mid y\in \LL^\infty(\G),x\in X]^{w*}$,
    \item $X\bar\otimes \LL^\infty(\G)=[(1\otimes y)\alpha(x)(1\otimes z)\mid y,z\in \LL^\infty(\G),x\in X]^{w*}$.
    \item $X\bar\otimes B(\LL^2(\G))=[(1\otimes b)\alpha(x)\mid b\in B(\LL^2(\G)),x\in X]^{w *},$
    \item $X\bar\otimes B(\LL^2(\G))=[\alpha(x)(1\otimes b)\mid b\in B(\LL^2(\G)),x\in X]^{w*}$,
    \item $X\bar\otimes B(\LL^2(\G))=[(1\otimes b)\alpha(x)(1\otimes c)\mid b,c\in B(\LL^2(\G)),x\in X]^{w *}$,
    \item $X=[\omega\rhd x \mid \omega\in \LL^1(\G),x\in X]^{w*} $,
\end{enumerate}
\end{Prop}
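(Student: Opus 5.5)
The proof splits into two blocks: (1)--(3) versus (7), and (4)--(6) versus (1)--(3). First, (1)$\iff$(2)$\iff$(3) comes from an adaptation of the computation in the preceding lemma. For $x\in X$ and $\omega\in L^1(\G)$ I would move $L^\infty(\G)$ across $\alpha(X)$ using $\Delta(y)=\vv(y\otimes 1)\vv^*$ and the coaction identity $(\id\otimes\Delta)\alpha=(\alpha\otimes\id)\alpha$. The goal is a formula expressing $(1\otimes y)\alpha(x)$ as a weak$^*$-convergent sum of elements $\alpha(x_i)(1\otimes y_i)$, and symmetrically. A cleaner route is the standard trick: the weak$^*$-closed span $[\alpha(X)(1\otimes L^\infty(\G))]^{w*}$ is invariant under the slice maps $(\id\otimes\id\otimes\omega)$ applied after $(\id\otimes\Delta)$. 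Applying $(\id\otimes\Delta)$ to $\alpha(x)(1\otimes y)$ gives $(\alpha\otimes\id)(\alpha(x))(1\otimes\Delta(y))$. Slicing the third leg against suitable functionals, and using that $\{(\id\otimes\omega)(\Delta(y)(1\otimes z))\}$ spans a weak$^*$-dense subspace of $L^\infty(\G)$ (a Podleś-type density for $\Delta$), should produce both orderings. If this gets messy, I will fall back on the explicit sum computation with an orthonormal basis, exactly as in the proof of \eqref{j}, using $\ww$ in place of $\vv$ since $L^\infty(\G)$ is now acting on the left leg.

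Next, for (4)--(6), I would use $B(L^2(\G))=[L^\infty(\G)L^\infty(\G)']^{w*}$. Elements of $1\otimes L^\infty(\G)'$ commute with $\alpha(X)\subseteq X\ovot_\mcF L^\infty(\G)$ inside $B(\mcH\otimes L^2(\G))$. Hence each of (4), (5), (6) is equivalent to the corresponding (1), (2), (3), after multiplying by $1\otimes L^\infty(\G)'$. For the reverse direction, I would cut down by a normal conditional expectation-free argument: slicing the second leg with $L^\infty(\G)'$-invariance, or more simply multiplying by $1\otimes y'$ and using that $X\ovot L^\infty(\G)=\{z\in X\ovot B(L^2(\G)): z \text{ commutes with } 1\otimes L^\infty(\G)'\}$.

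The key implication is (2)$\iff$(7). For (2)$\Rightarrow$(7), apply the weak$^*$-continuous slice $\id\otimes\omega$, $\omega\in L^1(\G)$, to (2). This gives $(\id\otimes\omega)(\alpha(x)(1\otimes y))=(y\omega)\rhd x$. Since $X\ovot L^\infty(\G)$ slices onto a weak$^*$-dense subspace of $X$ (take $x\otimes 1$), $X=[L^1(\G)\rhd X]^{w*}$ follows. For (7)$\Rightarrow$(2), let $E$ denote the right-hand side of (2). It suffices to show $x\otimes 1\in E$ for $x$ of the form $\omega\rhd x'$, because then $X\odot L^\infty(\G)\subseteq E$ by right multiplication. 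I would use the identity
$$(\omega\rhd x)\otimes 1=(\id\otimes\omega\otimes\id)\big((\alpha(x))_{12}\vv_{23}^{*}\text{-type conjugation}\big),$$
that is, write $\alpha(\cdot)$ twisted by $\vv$ to express $x'\otimes 1$ through slices of $(\id\otimes\Delta)\alpha(x)=\vv_{23}\alpha(x)_{12}\vv_{23}^*$. Slicing in the middle leg produces elements in $[\alpha(X)(1\otimes L^\infty(\G))\cdots]$ up to factors $(\id\otimes\omega_{\xi,\eta})(\vv)\in L^\infty(\check\G)$. This last point is the main obstacle, since those factors lie in $L^\infty(\check\G)$ rather than $L^\infty(\G)$. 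I would instead use $\ww$ and the left-version identity $\Delta(y)=\ww^*(1\otimes y)\ww$ together with the fact that $\alpha(\omega\rhd x)=(\id\otimes\id\otimes\omega)(\alpha\otimes\id)\alpha(x)$. The plan is to show that $\{(\id\otimes\id\otimes\omega)((\alpha(x)\otimes 1)(\id\otimes\Delta)(1\otimes y))\}$ recovers $x\otimes 1$-type elements, via the density $[(\omega\otimes\id)\Delta(L^\infty(\G))]=$ dense and the identity $(\id\otimes\omega)(\Delta(y)(z\otimes 1))$ spanning $L^\infty(\G)$.
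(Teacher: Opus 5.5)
Your easy directions are fine: (1)$\Rightarrow$(4), (2)$\Rightarrow$(5) and (3)$\Rightarrow$(6) by multiplying with $1\otimes L^\infty(\G)'$, and (2)$\Rightarrow$(7) by slicing. The gap is that none of your routes gets from (7), or from the $B(L^2(\G))$-statements (4)--(6), back down to the one-sided statements (1) and (2). Since (4)$\Rightarrow$(7) is trivial, this is where all the content lies.

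\emph{The ``slice after $\id\otimes\Delta$'' trick cannot swap orderings.} Slicing $(\id\otimes\Delta)(\alpha(x)(1\otimes y))=(\alpha\otimes\id)(\alpha(x))(1\otimes\Delta(y))$ in the third leg gives $\sum_i\alpha(\omega_{\xi,e_i}\rhd x)(1\otimes(\id\otimes\omega_{e_i,\eta})\Delta(y))$, which is again of the form $\alpha(X)(1\otimes L^\infty(\G))$.

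\emph{The $\ww$-fallback fails structurally.} The computation behind \eqref{j} rests on $\vv_{23}\alpha(x)_{12}\vv_{23}^*=(\alpha\otimes\id)\alpha(x)$, where $\alpha(x)$ and the operators $(\id\otimes\omega)(\vv)\in L^\infty(\check\G)$ share only leg $2$. The $\ww$-analogue is $\ww_{23}^*\alpha(x)_{13}\ww_{23}=(\alpha\otimes\id)\alpha(x)$, with $\alpha(x)$ in legs $1,3$, so the slice that removes $\ww$ also slices $\alpha(x)$. With $\{e_i\}$ an orthonormal basis of $L^2(\G)$, what comes out is
\[
(\omega_{\xi,\eta}\rhd x)\otimes 1=\sum_{i,j}\bigl(1\otimes(\id\otimes\omega_{\xi,e_i})(\ww)\bigr)\,\alpha(\omega_{e_i,e_j}\rhd x)\,\bigl(1\otimes(\id\otimes\omega_{e_j,\eta})(\ww^*)\bigr).
\]
This is only the two-sided statement (3), and (3)$\Rightarrow$(2) is exactly the swap you have not proved. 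The one-sided form $\alpha(x)_{13}\ww_{23}=\ww_{23}(\alpha\otimes\id)\alpha(x)$ only relates $(1\otimes L^\infty(\G))\alpha(X)$ to the $\ww$-twisted elements $\sum_i(\omega_{\xi,e_i}\rhd x)\otimes(\id\otimes\omega_{e_i,\eta})(\ww)$.

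\emph{Your concrete (7)$\Rightarrow$(2) computation gives nothing new.} The element $\alpha(x)(1\otimes(\id\otimes\omega)\Delta(y))$ already lies in $E$ and never produces $(\omega\rhd x)\otimes 1$.

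\emph{The descent (4)$\Rightarrow$(1) is not an argument.} Multiplying by $1\otimes L^\infty(\G)'$ cannot shrink a span. There is no normal conditional expectation $B(L^2(\G))\to L^\infty(\G)$ in general. The relative commutant of $1\otimes L^\infty(\G)'$ in $X\bar\otimes B(L^2(\G))$ is $X\bar\otimes_{\mcF}L^\infty(\G)$, which need not equal $X\bar\otimes L^\infty(\G)$. Even a correct description of the ambient space would not show that $[(1\otimes y)\alpha(x)]^{w*}$ exhausts it. You also never derive $\alpha(X)\subseteq X\bar\otimes L^\infty(\G)$, as opposed to $X\bar\otimes_{\mcF}L^\infty(\G)$, from (7), and this is needed for $E\subseteq X\bar\otimes L^\infty(\G)$.

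The missing idea is to do the swapping inside $B(L^2(\G))$ and then descend using the Podleś condition for $\Delta_l$.

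\emph{Going up is elementary.} For $b=|\zeta\rangle\langle\xi_1|$ and $c=|\xi_2\rangle\langle\zeta'|$ one has $(1\otimes b)\alpha(x)(1\otimes c)=(\omega_{\xi_1,\xi_2}\rhd x)\otimes|\zeta\rangle\langle\zeta'|$. So the span in (6) is always $X_{\operatorname{nd}}\bar\otimes B(L^2(\G))$, and (7)$\Rightarrow$(6). Then \eqref{k} gives (4)$\iff$(5)$\iff$(6). This is where the swap genuinely happens: $B(L^2(\G))=[L^\infty(\check\G)L^\infty(\G)']^{w*}$, $L^\infty(\check\G)$ moves across $\alpha(X)$ via $\vv$, and $L^\infty(\G)'$ commutes with $\alpha(X)$.

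\emph{Coming down.} Apply $\id\otimes\Delta_l$ to (4) and multiply on the left by $1\otimes y\otimes 1$ with $y\in L^\infty(\G)$. The Podleś condition $[(L^\infty(\G)\otimes 1)\Delta_l(B(L^2(\G)))]^{w*}=L^\infty(\G)\bar\otimes B(L^2(\G))$ then gives $X\bar\otimes L^\infty(\G)\bar\otimes B(L^2(\G))=[(1\otimes y\otimes b)(\alpha\otimes\id)\alpha(x)]^{w*}$. Slicing the third leg yields $X\bar\otimes L^\infty(\G)=[(1\otimes y)\alpha(\omega\rhd x)]^{w*}$. In particular $\alpha(\omega\rhd x)\in X\bar\otimes L^\infty(\G)$, and (7) together with normality of $\alpha$ gives $\alpha(X)\subseteq X\bar\otimes L^\infty(\G)$ and hence (1). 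The implication (5)$\Rightarrow$(2) is the mirror image.

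This is how the paper closes the cycle; it never attempts (1)$\iff$(2) directly.
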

Note that the conditions $(1),(2),(3)$ can be seen as versions of the Podleś condition. Note also that here we use the weak$^*$-tensor product and not the Fubini tensor product.
\begin{proof} The equivalences $(4)\iff (5)\iff (6)$ follow from \eqref{k}. The implication $(x)\implies (7)$ is trivial for $x\in \{1, 2, 3,4,5,6\}$ and the implication $(y)\implies (3)$ is trivial for $y\in \{1,2\}$.

$(7)\implies (6)$ Assume $(7)$ holds. Then
\[
\begin{split}
&\quad\ [(1\otimes b)\alpha(x)(1\otimes c)\mid b,c\in B(L^2(\G)), x\in X]^{w*}\\
&\supseteq
[(\I\otimes b)\alpha(\omega\rhd x)(\I\otimes c)\mid b,c\in B(\LL^2(\G)),\omega\in \LL^1(\G), x\in X]^{w*}\\
&=
[(\id\otimes \id\otimes\omega)\bigl(
(\I\otimes b\otimes \I)
(\alpha\otimes\id)(\alpha(x))
(\I\otimes c\otimes\I)\bigr)\mid b,c\in B(\LL^2(\G)),\omega\in \LL^1(\G),x\in X]^{w*}\\
&=
[(\id\otimes \id\otimes\omega)\bigl(
(\I\otimes b\otimes \I)
(\id\otimes\Delta)(\alpha(x))
(\I\otimes c\otimes\I)\bigr)\mid b,c\in B(\LL^2(\G)),\omega\in \LL^1(\G), x\in X]^{w*}\\
&=
[(\id\otimes \id\otimes\omega)\bigl(
(\I\otimes b\otimes \I)
\vv_{23}
\alpha(x)_{12}
\vv_{23}^{ *}
(\I\otimes c\otimes\I)\bigr)\mid b,c\in B(\LL^2(\G)),\omega\in \LL^1(\G), x\in X]^{w*}\\
&=
[(\id\otimes \id\otimes\omega)\bigl(
(\I\otimes b\otimes \I)
\alpha(x)_{12}
(\I\otimes c\otimes\I)\bigr)\mid b,c\in B(\LL^2(\G)),\omega\in \LL^1(\G), x\in X]^{w*}\\
&=
[(\I\otimes b)
\alpha(x)
(\I\otimes c)\mid b,c\in B(\LL^2(\G)), x\in X]^{w*}\\
&=
[(\omega \rhd x)\otimes d\mid d\in B(\LL^2(\G)),\omega\in\LL^1(\G), x\in X]^{w*}= X\ovot B(L^2(\G)),
\end{split}
\]
where the assumption was used in the last equality.

$(4)\implies (1)$ By assumption, we have $X\ovot B(L^2(\G))=
[(\I\otimes b)\alpha(x)\mid x\in X,b\in B(\LL^2(\G))]^{w*},$
hence after applying $\id\otimes \Delta_l$
\[\begin{split}
[x\otimes \Delta_l(b)\mid x\in X,b\in B(\LL^2(\G))]^{w*}=
[(\I\otimes \Delta_l(b))(\id\otimes \Delta)(\alpha(x))\mid x\in X,b\in B(\LL^2(\G))]^{w*}.
\end{split}\]
It follows that
\[\begin{split}
&\quad\;
[x\otimes (y\otimes \I)\Delta_l(b)\mid x\in X,b\in B(\LL^2(\G)),y\in \LL^{\infty}(\G)]^{w*}\\
&=
[(\I\otimes (y\otimes \I)\Delta_l(b))(\alpha\otimes \id)(\alpha(x))\mid x\in X,b\in B(\LL^2(\G)),y\in \LL^{\infty}(\G)]^{w*}.
\end{split}\]
The Podleś condition for the left $\G$-$W^*$-algebra $(B(L^2(\G)), \Delta_l)$ (cf.\ \eqref{Podles}) then gives
$$X\ovot L^\infty(\G)\ovot B(L^2(\G))=[(\I\otimes y\otimes b)(\alpha\otimes \id)(\alpha(x))\mid x\in X,b\in B(\LL^2(\G)),y\in \LL^{\infty}(\G)]^{w*}.$$
Slicing off the third leg gives
$X\bar\otimes\LL^{\infty}(\G)=
[(\I\otimes y)
\alpha(\omega\rhd x)\mid x\in X,\omega \in \LL^1(\G),y\in \LL^{\infty}(\G)]^{w*}.$
Since the implication $(4)\implies (7)$ holds, we conclude that 
$X\ovot L^\infty(\G)= [(1\otimes y)\alpha(x)\mid x\in X, y\in L^\infty(\G)]^{w*},$
where the weak$^*$-continuity of $\alpha$ was used for the first time in the proof. Thus, the implication $(4)\implies (1)$ holds and the implication $(5)\implies (2)$ is proven similarly.
\end{proof}

We will use the following natural notion of non-degenerate part:

\begin{Def}
    If $X\in {}_{L^1(\G)}\NMod$, we write $X_{\operatorname{nd}}:= [\omega\rhd x\mid \omega\in L^1(\G), x\in X]^{w*}$ and we call it the \emph{non-degenerate part of $X$}. We call $X$ \emph{non-degenerate} if $X= X_{\operatorname{nd}}$ (equivalently, one of the conditions in Proposition \ref{lemma3} is satisfied).
\end{Def}

\begin{Lem}\label{ndlemma} Given $(X, \alpha)\in {}_{L^1(\G)}\NMod$, the non-degenerate part $X_{\operatorname{nd}}$ is the largest weak$^*$-closed $L^1(\G)$-submodule of $X$ that is non-degenerate. The space \eqref{k} is equal to $X_{\operatorname{nd}}\ovot B(L^2(\G))$.
\end{Lem}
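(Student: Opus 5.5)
We first check that $X_{\operatorname{nd}}$ is a weak$^*$-closed $L^1(\G)$-submodule of $X$. It is weak$^*$-closed by definition. Each map $x\mapsto \omega\rhd x$ is weak$^*$-continuous and $\omega\rhd(\omega'\rhd x)=(\omega\star\omega')\rhd x$, so $L^1(\G)\rhd X_{\operatorname{nd}}\subseteq X_{\operatorname{nd}}$. Since $\alpha$ is weak$^*$-continuous, the restriction of $\alpha$ lands in $X_{\operatorname{nd}}\ovot_\mcF L^\infty(\G)$: slicing $\alpha(x)$ for $x\in X_{\operatorname{nd}}$ by $\omega\in L^1(\G)$ gives $\omega\rhd x\in X_{\operatorname{nd}}$. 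Hence $X_{\operatorname{nd}}\in{}_{L^1(\G)}\NMod$.

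Next we show $X_{\operatorname{nd}}$ is non-degenerate, i.e.\ $X_{\operatorname{nd}}=[L^1(\G)\rhd X_{\operatorname{nd}}]^{w*}$. The inclusion $\supseteq$ is clear. For $\subseteq$ it suffices to show $\omega\rhd x\in [L^1(\G)\rhd X_{\operatorname{nd}}]^{w*}$ for $\omega\in L^1(\G)$ and $x\in X$. We use the density $[L^1(\G)\star L^1(\G)]=L^1(\G)$ in norm, which holds for any locally compact quantum group. Choose $\omega_n$ in $\lin(L^1(\G)\star L^1(\G))$ with $\omega_n\to\omega$ in norm. Then $\omega_n\rhd x$ is a finite sum of terms $\mu\rhd(\nu\rhd x)\in L^1(\G)\rhd X_{\operatorname{nd}}$, and $\omega_n\rhd x\to\omega\rhd x$ in norm because the action is contractive. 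For maximality, let $Y\subseteq X$ be a weak$^*$-closed non-degenerate submodule. Then $Y=[L^1(\G)\rhd Y]^{w*}\subseteq[L^1(\G)\rhd X]^{w*}=X_{\operatorname{nd}}$.

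For the second claim, we show that the space in \eqref{k}, namely $[(1\otimes B(L^2(\G)))\alpha(X)(1\otimes B(L^2(\G)))]^{w*}$, equals $X_{\operatorname{nd}}\ovot B(L^2(\G))$.

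For the inclusion $\supseteq$, we run the computation of $(7)\implies(6)$ in Proposition \ref{lemma3} verbatim. It uses only $\alpha(\omega\rhd x)=(\id\otimes\id\otimes\omega)((\alpha\otimes\id)\alpha(x))$, the coaction identity, and $\Delta=\vv(\cdot\otimes1)\vv^*$ with $\vv_{23}$ absorbed into $B(L^2(\G))$ in the second leg. It shows that the space in \eqref{k} contains $[(\omega\rhd x)\otimes d]^{w*}=X_{\operatorname{nd}}\ovot B(L^2(\G))$.

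For the inclusion $\subseteq$, note that $X_{\operatorname{nd}}\ovot B(L^2(\G))$ is a weak$^*$-closed $B(L^2(\G))$-bimodule in the second leg. So it suffices to show $\alpha(x)\in X_{\operatorname{nd}}\ovot B(L^2(\G))$ for all $x\in X$. This is the main obstacle: a priori we only know $\alpha(x)\in X_{\operatorname{nd}}\ovot_\mcF L^\infty(\G)$, and the weak$^*$ and Fubini tensor products may differ.

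We circumvent this by using the fact, just proved via the computation from Proposition \ref{lemma3}, that the space in \eqref{k} is contained in $X\ovot B(L^2(\G))$. The plan is to show $\alpha(x)\in\overline{\alpha(X_{\operatorname{nd}})(1\otimes B(L^2(\G)))}^{w*}$ directly. Write $1\otimes b=\lim$ of elements of the form $(\id\otimes\id\otimes\omega)(\vv_{23}^*(1\otimes b\otimes 1)\vv_{23})$, using $\vv^*$-slices to approximate $1$. Concretely, pick a net $\omega_i\in L^1(\G)$ of states and use
$$\alpha(\omega_i\rhd x)=(\id\otimes\id\otimes\omega_i)\bigl(\vv_{23}\alpha(x)_{12}\vv_{23}^*\bigr).$$
Multiplying by $(1\otimes c\otimes1)$ operators on both sides and summing over matrix units of an orthonormal basis, as in the proof of \eqref{j}, expresses $\alpha(x)(1\otimes e)$ for finite-rank $e$ as a weak$^*$-limit of elements of $(1\otimes B)\alpha(X_{\operatorname{nd}})(1\otimes B)$. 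Then let $e\uparrow1$.

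If this route proves too delicate, the fallback is to slice. For $\rho\in X_*$ vanishing on $X_{\operatorname{nd}}$, one has $(\rho\otimes\id)\alpha(x)=0$, since its $\omega$-slice is $\rho(\omega\rhd x)=0$. This gives $\alpha(x)\in X_{\operatorname{nd}}\ovot_\mcF L^\infty(\G)$, and we combine this with $\alpha(x)\in$ \eqref{k} $\subseteq X\ovot B(L^2(\G))$. The intersection $(X_{\operatorname{nd}}\ovot_\mcF B)\cap(X\ovot B)$ equals $X_{\operatorname{nd}}\ovot B$, by the general identity $X_{\operatorname{nd}}\ovot B=\{z\in X\ovot B\mid(\rho\otimes\id)z=0\ \forall\rho\in X_{\operatorname{nd}}^\perp\}$, which holds because $B(L^2(\G))$ is injective (Lemma \ref{slicemapproperty}).
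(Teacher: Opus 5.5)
Your proof is correct once you commit to the fallback, and it is essentially the paper's argument. The first claim is handled identically: module property, maximality, and $[L^1(\G)\star L^1(\G)]=L^1(\G)$. For the second claim, the paper gets $\supseteq$ by applying Proposition \ref{lemma3} to the non-degenerate module $X_{\operatorname{nd}}$ instead of rerunning the $(7)\Rightarrow(6)$ computation for $X$. The inclusion $\subseteq$, which the paper's final equality uses without comment, is exactly the slice-map property of $B(L^2(\G))$ that you invoke.

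Some corrections to the write-up follow.

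\emph{The containment you attribute to Proposition \ref{lemma3}.} That computation does not show that \eqref{k} is contained in $X\ovot B(L^2(\G))$; it gives the opposite inclusion. You need neither that containment nor the intersection identity. Since $X_{\operatorname{nd}}$ is itself a dual operator space, Lemma \ref{slicemapproperty} gives directly $\alpha(x)\in X_{\operatorname{nd}}\ovot_\mcF L^\infty(\G)\subseteq X_{\operatorname{nd}}\ovot_\mcF B(L^2(\G))=X_{\operatorname{nd}}\ovot B(L^2(\G))$, and the bimodule property finishes.

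\emph{Your first route.} Drop it. As sketched, it relies on slices by a net of states approximating the identity. That would require something like a bounded approximate identity in $L^1(\G)$, i.e.\ coamenability, which is not available in general. If you want an elementary direct argument, compress by finite-rank projections. For an orthonormal basis $\{e_i\}$ of $L^2(\G)$ and $p_F$ the projection onto $\operatorname{span}\{e_i\mid i\in F\}$, you have $(1\otimes p_F)\alpha(x)(1\otimes p_F)=\sum_{i,j\in F}(\omega_{e_i,e_j}\rhd x)\otimes|e_i\rangle\langle e_j|\in X_{\operatorname{nd}}\odot B(L^2(\G))$, with $\omega_{e_i,e_j}$ restricted to $L^\infty(\G)$. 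This bounded net converges weak$^*$ to $\alpha(x)$ as $F$ increases.

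\emph{The slice computation.} To produce $(\omega\rhd x)\otimes d$, you must implement $\Delta$ by $\ww$, writing $(\id\otimes\Delta)\alpha(x)=\ww_{23}^*\alpha(x)_{13}\ww_{23}$. Absorbing $\ww_{23}$ then leaves $(\omega\rhd x)\otimes bc$. By contrast, absorbing $\vv_{23}$ in $\vv_{23}\alpha(x)_{12}\vv_{23}^*$ only returns $(1\otimes b)\alpha(x)(1\otimes c)$.
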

\begin{proof}
    It follows trivially from the module property that $X_{\operatorname{nd}}$ is an $L^1(\G)$-submodule of $X$. It is also immediately clear that it contains every weak$^*$-closed non-degenerate $L^1(\G)$-submodule of $X$. The non-degeneracy of $(X_{\operatorname{nd}}, \alpha)$ follows from $L^1(\G)=[L^1(\G)\star L^1(\G)]$.

    Since $(X_{\operatorname{nd}}, \alpha)$ is non-degenerate, it follows from Proposition \ref{lemma3} that
    \begin{align*}
        X_{\operatorname{nd}}\ovot B(L^2(\G))&= [(1\otimes B(L^2(\G)))\alpha(X_{\operatorname{nd}})(1\otimes B(L^2(\G)))]^{w*}\\
        &\subseteq [(1\otimes B(L^2(\G)))\alpha(X)(1\otimes B(L^2(\G)))]^{w*} = X_{\operatorname{nd}}\ovot B(L^2(\G)),
    \end{align*}
    so $X_{\operatorname{nd}}\ovot B(L^2(\G))$ coincides with the space \eqref{k}.
\end{proof}

\begin{Cor}\label{cor1}
If $X\in {}_{L^1(\G)}\NMod$ is non-degenerate, then $\alpha(X)\subseteq X\ovot L^\infty(\G)$.
\end{Cor}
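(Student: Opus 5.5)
The plan is to deduce the corollary from Proposition \ref{lemma3} and Lemma \ref{ndlemma}, together with the definition of $\alpha$ via slicing.

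Since $X$ is non-degenerate, condition (3) of Proposition \ref{lemma3} holds:
$$X\ovot L^\infty(\G)=[(1\otimes y)\alpha(x)(1\otimes z)\mid y,z\in L^\infty(\G),\ x\in X]^{w*}.$$
Taking $y=z=1$ immediately gives $\alpha(x)\in X\ovot L^\infty(\G)$ for every $x\in X$. So the corollary follows at once from the equivalence $(7)\Leftrightarrow(3)$ already proved.

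If one wants to see the mechanism directly, it is worth recording, since it is exactly what the proof of $(7)\Rightarrow(6)$ does. First I treat elements of the form $\omega\rhd x$ with $\omega\in L^1(\G)$. By the coaction identity,
$$\alpha(\omega\rhd x)=(\id\otimes\id\otimes\omega)(\alpha\otimes\id)\alpha(x)=(\id\otimes\id\otimes\omega)(\id\otimes\Delta)\alpha(x).$$
Now write $(\id\otimes\Delta)\alpha(x)=\vv_{23}\alpha(x)_{12}\vv_{23}^*$ and slice the third leg. This expresses $\alpha(\omega\rhd x)$ as a weak$^*$-limit of elements of the form $(1\otimes b)\alpha(x)(1\otimes c)$ with $b,c\in B(L^2(\G))$.

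The main subtle point is that $\alpha(x)$ a priori lies only in $X\ovot_\mcF L^\infty(\G)$ and not in $X\ovot L^\infty(\G)$. This is where weak$^*$-continuity of $\alpha$ and weak$^*$-density of $L^1(\G)\rhd X$ are used. Condition (7) says the elements $\omega\rhd x$ span a weak$^*$-dense subspace of $X$, and $\alpha$ is weak$^*$-continuous. Also, $X\ovot L^\infty(\G)$ is weak$^*$-closed in $X\ovot_\mcF L^\infty(\G)$. Hence it suffices to check $\alpha(\omega\rhd x)\in X\ovot L^\infty(\G)$, and this is guaranteed by Proposition \ref{lemma3}.
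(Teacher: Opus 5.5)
Your argument is correct and is essentially the paper's own: the inclusion is read off directly from Proposition~\ref{lemma3}. The paper cites condition (1) with $y=1$, and your use of condition (3) with $y=z=1$ works equally well.

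One caution about your optional ``mechanism'' aside. The $(7)\Rightarrow(6)$ computation only places $\alpha(\omega\rhd x)$ in $X\ovot B(L^2(\G))$, which already contains $X\ovot_\mcF L^\infty(\G)$ by Lemma~\ref{slicemapproperty}. So it cannot distinguish $X\ovot L^\infty(\G)$ from $X\ovot_\mcF L^\infty(\G)$. The step that actually lands in $X\ovot L^\infty(\G)$ is $(4)\Rightarrow(1)$, which uses the Podle\'s condition for $(B(L^2(\G)),\Delta_l)$ together with the weak$^*$-continuity of $\alpha$.
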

\begin{proof}
    This follows from Proposition \ref{lemma3} (1).
\end{proof}

\begin{Rem}
The converse of Corollary \ref{cor1} is not true. Indeed, consider the quantum group $\G$ with $\check{\G}=\oon{SL}(3,\R)$. Since $\oon{SL}(3,\R)$ does not have the AP \cite{LDLS11,HDL13}, the forthcoming Theorem \ref{main'} implies that there exists $X\in {}_{L^1(\G)}\NMod^{\|\cdot\|}$ which is degenerate. On the other hand, the group von Neumann algebra $L^{\infty}(\G)\cong \mathscr{L}(\oon{SL}(3,\R))$ is injective \cite{C76}, thus $X\bar\otimes_{\mathcal{F}} L^{\infty}(\G)=X\bar\otimes L^{\infty}(\G)$ by Lemma \ref{slicemapproperty}.
\end{Rem}

\begin{Rem}
     We do not know an example of $(X, \alpha)\in {}_{L^1(\G)}\NMod$ where the inclusion $\alpha(X)\subseteq X\bar\otimes L^{\infty}(\G)$ does not hold. If such an example exists, it will not be easy to construct: on the one hand, Corollary \ref{cor1} implies that $(X, \alpha)$ must necessarily be degenerate; on the other hand, the von Neumann algebra $L^\infty(\G)$ is not allowed to possess the slice map property (which is e.g.\ automatic if $\check{\G}$ has the AP, see Corollary \ref{smp}). 
\end{Rem}

\begin{Lem}[cf.\ \cite{An23}*{Proposition 2.5}]\label{satnd} If $(X, \alpha)\in {}_{L^1(\G)}\NMod$, then $(\Sat(X, \alpha), \id\otimes \Delta)\in {}_{L^1(\G)}\NMod^{\|\cdot\|}$. Setting $Y:= \operatorname{span}\{(\id \otimes \omega)(z)\mid \omega\in L^1(\G), z\in \Sat(X, \alpha)\}\subseteq X,$ we have $\Sat(X, \alpha)_{\operatorname{nd}}= \overline{\alpha(Y)}^{w*}$. In particular, if $(X, \alpha)\in {}_{L^1(\G)}\NMod^{\|\cdot\|}$, then $\Sat(X, \alpha)$ is non-degenerate if and only if $(X,\alpha)$ is both saturated and non-degenerate.
\end{Lem}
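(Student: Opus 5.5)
The lemma has three parts, and each reduces to the slicing identity already used in the proof of Lemma \ref{saturationsaturated}:
$$(\id\otimes\id\otimes\omega)(\id\otimes\Delta)(z)=\alpha((\id\otimes\omega)(z)), \qquad z\in\Sat(X,\alpha),\ \omega\in L^1(\G).$$
Throughout I will use the fact recalled at the end of Section \ref{SectionPreliminaries}: a weak$^*$-continuous isometry between dual spaces has weak$^*$-closed range. Together with the Krein--Smulian theorem, such an isometry is then a weak$^*$-homeomorphism onto its range.

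\emph{First claim.} By Lemma \ref{saturationsaturated}, $(\Sat(X,\alpha),\id\otimes\Delta)\in{}_{L^1(\G)}\Mod^{\|\cdot\|}$, so only the dual structure remains.
\begin{itemize}
\item Since $X\in{}_{L^1(\G)}\NMod$, $\alpha$ is weak$^*$-continuous. Hence $\alpha\otimes\id$ is weak$^*$-continuous on $X\ovot_\mcF L^\infty(\G)\cong\CB(L^1(\G),X)$, as is $\id\otimes\Delta$.
\item $\Sat(X,\alpha)$ is the equalizer of these two weak$^*$-continuous maps, so it is weak$^*$-closed in $X\ovot_\mcF L^\infty(\G)$ and is therefore a dual operator space.
\item The module maps $z\mapsto(\id\otimes\id\otimes\omega)(\id\otimes\Delta)(z)$ are visibly weak$^*$-continuous.
\end{itemize}

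\emph{Second claim.} By the slicing identity, $\omega\rhd z=\alpha((\id\otimes\omega)(z))$ for $z\in\Sat(X,\alpha)$. Hence the span of $L^1(\G)\rhd\Sat(X,\alpha)$ is exactly $\alpha(Y)$. Taking weak$^*$-closures gives $\Sat(X,\alpha)_{\operatorname{nd}}=\overline{\alpha(Y)}^{w*}$; note that $\alpha(Y)\subseteq\alpha(X)\subseteq\Sat(X,\alpha)$.

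\emph{Third claim.} Now let $\alpha$ be completely isometric. Then $\alpha(X)$ is weak$^*$-closed and $\alpha$ is a weak$^*$-homeomorphism onto it, so $\overline{\alpha(Y)}^{w*}=\alpha(\overline{Y}^{w*})$. This transfer of closures through $\alpha$ is the only non-formal point; the rest is bookkeeping.

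Suppose first that $X$ is saturated and non-degenerate. Then every $z\in\Sat(X,\alpha)$ has the form $\alpha(x)$, and $(\id\otimes\omega)\alpha(x)=\omega\rhd x$. So $Y=\lin(L^1(\G)\rhd X)$, whose weak$^*$-closure is $X_{\operatorname{nd}}=X$. Therefore
$$\Sat(X,\alpha)_{\operatorname{nd}}=\alpha(X)=\Sat(X,\alpha),$$
and $\Sat(X,\alpha)$ is non-degenerate.

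Conversely, suppose $\Sat(X,\alpha)$ is non-degenerate. Then
$$\Sat(X,\alpha)=\overline{\alpha(Y)}^{w*}=\alpha(\overline{Y}^{w*})\subseteq\alpha(X)\subseteq\Sat(X,\alpha),$$
so $X$ is saturated. Injectivity of $\alpha$ then gives $X=\overline{Y}^{w*}$. Since $X$ is saturated, $Y=\lin(L^1(\G)\rhd X)$ by the same computation as before, and hence $X=X_{\operatorname{nd}}$.
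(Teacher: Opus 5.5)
Your proposal is correct and follows the paper's own proof. The paper likewise reads off $\Sat(X,\alpha)_{\operatorname{nd}}=\overline{\alpha(Y)}^{w*}$ from the slicing identity $(\id\otimes\id\otimes\omega)(\id\otimes\Delta)(z)=\alpha((\id\otimes\omega)(z))$, and then uses $\overline{\alpha(Y)}^{w*}=\alpha(\overline{Y}^{w*})$ for completely isometric $\alpha$ to obtain the final equivalence. Your check that $\Sat(X,\alpha)$ is weak$^*$-closed, as the equalizer of the weak$^*$-continuous maps $\alpha\otimes\id$ and $\id\otimes\Delta$, fills in a point the paper leaves implicit.
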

\begin{proof} If $z\in \Sat(X, \alpha)$ and $\omega \in L^1(\G)$, we have
$(\id \otimes \id \otimes \omega)(\id \otimes \Delta)(z) = \alpha((\id \otimes \omega)(z)),$ from which we immediately conclude that $\Sat(X, \alpha)_{\operatorname{nd}}= \overline{\alpha(Y)}^{w*}$. If $(X, \alpha)\in {}_{L^1(\G)}\NMod^{\|\cdot\|}$, then $\overline{\alpha(Y)}^{w*}= \alpha(\overline{Y}^{w*})$ and the last claim immediately follows as well.
\end{proof}

\begin{Prop}[cf.\ \cite{An23}*{Corollary 3.28}]\label{prop1}
    If $X\in {}_{L^1(\G)}\NMod$, then $(X\bar{\rtimes}_\alpha \G,\alpha^{\rtimes})\in {}_{L^1(\check{\G})}\NMod^{\|\cdot\|}$ is non-degenerate.
\end{Prop}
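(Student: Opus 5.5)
The membership $(X\bar{\rtimes}_\alpha \G,\alpha^{\rtimes})\in {}_{L^1(\check{\G})}\NMod^{\|\cdot\|}$ was already observed right after the definition of the weak$^*$-crossed product. Indeed, $\alpha^\rtimes(z)=\check{\vv}_{23}z_{12}\check{\vv}_{23}^*$ is implemented by a unitary, so it is a weak$^*$-continuous complete isometry. It maps $X\bar{\rtimes}_\alpha \G$ into $(X\bar{\rtimes}_\alpha\G)\ovot L^\infty(\check{\G})$, and the coaction identity is inherited from Proposition \ref{relations} (2). So the plan is to prove only non-degeneracy, i.e.\ condition (7) of Proposition \ref{lemma3} for $\check{\G}$:
$$X\bar{\rtimes}_\alpha\G=[\omega\rhd z\mid \omega\in L^1(\check{\G}),\ z\in X\bar{\rtimes}_\alpha\G]^{w*}.$$
The inclusion $\supseteq$ is clear, so it remains to prove $\subseteq$.

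First I compute the action on generators. Take $x\in X$ and $y\in L^\infty(\check{\G})$, and consider $\alpha(x)(1\otimes y)$. Since $\check{\vv}\in L^\infty(\G)'\ovot L^\infty(\check{\G})$ and $\alpha(x)\in X\ovot_\mcF L^\infty(\G)$, the operator $\check{\vv}_{23}$ commutes with $\alpha(x)_{12}$. Hence
$$\alpha^\rtimes(\alpha(x)(1\otimes y))=(\alpha(x)\otimes 1)\,\check{\vv}_{23}(1\otimes y\otimes 1)\check{\vv}_{23}^*=(\alpha(x)\otimes 1)(1\otimes\check{\Delta}(y)).$$
Slicing the last leg by $\omega\in L^1(\check{\G})$ gives
$$\omega\rhd(\alpha(x)(1\otimes y))=\alpha(x)\bigl(1\otimes(\id\otimes\omega)\check{\Delta}(y)\bigr).$$

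Next I use that $L^\infty(\check{\G})$ is non-degenerate as a module over $L^1(\check{\G})$, namely
$$[(\id\otimes\omega)\check{\Delta}(y)\mid y\in L^\infty(\check{\G}),\ \omega\in L^1(\check{\G})]^{w*}=L^\infty(\check{\G}).$$
This follows from Proposition \ref{lemma3} applied to the right $\check{\G}$-$W^*$-algebra $(L^\infty(\check{\G}),\check{\Delta})$. That algebra satisfies the Podleś condition \eqref{Podles}, which is condition (2) there, and (2) implies (7).

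Finally I combine the two steps. Right multiplication by $\alpha(x)$ is weak$^*$-continuous in the ambient $B(\mcH)\ovot B(L^2(\G))$. Therefore the weak$^*$-closed span of the elements $\omega\rhd(\alpha(x)(1\otimes y))$ contains $\alpha(x)(1\otimes L^\infty(\check{\G}))$ for every $x\in X$. Its weak$^*$-closure thus contains $[\alpha(X)(1\otimes L^\infty(\check{\G}))]^{w*}=X\bar{\rtimes}_\alpha\G$, which gives $\subseteq$.

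The only step requiring care is the commutation of $\check{\vv}_{23}$ with $\alpha(x)_{12}$ when $\alpha(x)$ lies only in the Fubini tensor product $X\ovot_\mcF L^\infty(\G)$. I would handle it by viewing $\alpha(x)$ inside $B(\mcH)\ovot L^\infty(\G)$, where the commutation holds because the second leg of $\alpha(x)$ lies in $L^\infty(\G)$ and the second leg of $\check{\vv}$ lies in $L^\infty(\G)'$.
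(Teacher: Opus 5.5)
Your proof is correct and is essentially the paper's argument: you compute $\alpha^\rtimes$ on generators, slice the last leg, and use the Podle\'s density for $(L^\infty(\check{\G}),\check{\Delta})$ to recover all of $L^\infty(\check{\G})$ in the second leg. The differences are cosmetic: the paper uses generators $(1\otimes\check y)\alpha(x)$ instead of $\alpha(x)(1\otimes\check y)$ and invokes Podle\'s directly rather than via Proposition \ref{lemma3}. One small wording slip: the weak$^*$-continuous map you need is $T\mapsto\alpha(x)T$, which is left multiplication by $\alpha(x)$, not right multiplication.
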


\begin{proof}
    Using the Podleś condition for $(L^\infty(\check{\G}),\check{\Delta})$ (cf.\ \eqref{Podles}), we have   
\[\begin{split}
\quad\;
[ \check{\omega}\rhd z\mid \check{\omega}\in \LL^1(\check{\G}),z\in X\bar{\rtimes}_\alpha \G]^{w *}&=
[(\id\otimes \id\otimes \check{\omega})
\alpha^{\rtimes}
( (1\otimes \check{y})
\alpha(x)
)
\mid \check{\omega}\in \LL^1(\check{\G}),x\in X, \check{y}\in L^\infty(\check{\G})]^{w *}\\
&=
[(\id\otimes \id\otimes \check{\omega})
\bigl(
 (1\otimes
\check{\Delta}(\check{y}))
\alpha(x)_{12}
\bigr)
\mid \check{\omega}\in \LL^1(\check{\G}),x\in X, \check{y}\in L^\infty(\check{\G})]^{w *}\\
&=
[
(1\otimes (\id\otimes \check{\omega})
\check{\Delta}(\check{y}))
\,\alpha(x)
\mid \check{\omega}\in \LL^1(\check{\G}),x\in X, \check{y}\in L^\infty(\check{\G})]^{w *}\\
&=
[
(1\otimes \check{y})
\alpha(x)
\mid x\in X, \check{y}\in \LL^{\infty}(\check{\G})]^{w *}=
X\bar{\rtimes}_\alpha \G,
\end{split}\]
hence $(X\bar{\rtimes}_\alpha \G,\alpha^{\rtimes})$ is non-degenerate.
\end{proof}

\begin{Prop}\label{saturationnd} Let $(X, \alpha)\in {}_{L^1(\G)}\NMod$.
\begin{enumerate}[noitemsep]
    \item  $\Sat(X, \alpha)= \Sat(X_{\operatorname{nd}}, \alpha)$.
    \item If $(X_{\operatorname{nd}}, \alpha)$ is saturated, then so is $(X, \alpha)$.
    \item   If $X\in {}_{L^1(\G)}\NMod^{\|\cdot\|}$ and $(X_{\operatorname{nd}}, \alpha)$ is saturated, then $(X, \alpha)$ is non-degenerate.
\end{enumerate}  
\end{Prop}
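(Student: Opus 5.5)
For (1), the inclusion $\Sat(X_{\operatorname{nd}}, \alpha)\subseteq \Sat(X,\alpha)$ is immediate from $X_{\operatorname{nd}}\ovot_\mcF L^\infty(\G)\subseteq X\ovot_\mcF L^\infty(\G)$; this uses injectivity of the Fubini tensor product. For the converse, fix $z\in \Sat(X,\alpha)$. I will show $(\rho\otimes\id)(z)\in L^\infty(\G)$ and $(\id\otimes\omega)(z)\in X_{\operatorname{nd}}$ for all $\omega\in L^1(\G)$. The first holds already since $z\in X\ovot_\mcF L^\infty(\G)$. For the second, use the saturation identity $(\alpha\otimes\id)(z)=(\id\otimes\Delta)(z)$ and slice the last two legs with $\omega'\otimes\omega$. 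This gives
\[
\omega'\rhd (\id\otimes\omega)(z) = (\id\otimes \omega'\star\omega)(z).
\]
Since $L^1(\G)=[L^1(\G)\star L^1(\G)]$ and $z$ is bounded, $\omega\mapsto(\id\otimes\omega)(z)$ is norm continuous. Hence every $(\id\otimes\omega)(z)$ is a norm limit of elements of the form $\omega'\rhd x$, and so lies in $X_{\operatorname{nd}}$. Therefore $z\in X_{\operatorname{nd}}\ovot_\mcF L^\infty(\G)$, and since the saturation equation holds there, $z\in\Sat(X_{\operatorname{nd}},\alpha)$.

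For (2), suppose $\alpha(X_{\operatorname{nd}})=\Sat(X_{\operatorname{nd}},\alpha)$. By (1), $\Sat(X,\alpha)=\Sat(X_{\operatorname{nd}},\alpha)=\alpha(X_{\operatorname{nd}})\subseteq\alpha(X)$. Combined with the trivial inclusion $\alpha(X)\subseteq\Sat(X,\alpha)$, this shows $(X,\alpha)$ is saturated.

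For (3), take $x\in X$. Then $\alpha(x)\in\Sat(X,\alpha)=\alpha(X_{\operatorname{nd}})$ by the argument in (2), so $\alpha(x)=\alpha(y)$ for some $y\in X_{\operatorname{nd}}$. Since $\alpha$ is a complete isometry, it is injective, so $x=y\in X_{\operatorname{nd}}$. Hence $X=X_{\operatorname{nd}}$.

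The only step that needs care is in (1): showing the slices of $z$ land in $X_{\operatorname{nd}}$, which rests on the factorization/density $L^1(\G)=[L^1(\G)\star L^1(\G)]$ together with the norm continuity of slicing. Everything else is formal.
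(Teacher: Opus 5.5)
Your proposal is correct and follows the paper's proof essentially step for step. In (1) the paper uses the same slice identity $(\id\otimes(\mu\star\nu))(z)=\mu\rhd(\id\otimes\nu)(z)\in X_{\operatorname{nd}}$ together with $L^1(\G)=[L^1(\G)\star L^1(\G)]$. It then deduces (2) and (3) exactly as you do, via $\alpha(X)\subseteq\Sat(X,\alpha)=\alpha(X_{\operatorname{nd}})\subseteq\alpha(X)$ and injectivity of $\alpha$.
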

\begin{proof} $(1)$ Given $z\in \Sat(X,\alpha)$ and $\mu, \nu \in L^1(\G)$, we have the identity
    $$(\id \otimes (\mu\star \nu))(z) = (\id \otimes \mu)\alpha((\id \otimes \nu)(z))\in X_{\operatorname{nd}}.$$
    Since $L^1(\G)= [L^1(\G)\star L^1(\G)]$, it follows that $z\in X_{\operatorname{nd}}\ovot_\mcF L^\infty(\G)$. The inclusion $\Sat(X, \alpha)\subseteq \Sat(X_{\operatorname{nd}}, \alpha)$ then follows, while the other inclusion is trivial. 

$(2)$ follows immediately from $(1)$.

$(3)$ If $X\in {}_{L^1(\G)}\NMod^{\|\cdot\|}$ and $(X_{\operatorname{nd}}, \alpha)$ is saturated, we get
    $$\alpha(X)\subseteq  \Sat(X, \alpha)= \Sat(X_{\operatorname{nd}}, \alpha)=\alpha(X_{\operatorname{nd}})\subseteq \alpha(X),$$
    whence $X= X_{\operatorname{nd}}$ by injectivity of $\alpha$.
\end{proof}

\subsection{Takesaki-Takai duality} The two different notions of crossed products introduced in the previous subsections lead to two versions of the Takesaki-Takai duality. They are crucial tools in the theory.

\begin{Rem}\label{doublecrossed} Recall that $\check{\check{\G}}\cong \G$ via the isomorphism $x\mapsto u_\G x u_\G$.
    Given $(X, \check{\check{\alpha}})\in {}_{L^1(\check{\check{\G}})}\Mod$, we then obtain $(X, \alpha)\in {}_{L^1(\G)}\Mod$ via 
    $$\alpha(x):= (1\otimes u_\G)\check{\check{\alpha}}(x)(1\otimes u_\G) \in X\ovot_\mcF L^\infty(\G), \quad x\in X.$$
    Note also that
    $\Fix(X, \check{\check{\alpha}})= \Fix(X, \alpha).$
    
    In particular, if $X\in {}_{L^1(\G)}\Mod$, the above procedure allows us to turn $(X\rtimes_\alpha^\mcF \G\rtimes_{\alpha^\rtimes}^\mcF\check{\G}, \alpha^{\rtimes\rtimes})\in {}_{L^1(\check{\check{\G}})}\Mod^{\|\cdot\|}$ into an $L^1(\G)$-module via the coaction
$$\kappa: X\rtimes_\alpha^\mcF \G \rtimes_{\alpha^\rtimes}^\mcF\check{\G}\to (X\rtimes_\alpha^\mcF \G \rtimes_{\alpha^\rtimes}^\mcF\check{\G})\ovot_\mcF L^\infty(\G): z \mapsto u_{\G,4}\check{\check{\vv}}_{34}z_{123}\check{\check{\vv}}_{34}^* u_{\G,4}.$$
Since $\check{\check{\vv}}= (u_\G \otimes u_\G)\vv(u_\G \otimes u_\G)$ and $(u_\G \otimes 1)\vv(u_\G \otimes 1)= \ww_{21}$, we find 
$$\kappa(z) = \ww_{43}z_{123}\ww_{43}^*, \quad z \in X\rtimes_\alpha^\mcF \G \rtimes_{\alpha^\rtimes}^\mcF\check{\G}.$$
Similarly, if $X\in {}_{L^1(\G)}\NMod$, we have $((X\bar{\rtimes}_\alpha \G)\rtimes_{\alpha^\rtimes}^\mcF \check{\G}, \kappa)\in {}_{L^1(\G)}\NMod^{\|\cdot\|}.$
\end{Rem}

\begin{Prop}[cf.\ \cite{An23}*{Proposition 4.4, Proposition 4.7}] \label{TT1}  Let $(X, \alpha)\in {}_{L^1(\G)}\Mod$. \begin{enumerate}[noitemsep]
    \item The map
\begin{equation}\label{TTiso}
    \Phi: X\rtimes_\alpha^\F \G \rtimes_{\alpha^\rtimes}^\F \check{\G} \to \operatorname{Sat}(X, \alpha)\ovot_\mcF B(L^2(\G)): z \mapsto \vv_{23}z\vv_{23}^*
\end{equation}
is a completely isometric isomorphism. Endowing $X\rtimes_\alpha^\mcF \G\rtimes_{\alpha^\rtimes}^\mcF \check{\G}$ with its natural $\G$-action $$\kappa: X\rtimes_\alpha^\mcF \G\rtimes_{\alpha^\rtimes}^\mcF \check{\G}\to (X\rtimes_\alpha^\mcF \G\rtimes_{\alpha^\rtimes}^\mcF \check{\G})\ovot_\mcF L^\infty(\G): z\mapsto \ww_{43}z_{123}\ww_{43}^*$$ (see Remark \ref{doublecrossed}) and $\Sat(X, \alpha)\ovot_\mcF B(L^2(\G))$ with its natural $\G$-action $\lambda: \Sat(X, \alpha)\ovot_\mcF B(L^2(\G))\to \Sat(X, \alpha)\ovot_\mcF B(L^2(\G))\ovot_{\mcF} L^\infty(\G)$ given by
$$\lambda(v)= \ww_{43}(\id \otimes \Delta \otimes \id)(v)_{1243}\ww_{43}^*= \ww_{43}\vv_{24}v_{123}\vv_{24}^*\ww_{43}^*$$ (see Proposition \ref{relations}), the map $\Phi$ is $\G$-equivariant.
\item The natural map
    $$\Psi:= \Phi^{-1}\circ (\alpha\otimes \id): X \ovot_\mcF B(L^2(\G)) \to X\rtimes_\alpha^\mcF \G \rtimes_{\alpha^\rtimes}^\mcF\check{\G}: z \mapsto \vv_{23}^*(\alpha\otimes \id)(z)\vv_{23}$$
    is $\G$-equivariant, where $X\ovot_\mcF B(L^2(\G))$ is endowed with the $\G$-action from Proposition \ref{relations} and where $X\rtimes_\alpha^\mcF \G \rtimes_{\alpha^\rtimes}^\mcF\check{\G}$ is endowed with the $\G$-action $\kappa$ considered above. If moreover $(X, \alpha)\in {}_{L^1(\G)}\Mod^{\|\cdot\|}$, then $\Psi$ is surjective if and only if $(X, \alpha)$ is saturated. 
\end{enumerate}
\end{Prop}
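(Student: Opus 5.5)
The plan is to reduce everything to the von Neumann algebraic Takesaki--Takai duality by slicing off the $X$-leg.

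\textbf{Part (1): setup and the key identity.} Fix an embedding $X\subseteq B(\mcH)$, so that
$$X\rtimes_\alpha^\mcF\G\rtimes^\mcF_{\alpha^\rtimes}\check{\G}\subseteq B(\mcH)\ovot B(L^2(\G))\ovot B(L^2(\G)).$$
Since $\Phi$ is conjugation by the unitary $\vv_{23}$, it is automatically a complete isometry of $B(\mcH\otimes L^2(\G)\otimes L^2(\G))$. So the whole content is identifying its image. An element $z$ of the double crossed product is characterized by three conditions:
\begin{enumerate}[noitemsep]
\item[(a)] $z\in X\ovot_\mcF B(L^2(\G))\ovot_\mcF B(L^2(\G))$;
\item[(b)] $(\alpha\otimes\id\otimes\id)(z)=(\id\otimes\Delta_l\otimes\id)(z)$, i.e.\ the first two legs lie in $X\rtimes^\mcF_\alpha\G$;
\item[(c)] $(\alpha^\rtimes\otimes\id)(z)=(\id\otimes\id\otimes\check{\Delta}_l)(z)$.
\end{enumerate}
Condition (c) involves only the last two legs. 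Slicing the first leg with $\omega\in B(\mcH)_*$ reduces it to the statement that $y\in B(L^2(\G))\ovot B(L^2(\G))$ satisfies $(\check{\Delta}_r\otimes\id)(y)=(\id\otimes\check{\Delta}_l)(y)$. That is, $y\in B(L^2(\G))\bar{\rtimes}_{\check{\Delta}_r}\check{\G}$, by \eqref{FubiniGWstar} applied to $\check{\G}$.

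Next I would use \eqref{crossed}, which identifies $(B(L^2(\G)),\check{\Delta}_r)$ equivariantly with $L^\infty(\G)\bar{\rtimes}_\Delta\G$. Combining this with the Takesaki--Takai isomorphism \eqref{TTconcrete} for $M=L^\infty(\G)$, I expect to extract the key identity
$$\vv_{12}\bigl(B(L^2(\G))\bar{\rtimes}_{\check{\Delta}_r}\check{\G}\bigr)\vv_{12}^*=L^\infty(\G)\ovot B(L^2(\G)).$$
Establishing this cleanly, with the correct leg conventions, is the step I expect to be the main obstacle. If the abstract identification becomes messy, I would instead verify it directly on generators. The relevant generators are $\Delta_l(b)\otimes 1$ and $1\otimes\check{\Delta}(\check x)$ and $1\otimes1\otimes y'$, and one checks where each goes under conjugation by $\vv_{12}$.

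\textbf{Part (1): transferring (a)--(c) through $\Phi$.} Granting the key identity, (a) and (c) give that $w:=\vv_{23}z\vv_{23}^*$ lies in $B(\mcH)\ovot_\mcF L^\infty(\G)\ovot_\mcF B(L^2(\G))$. For (b), observe that $\vv\in L^\infty(\check{\G})\ovot L^\infty(\G)$ and $L^\infty(\check{\G})\subseteq L^\infty(\G)'$. Since $\Delta_l(y')=1\otimes y'$ for $y'\in L^\infty(\G)'$, we get $(\Delta_l\otimes\id)(\vv)=\vv_{23}$. Also $(\id\otimes\Delta_l)$ and $\Delta_r$ are intertwined via $\Delta(x)=\vv(x\otimes1)\vv^*$. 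Together these turn (b) into
$$(\alpha\otimes\id\otimes\id)(w)=(\id\otimes\Delta\otimes\id)(w).$$
Slicing the third leg then shows $w\in\Sat(X,\alpha)\ovot_\mcF B(L^2(\G))$, using the intersection property of Fubini tensor products to keep the first leg in $X$. Every step is reversible, which gives surjectivity of $\Phi$ onto $\Sat(X,\alpha)\ovot_\mcF B(L^2(\G))$.

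\textbf{Part (1): equivariance.} This is a routine leg computation. One checks $\lambda(\Phi(z))=(\Phi\otimes\id)(\kappa(z))$, i.e.
$$\ww_{43}\vv_{24}\vv_{23}z_{123}\vv_{23}^*\vv_{24}^*\ww_{43}^*=\vv_{23}\ww_{43}z_{123}\ww_{43}^*\vv_{23}^*.$$
This should follow from $(\id\otimes\Delta)\vv=\vv_{12}\vv_{13}$ together with the commutation of $\ww_{43}$ (which acts on legs $3,4$) with $\vv_{23}$, up to the pentagon-type relation linking $\ww$ and $\vv$.

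\textbf{Part (2).} Since $(\alpha\otimes\id)(X\ovot_\mcF B(L^2(\G)))\subseteq\Sat(X,\alpha)\ovot_\mcF B(L^2(\G))$ by the coaction identity, $\Psi$ is well defined. Its equivariance follows from part (1) combined with the equivariance of $\alpha\otimes\id$ for $\tilde\alpha$ versus $\lambda$, which is a direct computation.

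For the surjectivity statement, since $\Phi$ is bijective, $\Psi$ is surjective if and only if $\alpha\otimes\id$ is onto $\Sat(X,\alpha)\ovot_\mcF B(L^2(\G))$.
\begin{itemize}[noitemsep]
\item If $(X,\alpha)$ is saturated, then $\alpha:X\to\Sat(X,\alpha)$ is a surjective complete isometry. By Hamana's result \cite[Proposition 1.1]{Ham11}, $\alpha\otimes\id$ is then a completely isometric isomorphism.
\item Conversely, given $s\in\Sat(X,\alpha)$, the element $s\otimes1$ lies in the target, so it has a preimage $z$. Slicing the last leg with a normal state $\omega$ gives $\alpha((\id\otimes\omega)z)=s$, so $(X,\alpha)$ is saturated.
\end{itemize}
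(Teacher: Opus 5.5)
Your plan is sound and, once two justifications are repaired (see the second paragraph), gives a complete proof. It is organized differently from the paper's.

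\textbf{Comparison with the paper.} The paper neither slices off the $X$-leg nor uses the von Neumann-algebraic Takesaki--Takai duality. It puts two $\check{\G}$-actions on $(X\rtimes^\F_\alpha\G)\ovot_\F B(L^2(\G))$:
\[
\gamma(z)=\vv_{43}(\id\otimes\check{\Delta}_r\otimes\id)(z)_{1243}\vv_{43}^*,\qquad \delta(z)=(\id\otimes\check{\Delta}_r\otimes\id)(z)_{1243}.
\]
Using $(\check{\Delta}_r\otimes\id)(\vv)=\vv_{13}\vv_{23}$, it checks that $\Phi$ intertwines them. It then reads off $\Fix(\gamma)=X\rtimes^\F_\alpha\G\rtimes^\F_{\alpha^\rtimes}\check{\G}$ (since $\check{\Delta}_l=\vv^*(1\otimes\cdot)\vv$), and $\Fix(\delta)=\Fix(X\rtimes^\F_\alpha\G,\alpha^\rtimes)\ovot_\F B(L^2(\G))=\Sat(X,\alpha)\ovot_\F B(L^2(\G))$ via Proposition \ref{relations}(2). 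In that argument your condition (b) never needs separate transport, because $\Phi$ preserves $(X\rtimes^\F_\alpha\G)\ovot_\F B(L^2(\G))$ from the start.

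Your key identity $\vv_{12}(B(L^2(\G))\bar{\rtimes}_{\check{\Delta}_r}\check{\G})\vv_{12}^*=L^\infty(\G)\ovot B(L^2(\G))$ is exactly this $\gamma$/$\delta$ fixed-point comparison with the $X$-leg sliced away. It holds, and the generator check is the quickest proof. However, it needs the two-leg generators, not the three-leg ones from \eqref{TTconcrete} that you listed. Under $\mathrm{Ad}\,\vv$:
\begin{itemize}
\item $\check{\Delta}_r(x)=x\otimes 1\mapsto\Delta(x)$ for $x\in L^\infty(\G)$;
\item $\check{\Delta}_r(\check{y})=\vv^*(1\otimes\check{y})\vv\mapsto 1\otimes\check{y}$ for $\check{y}\in L^\infty(\check{\G})$;
\item $1\otimes y'\mapsto 1\otimes y'$ for $y'\in L^\infty(\G)'$.
\end{itemize}
These images generate $L^\infty(\G)\ovot B(L^2(\G))$. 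Your route makes the link to classical Takesaki--Takai duality explicit and separates the roles of (b) and (c). The paper's route is shorter and recycles Proposition \ref{relations}(2).

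\textbf{Two faulty justifications.} First, $L^\infty(\check{\G})=L^\infty(\hat{\G})'$ is not contained in $L^\infty(\G)'$, and $\Delta_l(y')\neq 1\otimes y'$ for general $y'\in L^\infty(\G)'$. The correct reason for $(\Delta_l\otimes\id)(\vv)=\vv_{23}$ is that $\check{y}\in L^\infty(\check{\G})$ commutes with the second leg of $\ww\in L^\infty(\G)\ovot L^\infty(\hat{\G})$, so $\Delta_l(\check{y})=1\otimes\check{y}$. With this, both $\alpha\otimes\id\otimes\id$ and $\id\otimes\Delta_l\otimes\id$ turn conjugation by $\vv_{23}$ into conjugation by $\vv_{34}$. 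Hence (b) for $z$ becomes (b) for $w$, which reads as your displayed equation because $\Delta_l|_{L^\infty(\G)}=\Delta$.

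Second, in the equivariance check $\ww_{43}$ and $\vv_{23}$ do not commute: they share leg $3$, with values in $L^\infty(\hat{\G})$ and $L^\infty(\G)$ respectively. What you need is $\ww_{43}\vv_{24}\vv_{23}=\vv_{23}\ww_{43}$. This is precisely $(\id\otimes\Delta)(\vv)=\vv_{12}\vv_{13}$ written with $\Delta(x)=\ww^*(1\otimes x)\ww$ and legs relabelled $1,2,3\mapsto 2,4,3$.

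Part (2) agrees with the paper.
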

\begin{proof} (1)
    We endow $(X \rtimes_\alpha^\F \G)\ovot_\mcF B(L^2(\G))$ with two $\check{\G}$-actions
    $$\gamma(z)= \vv_{43}(\id \otimes \check{\Delta}_r\otimes \id)(z)_{1243}\vv_{43}^*, \quad \delta(z)= (\id \otimes \check{\Delta}_r\otimes \id)(z)_{1243}.$$
    We claim that
    $$\Phi: ((X \rtimes_\alpha^\F \G)\ovot_\mcF B(L^2(\G)), \gamma)\to ((X \rtimes_\alpha^\F \G)\ovot_\mcF B(L^2(\G)), \delta): z \mapsto \vv_{23}z\vv_{23}^*$$
    is $\check{\G}$-equivariant. Indeed, since $\vv\in L^\infty(\check{\G})\ovot L^\infty(\G)$, the map $\Phi$ has the correct codomain. Moreover, as 
    $(\check{\Delta}_r\otimes \id)(\vv) = (\check{\Delta}\otimes \id)(\vv) = \vv_{12}^*\vv_{23}\vv_{12}= \vv_{13}\vv_{23}$, for $z\in (X \rtimes_\alpha^\F \G)\ovot_\mcF B(L^2(\G))$ we find
    \begin{align*}
        \delta(\Phi(z))&= (\id \otimes \check{\Delta}_r \otimes \id)(\vv_{23}z \vv_{23}^*)_{1243}\\
        &= (\vv_{24}\vv_{34}(\id \otimes \check{\Delta}_r\otimes \id)(z)\vv_{34}^*\vv_{24}^*)_{1243}\\
        &= \vv_{23}\vv_{43}
 (\id \otimes \check{\Delta}_r\otimes \id)(z)_{1243} \vv_{43}^* \vv_{23}^*= (\Phi\otimes \id)\gamma(z).
 \end{align*}
 Since $\Phi$ is a $\check{\G}$-equivariant isomorphism, it restricts to an isomorphism
 $$\Phi: \Fix((X \rtimes_\alpha^\F \G)\ovot_\mcF B(L^2(\G)), \gamma)\to \Fix((X \rtimes_\alpha^\F \G)\ovot_\mcF B(L^2(\G)), \delta).$$
However, using the fact that $\check{\ww}= \vv$ and thus $\check{\Delta}_l$ is implemented by $\vv$, we find that
\begin{align*}
    \Fix((X \rtimes_\alpha^\F \G)\ovot_\mcF B(L^2(\G)), \gamma) &= \{z\in (X \rtimes_\alpha^\F \G) \ovot_\mcF B(L^2(\G))\mid (\alpha^\rtimes \otimes \id)(z)= (\id_{X\rtimes_\alpha^\mcF \G} \otimes \check{\Delta}_l)(z) \}\\
    &=  X\rtimes_\alpha^\F \G \rtimes_{\alpha^\rtimes}^\F \check{\G}.
\end{align*}
On the other hand, it follows from Lemma \ref{relations} (2) that
$$\Fix((X \rtimes_\alpha^\F \G)\ovot_\mcF B(L^2(\G)),\delta)= \Fix(X\rtimes_\alpha^\F \G, \alpha^\rtimes)\ovot_\mcF B(L^2(\G))= \operatorname{Sat}(X, \alpha)\ovot_\mcF B(L^2(\G)).$$
It remains to show that $\Phi$ transforms the action $\kappa$ into the action $\lambda$. This follows from the following calculation, where $z\in X\rtimes_\alpha^\mcF\G \rtimes_{\alpha^\rtimes}^\mcF\check{\G}$,
\begin{align*}
    \lambda \Phi(z) &= \lambda(\vv_{23}z\vv_{23}^*) = \ww_{43}\vv_{24}\vv_{23}z_{123}\vv_{23}^*\vv_{24}^*\ww_{43}^*\\
    &= \vv_{23}\ww_{43} z_{123}\ww_{43}^* \vv_{23}^*= \vv_{23}\kappa(z)\vv_{23}^*= (\Phi\otimes \id)\kappa(z).
\end{align*}
(2) The statement about $\G$-equivariance of $\Psi$ follows since it is a composition of two $\G$-equivariant maps. If $(X, \alpha)\in {}_{L^1(\G)}\Mod^{\|\cdot\|}$, then we have the equalities
    \begin{align*}
        \Psi(X \ovot_\mcF B(L^2(\G)))&= \vv_{23}^*(\alpha(X)\ovot_\mcF B(L^2(\G)))\vv_{23}\\
        \vv_{23}^*(\operatorname{Sat}(X, \alpha)\ovot_\mcF B(L^2(\G)))\vv_{23}&= X\rtimes_\alpha^\F \G \rtimes_{\alpha^\rtimes}^\F \check{\G},
    \end{align*}
    where we used that $\alpha$ is completely isometric to ensure that $(\alpha\otimes \id)(X\ovot_\mcF B(L^2(\G))) = \alpha(X)\ovot_\mcF B(L^2(\G))$.
    Combining these equalities, the conclusion immediately follows.
\end{proof}

\begin{Prop}[cf.\ \cite{An23}*{Proposition 4.2, Proposition 4.6}]\label{TT2}
    Let $(X,\alpha)\in {}_{L^1(\G)}\NMod^{\|\cdot\|}$ 
and consider the natural map
$$\Psi: X \ovot B(L^2(\G))\to X \ovot B(L^2(\G))\ovot B(L^2(\G)): z \mapsto \vv_{23}^*(\alpha\otimes \id)(z)\vv_{23}.$$
Then
$\Psi(X_{\operatorname{nd}} \ovot B(L^2(\G)))= X\bar{\rtimes}_\alpha \G \bar{\rtimes}_{\alpha^\rtimes} \check{\G}.$ 
\end{Prop}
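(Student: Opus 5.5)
Since $\Psi$ is a weak$^*$-continuous map, the plan is to compute the image of a weak$^*$-generating set of $X_{\operatorname{nd}}\ovot B(L^2(\G))$ and compare it with generators of the double crossed product. $\Psi$ is weak$^*$-continuous because $\alpha$ is (the lemma characterising ${}_{L^1(\G)}\NMod$ via coactions) and conjugation by $\vv_{23}$ is weak$^*$-continuous. It is also completely isometric, since $\alpha$ is.

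By Lemma \ref{ndlemma} and Proposition \ref{lemma3}, we have
$$X_{\operatorname{nd}}\ovot B(L^2(\G))=[\alpha(x)(1\otimes b)\mid x\in X,\ b\in B(L^2(\G))]^{w*}.$$
Moreover, $B(L^2(\G))=[L^\infty(\check{\G})L^\infty(\G)']^{w*}$, so the generators can be taken to be $\alpha(x)(1\otimes \check{y}y')$ with $\check{y}\in L^\infty(\check{\G})$ and $y'\in L^\infty(\G)'$. Because $(\alpha\otimes\id)\alpha=(\id\otimes\Delta)\alpha$ and $\Delta(x)=\vv(x\otimes 1)\vv^*$, we get
$$\vv_{23}^*(\alpha\otimes\id)(\alpha(x))\vv_{23}=\alpha(x)\otimes 1.$$
Next, $\vv\in L^\infty(\check{\G})\ovot L^\infty(\G)$ commutes with $1\otimes y'$, so $\vv^*_{23}(1\otimes1\otimes y')\vv_{23}=1\otimes 1\otimes y'$. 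Finally, $\vv^*(1\otimes\check{y})\vv$ is $\check{\Delta}(\check y)$ in the appropriate convention, using $\check{\ww}=\vv$ so that $\check{\Delta}_l$ is implemented by $\vv$. Hence
$$\Psi(\alpha(x)(1\otimes\check{y}y'))=(\alpha(x)\otimes1)(1\otimes\check{\Delta}(\check y))(1\otimes1\otimes y').$$
These are exactly the formulas of the Takesaki--Takai generators in \eqref{TTconcrete}.

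The remaining step is to identify the weak$^*$-closed span of these elements with $X\bar{\rtimes}_\alpha\G\bar{\rtimes}_{\alpha^\rtimes}\check{\G}$. By definition, the latter is
$$[\alpha^\rtimes(X\bar\rtimes_\alpha\G)(1\otimes1\otimes L^\infty(\check{\check{\G}}))]^{w*}.$$
Here $\alpha^\rtimes(\alpha(x)(1\otimes\check y))=(\alpha(x)\otimes1)(1\otimes\check\Delta(\check y))$, since $\alpha^\rtimes(\alpha(x))=\alpha(x)\otimes1$ (the first two legs of $\alpha(x)$ lie in $X\ovot_\mcF L^\infty(\G)$, which commutes with $\check{\vv}_{23}$ in the relevant leg). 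Also $L^\infty(\check{\check{\G}})=u_\G L^\infty(\G)u_\G$, which, after unwinding the conventions, corresponds to $L^\infty(\G)'$ in the third leg. Both spaces are therefore the weak$^*$-closed spans of products of three families: $\alpha(x)\otimes 1$, $1\otimes\check\Delta(\check y)$ and $1\otimes1\otimes y'$. Using the identity \eqref{j} for the respective dual actions, the order of the factors can be rearranged.

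I expect the main obstacle to be this last identification. First, the leg conventions must be matched exactly: the precise form of $\check{\check{\G}}$ and of the implementing unitary for $\check{\Delta}$ can introduce the $u_\G$-twists. Second, the order of multiplication must be checked, since $\Psi$ produces a product of the form $(\cdots)\check\Delta(\check y)(\cdots)y'$, whereas the crossed product is spanned by $\alpha^\rtimes(\cdot)(1\otimes \check{\check y})$. To handle this I would mimic the $W^*$-algebra proof of \eqref{TTconcrete} (\cite{Vae01}*{Theorem 2.6}), which is purely generator-level, together with the commutation $[\check{\Delta}(L^\infty(\check\G)),1\otimes L^\infty(\G)']=0$.
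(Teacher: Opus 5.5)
Your argument is correct and is essentially the paper's own proof. You compute $\Psi(\alpha(x)(1\otimes\check{y}y'))=(\alpha(x)\otimes 1)(1\otimes\check{\Delta}(\check{y}))(1\otimes 1\otimes y')$, and this is literally $\alpha^\rtimes(\alpha(x)(1\otimes\check{y}))(1\otimes 1\otimes y')$ with $y'\in L^\infty(\G)'=L^\infty(\check{\check{\G}})$. So no reordering of factors is needed at all, which is fortunate: the commutation $[\check{\Delta}(L^\infty(\check{\G})),1\otimes L^\infty(\G)']=0$ you propose as a fallback is false in general, already for $\G=\Z$.

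The one step you should make explicit is the reverse inclusion. Weak$^*$-continuity of $\Psi$ alone only gives $\Psi(X_{\operatorname{nd}}\ovot B(L^2(\G)))\subseteq X\bar{\rtimes}_\alpha\G\bar{\rtimes}_{\alpha^\rtimes}\check{\G}$. For equality you need that $\Psi$, being a weak$^*$-continuous isometry, maps the weak$^*$-closed space $[\alpha(X)(1\otimes B(L^2(\G)))]^{w*}=X_{\operatorname{nd}}\ovot B(L^2(\G))$ onto a weak$^*$-closed space. Then $\Psi$ of the weak$^*$-closed span of your generators equals the weak$^*$-closed span of their images.
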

\begin{proof} We have the following chain of equalities:
\begin{align*}
    X\bar{\rtimes}_\alpha \G \bar{\rtimes}_{\alpha^\rtimes}\check{\G}&= [\alpha^{\rtimes}(\alpha(x)(1\otimes \check{y}))(1\otimes 1\otimes  z')\mid  x\in X, \check{y}\in L^\infty(\check{\G}), z'\in L^\infty(\G)']^{w*}\\
    &=[(\alpha(x)\otimes 1)(1\otimes \check{\Delta}(\check{y}))(1\otimes 1 \otimes z')\mid  x\in X, \check{y}\in L^\infty(\check{\G}), z'\in L^\infty(\G)']^{w*}\\
    &= [\Psi(\alpha(x)(1\otimes \check{y}z'))\mid x\in X, \check{y}\in L^\infty(\check{\G}), z'\in L^\infty(\G)']^{w*}\\
    &=\Psi([\alpha(x)(1\otimes \check{y}z'))\mid x\in X, \check{y}\in L^\infty(\check{\G}), z'\in L^\infty(\G)']^{w*})\\
    &= \Psi([\alpha(X)(1\otimes B(L^2(\G)))]^{w*})\\
    &= \Psi(X_{\operatorname{nd}} \ovot B(L^2(\G))),
\end{align*}
where the fourth equality follows from the fact that $\Psi$ maps weak$^*$-closed subspaces to weak$^*$-closed subspaces (here we use that $\alpha$ is a weak$^*$-continuous complete isometry), the fifth equality follows from 
$[L^\infty(\check{\G})L^\infty(\G)']^{w *} = B(L^2(\G))$ and the sixth equality follows from Lemma \ref{ndlemma}. 
\end{proof}

\subsection{Relations between crossed products} With the Takesaki-Takai dualities in place, we are in a position to derive some non-trivial relations between the different notions of crossed products. 

We start by proving that the Fubini crossed product can not detect saturation and the weak$^*$-crossed product can not detect non-degeneracy. These statements appear to be new for classical groups as well. 

\begin{Prop}\label{nondetection} \noindent
\begin{enumerate}[noitemsep]
\item If $(X, \alpha)\in {}_{L^1(\G)}\Mod^{\|\cdot\|}$, then 
    $\alpha(X)\rtimes_{\id \otimes \Delta}^\mcF \G= \Sat(X, \alpha)\rtimes_{\id \otimes \Delta}^\mcF \G$.
    \item If $(X,\alpha)\in {}_{L^1(\G)}\NMod$, then $X\bar{\rtimes}_\alpha\G = X_{\operatorname{nd}}\bar{\rtimes}_\alpha\G$.
\end{enumerate}
\end{Prop}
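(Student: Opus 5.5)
For (2), I would argue directly from the definitions. The inclusion $X_{\operatorname{nd}}\bar{\rtimes}_\alpha\G\subseteq X\bar{\rtimes}_\alpha\G$ is trivial. For the converse, it suffices to show that $\alpha(X)\subseteq X_{\operatorname{nd}}\bar{\rtimes}_\alpha\G$, because the latter space is a weak$^*$-closed right $L^\infty(\check{\G})$-module.

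First, I will show that $\alpha(X)$ lies in $[(1\otimes L^\infty(\check{\G}))\alpha(X_{\operatorname{nd}})]^{w*}$. The computation in the proof of \eqref{j} gives
$$(1\otimes (\id\otimes\omega_{\xi,\eta})(\vv))\alpha(x)=\sum_i \alpha(\omega_{\xi,e_i}\rhd x)(1\otimes(\id\otimes\omega_{e_i,\eta})(\vv)),$$
and each term $\alpha(\omega_{\xi,e_i}\rhd x)$ belongs to $\alpha(X_{\operatorname{nd}})$. So $(1\otimes L^\infty(\check{\G}))\alpha(X)\subseteq X_{\operatorname{nd}}\bar{\rtimes}_\alpha\G$. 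Taking weak$^*$-closed spans and using \eqref{j} for $X$ then gives $X\bar{\rtimes}_\alpha\G\subseteq X_{\operatorname{nd}}\bar{\rtimes}_\alpha\G$. This route avoids needing $\alpha(x)$ itself to lie in the span. The only point to check is that a weak$^*$-convergent sum of elements of the weak$^*$-closed space stays in it, which holds.

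For (1), the inclusion $\alpha(X)\rtimes^\mcF\G\subseteq\Sat(X,\alpha)\rtimes^\mcF\G$ is clear, since $\alpha(X)\subseteq \Sat(X,\alpha)$ and both carry the coaction $\id\otimes\Delta$. I would use the natural isomorphism ${}_{L^1(\G)}\CB(B(L^2(\G))_*,-)\cong -\rtimes^\mcF\G$ from Proposition \ref{p:natural}. It then suffices to show that every $L^1(\G)$-linear completely bounded map $\Phi:B(L^2(\G))_*\to \Sat(X,\alpha)$ lands in $\alpha(X)$. Fix $\eta\in B(L^2(\G))_*$. Since $\eta\in [L^1(\G)\rhd B(L^2(\G))_*]$ (non-degeneracy of the module $B(L^2(\G))_*$, which follows from the Podleś condition for $(B(L^2(\G)),\Delta_l)$), approximate $\eta$ by sums $\sum\omega_k\rhd\eta_k$. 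Then $\Phi(\omega\rhd\eta')=\omega\rhd\Phi(\eta')=(\id\otimes\id\otimes\omega)(\id\otimes\Delta)(z)=\alpha((\id\otimes\omega)(z))\in\alpha(X)$ for $z=\Phi(\eta')$. Here $\alpha(X)$ is norm-closed because $\alpha$ is completely isometric, so by continuity $\Phi(\eta)\in\alpha(X)$. Naturality of the isomorphism, together with $\alpha(X)\subseteq\Sat(X,\alpha)$ being completely isometric, identifies the corresponding Fubini crossed products, giving equality.

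The main obstacle is justifying the norm density $B(L^2(\G))_*=[L^1(\G)\rhd B(L^2(\G))_*]$. I would derive it from the Podleś condition for $\Delta_l$. Suppose a functional $b\in B(L^2(\G))$ annihilates all $\omega\rhd\eta$. Then $(\omega\otimes\eta)\Delta_l(b)=0$ for all $\omega,\eta$, so $\Delta_l(b)=0$, and hence $b=0$ since $\Delta_l$ is injective. Hahn–Banach then gives the density.
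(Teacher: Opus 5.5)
Your proposal is correct. Part (2) is the paper's own argument: you expand $(1\otimes(\id\otimes\omega_{\xi,\eta})(\vv))\alpha(x)=\sum_i\alpha(\omega_{\xi,e_i}\rhd x)(1\otimes(\id\otimes\omega_{e_i,\eta})(\vv))$, note that $\omega_{\xi,e_i}\rhd x\in X_{\operatorname{nd}}$, and combine this with \eqref{j}.

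For part (1) you take a genuinely different and considerably shorter route.

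\textbf{The paper's route.} It first proves the identity \eqref{l1},
$$\alpha^\rtimes(X\rtimes^\mcF_\alpha\G)=\vv_{23}^*(\Sat(X,\alpha)\rtimes^\mcF_{\id\otimes\Delta}\G)\vv_{23},$$
for arbitrary $(X,\alpha)\in{}_{L^1(\G)}\Mod$. This goes through Proposition \ref{prop1'}, Proposition \ref{relations}, Remark \ref{doublecrossed} and the equivariant Takesaki--Takai isomorphism of Proposition \ref{TT1}. The paper then applies \eqref{l1} to $\Sat(X,\alpha)$ and to $\alpha(X)$, and concludes with Lemma \ref{saturationsaturated} and functoriality of $\Sat$.

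\textbf{Your route.} You use only representability (Proposition \ref{p:natural}), essentiality of $B(L^2(\G))_*$, and the identity $\omega\rhd z=\alpha((\id\otimes\omega)(z))$ for $z\in\Sat(X,\alpha)$. Concretely, for $w\in\Sat(X,\alpha)\rtimes^\mcF_{\id\otimes\Delta}\G$, each slice $(\id\otimes\id\otimes(\omega\rhd\eta))(w)$ lies in $\alpha(X)$. By density and norm-closedness of $\alpha(X)$, all slices then lie in $\alpha(X)$, which is exactly the statement $w\in\alpha(X)\ovot_\mcF B(L^2(\G))$. Your density argument actually uses only injectivity of $\Delta_l$ and the fact that the functionals $\omega\otimes\eta$ separate points of $L^\infty(\G)\ovot B(L^2(\G))$, not the Podle\'s condition; that is fine.

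\textbf{What each approach buys.} Your argument is more general and more transparent. The same reasoning gives $Y\rtimes^\mcF_\beta\G=[L^1(\G)\rhd Y]\rtimes^\mcF_\beta\G$ for every $(Y,\beta)\in{}_{L^1(\G)}\Mod$. Statement (1) then follows from the inclusion $[L^1(\G)\rhd\Sat(X,\alpha)]\subseteq\alpha(X)$, and (1) becomes the norm/Fubini counterpart of (2). The paper's route, in turn, yields the identity \eqref{l1} itself. This is a Takesaki--Takai type description of the image of the dual coaction on the Fubini crossed product, and it holds without assuming $\alpha$ is completely isometric.
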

\begin{proof} (1)
The key observation is that
\begin{equation}\label{l1}
    \check{\vv}_{23}((X\rtimes_\alpha^\mcF \G)\ovot_\mcF \C)\check{\vv}_{23}^* =\alpha^\rtimes(X\rtimes_\alpha^\mcF \G) =  \vv_{23}^*(\Sat(X,\alpha)\rtimes_{\id \otimes \Delta}^\mcF \G)\vv_{23}.
\end{equation}
Indeed, we have (making use of the notations from Proposition \ref{TT1}) that
\begin{align*}
    \alpha^\rtimes(X\rtimes_\alpha^\mcF\G) &= \Sat(X\rtimes_\alpha^\mcF \G, \alpha^\rtimes)\\
    &= \Fix(X\rtimes_\alpha^\mcF \G\rtimes_{\alpha^\rtimes}^\mcF\check{\G}, \alpha^{\rtimes\rtimes})\\
    &= \Fix(X\rtimes_\alpha^\mcF \G \rtimes_{\alpha^\rtimes}^\mcF\check{\G}, \kappa)\\
    &= \vv_{23}^* \Fix(\Sat(X, \alpha)\ovot_\mcF B(L^2(\G)), \lambda) \vv_{23}\\
    &= \vv_{23}^*(\Sat(X, \alpha)\rtimes_{\id \otimes \Delta}^\mcF  \G)\vv_{23},
\end{align*}
where the first equality follows from Proposition \ref{prop1'}, the second equality follows from Proposition \ref{relations}, the third equality follows from Remark \ref{doublecrossed}, the fourth equality follows from the $\G$-equivariance of the isomorphism \eqref{TTiso} and the fifth equality follows from Proposition \ref{relations}. Consequently, \eqref{l1} is proven. 

It then follows that
\begin{align*}
    \check{\vv}_{34}((\Sat(X,\alpha)\rtimes_{\id \otimes \Delta}^\mcF \G)\ovot_\mcF \C)\check{\vv}_{34}^* &= \vv_{34}^*(\Sat(\Sat(X, \alpha), \id \otimes \Delta)\rtimes^\mcF_{\id \otimes \id \otimes \Delta} \G)\vv_{34}\\
    &= \vv_{34}^* ((\id \otimes \Delta)(\Sat(X, \alpha))\rtimes^\mcF_{\id \otimes \id \otimes \Delta} \G)\vv_{34}\\
    &=  \vv_{34}^* ((\alpha \otimes \id)(\Sat(X, \alpha))\rtimes^\mcF_{\id \otimes \id \otimes \Delta} \G)\vv_{34}\\
    &\subseteq  \vv_{34}^* (\Sat(\alpha(X), \id \otimes \Delta)\rtimes^\mcF_{\id \otimes \id \otimes \Delta} \G)\vv_{34}\\
    &= \check{\vv}_{34}((\alpha(X)\rtimes_{\id \otimes \Delta}^\mcF \G)\ovot_\mcF \C)\check{\vv}_{34}^*,
\end{align*}
where the first and fourth equality follow from \eqref{l1}, the second equality follows from Lemma \ref{saturationsaturated}, the third equality follows from the definition of $\Sat(X, \alpha)$ and the inclusion follows from the fact that if $\phi: Y \to Z$ is a $\G$-equivariant completely bounded map, then $(\phi\otimes \id)\Sat(Y) \subseteq \Sat(Z)$ (functoriality of $\Sat$).

(2) Given $\xi, \eta\in L^2(\G)$ and $x\in X$, we have
$$(1\otimes (\id \otimes\omega_{\xi, \eta})(\vv))\alpha(x)= \sum_{i\in I} \alpha(\omega_{\xi, e_i}\rhd x)(1\otimes (\id \otimes \omega_{e_i, \eta})(\vv))\in X_{\operatorname{nd}}\bar{\rtimes}_\alpha \G,$$
where $\{e_i\}_{i\in I}$ is an orthonormal basis for $L^2(\G)$ and where the series converges in the weak$^*$-topology.
\end{proof}

Also the following result is new for classical groups. Its proof is partially inspired on the proof of \cite{An21}*{Theorem 3.17}.

\begin{Prop}\label{ndfubini}
     If $(X, \alpha)\in {}_{L^1(\G)}\NMod$, then 
     $$(X\rtimes_\alpha^\mcF \G)_{\operatorname{nd}}= [\Sat(X, \alpha)(1\otimes L^\infty(\check{\G}))]^{w*} = X\bar{\rtimes}_\alpha \G.$$
\end{Prop}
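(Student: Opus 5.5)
The plan is to prove the cyclic chain of inclusions
$$X\bar{\rtimes}_\alpha\G\subseteq [\Sat(X,\alpha)(1\otimes L^\infty(\check{\G}))]^{w*}\subseteq (X\rtimes_\alpha^\mcF\G)_{\operatorname{nd}}\subseteq [(1\otimes L^\infty(\check{\G}))\Sat(X,\alpha)(1\otimes L^\infty(\check{\G}))]^{w*}\subseteq X\bar{\rtimes}_\alpha\G.$$
Here the non-degenerate part is taken with respect to the dual action $\alpha^\rtimes$ of $\check{\G}$. Note that $X\rtimes_\alpha^\mcF\G$ is weak$^*$-closed in $X\ovot_\mcF B(L^2(\G))$, since $\alpha$ is weak$^*$-continuous by the Lemma preceding Example \ref{interestingex}. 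Hence $(X\rtimes^\mcF_\alpha\G,\alpha^\rtimes)\in{}_{L^1(\check{\G})}\NMod$ and the non-degenerate part makes sense.

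\textbf{The easy inclusions.} The first inclusion is immediate from $\alpha(X)\subseteq\Sat(X,\alpha)$.

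For the second inclusion, take $s\in\Sat(X,\alpha)=\Fix(X\rtimes^\mcF_\alpha\G,\alpha^\rtimes)$ (Proposition \ref{relations} (2)) and $\check y\in L^\infty(\check{\G})$. Then $\alpha^\rtimes(s(1\otimes\check y))=(s\otimes 1)(1\otimes\check{\Delta}(\check y))$, so for $\check\omega\in L^1(\check{\G})$ we get
$$\check\omega\rhd (s(1\otimes\check y))=s(1\otimes(\id\otimes\check\omega)\check\Delta(\check y)).$$
By the Podleś condition for $(L^\infty(\check{\G}),\check\Delta)$, the elements $(\id\otimes\check\omega)\check\Delta(\check y)$ span a weak$^*$-dense subspace of $L^\infty(\check{\G})$. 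Hence $s(1\otimes\check y)\in(X\rtimes^\mcF_\alpha\G)_{\operatorname{nd}}$.

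For the last inclusion, repeat the computation from the proof of \eqref{j}, with $s\in\Sat(X,\alpha)$ in place of $\alpha(x)$ and using only $(\id\otimes\Delta)(s)=(\alpha\otimes\id)(s)$:
$$(1\otimes(\id\otimes\omega_{\xi,\eta})(\vv))s=\sum_i\alpha((\id\otimes\omega_{\xi,e_i})(s))(1\otimes(\id\otimes\omega_{e_i,\eta})(\vv))\in X\bar{\rtimes}_\alpha\G.$$
The analogous identity with $\vv^*$ on the right handles right multiplications. Together these show that $(1\otimes L^\infty(\check{\G}))\Sat(X,\alpha)(1\otimes L^\infty(\check{\G}))\subseteq X\bar{\rtimes}_\alpha\G$.

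\textbf{The main step: $\check\omega\rhd z$ lands in the right space.} The key, and in my view the main obstacle, is the third inclusion. Here I must show that $\check\omega\rhd z$ lies in $[(1\otimes L^\infty(\check{\G}))\Sat(X,\alpha)(1\otimes L^\infty(\check{\G}))]^{w*}$ for every $z\in X\rtimes^\mcF_\alpha\G$ and $\check\omega\in L^1(\check{\G})$. Slicing the formula $\alpha^\rtimes(z)=\check{\vv}_{23}z_{12}\check{\vv}_{23}^*$ directly only produces multiplications by elements of $L^\infty(\G)'$, which is not what we want.

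Instead I will use \eqref{l1} from the proof of Proposition \ref{nondetection}, namely $\alpha^\rtimes(z)=\vv_{23}^*w\vv_{23}$ for some $w\in\Sat(X,\alpha)\rtimes^\mcF_{\id\otimes\Delta}\G$. This $w$ lies in $\Sat(X,\alpha)\ovot_\mcF B(L^2(\G))$, with $\Sat(X,\alpha)$ occupying legs $1,2$. Taking $\check\omega=\omega_{\xi,\eta}$ and expanding along an orthonormal basis $\{e_i\}$ of $L^2(\G)$ gives
$$\check\omega\rhd z=(\id\otimes\id\otimes\omega_{\xi,\eta})(\vv_{23}^*w\vv_{23})=\sum_{i,j}\bigl(1\otimes(\id\otimes\omega_{\xi,e_i})(\vv^*)\bigr)\,(\id\otimes\id\otimes\omega_{e_i,e_j})(w)\,\bigl(1\otimes(\id\otimes\omega_{e_j,\eta})(\vv)\bigr).$$
This sum converges in the weak$^*$-topology. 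In it, the outer factors lie in $1\otimes L^\infty(\check{\G})$ and the middle slices lie in $\Sat(X,\alpha)$.

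The points to check carefully are that this weak$^*$-convergent double sum is legitimate inside the Fubini tensor product, and that slicing $w$ in the third leg lands in $\Sat(X,\alpha)$, which holds by the definition of $\ovot_\mcF$. Granted these, $\check\omega\rhd z$ lies in the required space. Since $\omega_{\xi,\eta}$ span a dense subspace of $L^1(\check{\G})$, the inclusion follows, and the chain closes to give equality throughout.
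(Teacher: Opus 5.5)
Your proof is correct, but it is organized differently from the paper's.

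\textbf{The paper's route.} The paper treats $Y=(X\rtimes_\alpha^\mcF\G)_{\operatorname{nd}}$ abstractly, in three steps.
\begin{enumerate}
\item Using the maximality in Lemma \ref{ndlemma}, it shows that $Y$ is an $L^\infty(\check{\G})$-bimodule, so that $Y\ovot B(L^2(\G))$ is invariant under conjugation by $\vv_{23}$.
\item It sets $S=[\Sat(X,\alpha)(1\otimes L^\infty(\check{\G}))]^{w*}$. It then combines the non-degeneracy of $Y$ in the form of Proposition \ref{lemma3}, the factorization $B(L^2(\G))=[L^\infty(\check{\G})L^\infty(\G)']^{w*}$ and Proposition \ref{TT1} to get $Y\ovot B(L^2(\G))\subseteq S\ovot B(L^2(\G))$.
\item It shows $S\subseteq X\bar{\rtimes}_\alpha\G$ by a $\Delta_r$-computation followed by conjugation.
\end{enumerate}

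\textbf{Your route.} You slice $\alpha^\rtimes(z)=\vv_{23}^*w\vv_{23}$ in its third leg. This gives an explicit weak$^*$-convergent expansion of $\check\omega\rhd z$ whose terms lie in $(1\otimes L^\infty(\check{\G}))\Sat(X,\alpha)(1\otimes L^\infty(\check{\G}))$. That closes a cycle of elementary inclusions.

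\textbf{What each buys.} Your approach bypasses Step~1, Proposition \ref{lemma3}, and the passage through $-\ovot B(L^2(\G))$; you only ever slice elements of Fubini tensor products. As a by-product, you also identify the two-sided space $[(1\otimes L^\infty(\check{\G}))\Sat(X,\alpha)(1\otimes L^\infty(\check{\G}))]^{w*}$ with the other three. The paper's route, in turn, records the bimodule property of $Y$ along the way. Both arguments ultimately rest on Proposition \ref{TT1}.

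\textbf{Two small remarks.}
\begin{itemize}
\item Formula \eqref{l1} is derived inside the proof of Proposition \ref{nondetection}(1), whose statement assumes $(X,\alpha)\in{}_{L^1(\G)}\Mod^{\|\cdot\|}$. Here $\alpha$ need not be isometric, so you should note that the derivation uses only Propositions \ref{prop1'}, \ref{relations}, \ref{TT1} and Remark \ref{doublecrossed}, all valid on ${}_{L^1(\G)}\Mod$. In fact you need less: $\alpha^\rtimes(X\rtimes_\alpha^\mcF\G)\subseteq X\rtimes_\alpha^\mcF\G\rtimes^\mcF_{\alpha^\rtimes}\check{\G}=\vv_{23}^*(\Sat(X,\alpha)\ovot_\mcF B(L^2(\G)))\vv_{23}$. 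This follows from the coaction identity for $\alpha^\rtimes$ together with Proposition \ref{TT1}(1).
\item The identity with $\vv^*$ on the right is superfluous. $X\bar{\rtimes}_\alpha\G$ is stable under right multiplication by $1\otimes L^\infty(\check{\G})$ by definition.
\end{itemize}
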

\begin{proof}   Put $Y:= (X\rtimes_\alpha^\mcF \G)_{\operatorname{nd}}$.

(STEP I) We prove first that $Y \ovot B(L^2(\G))= \vv_{23}( Y\ovot B(L^2(\G)))\vv_{23}^*$.

Indeed, this will follow once we can prove that
$Y = [Y(1\otimes L^\infty(\check{\G}))]^{w*}$ and $Y = [(1\otimes L^\infty(\check{\G}))Y]^{w*}$.

Put $Z:= [Y(1\otimes L^\infty(\check{\G}))]^{w*}$. Given $y\in Y$ and $\check{x}\in L^\infty(\check{\G})$, it follows from Corollary \ref{cor1} that
    \begin{align*}
        \alpha^\rtimes(y(1\otimes \check{x}))&= \alpha^\rtimes(y)(1\otimes \check{\Delta}(\check{x}))\subseteq (Y\ovot L^\infty(\check{\G}))(\C\ovot L^\infty(\check{\G})\ovot L^\infty(\check{\G}))\subseteq Z\ovot L^\infty(\check{\G}),
    \end{align*}
    so $\alpha^\rtimes(Z)\subseteq Z\ovot L^\infty(\check{\G})$. Consequently, $Z\in {}_{L^1(\check{\G})}\NMod^{\|\cdot\|}$. We then compute
    \begin{align*}
&Z_{\operatorname{nd}}=[(\id \otimes \id \otimes \check{\mu})\alpha^\rtimes((\id\otimes \id \otimes \check{\nu})(\alpha^\rtimes(z))(1\otimes \check{x}))\mid \check{\mu}, \check{\nu}\in L^1(\check{\G}), \check{x}\in L^\infty(\check{\G}), z \in X\rtimes_\alpha^\mcF\G]^{w*}\\
        &= [(\id \otimes \id \otimes \check{\mu}\otimes \check{\nu})((\id \otimes \id \otimes \check{\Delta})(\alpha^\rtimes(z))(1\otimes \check{\Delta}(\check{x})\otimes 1))\mid \check{\mu}, \check{\nu}\in L^1(\check{\G}), \check{x}\in L^\infty(\check{\G}), z \in X\rtimes_\alpha^\mcF\G]^{w*}\\
        &= [(\id \otimes \id \otimes \check{\mu}\otimes \check{\nu})((\id \otimes \id \otimes \check{\Delta})(\alpha^\rtimes(z))(1\otimes \check{\Delta}(\check{x})(1\otimes \check{y})\otimes 1))\mid \check{\mu}, \check{\nu}\in L^1(\check{\G}), \check{x}, \check{y}\in L^\infty(\check{\G}), z \in X\rtimes_\alpha^\mcF\G]^{w*}\\
        &= [(\id \otimes \id \otimes \check{\mu}\otimes \check{\nu})((\id \otimes \id \otimes \check{\Delta})(\alpha^\rtimes(z))(1\otimes \check{x}\otimes \check{y}\otimes \check{z}))\mid \check{\mu}, \check{\nu}\in L^1(\check{\G}), \check{x}, \check{y}, \check{z}\in L^\infty(\check{\G}), z \in X\rtimes_\alpha^\mcF\G]^{w*}\\
        &= [(\id \otimes \id \otimes \check{\mu}\otimes \check{\nu})((\id \otimes \id \otimes \check{\Delta})(\alpha^\rtimes(z))(1\otimes \check{x}\otimes \check{\Delta}(\check{y})))\mid \check{\mu}, \check{\nu}\in L^1(\check{\G}), \check{x}, \check{y}\in L^\infty(\check{\G}), z \in X\rtimes_\alpha^\mcF\G]^{w*}\\
        &= [(\id \otimes \id \otimes (\check{\mu}\star \check{\nu}))(\alpha^\rtimes(z)(1\otimes \check{x}\otimes \check{y}))\mid \check{\mu}, \check{\nu}\in L^1(\check{\G}), \check{x}, \check{y}\in L^\infty(\check{\G}), z \in X\rtimes_\alpha^\mcF\G]^{w*}\\
        &= [(\id \otimes \id \otimes \check{\mu})(\alpha^\rtimes(z)(1\otimes \check{x}\otimes 1))\mid \check{\mu}\in L^1(\check{\G}), \check{x}\in L^\infty(\check{\G}), z \in X\rtimes_\alpha^\mcF\G]^{w*}= Z,
    \end{align*}
   so $Z$ is non-degenerate. But $Z\subseteq X\rtimes_\alpha^\mcF\G$, so we conclude by the maximality in Lemma \ref{ndlemma} that $Z\subseteq Y$, which shows that $Y = [Y(1\otimes L^\infty(\check{\G}))]^{w*}$. By a similar argument  $Y = [(1\otimes L^\infty(\check{\G}))Y]^{w*}$.

    (STEP II) We prove that $[\Sat(X, \alpha)(1\otimes L^\infty(\check{\G}))]^{w*}= Y$.

    Indeed, putting $S:= [\Sat(X, \alpha)(1\otimes L^\infty(\check{\G}))]^{w*}$, it is clear that $\alpha^\rtimes(S)\subseteq S\ovot L^\infty(\check{\G})$. The equality
    $$(\id \otimes \id \otimes \check{\omega})\alpha^\rtimes(z(1\otimes \check{x})) = z(1\otimes (\id \otimes \check{\omega})\check{\Delta}(\check{x})), \quad \check{\omega}\in L^1(\check{\G}), \quad z\in \Sat(X, \alpha), \quad \check{x}\in L^\infty(\check{\G})$$
    immediately shows that $S\in {}_{L^1(\check{\G})}\NMod^{\|\cdot\|}$ is non-degenerate. Since $S\subseteq X\rtimes_\alpha^\mcF\G$, another application of the maximality in Lemma \ref{ndlemma} gives $S\subseteq Y$.

Since $(Y, \alpha^\rtimes)$ is non-degenerate, Proposition \ref{lemma3} gives
    \begin{align*}
        Y \ovot B(L^2(\G))&= [\alpha^\rtimes(Y)(1\otimes 1\otimes B(L^2(\G)))]^{w*} \\
        &=[\alpha^\rtimes(Y)(1 \otimes 1 \otimes L^\infty(\G)')(1\otimes 1 \otimes L^\infty(\check{\G}))]^{w*}\subseteq [(Y \rtimes_{\alpha^\rtimes}^\F \check{\G})(1\otimes 1 \otimes L^\infty(\check{\G}))]^{w*}.
\end{align*}
Therefore, using (STEP I) and Proposition \ref{TT1}, we find
\begin{align*}
    Y \ovot B(L^2(\G))&= \vv_{23}( Y\ovot B(L^2(\G)))\vv_{23}^* \\
    &\subseteq [\vv_{23} (Y\rtimes_{\alpha^\rtimes}^\F \check{\G})\vv_{23}^* \vv_{23} (1\otimes 1 \otimes L^\infty(\check{\G}))\vv_{23}^*]^{w*}\\
    &\subseteq [(\operatorname{Sat}(X, \alpha)\ovot B(L^2(\G)))\vv_{23}(1\otimes 1\otimes L^\infty(\check{\G}))\vv_{23}^*]^{w*}\\
&\subseteq  [(\operatorname{Sat}(X, \alpha)\ovot B(L^2(\G)))(1 \otimes L^\infty(\check{\G})\ovot B(L^2(\G)))]^{w*}= S\ovot B(L^2(\G))\subseteq Y\ovot B(L^2(\G)),
\end{align*}
so we conclude that $S= Y$, as desired.

(STEP III) We prove that $S = X\bar{\rtimes}_\alpha\G$.

For $z\in \operatorname{Sat}(X, \alpha)\subseteq X \ovot_\F L^\infty(\G)$, $\check{y}\in L^\infty(\check{\G})$ and $\xi, \eta\in L^2(\G)$, we compute
\begin{align*}
  (\id \otimes \id \otimes \omega_{\xi,\eta})((\id \otimes \Delta_r)(z(1\otimes \check{y})))&=
    (\id \otimes \id \otimes \omega_{\xi,\eta})((\alpha \otimes \id)(z)(1\otimes \Delta_r(\check{y})))\\
    &= \sum_{i\in I} (\id \otimes \id \otimes \omega_{\xi, e_i})((\alpha\otimes \id)(z))(\id \otimes \id \otimes \omega_{e_i, \eta})(1\otimes \Delta_r(\check{y}))\\
    &= \sum_{i\in I} \alpha((\id \otimes \omega_{\xi, e_i})(z))(1 \otimes \underbrace{(\id \otimes \omega_{e_i, \eta})\Delta_r(\check{y})}_{\in L^\infty(\check{\G})}) \in X\bar{\rtimes}_\alpha \G,
\end{align*}
where $\{e_i\}_{i\in I}$ is an orthonormal basis of $L^2(\G)$ and the sum converges in the weak$^*$-topology. From this, we conclude that $(\id \otimes \Delta_r)(S)\subseteq (X\bar{\rtimes}_\alpha\G)\ovot_\mcF L^\infty(\G)$.
Conjugating with $\vv_{23}^*$ leads to
$$S\ovot \C \subseteq \vv_{23}^* ((X\bar{\rtimes}_\alpha \G) \ovot B(L^2(\G)))\vv_{23}\subseteq (X\bar{\rtimes}_\alpha \G)\ovot B(L^2(\G)),$$
and in particular $S\subseteq X\bar{\rtimes}_\alpha \G$.
\end{proof}

Note in particular that Proposition \ref{ndfubini} implies that $\Sat(X, \alpha)\subseteq X\bar{\rtimes}_\alpha\G$ for $(X, \alpha)\in {}_{L^1(\G)}\NMod$. The following is then immediately clear (recall also Proposition \ref{relations}):
\begin{Cor}[cf.\ \cite{An23}*{Proposition 3.10}]
    Given $(X,\alpha)\in {}_{L^1(\G)}\NMod$, we have $\Fix(X\bar{\rtimes}_\alpha\G, \alpha^\rtimes)= \Sat(X, \alpha)= \Fix(X\rtimes_\alpha^\mcF\G, \alpha^\rtimes)$.
\end{Cor}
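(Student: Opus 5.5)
The plan is to establish the two equalities separately. The second, $\Sat(X,\alpha)= \Fix(X\rtimes_\alpha^\mcF\G, \alpha^\rtimes)$, is already contained in Proposition \ref{relations} (2), so nothing new is needed there. For the first, I will show the two inclusions between $\Fix(X\bar{\rtimes}_\alpha\G,\alpha^\rtimes)$ and $\Sat(X,\alpha)$, working inside $X\rtimes_\alpha^\mcF\G$ and using that the action $\alpha^\rtimes$ on $X\bar{\rtimes}_\alpha\G$ is the restriction of the one on $X\rtimes_\alpha^\mcF\G$.

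For the inclusion $\Fix(X\bar{\rtimes}_\alpha\G,\alpha^\rtimes)\subseteq \Sat(X,\alpha)$, I use $X\bar{\rtimes}_\alpha\G\subseteq X\rtimes_\alpha^\mcF\G$. Then
$$\Fix(X\bar{\rtimes}_\alpha\G,\alpha^\rtimes)\subseteq \Fix(X\rtimes_\alpha^\mcF\G,\alpha^\rtimes)=\Sat(X,\alpha),$$
where the equality is again Proposition \ref{relations} (2).

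For the reverse inclusion, I first invoke Proposition \ref{ndfubini}. It gives $X\bar{\rtimes}_\alpha\G=[\Sat(X,\alpha)(1\otimes L^\infty(\check{\G}))]^{w*}$. Since $1\in L^\infty(\check{\G})$, this contains $\Sat(X,\alpha)$. Next, each $z\in\Sat(X,\alpha)$ is fixed by $\alpha^\rtimes$. This follows from Proposition \ref{relations} (2), or directly: $\check{\vv}\in L^\infty(\G)'\ovot L^\infty(\check{\G})$ commutes with $z_{12}$ when $z\in X\ovot_\mcF L^\infty(\G)$. Hence $\Sat(X,\alpha)\subseteq \Fix(X\bar{\rtimes}_\alpha\G,\alpha^\rtimes)$.

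There is no real obstacle, since the substantive input, namely $\Sat(X,\alpha)\subseteq X\bar{\rtimes}_\alpha\G$, was already obtained in Proposition \ref{ndfubini}. The only point requiring care is the remark above that $\alpha^\rtimes$ on the weak$^*$-crossed product is literally the restriction of $\alpha^\rtimes$ on the Fubini crossed product. This holds because both are given by the same formula $z\mapsto\check{\vv}_{23}z_{12}\check{\vv}_{23}^*$, so fixed points computed in either space agree on $X\bar{\rtimes}_\alpha\G$.
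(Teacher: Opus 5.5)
Your proposal is correct and is the paper's argument: the second equality is Proposition \ref{relations}~(2), and the first follows by intersecting $\Fix(X\rtimes_\alpha^\mcF\G,\alpha^\rtimes)=\Sat(X,\alpha)$ with $X\bar{\rtimes}_\alpha\G$. This intersection gives all of $\Sat(X,\alpha)$ because of the inclusion $\Sat(X,\alpha)\subseteq X\bar{\rtimes}_\alpha\G$ from Proposition \ref{ndfubini}, which the paper also cites as the key input.
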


\begin{Prop}[cf.\ \cite{An23}*{Corollary 4.8}]\label{saturatedreduced}
     If $(X, \alpha)\in {}_{L^1(\G)}\NMod$, then 
       $$\Sat(X\bar{\rtimes}_\alpha \G, \alpha^\rtimes)= \alpha^\rtimes(X\rtimes_\alpha^\mcF \G).$$
\end{Prop}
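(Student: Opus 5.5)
We prove the two inclusions separately. The inclusion $\alpha^\rtimes(X\rtimes_\alpha^\mcF\G)\subseteq \Sat(X\bar{\rtimes}_\alpha\G,\alpha^\rtimes)$ is the main step. Since $X\rtimes_\alpha^\mcF\G$ is saturated by Proposition \ref{prop1'}, we have
$$\alpha^\rtimes(X\rtimes_\alpha^\mcF\G)=\Sat(X\rtimes_\alpha^\mcF\G,\alpha^\rtimes).$$
Let $w=\alpha^\rtimes(z)$ with $z\in X\rtimes_\alpha^\mcF\G$ and let $\check\omega\in L^1(\check\G)$. Then
$$(\id\otimes\id\otimes\check\omega)(w)=\check\omega\rhd z\in (X\rtimes_\alpha^\mcF\G)_{\operatorname{nd}}=X\bar{\rtimes}_\alpha\G,$$
where the last equality is Proposition \ref{ndfubini}. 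This slice condition says that $w$ lies in $(X\bar{\rtimes}_\alpha\G)\ovot_\mcF L^\infty(\check\G)$. Indeed, after fixing $X\subseteq B(\mcH)$, the Fubini tensor product is defined precisely by slice conditions in each leg. The second-leg condition holds because $w$ already lies in $(X\rtimes_\alpha^\mcF\G)\ovot_\mcF L^\infty(\check\G)$. For the first leg, the slices $(\id\otimes\id\otimes\check\omega)(w)$ over normal functionals on $B(L^2(\G))$ can be reduced to slices over $L^1(\check\G)$, because $w$ already has third leg in $L^\infty(\check\G)$. The saturation identity $(\alpha^\rtimes\otimes\id)(w)=(\id\otimes\check\Delta)(w)$ is inherited, since it is the same equation in the larger space. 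Hence $w\in\Sat(X\bar{\rtimes}_\alpha\G,\alpha^\rtimes)$.

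The reverse inclusion uses functoriality of $\Sat$. The inclusion $\iota:X\bar{\rtimes}_\alpha\G\hookrightarrow X\rtimes_\alpha^\mcF\G$ is a $\check\G$-equivariant complete isometry, so $\iota\otimes\id$ is completely isometric and
$$\Sat(X\bar{\rtimes}_\alpha\G,\alpha^\rtimes)\subseteq \Sat(X\rtimes_\alpha^\mcF\G,\alpha^\rtimes).$$
By Proposition \ref{prop1'} the right-hand side equals $\alpha^\rtimes(X\rtimes_\alpha^\mcF\G)$. This completes the plan.

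The delicate point, which I expect to be the main obstacle, is membership in the Fubini tensor product from the slice condition. Slicing the third leg only by $L^1(\check\G)$ rather than all of $B(L^2(\G))_*$ might seem insufficient. My approach is as follows. Write $\check\omega$ as the restriction of some $\rho\in B(L^2(\G))_*$; then $(\id\otimes\id\otimes\rho)(w)$ depends only on $\rho|_{L^\infty(\check\G)}$, because the third leg of $w$ is in $L^\infty(\check\G)$. If needed, I will instead use the identification $X\ovot_\mcF Y\cong\CB(Y_*,X)$ with $Y=L^\infty(\check\G)$: the map $\check\omega\mapsto\check\omega\rhd z$ takes values in the weak$^*$-closed subspace $X\bar{\rtimes}_\alpha\G$, and therefore defines an element of $(X\bar{\rtimes}_\alpha\G)\ovot_\mcF L^\infty(\check\G)$.
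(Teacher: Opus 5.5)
Your proof is correct and is essentially the paper's argument. The paper chains $\Sat(X\bar{\rtimes}_\alpha\G,\alpha^\rtimes)=\Sat((X\rtimes_\alpha^\mcF\G)_{\operatorname{nd}},\alpha^\rtimes)=\Sat(X\rtimes_\alpha^\mcF\G,\alpha^\rtimes)=\alpha^\rtimes(X\rtimes_\alpha^\mcF\G)$ using Propositions \ref{ndfubini}, \ref{saturationnd}(1) and \ref{prop1'}. Your slicing step is an inline proof of the middle equality, slightly streamlined: since saturation already writes every element as $\alpha^\rtimes(z)$, you do not need the factorization $L^1(\check{\G})=[L^1(\check{\G})\star L^1(\check{\G})]$.
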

\begin{proof}
    We have
    \begin{align*}
        \Sat(X\bar{\rtimes}_\alpha \G, \alpha^\rtimes) &= \Sat((X\rtimes^\mcF_\alpha \G)_{\operatorname{nd}}, \alpha^\rtimes) = \Sat(X\rtimes^\mcF_\alpha \G, \alpha^\rtimes) =  \alpha^\rtimes(X\rtimes_\alpha^\mcF\G),
    \end{align*}
    where the first equality follows from Proposition \ref{ndfubini}, the second equality follows from Proposition \ref{saturationnd} and the third equality follows from  Proposition \ref{prop1'}.
\end{proof}

\begin{Rem}
Using Proposition \ref{Gcompletion}, Proposition \ref{saturatedreduced} can be reformulated by the statement that $X\rtimes^\mcF_\alpha\G$ is the $\check{\G}$-completion of $X\bar{\rtimes}_\alpha \G\in {}_{L^1(\check{\G})}\NMod^{\|\cdot\|}$ for every $X\in {}_{L^1(\G)}\NMod$.
\end{Rem}

\begin{Theorem}[cf.\ \cite{An23}*{Theorem 4.9}]\label{non-degenerate Fubini crossed} The following are equivalent for $(X, \alpha)\in {}_{L^1(\G)}\NMod$:
    \begin{enumerate}[noitemsep]
        \item $X\bar{\rtimes}_\alpha \G = X \rtimes_\alpha^\F \G.$
        \item $(X\rtimes_\alpha^\F \G, \alpha^\rtimes)$ is non-degenerate.
        \item $(X\bar{\rtimes}_\alpha\G, \alpha^\rtimes)$ is saturated. 
    \end{enumerate}
\end{Theorem}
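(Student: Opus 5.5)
The plan is to deduce all three equivalences from Proposition \ref{ndfubini} and Proposition \ref{saturatedreduced}, together with the injectivity of the dual action $\alpha^\rtimes$.

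\textbf{(1)$\iff$(2).} Proposition \ref{ndfubini} gives $(X\rtimes_\alpha^\mcF\G)_{\operatorname{nd}} = X\bar{\rtimes}_\alpha\G$. Hence $X\rtimes_\alpha^\mcF\G$ is non-degenerate, i.e.\ equal to its non-degenerate part, exactly when $X\rtimes_\alpha^\mcF\G = X\bar{\rtimes}_\alpha\G$. One small point needs checking: non-degeneracy is defined for objects of ${}_{L^1(\check{\G})}\NMod$. So I would first confirm that $X\rtimes_\alpha^\mcF\G$ is weak$^*$-closed in $X\ovot_\mcF B(L^2(\G))$. This holds because it is cut out by the weak$^*$-continuous conditions $(\alpha\otimes\id)(z)=(\id\otimes\Delta_l)(z)$, using the weak$^*$-continuity of $\alpha$ from the lemma characterizing ${}_{L^1(\G)}\NMod$. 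It then follows that $\alpha^\rtimes$, being implemented by conjugation with $\check{\vv}_{23}$, is weak$^*$-continuous. This is the same setting in which Proposition \ref{ndfubini} is already phrased, so no extra work beyond this remark is expected.

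\textbf{(1)$\iff$(3).} Proposition \ref{saturatedreduced} gives $\Sat(X\bar{\rtimes}_\alpha\G,\alpha^\rtimes)=\alpha^\rtimes(X\rtimes_\alpha^\mcF\G)$. By definition, $X\bar{\rtimes}_\alpha\G$ is saturated iff $\Sat(X\bar{\rtimes}_\alpha\G,\alpha^\rtimes)=\alpha^\rtimes(X\bar{\rtimes}_\alpha\G)$, which is therefore equivalent to $\alpha^\rtimes(X\rtimes_\alpha^\mcF\G)=\alpha^\rtimes(X\bar{\rtimes}_\alpha\G)$. Since $\alpha^\rtimes(z)=\check{\vv}_{23}z_{12}\check{\vv}_{23}^*$ is a complete isometry, in particular injective (Proposition \ref{relations}(2)), this last equality is equivalent to $X\rtimes_\alpha^\mcF\G = X\bar{\rtimes}_\alpha\G$.

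Both directions are thus essentially formal. The only step needing care is the bookkeeping that the two $\alpha^\rtimes$'s, on the Fubini and on the weak$^*$ crossed product, are restrictions of one injective map. The paper already notes this abuse of notation, so I expect no real obstacle.
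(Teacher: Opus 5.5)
Your proposal is correct and follows the paper's proof: (2)$\Rightarrow$(1) comes from Proposition \ref{ndfubini}, and (3)$\Rightarrow$(1) comes from Proposition \ref{saturatedreduced} together with injectivity of $\alpha^\rtimes$. The only difference is that you read the forward implications off the same two identities, whereas the paper cites Propositions \ref{prop1} and \ref{prop1'} for them (and Proposition \ref{saturatedreduced} itself rests on Proposition \ref{prop1'}).
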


\begin{proof}
   The implications $(1)\implies (2)+(3)$ are an immediate consequence of Proposition \ref{prop1} and Proposition \ref{prop1'}.
   The implication $(2)\implies (1)$ follows from Proposition \ref{ndfubini} and the implication
$(3)\implies (1)$ follows immediately from Proposition \ref{saturatedreduced}.
\end{proof}

We then find the following surprising consequence involving mixed iterated crossed products:
\begin{Cor}[cf.\ \cite{An23}*{Corollary 4.8}]\label{surprising} Let $(X, \alpha)\in {}_{L^1(\G)}\NMod$. Then
$$(X\bar{\rtimes}_\alpha\G)\rtimes_{\alpha^\rtimes}^\mcF \check{\G}= (X\rtimes_\alpha^\mcF\G)\rtimes_{\alpha^\rtimes}^\mcF\check{\G}, \quad (X\bar{\rtimes}_\alpha\G)\bar{\rtimes}_{\alpha^\rtimes}\check{\G}= (X\rtimes_\alpha^\mcF\G)\bar{\rtimes}_{\alpha^\rtimes}\check{\G}.$$
\end{Cor}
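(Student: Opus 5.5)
The idea is to reduce both equalities to statements about the $\check{\G}$-modules $Y:=X\bar{\rtimes}_\alpha\G$ and $Z:=X\rtimes_\alpha^\mcF\G$. Both carry the same dual action $\alpha^\rtimes$, with $Y\subseteq Z$ and $Y,Z\in {}_{L^1(\check{\G})}\Mod^{\|\cdot\|}$. Proposition \ref{ndfubini} gives $Y=Z_{\operatorname{nd}}$, and Proposition \ref{saturatedreduced} gives $\Sat(Y,\alpha^\rtimes)=\alpha^\rtimes(Z)=\Sat(Z,\alpha^\rtimes)$; for the last equality we use that $Z$ is saturated (Proposition \ref{prop1'}). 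The Fubini identity will come from the fact that the Fubini crossed product only sees the saturation, and the weak$^*$ identity from the fact that the weak$^*$-crossed product only sees the non-degenerate part.

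\emph{Second equality.} Apply Proposition \ref{nondetection} (2) to the quantum group $\check{\G}$ and the module $(Z,\alpha^\rtimes)\in {}_{L^1(\check{\G})}\NMod$. Here $Z$ is weak$^*$-closed in $X\ovot_\mcF B(L^2(\G))$, and $\alpha^\rtimes$ is a weak$^*$-continuous isometry, being a restriction of conjugation by $\check{\vv}_{23}$. The proposition gives $Z\bar{\rtimes}_{\alpha^\rtimes}\check{\G}=Z_{\operatorname{nd}}\bar{\rtimes}_{\alpha^\rtimes}\check{\G}=Y\bar{\rtimes}_{\alpha^\rtimes}\check{\G}$, which is exactly the claim.

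\emph{First equality.} The inclusion $Y\rtimes^\mcF\check{\G}\subseteq Z\rtimes^\mcF\check{\G}$ is immediate from injectivity of the Fubini tensor product together with $Y\subseteq Z$. For the reverse inclusion I will use Proposition \ref{TT1} (1) for $\check{\G}$. The Takesaki--Takai map is conjugation by a fixed unitary $\check{\vv}_{34}$, and it identifies $W\rtimes^\mcF_{\alpha^\rtimes}\check{\G}\rtimes^\mcF\check{\check{\G}}$ with $\Sat(W,\alpha^\rtimes)\ovot_\mcF B(L^2(\G))$ for any $W$. I will not iterate this, and instead argue directly through Proposition \ref{nondetection} (1). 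Since $\alpha^\rtimes$ is a complete isometry, it gives $\alpha^\rtimes(Y)\rtimes^\mcF_{\id\otimes\check{\Delta}}\check{\G}=\Sat(Y)\rtimes^\mcF_{\id\otimes\check{\Delta}}\check{\G}$. Since $\Sat(Y)=\alpha^\rtimes(Z)$, this reads $\alpha^\rtimes(Y)\rtimes^\mcF\check{\G}=\alpha^\rtimes(Z)\rtimes^\mcF\check{\G}$. Transporting back along the equivariant complete isometry $\alpha^\rtimes\colon Z\to\alpha^\rtimes(Z)$ then yields the result. By the remark after the definition of $X\rtimes^\mcF_\alpha\G$, the Fubini crossed product is preserved under equivariant completely isometric isomorphisms, and applying $(\alpha^\rtimes\otimes\id)$ maps $Y\rtimes^\mcF\check{\G}$ onto $\alpha^\rtimes(Y)\rtimes^\mcF\check{\G}$, and likewise for $Z$.

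The main point needing care is this last transport step. The two crossed products $\alpha^\rtimes(Y)\rtimes^\mcF\check{\G}$ and $\alpha^\rtimes(Z)\rtimes^\mcF\check{\G}$ live inside the common ambient space $\alpha^\rtimes(Z)\ovot_\mcF B(L^2(\G))$. Since $\alpha^\rtimes\otimes\id$ is injective on $Z\ovot_\mcF B(L^2(\G))$ and restricts to the corresponding maps on $Y$, the equality of the images forces equality of $Y\rtimes^\mcF\check{\G}$ and $Z\rtimes^\mcF\check{\G}$ inside $Z\ovot_\mcF B(L^2(\G))$. The remaining bookkeeping is the identification of the action $\id\otimes\check{\Delta}$ on $\alpha^\rtimes(Z)$ with $\alpha^\rtimes$ transported, which is the coaction identity.
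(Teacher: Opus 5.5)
Your proposal is correct and follows the paper's proof. The second identity comes from Proposition \ref{nondetection}(2), applied to $(X\rtimes_\alpha^\mcF\G,\alpha^\rtimes)$, together with $(X\rtimes_\alpha^\mcF\G)_{\operatorname{nd}}=X\bar{\rtimes}_\alpha\G$ (Proposition \ref{ndfubini}); the first comes from applying $\alpha^\rtimes\otimes\id$ and combining Proposition \ref{nondetection}(1) with $\Sat(X\bar{\rtimes}_\alpha\G,\alpha^\rtimes)=\alpha^\rtimes(X\rtimes_\alpha^\mcF\G)$ (Proposition \ref{saturatedreduced}). The Takesaki--Takai aside and the separate proof of the easy inclusion are unnecessary, since injectivity of $\alpha^\rtimes\otimes\id$ already gives the full equality.
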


\begin{proof}
We have
\begin{align*}
    (\alpha^\rtimes\otimes \id)((X\bar{\rtimes}_\alpha\G)\rtimes_{\alpha^\rtimes}^\mcF \check{\G})&= \alpha^\rtimes(X\bar{\rtimes}_\alpha\G)\rtimes_{\id \otimes \id \otimes \check{\Delta}}^\mcF\check{\G}\\
    &= \Sat(X\bar{\rtimes}_\alpha\G, \alpha^\rtimes)\rtimes_{\id \otimes \id \otimes \check{\Delta}}^\mcF \check{\G}\\
    &= \alpha^\rtimes(X\rtimes_\alpha^\mcF\G)\rtimes_{\id \otimes \id \otimes \check{\Delta}}^\mcF \check{\G}\\
    &= (\alpha^\rtimes\otimes \id)((X\rtimes_\alpha^\mcF\G)\rtimes_{\alpha^\rtimes}^\mcF \check{\G}),
\end{align*}
where the second equality follows from Proposition \ref{nondetection} and where the third equality follows from Proposition \ref{saturatedreduced}. From this, the first equality follows. To prove the second equality, we note that
\begin{align*} (X\rtimes_\alpha^\mcF\G)\bar{\rtimes}_{\alpha^\rtimes}\check{\G}&= (X\rtimes_\alpha^\mcF\G)_{\operatorname{nd}}\bar{\rtimes}_{\alpha^\rtimes}\check{\G} = (X\bar{\rtimes}_\alpha\G)\bar{\rtimes}_{\alpha^\rtimes} \check{\G},
\end{align*}
where the first equality follows from Proposition \ref{nondetection} and the second equality follows from  Proposition \ref{ndfubini}.
\end{proof}

\section{Crossed product characterizations of the AP}\label{SectionCharacterizationsAP}

Let $\G$ be a locally compact quantum group. The work of the previous section culminates in the following characterization of the AP:

\begin{Theorem}[cf.\ \cite{An23}*{Proposition 3.26, Corollary 4.5, Theorem 4.12}]\label{main}\label{main'}
    The following conditions are equivalent:
    \begin{enumerate}[noitemsep]
        \item\label{i} $\check{\G}$ has the AP.
         \item\label{ii} Every $(X, \alpha)\in {}_{L^1(\G)}\NMod^{\|\cdot\|}$ is saturated.
                  \item\label{ii'} $\Sat(X, \alpha)= \overline{\alpha(X)}^{w*}$ for every $(X, \alpha)\in {}_{L^1(\G)}\NMod$.
        \item\label{iii} Every $(X, \alpha)\in {}_{L^1(\G)}\NMod^{\|\cdot\|}$ is non-degenerate. 
         \item\label{iv} Every non-degenerate $(X, \alpha)\in {}_{L^1(\G)}\NMod^{\|\cdot\|}$ is saturated.
                  \item\label{iv'} $\Sat(X, \alpha)= \overline{\alpha(X)}^{w*}$ for every non-degenerate $(X, \alpha)\in {}_{L^1(\G)}\NMod$.
  \item\label{v} Every saturated $(X, \alpha)\in {}_{L^1(\G)}\NMod^{\|\cdot\|}$ is non-degenerate.
        \item\label{vi} $(X\rtimes_\alpha^\F \G)\rtimes_{\alpha^\rtimes}^\F \check{\G}= (X\rtimes_\alpha^\F \G)\bar{\rtimes}_{\alpha^\rtimes} \check{\G}$ for all $(X, \alpha)\in {}_{L^1(\G)}\NMod^{\|\cdot\|}$.
                \item\label{vi'} $(X\rtimes_\alpha^\F \G)\rtimes_{\alpha^\rtimes}^\F \check{\G}= (X\rtimes_\alpha^\F \G)\bar{\rtimes}_{\alpha^\rtimes} \check{\G}$ for all $(X, \alpha)\in {}_{L^1(\G)}\NMod$.
                \item\label{vii} $(X\bar{\rtimes}_\alpha \G)\rtimes_{\alpha^\rtimes}^\F \check{\G}= (X\bar{\rtimes}_\alpha \G)\bar{\rtimes}_{\alpha^\rtimes} \check{\G}$ for all $(X, \alpha)\in {}_{L^1(\G)}\NMod^{\|\cdot\|}$.
\item\label{vii'} $(X\bar{\rtimes}_\alpha \G)\rtimes_{\alpha^\rtimes}^\F \check{\G}= (X\bar{\rtimes}_\alpha \G)\bar{\rtimes}_{\alpha^\rtimes} \check{\G}$ for all $(X, \alpha)\in {}_{L^1(\G)}\NMod^{}$.
        \item\label{viii} $Y\rtimes_\beta^\F \check{\G} = Y \bar{\rtimes}_\beta\check{\G}$ for all $(Y, \beta)\in {}_{L^1(\check{\G})}\NMod^{\|\cdot\|}$.
         \item\label{viii'} $Y\rtimes_\beta^\F \check{\G} = Y \bar{\rtimes}_\beta\check{\G}$ for all $(Y, \beta)\in {}_{L^1(\check{\G})}\NMod$.
    \end{enumerate}
\end{Theorem}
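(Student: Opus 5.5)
The plan is to organize the thirteen conditions around the two "crossed product" conditions \ref{viii} and \ref{viii'}. The analytic content (involving the AP) will enter in only two places: \ref{i}$\implies$\ref{viii'}, and one implication back to \ref{i}. Everything else should be bookkeeping with the results of Section \ref{SectionEquivariantOperatorSpaces}.

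\textbf{The approximation step, \ref{i}$\implies$\ref{viii'}.} Fix $(Y,\beta)\in{}_{L^1(\check{\G})}\NMod$ and $z\in Y\rtimes^\mcF_\beta\check{\G}$. By \cite[Theorem 4.4]{DKV24} there is a net $(a_i)$ in $\A(\check{\G})$ such that $\Theta^l(a_i)\to\id$ in the stable point-weak$^*$ topology. The aim is to show that
\[
z_i:=(\id\otimes\Theta^l(a_i))(z),
\]
or a suitable variant acting on the $B(L^2(\check\G))$-leg, has two properties:
\begin{itemize}
\item each $z_i$ lies in $Y\bar{\rtimes}_\beta\check{\G}$;
\item $z_i\to z$ weak$^*$.
\end{itemize}
Convergence should follow from the stable point-weak$^*$ convergence, using that slice maps are available on $Y\ovot_\mcF B(L^2(\check\G))$. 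For membership, I would write $a_i=\lambda_{\hat{\check\G}}(\omega_i)$ and express the multiplier action as a slice of a conjugation by a Kac--Takesaki unitary. Applying the crossed-product relation $(\beta\otimes\id)(z)=(\id\otimes\check\Delta_l)(z)$ should then rewrite $z_i$ as a weak$^*$-convergent sum of terms $\beta(y)(1\otimes\check{\check{x}})$, exactly as in the computations in Proposition \ref{nondetection}(2) and Proposition \ref{ndfubini}. This is the step I expect to be the main obstacle: the correct placement of legs and the choice of which unitary implements $\Theta^l(a_i)$ must be arranged so that the slice visibly produces elements of $[\beta(Y)(1\otimes L^\infty(\check{\check\G}))]^{w*}$.

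\textbf{The formal reductions.} Condition \ref{viii'} trivially implies \ref{viii}. Applying Theorem \ref{non-degenerate Fubini crossed} to $\check\G$ shows that \ref{viii} (respectively \ref{viii'}) is equivalent to saturation, or non-degeneracy, of the relevant dual crossed products. Since $X\rtimes^\mcF_\alpha\G$ and $X\bar{\rtimes}_\alpha\G$ are objects of ${}_{L^1(\check\G)}\NMod^{\|\cdot\|}$ by Propositions \ref{relations} and \ref{prop1}, condition \ref{viii} immediately gives \ref{vi}, \ref{vi'}, \ref{vii} and \ref{vii'}. By Corollary \ref{surprising}, \ref{vi'} and \ref{vii'} coincide, and likewise \ref{vi} and \ref{vii}.

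To return to statements about $X$ itself, I would use the Takesaki--Takai isomorphisms, Propositions \ref{TT1} and \ref{TT2}. The Fubini double crossed product is $\Psi$ applied to $\Sat(X,\alpha)\ovot_\mcF B(L^2(\G))$ (conjugated by $\vv_{23}$), while the weak$^*$ double crossed product is $\Psi(X_{\rm nd}\ovot B(L^2(\G)))$. Hence \ref{vii} forces $\Sat(X,\alpha)=\alpha(X_{\rm nd})$, which gives \ref{ii}, \ref{iii} and \ref{ii'} via Proposition \ref{saturationnd} and Lemma \ref{satnd}. The implications \ref{ii}$\implies$\ref{iv}, \ref{ii'}$\implies$\ref{iv'}$\implies$\ref{iv} and \ref{iii}$\implies$\ref{v} are trivial.

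Conversely, \ref{iv} and \ref{v} each feed back into \ref{viii}. Apply \ref{iv} to the non-degenerate module $X\bar\rtimes_\alpha\G$ (Proposition \ref{prop1}), or apply \ref{v} to the saturated module $X\rtimes^\mcF_\alpha\G$ (Proposition \ref{prop1'}), in each case after relabelling $\check\G$ via Remark \ref{doublecrossed}. Theorem \ref{non-degenerate Fubini crossed} then yields the equality of crossed products for every module of the form $X\bar\rtimes\G$. Finally, Takesaki--Takai duality for $\check\G$-modules, $Y\mapsto Y\bar\rtimes\check\G\bar\rtimes\G$, should realize every $Y\in{}_{L^1(\check\G)}\NMod^{\|\cdot\|}$ (tensored with $B(L^2)$) in this form.

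\textbf{Closing the loop, \ref{iii}$\implies$\ref{i}.} Here I would take the concrete module
\[
X:=M^l_{\cb}(\A(\check\G))\subseteq L^\infty(\check\G),
\]
transported to a $\G$-module through the identification $\check{\check\G}\cong\G$. Its weak$^*$ topology comes from $Q^l(\A(\check\G))$, and the coaction is the comultiplication restricted to $X$. One checks the following:
\begin{itemize}
\item multipliers are translation invariant, so the coaction maps $X$ into $X\ovot_\mcF L^\infty$;
\item the coaction is completely isometric and weak$^*$-continuous, so $X\in{}_{L^1}\NMod^{\|\cdot\|}$;
\item convolution $\omega\rhd b$ lands in the weak$^*$-closure of $\A(\check\G)$, so $X_{\rm nd}=\overline{\A(\check\G)}^{w*}$.
\end{itemize}
Non-degeneracy of $X$ then means $1\in\overline{\A(\check\G)}^{w*}$, which is precisely the AP of $\check\G$. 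The delicate point is the second half of the third item: that $X_{\rm nd}$ is contained in the weak$^*$-closure of $\A(\check\G)$, and not merely that $X_{\rm nd}$ contains it.
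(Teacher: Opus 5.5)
There is a genuine gap: the two steps that actually involve the AP do not go through as proposed, and the step that closes your loop fails outright. For \eqref{iii}$\implies$\eqref{i}, the comultiplication restricted to $\M^l_{\cb}(\A(\check{\G}))\subseteq L^\infty(\check{\G})$ is a coaction of $\check{\G}$. It therefore makes $X$ an $L^1(\check{\G})$-module. The identification $\check{\check{\G}}\cong\G$ relates $\check{\check{\G}}$ and $\G$, not $\check{\G}$ and $\G$, so it does not produce an object to which \eqref{iii} applies. More decisively, any translation coaction satisfies $\omega\rhd 1=\omega(1)1$, so $1\in X_{\operatorname{nd}}$ always. The inclusion $X_{\operatorname{nd}}\subseteq\overline{\A(\check{\G})}^{w*}$ that you flag as delicate is simply false when $\check{\G}$ lacks the AP, and non-degeneracy of this module carries no information about the AP.

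What works is letting the Fourier algebra act by multiplication. Since $\hat{\check{\G}}=\G^{\operatorname{op}}$, the formula $\omega\rhd b:=b\,\lambda_{\hat{\check{\G}}}(\omega)$ for $\omega\in L^1(\G)$ makes $\M^l_{\cb}(\A(\check{\G}))$, with its $Q^l$ weak$^*$-topology, a left $L^1(\G)$-module. It lies in ${}_{L^1(\G)}\NMod^{\|\cdot\|}$, by separate weak$^*$-continuity of multiplication and because convolution on $L^1(\G)$ is a complete quotient map. Because $\M^l_{\cb}(\A(\check{\G}))\A(\check{\G})\subseteq\A(\check{\G})$ and products are dense in $\A(\check{\G})$, its non-degenerate part is exactly $\overline{\A(\check{\G})}^{w*}$. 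Note that the paper does not prove \eqref{i}$\iff$\eqref{ii}$\iff$\eqref{iii} itself; it imports this from \cite{An23}*{Proposition 3.26}. Its own argument is the bookkeeping, which your proposal largely parallels: Corollary~\ref{surprising}, then Takesaki--Takai duality to get $\Sat(X,\alpha)=\alpha(X_{\operatorname{nd}})$, then Lemma~\ref{satnd}.

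Your \eqref{i}$\implies$\eqref{viii'} is also not established. The characterization from \cite{DKV24} concerns $\Theta^l(a_i)$ on $L^\infty(\hat{\check{\G}})=L^\infty(\G)$. Three things therefore remain unproved: the choice of an extension acting on the $B(L^2(\G))$-leg of $Y\rtimes^\mcF_\beta\check{\G}$, its convergence there, and membership of $z_i$ in $Y\bar{\rtimes}_\beta\check{\G}$. The computations you cite only handle elements of $\Sat$ multiplied by $L^\infty(\check{\G})$; for general $z$, membership is essentially Proposition~\ref{ndfubini}.

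It is much easier to act on an $L^\infty(\G)$-leg. For $a_i=\lambda_{\hat{\check{\G}}}(\omega_i)$ one has $\Theta^l(a_i)=(\id\otimes\omega_i)\Delta$. So for $z\in\Sat(X,\alpha)$ the defining relation gives $(\id\otimes\Theta^l(a_i))(z)=\alpha((\id\otimes\omega_i)(z))\to z$ weak$^*$, which proves \eqref{ii} because $\alpha(X)$ is weak$^*$-closed.

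Your route from \eqref{iv} and \eqref{v} back to \eqref{viii} is likewise muddled. $X\bar{\rtimes}_\alpha\G$ is a $\check{\G}$-module, so \eqref{iv} does not apply to it. Your final claim that Takesaki--Takai duality realizes every $Y$ in the required form is unjustified: Proposition~\ref{TT2} only recovers $Y_{\operatorname{nd}}\ovot B(L^2(\G))$, so degenerate $Y$ are missed. None of this is needed. For arbitrary $(Y,\beta)\in{}_{L^1(\check{\G})}\NMod$, apply \eqref{ii} or \eqref{iv} to $Y\bar{\rtimes}_\beta\check{\G}$, which is non-degenerate by Proposition~\ref{prop1}. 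Alternatively, apply \eqref{v} to $Y\rtimes^\mcF_\beta\check{\G}$, which is saturated by Proposition~\ref{prop1'}. In either case relabel as a $\G$-module via Remark~\ref{doublecrossed}; Theorem~\ref{non-degenerate Fubini crossed} then gives \eqref{viii'} directly. This is exactly how the paper proves \eqref{ii}$\implies$\eqref{viii'}.
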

\begin{proof} The equivalences $\eqref{i}\iff \eqref{ii}\iff \eqref{iii}$ are established in \cite{An23}*{Proposition 3.26} in the context of classical groups, but the proof carries over to the setting of locally compact quantum groups. 

$\eqref{ii}\implies \eqref{viii'}$ If $\eqref{ii}$ holds, then every $(Z, \gamma)\in {}_{L^1(\check{\check{\G}})}\NMod^{\|\cdot\|}$ is saturated. In particular, given $(Y, \beta)\in {}_{L^1(\check{\G})}\NMod$, we see that $(Y\bar{\rtimes}_\beta\check{\G}, \beta^\rtimes)\in {}_{L^1(\check{\check{\G}})}\NMod^{\|\cdot\|}$ is saturated. By Theorem \ref{non-degenerate Fubini crossed}, we conclude that $Y\rtimes_\beta^\mcF\check{\G}= Y\bar{\rtimes}_\beta\check{\G}$.

Note that by Corollary \ref{surprising}, we have $\eqref{vi} = \eqref{vii}$ and $\eqref{vi'}= \eqref{vii'}$.
The implications $\eqref{viii'}\implies \eqref{viii}\implies \eqref{vii'}\implies \eqref{vii}$ are trivial. 

$\eqref{vi}\implies \eqref{ii}+\eqref{iii}$. Assume that $\eqref{vi}$ holds, then for $X\in {}_{L^1(\G)}\NMod^{\|\cdot\|}$, it follows from Proposition \ref{TT1}, Corollary \ref{surprising} and Proposition \ref{TT2} that
\begin{align*}
\vv_{23}^*(\Sat(X, \alpha)\ovot B(L^2(\G)))\vv_{23}&= (X\rtimes_\alpha^\mcF \G)\rtimes_{\alpha^\rtimes}^\mcF\check{\G}= (X\rtimes^\mcF_\alpha\G)\bar{\rtimes}_{\alpha^\rtimes} \check{\G}\\
&= (X\bar{\rtimes}_\alpha\G)\bar{\rtimes}_{\alpha^\rtimes} \check{\G}  =\vv_{23}^*(\alpha(X_{\operatorname{nd}})\ovot B(L^2(\G)))\vv_{23},\end{align*}
so $\Sat(X, \alpha)= \alpha(X_{\operatorname{nd}})$. This implies in particular that $(X, \alpha)$ is both non-degenerate and saturated.

$\eqref{v}\implies \eqref{ii'}$ Let $(X, \alpha)\in {}_{L^1(\G)}\NMod$. By assumption, $(\Sat(X, \alpha), \id \otimes \Delta)\in {}_{L^1(\G)}\NMod^{\|\cdot\|}$ is non-degenerate, so that Lemma \ref{satnd} implies that
$\Sat(X, \alpha)= \Sat(X, \alpha)_{\operatorname{nd}}\subseteq \overline{\alpha(X)}^{w*}\subseteq \Sat(X, \alpha),$
whence $\Sat(X, \alpha)=\overline{\alpha(X)}^{w*}$.

$\eqref{iv'}\implies \eqref{ii'}$ Let $(X, \alpha)\in {}_{L^1(\G)}\NMod$. By assumption, $(X_{\operatorname{nd}}, \alpha)\in {}_{L^1(\G)}\NMod$ satisfies $\Sat(X_{\operatorname{nd}}, \alpha)= \overline{\alpha(X_{\operatorname{nd}})}^{w*}$. Consequently, using  Proposition \ref{saturationnd}, we obtain $$\Sat(X, \alpha)= \Sat(X_{\operatorname{nd}}, \alpha) = \overline{\alpha(X_{\operatorname{nd}})}^{w*}\subseteq \overline{\alpha(X)}^{w*}\subseteq \Sat(X, \alpha),$$ whence $\Sat(X, \alpha)= \overline{\alpha(X)}^{w*}.$ The implication $\eqref{iv}\implies \eqref{ii}$ is proven similarly.

The implications $\eqref{ii'}\implies \eqref{ii}$ and $\eqref{iv'}\implies \eqref{iv}$ follow since $\alpha(X)$ is weak$^*$-closed for $(X, \alpha)\in \NMod^{\|\cdot\|}$ and the implications $\eqref{ii'}\implies \eqref{iv'}$ and $\eqref{iii}\implies \eqref{v}$ are trivial.
\end{proof}

\begin{Rem} Any locally compact quantum group $\G$ admits saturated $X\in {}_{L^1(\G)}\NMod$ which is degenerate. Simply take any non-zero dual operator space $X$ and endow it with the $L^1(\G)$-module structure $\omega\rhd x=0$. Thus, one can not replace ${}_{L^1(\G)}\NMod^{\|\cdot\|}$ by ${}_{L^1(\G)}\NMod$ in points \eqref{iii}, \eqref{v} of Theorem \ref{main}.
\end{Rem}

We now use Theorem \ref{main} to give a homological interpretation of the AP. 

\begin{Prop}
    The following are equivalent:
    \begin{enumerate}[noitemsep]
    \item $\check{\G}$ has the AP.
        \item The functor $-\rtimes^\mcF \G: {}_{L^1(\G)}\NMod^{\|\cdot\|}\to {}_\C\Mod$ reflects left $1$-exactness.
        \item The functor $-\bar{\rtimes}\G: {}_{L^1(\G)}\NMod^{\|\cdot\|}\to {}_\C\Mod$ reflects left $1$-exactness.
    \end{enumerate}
\end{Prop}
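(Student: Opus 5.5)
The plan is to interpret ``reflects left $1$-exactness'' in the standard way. Whenever $0\to X\xrightarrow{\iota}Y\xrightarrow{\phi}Z$ is a sequence in ${}_{L^1(\G)}\NMod^{\|\cdot\|}$ whose image under the functor is $1$-exact, the original sequence must itself be $1$-exact. The two directions then use two different tools. The forward direction goes through the fixed point functor, which is representable and hence left $1$-exact. The converse uses the non-detection results of Proposition \ref{nondetection} together with Theorem \ref{main}.

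$(1)\implies(2)$ and $(1)\implies(3)$. Assume $\check{\G}$ has the AP. By Theorem \ref{main}, every object of ${}_{L^1(\G)}\NMod^{\|\cdot\|}$ is then saturated, so $\alpha\colon X\to\Sat(X,\alpha)$ is a completely isometric isomorphism. Moreover, by Proposition \ref{relations}(2) and the Corollary following Proposition \ref{ndfubini}, we have
$$\Sat(X,\alpha)=\Fix(X\rtimes_\alpha^\mcF\G,\alpha^\rtimes)=\Fix(X\bar{\rtimes}_\alpha\G,\alpha^\rtimes).$$
For an $L^1(\G)$-linear map $f\colon X\to Y$, the induced map $f\otimes\id$ on either crossed product is $\check{\G}$-equivariant for the dual actions. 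Hence $X\mapsto X\rtimes^\mcF_\alpha\G$ and $X\mapsto X\bar\rtimes_\alpha\G$ factor through ${}_{L^1(\check{\G})}\Mod$.

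Now suppose the image sequence $0\to X\rtimes^\mcF\G\to Y\rtimes^\mcF\G\to Z\rtimes^\mcF\G$ is $1$-exact. Being $1$-exact only involves kernels, images and complete isometries, so it is $1$-exact as a sequence in ${}_{L^1(\check{\G})}\Mod$. We then apply $\Fix(-)\cong{}_{L^1(\check\G)}\CB(\C,-)$ (Proposition \ref{p:natural}), which is left $1$-exact by Proposition \ref{exactness homfunctor}(1). This gives a $1$-exact sequence $0\to\Sat(X)\to\Sat(Y)\to\Sat(Z)$. Finally we transport it back along the natural isomorphisms $\alpha,\beta,\gamma$; naturality is exactly $(f\otimes\id)\alpha=\beta f$. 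The same argument works verbatim for $-\bar\rtimes\G$.

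$(2)\implies(1)$. Suppose $\check{\G}$ fails the AP. By Theorem \ref{main}\eqref{ii} there is $(X,\alpha)\in{}_{L^1(\G)}\NMod^{\|\cdot\|}$ that is not saturated. By Lemma \ref{satnd}, $(\Sat(X,\alpha),\id\otimes\Delta)$ lies in ${}_{L^1(\G)}\NMod^{\|\cdot\|}$, and $\alpha\colon X\to\Sat(X,\alpha)$ is an equivariant weak$^*$-continuous complete isometry. Consider the sequence $0\to X\xrightarrow{\alpha}\Sat(X,\alpha)\to 0$. It is not $1$-exact, since the kernel of the zero map is $\Sat(X,\alpha)\neq\alpha(X)$. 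After identifying $X$ with $\alpha(X)$, Proposition \ref{nondetection}(1) shows that the induced map $X\rtimes^\mcF\G\to\Sat(X,\alpha)\rtimes^\mcF\G$ is a surjective complete isometry. So the image sequence is $1$-exact, and reflection fails.

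$(3)\implies(1)$. Again assume $\check{\G}$ fails the AP, and use Theorem \ref{main}\eqref{iii} to pick a degenerate $(X,\alpha)\in{}_{L^1(\G)}\NMod^{\|\cdot\|}$. The submodule $X_{\operatorname{nd}}$ is weak$^*$-closed with completely isometric restricted coaction, so it belongs to ${}_{L^1(\G)}\NMod^{\|\cdot\|}$. Take the sequence $0\to X_{\operatorname{nd}}\hookrightarrow X\to 0$, which is not $1$-exact. By Proposition \ref{nondetection}(2), its image under $-\bar\rtimes\G$ is the identity $X_{\operatorname{nd}}\bar\rtimes\G=X\bar\rtimes\G$, which is $1$-exact.

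The main point requiring care is bookkeeping rather than substance. One must check that the zero module is allowed in the category, and that the induced crossed product maps are genuinely $\check{\G}$-equivariant so that $\Fix$ applies. One must also check that in the non-saturated counterexample the map really is the identification provided by Proposition \ref{nondetection}(1), namely $\alpha\otimes\id$ onto $\alpha(X)\rtimes^\mcF\G$.
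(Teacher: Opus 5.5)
Your proposal is correct. The two converse implications are exactly the paper's arguments. The paper uses $\alpha(X)\hookrightarrow\Sat(X,\alpha)$ with Proposition \ref{nondetection}(1), working with $\alpha(X)$ directly, which spares it the identification $(\alpha\otimes\id)(X\rtimes^\mcF_\alpha\G)=\alpha(X)\rtimes^\mcF_{\id\otimes\Delta}\G$ that you flag. It uses $X_{\operatorname{nd}}\hookrightarrow X$ with Proposition \ref{nondetection}(2), and it argues both directly rather than by contraposition.

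The forward implications are where your packaging differs. The paper runs an element chase. It gets that $\iota$ is completely isometric from $(\iota\rtimes^\mcF\G)\alpha=\beta\iota$, and that $\phi\iota=0$ from injectivity of $\gamma$. For $y\in\Ker\phi$ it lifts $\beta(y)=(\iota\rtimes^\mcF\G)(s)$ and notes $s\in(X\rtimes^\mcF_\alpha\G)\cap(X\ovot_\mcF L^\infty(\G))=\Sat(X,\alpha)=\alpha(X)$.

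You instead observe that, under the AP, $\Fix\circ(-\rtimes^\mcF\G)\cong\Sat\cong\id$ naturally on ${}_{L^1(\G)}\NMod^{\|\cdot\|}$. You also use that $\Fix\cong{}_{L^1(\check{\G})}\CB(\C,-)$ is left $1$-exact. A functor admitting such a left $1$-exact left inverse, up to completely isometric natural isomorphism, reflects left $1$-exactness.

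The underlying input is the same. The intersection identity above is exactly how Proposition \ref{relations}(2) proves $\Fix(X\rtimes^\mcF_\alpha\G,\alpha^\rtimes)=\Sat(X,\alpha)$. Your formulation makes it transparent that only saturation is used, not equality of the two crossed products, and it treats both functors uniformly.

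The paper's version is more elementary. For $-\bar{\rtimes}\G$ it needs only the trivial inclusion $(X\bar{\rtimes}_\alpha\G)\cap(X\ovot_\mcF L^\infty(\G))\subseteq\Sat(X,\alpha)$. You could likewise avoid the Corollary to Proposition \ref{ndfubini}, since $\alpha(X)\subseteq\Fix(X\bar{\rtimes}_\alpha\G,\alpha^\rtimes)\subseteq\Sat(X,\alpha)=\alpha(X)$ already suffices.
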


\begin{proof} $(1)\implies (2)$ Suppose that 
the sequence
$$
\begin{tikzcd}
0  \arrow[rr, ""] & & X\rtimes_\alpha^\mcF \G \arrow[rr, "\iota\rtimes^\mcF \G"] &  & Y\rtimes_\beta^\mcF \G \arrow[rr, "\phi\rtimes^\mcF \G"] &  & Z\rtimes_\gamma^\mcF \G
\end{tikzcd}$$ is 1-exact, where $\iota: (X, \alpha)\to (Y, \beta)$ and $\phi: (Y, \beta)\to (Z, \gamma)$ are morphisms in ${}_{L^1(\G)}\NMod^{\|\cdot\|}$. Since $(\iota\rtimes^\mcF \G)\circ \alpha = \beta \circ \iota$, it is clear that $\iota: X\to Y$ is a  complete isometry. If $x\in X$, then
$$0 = (\phi\rtimes^\mcF \G)(\iota\rtimes^\mcF \G)\alpha(x)= \gamma(\phi\iota(x)),$$
so $\phi\iota(x)= 0$. From this, we see that $\operatorname{Im}(\iota)\subseteq \Ker(\phi)$. Conversely, if $y\in Y$ with $\phi(y)= 0$, then $(\phi\rtimes^\mcF \G)\beta(y)= \gamma(\phi(y))= 0$, whence $\beta(y) = (\iota\rtimes^\mcF \G)(s)$ for some $s\in X\rtimes_\alpha^\mcF \G$. Since $\beta(y)\in Y \bar{\otimes}_\mcF L^\infty(\G)$ and since $\iota$ is a complete isometry, it follows that
    \begin{equation}\label{eq9}
    s \in (X\rtimes_\alpha^\mcF \G)\cap (X \bar{\otimes}_\mcF L^\infty(\G)) = \operatorname{Sat}(X, \alpha)= \alpha(X),
    \end{equation}
    where the last equality follows from Theorem \ref{main}. Thus, $\beta(y)= (\iota \rtimes_\mcF \G)(\alpha(x))= \beta(\iota(x))$ for some $x\in X$, i.e.\ $y\in \operatorname{Im}(\iota)$. We conclude that
    $$\begin{tikzcd}
0  \arrow[rr, ""] & & X \arrow[rr, "\iota"] &  & Y \arrow[rr, "\phi"] &  & Z
\end{tikzcd}$$
is 1-exact.

$(1)\implies (3)$ The reasoning is analogous to the implication $(1)\implies (2)$, one only needs to replace the first equality in \eqref{eq9} by $(X\bar{\rtimes}_{\alpha}\G)\cap (X\bar{\otimes}_{\mcF} L^{\infty}(\G))\subseteq \Sat(X,\alpha)$.

    $(2)\implies (1)$ Let $(X, \alpha)\in {}_{L^1(\G)}\NMod^{\|\cdot\|}$. Consider the inclusion map $\iota: \alpha(X)\to \Sat(X, \alpha)$. Since $\alpha(X)\rtimes_{\id \otimes \Delta}^\mcF \G = \Sat(X, \alpha)\rtimes_{\id \otimes \Delta}^\mcF \G$ (Proposition \ref{nondetection}), the sequence
    $$
\begin{tikzcd}
0  \arrow[rr, ""] & &\alpha(X)\rtimes_{\id \otimes \Delta}^\mcF \G \arrow[rr, "\iota\rtimes^\mcF \G"] &  & \Sat(X, \alpha)\rtimes_{\id \otimes \Delta}^\mcF \G \arrow[rr] &  & 0
\end{tikzcd}$$
is 1-exact. By assumption, this implies that 
$$
\begin{tikzcd}
0  \arrow[rr, ""] & &\alpha(X) \arrow[rr, "\iota"] &  & \Sat(X, \alpha) \arrow[rr] &  & 0
\end{tikzcd}$$
is $1$-exact, i.e.\ $(X, \alpha)$ is saturated. By Theorem \ref{main'}, $\check{\G}$ has the AP.

$(3)\implies (1)$ Similarly, by making use of the fact that $X\bar{\rtimes}_\alpha\G = X_{\operatorname{nd}}\bar{\rtimes}_\alpha\G$ for $(X, \alpha)\in {}_{L^1(\G)}\NMod^{\|\cdot\|}$ (Proposition \ref{nondetection}).
\end{proof}

\begin{Rem}
Let $\G$ be a locally compact quantum group without the AP. Using Theorem \ref{main'}, we see that there exist $X,Y\in {}_{L^1(\check{\G})}\NMod^{\|\cdot\|}$ such that $X$ is degenerate and $Y$ is non-saturated. Their direct sum $X\oplus_\infty Y\in {}_{L^1(\check{\G})}\NMod^{\|\cdot\|}$ is then both degenerate and non-saturated. We omit the details, as the existence of examples which are both non-saturated and degenerate will also follow by combining Proposition \ref{prop7} and Theorem \ref{thm1}. This answers a question raised in \cite{An23}*{Remark 3.27}.
\end{Rem}

Let us now give some further applications of Theorem \ref{main'}:

\begin{Cor}[\cite{DKV24}*{Proposition 6.10}]\label{smp}
    If $\G$ has the AP, then $L^\infty(\check{\G})$ has the $W^*$-OAP. 
\end{Cor}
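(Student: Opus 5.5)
The plan is to apply Theorem \ref{main'} to a suitable quantum group and then evaluate both crossed products for the \emph{trivial} action; this reduces the slice map property to identities already available.

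First, choose $\Hh:=\check{\G}$. By Remark \ref{doublecrossed}, $\check{\Hh}=\check{\check{\G}}\cong\G$ via $x\mapsto u_\G x u_\G$, so the hypothesis that $\G$ has the AP says that $\check{\Hh}$ has the AP. Condition \eqref{i} of Theorem \ref{main'} for $\Hh$ then yields condition \eqref{viii'}, which I will use in the form
$$Y\rtimes^\mcF_\beta\G= Y\bar{\rtimes}_\beta\G\quad\text{for all } (Y,\beta)\in {}_{L^1(\G)}\NMod.$$
Transporting everything along the isomorphism $\check{\check{\G}}\cong\G$ is routine bookkeeping, since both crossed product constructions are natural.

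Next, let $X$ be an arbitrary dual operator space with the trivial action $\tau(x)=x\otimes 1$, i.e.\ $\omega\rhd x=\omega(1)x$. This map is a weak$^*$-continuous complete isometry satisfying the coaction identity, so $(X,\tau)\in{}_{L^1(\G)}\NMod$. I compute both crossed products.
\begin{itemize}
\item By definition, $X\bar{\rtimes}_\tau\G=[(x\otimes 1)(1\otimes \check{y})]^{w*}=X\ovot L^\infty(\check{\G})$.
\item The Fubini crossed product is $\{z\in X\ovot_\mcF B(L^2(\G)) \mid z_{13}=(\id\otimes\Delta_l)(z)\}$. Fix an embedding $X\subseteq B(\mcH)$ and slice the first leg with $\rho\in B(\mcH)_*$. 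This shows that $z$ lies in the crossed product if and only if every slice $(\rho\otimes\id)(z)$ lies in $\Fix(B(L^2(\G)),\Delta_l)$.
\end{itemize}

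I then identify this fixed point space. Since $\Delta_l(b)=\ww^*(1\otimes b)\ww$ and $L^\infty(\hat\G)$ is generated by the legs of $\ww$, an element $b$ is fixed exactly when $b$ commutes with $L^\infty(\hat{\G})$. This gives $\Fix(B(L^2(\G)),\Delta_l)=L^\infty(\hat\G)'=L^\infty(\check{\G})$, the left analogue of the stated fact $\Fix(B(L^2(\G)),\Delta_r)=L^\infty(\hat\G)$. By the intersection property of Fubini tensor products, $X\rtimes^\mcF_\tau\G=X\ovot_\mcF L^\infty(\check{\G})$. Combining with the previous step yields $X\ovot L^\infty(\check{\G})=X\ovot_\mcF L^\infty(\check{\G})$ for every dual operator space $X$, which is the $W^*$-OAP (slice map property) of $L^\infty(\check{\G})$.

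The only points needing real care are the index bookkeeping in applying Theorem \ref{main'} to $\check{\G}$ rather than $\G$, and the identification of the fixed point algebra of $\Delta_l$. Neither should be a genuine obstacle.
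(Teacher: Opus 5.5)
Your proposal is correct and is essentially the paper's proof. Both apply Theorem \ref{main} to the trivial action $\tau(x)=x\otimes 1$ on an arbitrary dual operator space and identify $X\rtimes^\mcF_\tau\G=X\ovot_\mcF L^\infty(\check{\G})$ and $X\bar{\rtimes}_\tau\G=X\ovot L^\infty(\check{\G})$; your version additionally writes out the index shift through $\check{\check{\G}}\cong\G$ and the computation $\Fix(B(L^2(\G)),\Delta_l)=L^\infty(\check{\G})$, which the paper leaves implicit. The one thing to add is that the resulting identity for all dual $X$ is property $S_\sigma$, and its equivalence with the $W^*$-OAP is a known theorem of Kraus (see the discussion preceding \cite{HK94}*{Theorem 2.1}) rather than the definition, so it should be cited.
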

\begin{proof}
   Since $\G$ has the AP, it follows from Theorem \ref{main} that for any dual operator space $X$,
    $$X\ovot_\F L^\infty(\check{\G})= X\rtimes^\F_\tau \G = X\bar{\rtimes}_{\tau}\G = X \ovot L^\infty(\check{\G}),$$
    where $\tau: X \to X\ovot_\mcF L^\infty(\G): x \mapsto x \otimes 1$ is the trivial action. In other words, $L^\infty(\check{\G})$ has property $S_\sigma$, which is known to be equivalent with the $W^*$-OAP (see e.g.\ the discussion preceding \cite{HK94}*{Theorem 2.1}).
\end{proof}

\begin{Cor}[\cite{DKV24}*{Theorem 7.1}]
    If $\G$ has the AP and $\mathbb{H}$ is a closed quantum subgroup of $\G$ (in the sense of Vaes), then $\mathbb{H}$ also has the AP.
\end{Cor}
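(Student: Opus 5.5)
The plan is to use the characterization in Theorem \ref{main'}, in the form \eqref{i}$\iff$\eqref{iii}. I will work through an auxiliary quantum group whose $\check{(\cdot)}$ is the given one, so that non-degeneracy of modules can be transported along the closed-subgroup embedding.

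First I set up the auxiliary quantum groups. Let $\mathbb{K}$ and $\mathbb{K}'$ be locally compact quantum groups with $\check{\mathbb{K}}\cong \mathbb{H}$ and $\check{\mathbb{K}'}\cong \G$. Concretely, $\mathbb{K}=\check{\mathbb{H}}$ and $\mathbb{K}'=\check{\G}$, using $\check{\check{\G}}\cong\G$ via $x\mapsto u_\G x u_\G$ (Remark \ref{doublecrossed}). Then $L^\infty(\mathbb{K})$ and $L^\infty(\mathbb{K}')$ are, up to the canonical anti-isomorphism $j$ and commutants, copies of $L^\infty(\hat{\mathbb{H}})$ and $L^\infty(\hat{\G})$.

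Next I transfer the closed-subgroup embedding. By definition, $\mathbb{H}$ being closed in $\G$ in the sense of Vaes gives an injective normal unital $*$-homomorphism $\gamma: L^\infty(\hat{\mathbb{H}})\to L^\infty(\hat{\G})$ with $\Delta_{\hat\G}\circ\gamma=(\gamma\otimes\gamma)\circ\Delta_{\hat{\mathbb{H}}}$. Conjugating by the $j$-maps and taking commutants transports this to an injective normal unital $*$-homomorphism
$$\theta: L^\infty(\mathbb{K})\to L^\infty(\mathbb{K}'), \qquad \Delta_{\mathbb{K}'}\circ\theta=(\theta\otimes\theta)\circ\Delta_{\mathbb{K}}.$$
The one point to check here is that the opposite and commutant conventions match, so that $\theta$ intertwines the comultiplications and is not anti-multiplicative. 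I expect this bookkeeping to be the main, though routine, obstacle. If a flip appears, I will compensate by composing with the unitary antipode on both sides, which is compatible with $\gamma$.

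Now fix $(X,\alpha)\in{}_{L^1(\mathbb{K})}\NMod^{\|\cdot\|}$ and push it forward to $\mathbb{K}'$. Define $\alpha':=(\id\otimes\theta)\circ\alpha: X\to X\ovot_\mcF L^\infty(\mathbb{K}')$. This map is well defined and completely isometric, because the Fubini tensor product is injective and $\theta$ is a weak$^*$-continuous complete isometry. It is also weak$^*$-continuous. The coaction identity for $\alpha'$ follows from that of $\alpha$ together with the intertwining property of $\theta$. Hence $(X,\alpha')\in{}_{L^1(\mathbb{K}')}\NMod^{\|\cdot\|}$.

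Since $\check{\mathbb{K}'}\cong\G$ has the AP, Theorem \ref{main'}\eqref{iii} says $(X,\alpha')$ is non-degenerate:
$$X=[(\id\otimes\omega)\alpha'(x)\mid \omega\in L^1(\mathbb{K}'),\ x\in X]^{w*}.$$
For every such $\omega$ we have $(\id\otimes\omega)\alpha'(x)=(\id\otimes \omega\circ\theta)\alpha(x)$. Because $\theta$ is normal, $\omega\circ\theta\in L^1(\mathbb{K})$, so $X\subseteq X_{\operatorname{nd}}$ for the original action. Thus every $(X,\alpha)\in{}_{L^1(\mathbb{K})}\NMod^{\|\cdot\|}$ is non-degenerate.

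Finally, applying Theorem \ref{main'} in the direction \eqref{iii}$\implies$\eqref{i} to $\mathbb{K}$ shows that $\check{\mathbb{K}}\cong\mathbb{H}$ has the AP.
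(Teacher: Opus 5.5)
Your proposal is correct and is essentially the paper's proof: push $(X,\alpha)$ forward along the coproduct-intertwining embedding $L^\infty(\check{\mathbb{H}})\to L^\infty(\check{\G})$, obtain non-degeneracy from Theorem \ref{main'}, and pull it back. Two small remarks. First, $j_{\hat{\G}}\circ\gamma\circ j_{\hat{\mathbb{H}}}^{-1}$ is automatically multiplicative and intertwines the coproducts, since two anti-isomorphisms compose to a homomorphism, so no antipode correction is needed. Second, your observation that $\omega\circ\theta\in L^1(\check{\mathbb{H}})$ already gives the needed inclusion $X\subseteq X_{\operatorname{nd}}$, so you do not need the paper's appeal to surjectivity of $\check{\gamma}_*$.
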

\begin{proof}
    Since $\mathbb{H}$ is a closed quantum subgroup of $\G$  in the sense of Vaes, there is a normal, unital, isometric $*$-homomorphism $\check{\gamma}: L^\infty(\check{\mathbb{H}})\to L^\infty(\check{\G})$ intertwining the coproducts. 
    
    Assume that $(X,\alpha)\in {}_{L^1(\check{\mathbb{H}})}\NMod^{\|\cdot\|}$. Then we can view $X$ as an object of ${}_{L^1(\check{\G})}\NMod^{\|\cdot\|}$ through the coaction
    $(\id \otimes \check{\gamma})\circ \alpha: X \to X \ovot_\F L^\infty(\check{\G}).$ Since $\G$ has the AP, $(X, (\id \otimes \check{\gamma})\circ \alpha)$ is non-degenerate (Theorem \ref{main'}), and thus
    $$X = [(\id \otimes \omega)((\id \otimes \check{\gamma})\alpha(x)) \mid \omega \in L^1(\check{\G}), x \in X]^{w*}=[(\id \otimes \eta)\alpha(x)\mid \eta \in L^1(\check{\mathbb{H}}), x \in X]^{w*},$$
    where the surjectivity of $\check{\gamma}_*: L^1(\check{\G})\to L^1(\check{\H})$ was used \cite{ER00}*{Corollary 4.1.9}.
    Thus, $(X, \alpha)$ is non-degenerate as well. It follows from Theorem \ref{main} that $\mathbb{H}$ has the AP.
\end{proof}

Observe that any functional $\omega\in L^1(\G)$ gives a normal completely bounded map $\Phi( \lambda(\omega))$ on $B(L^2(\G))$ via
$$
\Phi(\lambda(\omega))(x) = (\omega\otimes \id)\Delta_l(x), \quad \omega\in L^1(\G), \quad x\in B(L^2(\G)),$$
cf.~\cite{DKV24}*{Section 3.3}.
Towards another application of Theorem \ref{main}, consider a norm-closed right ideal $\mathcal{X}\subseteq L^1(\G)$ and write
$\mathcal{X}^\perp:= \{x\in L^\infty(\G)\mid \forall \omega\in \mathcal{X}: \omega(x)= 0\}$. It is easily verified that $\Delta(\mathcal{X}^\perp)\subseteq \mathcal{X}^\perp\ovot_\mcF L^\infty(\G)$, so that $(\mathcal{X}^\perp, \Delta)\in {}_{L^1(\G)}\NMod^{\|\cdot\|}$. 
Define next the weak$^*$-closed (algebraic) $L^\infty(\check{\G})$-$L^\infty(\check{\G})$-bimodule $$\operatorname{Bim}(\mathcal{X}^\perp):= [\check{x}z\check{y} \mid \check{x},\check{y} \in L^\infty(\check{\G}), z\in \mathcal{X}^\perp]^{w*}\subseteq B(L^2(\G)).$$

\begin{Cor} Let $\G$ be a locally compact  quantum group. Then we have the equalities $\Delta_l(\operatorname{Bim}(\mathcal{X}^\perp))= \mathcal{X}^\perp\bar{\rtimes}_\Delta \G$ and $\Delta_l\left(\bigcap_{\omega\in \mathcal{X}}\Ker\Phi(\lambda(\omega))\right)= \mathcal{X}^\perp\rtimes^\mcF_\Delta \G$. If $\G$ has the AP, then $\operatorname{Bim}(\mathcal{X}^\perp)= \bigcap_{\omega\in \mathcal{X}}\Ker\Phi(\lambda(\omega))$. 
\end{Cor}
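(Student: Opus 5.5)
Everything rests on the $*$-isomorphism $\Delta_l: B(L^2(\G))\to L^\infty(\G)\bar{\rtimes}_\Delta\G$ from \eqref{crossed}. By \eqref{FubiniGWstar}, $L^\infty(\G)\bar{\rtimes}_\Delta\G$ is also the Fubini crossed product $L^\infty(\G)\rtimes^\mcF_\Delta\G$. Since $\mathcal{X}^\perp\subseteq L^\infty(\G)$ is weak$^*$-closed and $\Delta$-invariant, the injectivity of the Fubini tensor product gives $\mathcal{X}^\perp\rtimes^\mcF_\Delta\G=(L^\infty(\G)\rtimes^\mcF_\Delta\G)\cap(\mathcal{X}^\perp\ovot_\mcF B(L^2(\G)))$. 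Hence
$$\mathcal{X}^\perp\rtimes^\mcF_\Delta\G=\{\Delta_l(x)\mid x\in B(L^2(\G)),\ (\omega\otimes\id)\Delta_l(x)=0 \ \forall\,\omega\in\mathcal{X}\}.$$
Here I use that $z\in L^\infty(\G)\ovot B(L^2(\G))$ lies in $\mathcal{X}^\perp\ovot_\mcF B(L^2(\G))$ iff $(\omega\otimes\id)z=0$ for all $\omega\in\mathcal{X}$. To check this, slice the second leg with arbitrary $\rho$: the result $(\id\otimes\rho)z$ lies in $\mathcal{X}^\perp$ iff $\omega((\id\otimes\rho)z)=\rho((\omega\otimes\id)z)=0$ for all $\omega\in\mathcal{X}$. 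Since $(\omega\otimes\id)\Delta_l(x)=\Phi(\lambda(\omega))(x)$, the displayed set is $\Delta_l\bigl(\bigcap_{\omega\in\mathcal{X}}\Ker\Phi(\lambda(\omega))\bigr)$, which is the second equality.

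For the first equality, $\mathcal{X}^\perp\bar{\rtimes}_\Delta\G=[\Delta(\mathcal{X}^\perp)(1\otimes L^\infty(\check{\G}))]^{w*}$, and by \eqref{j} this equals $[(1\otimes L^\infty(\check{\G}))\Delta(\mathcal{X}^\perp)(1\otimes L^\infty(\check{\G}))]^{w*}$. On the other side, $\Delta_l$ is a normal $*$-isomorphism that restricts to $\Delta$ on $L^\infty(\G)$. It also satisfies $\Delta_l(\check{x})=1\otimes\check{x}$ for $\check{x}\in L^\infty(\check{\G})=L^\infty(\hat{\G})'$, because $\ww\in L^\infty(\G)\ovot L^\infty(\hat{\G})$ commutes with $1\otimes\check{x}$. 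Consequently $\Delta_l(\check{x}z\check{y})=(1\otimes\check{x})\Delta(z)(1\otimes\check{y})$. Normality of $\Delta_l$, together with the fact that a normal isometric map carries weak$^*$-closures to weak$^*$-closures, then yields $\Delta_l(\operatorname{Bim}(\mathcal{X}^\perp))=\mathcal{X}^\perp\bar{\rtimes}_\Delta\G$.

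For the final claim, suppose $\G$ has the AP. Theorem~\ref{main} (\eqref{viii'}, applied with the roles of $\G$ and $\check{\G}$ interchanged via $\check{\check{\G}}\cong\G$) gives $Y\rtimes^\mcF\G=Y\bar{\rtimes}\G$ for every $Y\in{}_{L^1(\G)}\NMod$. Applying this to $Y=\mathcal{X}^\perp$ and using injectivity of $\Delta_l$ gives $\operatorname{Bim}(\mathcal{X}^\perp)=\bigcap_{\omega\in\mathcal{X}}\Ker\Phi(\lambda(\omega))$.

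The main obstacle is this last identification. One must check that the isomorphism $\check{\check{\G}}\cong\G$ (conjugation by $u_\G$, Remark~\ref{doublecrossed}) transports actions and both crossed products compatibly. Under that transport, ``$\check{\G}$ has the AP'' in Theorem~\ref{main} should become ``$\G$ has the AP'' for $L^1(\G)$-modules, since the AP is invariant under this isomorphism.
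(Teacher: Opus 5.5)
Your proposal is correct and follows essentially the same route as the paper. You work inside $\Delta_l(B(L^2(\G)))=L^\infty(\G)\bar{\rtimes}_\Delta\G$, use $\Delta_l(\check{x}z\check{y})=(1\otimes\check{x})\Delta(z)(1\otimes\check{y})$ and the slice characterization of $\mathcal{X}^\perp\ovot_\mcF B(L^2(\G))$, and then obtain the last claim from Theorem~\ref{main} together with injectivity of $\Delta_l$. The transport along $\check{\check{\G}}\cong\G$ that you flag is a routine step the paper leaves implicit, and it does go through: conjugation by $1\otimes u_\G$ carries both $Y\rtimes^\mcF_\alpha\G$ and $Y\bar{\rtimes}_\alpha\G$ onto the corresponding crossed products for $\check{\check{\G}}$, so nothing is missing.
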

\begin{proof}
    Since
$\Delta_l(\check{x}z\check{y}) = (1\otimes \check{x}) \Delta(z) (1\otimes \check{y})$ for $\check{x},\check{y} \in L^\infty(\check{\G})$  and $z\in \mathcal{X}^\perp\subseteq L^\infty(\G)$, it is clear that
$$ \Delta_l(\operatorname{Bim}(\mathcal{X}^\perp))= \mathcal{X}^\perp\bar{\rtimes}_\Delta \G.$$ 
Let us write $K:= \bigcap_{\omega\in \mathcal{X}}\Ker\Phi(\lambda(\omega))$.
We will now prove that $\Delta_l(K)= \mathcal{X}^\perp\rtimes_\Delta^\mcF \G$. Indeed, if $x\in K$, $\mu \in B(L^2(\G))_*$ and $\omega\in \mathcal{X}$, then $\omega\bigl((\id \otimes \mu)\Delta_l(x)\bigr) = \mu\bigl(\Phi(\lambda(\omega))(x)\bigr)= 0,$
so that $\Delta_l(x)\in \mathcal{X}^\perp\ovot_\mcF B(L^2(\G))$. The (left) coaction property $(\Delta\otimes \id)\Delta_l = (\id \otimes \Delta_l)\Delta_l$ then ensures that $\Delta_l(K)\subseteq \mathcal{X}^\perp\rtimes_\Delta^\mcF \G$. Conversely, if $z\in \mathcal{X}^\perp\rtimes_\Delta^\mcF\G\subseteq L^\infty(\G)\bar{\rtimes}_\Delta \G = \Delta_l(B(L^2(\G)))$, there is $x\in B(L^2(\G))$ such that $z= \Delta_l(x)$. Since $z\in \mathcal{X}^\perp\ovot_\mcF B(L^2(\G))$, it follows straightforwardly that $x\in K$. Thus, $\mathcal{X}^\perp\rtimes_\Delta^\mcF \G= \Delta_l(K)$. It follows from Theorem \ref{main} that $K = \operatorname{Bim}(\mathcal{X}^\perp)$ if $\G$ has the AP.
\end{proof}

The equality $\operatorname{Bim}(\mathcal{X}^\perp)= \bigcap_{\omega\in \mathcal{X}}\Ker\Phi(\lambda(\omega))$ was first proven in \cite{AKT19} for abelian, compact and weakly amenable discrete groups. It was proven for classical locally compact groups with the AP in \cite{CN22}*{Theorem 5.5} and for discrete quantum groups with the AP in \cite{CKN25}*{Theorem 4.4}. The equality remains open if $\G$ does not have the AP (even in the case of classical groups) and appears to be related to the question if $\G$ automatically has Ditkin's property at infinity (see e.g.\ the discussion in \cite{An23}*{Section 5}).

\section{$L^1(\G)$-decomposable approximation property}
\label{SectionDAP}
In this section, we aim to relate the AP of a locally compact quantum group $\G$ to an appropriate module-theoretic approximation property.

Before we can define this approximation property, we need to introduce some terminology. Recall the notion of the (forced) unitization $L^1(\G)_+=L^1(\G)\oplus_1 \C$ from Subsection \ref{operatormodules}. It is an object of ${}_{L^1(\G)}\Mod$ via $\omega\rhd(\nu,\lambda)=(\omega\star \nu+\lambda\omega,0)$ for $\omega,\nu\in L^1(\G),\lambda\in \C$. We emphasize that this $L^1(\G)$-module structure on $L^1(\G)_+$ is not the one induced from the natural $\G$-$W^*$-algebra structure on $L^\infty(\G)\oplus_\infty\C$ (cf.\ Example \ref{interestingex}). Then, for any operator space $F$, $L^1(\G)_+\hat{\otimes}F$ is an object of ${}_{L^1(\G)}\Mod$ via $\omega\rhd(\rho\otimes f)=(\omega\rhd \rho)\otimes f$ for $\omega\in L^1(\G),\rho\in L^1(\G)_+, f\in F$.

\begin{Def}\label{d1}
    A morphism $\phi:X\to Y$ between $X,Y \in {}_{L^1(\G)} \operatorname{Mod}$ is said to have \textit{finite $L^1(\G)$-rank} if there exists a finite-dimensional operator space $F$,  completely bounded $L^1(\G)$-module maps $u:X\to L^1(\G)_+\pten F$ and $v:L^1(\G)_+\pten F\to Y$ such that the diagram
\begin{equation}\label{e:diagram} \begin{tikzcd}
 & L^1(\G)_+\pten F \arrow[dr, "v"]  \\
 X \arrow[ru, "u"] \arrow[rr, "\phi"]  && Y
\end{tikzcd}
\end{equation}
commutes. 
\end{Def}

In the language of \cite{Cra21}, this property means that $\phi$ admits a completely bounded factorization through a finitely generated free $L^1(\G)$-module, and hence may be viewed as an operator module analogue of a finite-rank map. The set ${}_{L^1(\G)}\mathcal{F}(X,Y)$ of finite $L^1(\G)$-rank morphisms forms a subspace of ${}_{L^1(\G)}\CB(X,Y)$. If $X\in {}_{L^1(\G)}\Mod$ is essential and $L^1(\G)$ is unital (i.e.\ when $\G$ is discrete), one may replace $L^1(\G)_+\hat{\otimes} F$ by $L^1(\G)\hat{\otimes} F$ in the factorization \eqref{e:diagram}. This follows from the fact that $L^1(\G)$ is complemented in $L^1(\G)_+$ as an $L^1(\G)$-module so that a morphism $v: L^1(\G)\hat{\otimes} F \to Y$ automatically extends to a morphism $L^1(\G)_+\hat{\otimes} F\to Y$, and that, by essentiality of $X$, a morphism $u:X\to L^1(\G)_+\pten F$ automatically maps into $L^1(\G)\pten F$.

If $M$ is a von Neumann algebra, there is a vector space isomorphism
$$M_n(M_*)\to M_n(M)_*: [\omega_{ij}]\mapsto \left([m_{ij}]\mapsto \sum_{i,j=1}^n \omega_{ij}(m_{ij})\right).$$
We say that $[\omega_{ij}]\in M_n(M_*)$ is positive if it is positive when regarded as a functional on $M_n(M)$. Given von Neumann algebras $M$ and $N$, a bounded linear map $\varphi: M_*\to N_*$ is called completely positive (= cp) if $\varphi_n: M_n(M_*)\to M_n(N_*)$ maps positive elements of $M_n(M_*)$ to positive elements of $M_n(N_*)$ for all $n\ge 1$. We then write $\varphi\in \CP(M_*, N_*)$. It is easy to verify that $\varphi: M_*\to N_*$ is cp if and only if $\varphi^*: N\to M$ is cp in the usual sense. Identifying $L^1(\G)_+ \cong (L^\infty(\G)\oplus_\infty \C)_*$, we can then make sense of the following definition:
\begin{Def}\label{d2}
    In the case where $X=M_*$ is the predual of a left $\G$-$W^*$-algebra $M$, we say that $\phi$ in $_{L^1(\G)}\mathcal{F}(M_*,Y)$ is \textit{decomposable} if there exists a factorization (\ref{e:diagram}) for which the map $u:M_*\to L^1(\G)_+\pten F$ has the property that $(\id\ten f^*)u$ lies in the span of $_{L^1(\G)}\mathcal{CP}(M_*,L^1(\G)_+)$ for all $f^*\in F^*$. The resulting subspace of $_{L^1(\G)}\mathcal{F}(M_*,Y)$ will be denoted by  $_{L^1(\G)}\mathcal{DF}(M_*,Y)$.
\end{Def}

 Note that when $\G=\{e\}$ is trivial, ${}_{L^1(\{e\})}\mathcal{F}(M_*,Y)$ is the usual space of finite rank mappings $M_*\to Y$, and since every element in $(M_*)^*\cong M$ is a linear combination of positives, it follows that ${}_{L^1(\{e\})}\mathcal{F}(M_*,Y)={}_{L^1(\{e\})}\mathcal{DF}(M_*,Y)$. 
 
 Recall that for any operator spaces $W,Z$, the space $\mathcal{CB}(W,Z^*)$ carries a canonical weak$^*$-topology with respect to the identification $\mathcal{CB}(W,Z^*)\cong (W\widehat{\otimes} Z)^*$ \cite{ER00}*{Proposition 7.1.2}. Whenever we speak about the weak$^*$-topology on $\mathcal{CB}(W,Z^*)$ (or its subspaces), we will have in mind this particular predual. If $A$ is a completely contractive Banach algebra, $W\in {}_A\Mod$ and $Z\in \Mod_A$, then ${}_A \CB(W,Z^*)$ is weak$^*$-closed in $\CB(W,Z^*)$.

\begin{Def}\label{DAP}
    Let $(M, \alpha)$ be a left $\G$-$W^*$-algebra. We say that $M_*\in {}_{L^1(\G)}\Mod$ has the \emph{$L^1(\G)$-decomposable approximation property} ($L^1(\G)$-DAP) if 
$\overline{{}_{L^1(\G)}\mathcal{DF}(M_*, X)}^{w*} = {}_{L^1(\G)}\CB(M_*, X)$
for every $X\in {}_{L^1(\G)}\NMod^{\|\cdot\|}$.
\end{Def}

Let us note that $\C\in {}_{L^1(\G)}\Mod$ trivially has the $L^1(\G)$-DAP. The main goal of this section is to show that there is a close connection between the approximation property of $\G$, the $L^1(\G)$-DAP of $B(L^2(\G))_*$ and the $L^1(\check{\G})$-DAP of $L^1(\check{\G})$.

We will use the notion of the weak$^*$-Haagerup tensor product, which we briefly recall. For a comprehensive introduction and proofs of the following claims, see \cite{BS92}. Let $\mcH,\mcK$ be Hilbert spaces. Considering the Haagerup tensor product $\otimes_h$ of operator spaces \cite{ER00}*{Chapter 9}, there is a canonical injective linear map $\theta: B(\mcH)\odot B(\mcK)\hookrightarrow (B(\mcH)_*\otimes_h B(\mcK)_*)^*$ given by $$\theta(x\otimes y)(x^*\otimes y^*) = x^*(x)y^*(y),  \quad x\in B(\mcH), \quad y\in B(\mcK), \quad x^*\in B(\mcH)_*, \quad y^*\in B(\mcK)_*.$$
We then define $B(\mcH)\otimes_{w^* h}B(\mcK)$ to be the weak$^*$-closure of $B(\mcH)\odot B(\mcK)$ in $(B(\mcH)_*\otimes_h B(\mcK)_*)^*$ (using the embedding $\theta$). In fact, we have the equality $B(\mcH)\otimes_{w^* h}B(\mcK)= (B(\mcH)_*\otimes_{ h}B(\mcK)_*)^*$. Next, if $M\subseteq B(\mcH)$, $N\subseteq B(\mcK)$ are von Neumann algebras, then $M\otimes_{w^* h} N$ is the weak$^*$-closure of $M\odot N\subseteq B(\mcH)\otimes_{w^* h} B(\mcK)$. The assignment $m\otimes n\mapsto \bigl( T\mapsto mT n\bigr)$ establishes completely isometric identifications
\[
M\otimes_{w^* h}N\cong 
{}_{M'}\CB_{N'}(K(\mcK), B(\mcH))\cong 
{}_{M'}\CB_{N'}^\sigma(B(\mcK), B(\mcH)),
\]
where the subscripts $M',N'$ indicate that the spaces on the right consist of left-$M'$ and right-$N'$ bimodule maps. We will use this identification in the sequel. Whenever $(m_i)_{i\in I}, (n_i)_{i\in I}$ are families in $M,N$ such that $\sum_{i\in I} m_i m_i^*$ and $\sum_{i\in I} n_i^* n_i$ converge in SOT, then $\sum_{i\in I} m_i \otimes n_i$ converges weak$^*$ in $M\otimes_{w^* h}N$. Every element of $M\otimes_{w^* h} N$ has such a description. The corresponding map acts by $B(\mcK)\ni T\mapsto \sum_{i\in I} m_i T n_i\in B(\mcH)$ and this series converges in $\sigma$-WOT.

Another notion we will need in this and the next section is \emph{(strong) regularity}, which we now briefly discuss. See \cites{Va05, Timm08, HHKT25} and references therein for more details. Let $\G$ be a locally compact quantum group and consider its canonical C$^*$-algebraic action $\Delta: \G\curvearrowright C_0(\G)$. We say that $\G$ is \emph{regular} if the reduced C$^*$-crossed product $\G\ltimes_r C_0(\G)$ is isomorphic to the algebra of compact operators $K(L^2(\G))$. Similarly, we say that $\G$ is \emph{strongly regular} if the full C$^*$-crossed product $\G\ltimes_f C_0(\G)$ is isomorphic to $K(L^2(\G))$ \cite[Section 2]{Va05}. It is known that strong regularity implies regularity \cite[Page 6]{Va05} and we obtain the same property, if we take this isomorphism to be canonical or just abstract (this easily follows from the representation theory of compact operators \cite[Theorem I.10.7]{Dav96}).

Algebraic quantum groups are regular \cite[Example 7.3.4 iv)]{Timm08} and quantum groups which are compact, discrete, classical or dual-to-classical are strongly regular. Let us provide a brief argument for this claim. First, by \cite[Proposition 2.2]{HHKT25}, it is enough to prove that $\G$ is strongly regular, when $\G$ is classical or compact. For classical groups, this is a well-known harmonic-analytic fact (see \cite[Theorem 4.24]{W07}). On the other hand, compact quantum groups are strongly amenable, hence both crossed products coincide \cite[Theorem 3.14]{Ch22} and it is enough to argue about regularity of $\G$. It holds, as compact quantum groups are algebraic quantum groups.

\begin{Prop}\label{prop4} 
    Let $\G$ be a strongly regular locally compact quantum group. The map
    \begin{equation}\label{restriction1}
        {}_{L^\infty(\hat{\G})}\mathcal{CP}^\sigma_{L^\infty(\hat{\G})}(B(L^2(\G)), B(L^2(\G)))\to \mathcal{CP}{}_{L^1(\G)}^\sigma(L^\infty(\G), B(L^2(\G))): \Phi\mapsto \Phi\vert_{L^\infty(\G)}
    \end{equation}
    is a well-defined bijection.
\end{Prop}

\begin{proof} Since $\ww\in L^\infty(\G)\ovot L^\infty(\hat{\G})$, it is clear that $${}_{L^\infty(\hat{\G})}\mathcal{CP}^\sigma_{L^\infty(\hat{\G})}(B(L^2(\G)), B(L^2(\G)))\subseteq \CP_{L^1(\G)}^\sigma(B(L^2(\G)), B(L^2(\G)),$$ so the map \eqref{restriction1} is well-defined. Since $[L^\infty(\G) L^\infty(\hat{\G})]^{w*} = B(L^2(\G))$, it is also clear that \eqref{restriction1} is injective. 

It remains to argue the surjectivity of \eqref{restriction1}. To this end, consider $\varphi \in \CP^\sigma_{L^1(\G)}(L^\infty(\G), B(L^2(\G)))$. By (the left version of) Theorem \ref{equivariant Stinespring}, there exists a Hilbert space $\mcH$, a normal, unital $*$-representation $\pi: L^{\infty}(\G)\rightarrow B(\mcH)$, a (left) unitary representation $\uu\in L^{\infty}(\G) \bar\otimes B(\mcH)$ and a bounded linear map $T:  L^2(\G)\rightarrow \mcH$ such that
$$
\varphi(x)= T^* \pi(x) T, \quad
(\id\otimes \pi) \Delta(x)=
\uu^*(1\otimes \pi(x))\uu,\quad 
\uu(1\otimes T)=(1\otimes T) \ww,
\quad x\in L^\infty(\G).$$
Since $\G$ is strongly regular, it follows from \cite{Va05}*{Theorem 5.1} applied to the trivial subgroup (see also the discussion preceding \cite{Va05}*{Theorem 5.1}), that there exist a Hilbert space $\mcK$ and a unitary $S: \mcH \to L^2(\G)\otimes \mcK $ such that $(1\otimes S) \uu = \ww_{12}(1\otimes S)$ and $S \pi(x)= ( x\otimes 1)S$ for all  $x\in L^\infty(\G)$. Choose an orthonormal basis $\{e_i\}_{i\in I}$ for $\mcK$ and define the bounded linear operators
$$
T_i:  L^2(\G)\otimes \mcK \to L^2(\G): \eta\otimes \xi \mapsto \langle e_i, \xi\rangle\, \eta.$$
Given $\omega \in L^1(\G)$, we have
\begin{align*}
    T_iST (\omega\otimes \id)(\ww) = T_i S (\omega\otimes \id)(\uu) T = T_i ((\omega\otimes \id)(\ww)\otimes 1) ST = (\omega\otimes \id)(\ww) T_i ST,
\end{align*}
so it follows that $\check{y}_i:= T_i ST \in L^\infty(\hat{\G})'= L^\infty(\check{\G})$. The series $\sum_{i\in I} \check{y}_i^* \check{y}_i $ converges strongly, hence we can consider $\sum_{i\in I} \check{y}_i^*\otimes \check{y}_i\in L^{\infty}(\check{\G})\otimes_{w^* h} L^{\infty}(\check{\G})$, which leads to the map
$$\Phi\in {}_{L^\infty(\hat{\G})}\CP_{L^\infty(\hat{\G})}^\sigma(B(L^2(\G)), B(L^2(\G))), \quad \Phi(x) = \sum_{i\in I} \check{y}_i^* x\check{y}_i, \quad x\in B(L^2(\G)).$$
Given $x\in L^\infty(\G)$, we have 
$
\Phi(x)=\sum_{i\in I} \check{y}_i^* x \check{y}_i =
\sum_{i\in I} T^* S^* T_i^* xT_i  ST = T^*S^*(x\otimes 1)ST = T^* \pi(x) T = \varphi(x)$. 
Thus, the map $\Phi$ extends $\varphi$. 
\end{proof}

\begin{Cor}\label{cor2}
    Let $\G$ be an amenable, strongly regular locally compact quantum group. The restriction map
    \begin{equation}\label{injection}
     L^\infty(\check{\G})\otimes_{w^* h} L^\infty(\check{\G})\cong {}_{L^\infty(\hat{\G})}\CB^\sigma_{L^\infty(\hat{\G})}(B(L^2(\G)), B(L^2(\G)))\to   \CB_{L^1(\G)}
    ^\sigma(L^\infty(\G), B(L^2(\G)))
    \end{equation}
    is bijective.
\end{Cor}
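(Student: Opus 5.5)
Throughout, ${}_{L^1(\G)}\CB$, $\CB_{L^1(\G)}$, $\CP_{L^1(\G)}$ etc.\ denote the $\G$-equivariant maps for the (left) action $\Delta$ on $L^\infty(\G)$ and $\Delta_l$ on $B(L^2(\G))$, as in Proposition \ref{prop4}.

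The restriction map is well defined and injective for the same reasons as in Proposition \ref{prop4}. First, $\ww\in L^\infty(\G)\ovot L^\infty(\hat{\G})$, so any normal $L^\infty(\hat{\G})$-bimodule map on $B(L^2(\G))$ intertwines $\Delta_l=\ww^*(1\otimes\cdot)\ww$ on $B(L^2(\G))$ with $\Delta$ on $L^\infty(\G)$. Second, $[L^\infty(\G)L^\infty(\hat{\G})]^{w*}=B(L^2(\G))$, so a normal bimodule map is determined by its values on $L^\infty(\G)$. Hence the only issue is surjectivity. The plan is to reduce it to the completely positive case handled by Proposition \ref{prop4}, using amenability to decompose equivariant completely bounded maps into equivariant completely positive ones.

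Fix $\varphi\in\CB^\sigma_{L^1(\G)}(L^\infty(\G),B(L^2(\G)))$. By the Haagerup--Paulsen--Wittstock theorem, there are completely positive maps $\psi_1,\psi_2:L^\infty(\G)\to B(L^2(\G))$ such that
\[
\Theta=\begin{pmatrix}\psi_1&\varphi\\ \varphi^\dagger&\psi_2\end{pmatrix}
\]
is completely positive, where $\varphi^\dagger(x)=\varphi(x^*)^*$. These $\psi_i$ need not be normal or equivariant, and fixing this is the main obstacle.

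The equivariance is obtained by averaging with a left invariant mean $m\in L^\infty(\G)^*$. For a map $\theta:L^\infty(\G)\to B(L^2(\G))$, define $E(\theta)$ by
\[
\langle E(\theta)(x)\xi,\eta\rangle = m\bigl(\text{the function } \ww^*(1\otimes\theta(\cdot))\ww \text{ applied to } \Delta(x),\ \text{sliced by } \omega_{\xi,\eta}\bigr),
\]
which is the quantum analogue of $\int_G \lambda_s^*\theta(\lambda_s x\lambda_s^*)\lambda_s\,ds$. Concretely, one applies $\id\otimes\theta$ to $\Delta(x)$, conjugates by $\ww$ to produce an element of $L^\infty(\G)\ovot B(L^2(\G))$, and then applies $m\otimes\id$, with the orientation chosen so that the two coproducts match. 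Invariance of $m$ makes $E(\theta)$ equivariant. $E$ preserves complete positivity, fixes $\varphi$ because $\varphi$ is already equivariant, and commutes with the $2\times2$ matrix amplification. Applying it to $\Theta$ gives a completely positive equivariant $2\times2$ matrix with off-diagonal entry $\varphi$.

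Normality is then restored by taking the normal part. Let $\theta\mapsto\theta_n$ be the projection onto the normal part of a completely positive map, coming from $L^\infty(\G)^*=L^1(\G)\oplus L^1(\G)^\perp$ applied pointwise. This projection preserves complete positivity, and one checks it commutes with equivariance, because $L^1(\G)\star L^\infty(\G)^*\subseteq L^1(\G)$. Since $\varphi$ is already normal, the result is a normal completely positive equivariant block matrix with off-diagonal entry $\varphi$.

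Now apply Proposition \ref{prop4} in the $2\times2$ amplified setting, replacing $B(L^2(\G))$ by $M_2(B(L^2(\G)))$ with $\hat{\G}$ acting diagonally, or equivalently using $\mcH\otimes\C^2$ in its Stinespring argument. This yields a normal $L^\infty(\hat{\G})$-bimodule extension of the whole matrix, and its off-diagonal corner is a normal bimodule extension of $\varphi$. Alternatively, one can polarize: a completely positive block matrix with equivariant normal diagonals shows that $\varphi$ is a combination of four maps $x\mapsto V_k^*\pi(x)W_k$ of the same Stinespring form. Each of these extends exactly as in the proof of Proposition \ref{prop4} via $\Phi(x)=\sum_i\check{y}_i^*x\check{z}_i$, which lies in $L^\infty(\check{\G})\otimes_{w^*h}L^\infty(\check{\G})$.

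The step I expect to be delicate is the averaging: defining $E$ so that it really maps into equivariant maps for the correct left/right conventions, and checking that the normal-part projection preserves equivariance.
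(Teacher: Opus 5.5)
Your averaging step is sound. For instance, $E(\theta)(x)=(m\otimes\id)(\ww\,(\id\otimes\theta)\Delta(x)\,\ww^*)$ is equivariant by the pentagon relation and $(\id\otimes m)\Delta=m(\cdot)1$. It is also cp, fixes equivariant maps, and acts entrywise on $2\times 2$ matrices. The final appeal to Proposition \ref{prop4}, either amplified or after polarizing the $2\times2$ matrix into four equivariant cp maps, is also fine.

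The gap is the normality step. The inclusion $L^1(\G)\star L^\infty(\G)^*\subseteq L^1(\G)$ is false for every non-compact $\G$. Write $(\omega\rhd\nu)(x)=\nu((\omega\otimes\id)\Delta(x))$. Your own invariant mean gives $\omega\rhd m=\omega(1)m$, which is not normal, since a normal invariant state forces compactness. Worse, the normal-part projection $P\colon L^\infty(\G)^*\to L^1(\G)$ genuinely fails to commute with this action. For $\G=\R$, let $\mu$ be a state on $L^\infty(\R)$ extending evaluation at $0$ on $C_b(\R)$. Then $\mu$ is singular. But $(\omega\otimes\id)\Delta(f)$ is continuous, so $\omega\rhd\mu=\omega$, and hence $P(\omega\rhd\mu)=\omega\neq 0=\omega\rhd P(\mu)$. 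So your argument gives no reason why the normal part of $E(\Theta)$ should still be equivariant. The conclusion may hold for maps into $B(L^2(\G))$, but not for the reason you give.

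The missing ingredient must use the specific target, namely the essentiality of $B(L^2(\G))_*$. The simplest repair is to replace ``take the normal part'' by ``restrict to $C_0(\G)$ and extend normally''. Suppose $\psi$ is equivariant and $\rho=\omega\rhd\rho'$ with $\omega\in L^1(\G)$ and $\rho'\in B(L^2(\G))_*$. Then $\rho\circ\psi|_{C_0(\G)}=(\rho'\circ\psi|_{C_0(\G)})\circ(\omega\otimes\id)\Delta|_{C_0(\G)}$. This lies in $L^1(\G)$ because $(\omega\otimes\id)\Delta$ preserves $C_0(\G)$ and $L^1(\G)$ is an ideal in $C_0^u(\G)^*$, into which $C_0(\G)^*$ embeds via the reducing map. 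Since such $\rho$ are dense, $\psi|_{C_0(\G)}$ extends to a normal, equivariant cp map (cp by Kaplansky density). Applied to $E(\Theta)$, the off-diagonal corner of this extension is still $\varphi$, because $\varphi$ is normal. This is exactly the identification \eqref{eq6} used in the paper.

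With this substitution your proof goes through, and it is a hands-on version of the paper's argument. The paper invokes injectivity of $B(L^2(\G))$ in $\Mod_{L^1(\G)}$ (itself an invariant-mean averaging) to get $\CB_{L^1(\G)}(C_0(\G),B(L^2(\G)))=\operatorname{span}\CP_{L^1(\G)}(C_0(\G),B(L^2(\G)))$, and then applies Proposition \ref{prop4} to each cp piece. You instead produce that decomposition explicitly with the Wittstock $2\times 2$ trick followed by averaging.
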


\begin{proof} By the same argument as in the proof of Proposition \ref{prop4}, the map \eqref{injection} is a well-defined injection.
Using a similar argument as in \cite{Cra17}*{Lemma 5.7} (making use of the essentiality of $B(L^2(\G))_*\in {}_{L^1(\G)}\Mod$), there is a completely isometric identification
\begin{equation}\label{eq6}
\CB_{L^1(\G)}^\sigma(L^\infty(\G), B(L^2(\G))) \cong \CB_{L^1(\G)}(C_0(\G), B(L^2(\G))): \varphi\mapsto \varphi\vert_{C_0(\G)}
\end{equation}
which preserves the property of being completely positive.
Since $\G$ is amenable, $B(L^2(\G))$ is injective in $\Mod_{L^1(\G)}$ by the $L^1(\G)$-variant of \cite[Theorem 5.5]{CN16} or by using \cite{DR26}*{Proposition 6.2}. Then a reasoning analogous to \cite[Proposition 5.5]{Cra17} gives
    $$ \CB_{L^1(\G)}(C_0(\G), B(L^2(\G))) = \operatorname{span}  \CP_{L^1(\G)}(C_0(\G), B(L^2(\G))).$$
The surjectivity of \eqref{injection} then follows from the surjectivity of \eqref{restriction1}.
\end{proof}

\begin{Rem}
    It is not clear if the bijection \eqref{injection} is (completely) isometric.
\end{Rem}

Recall from Proposition \ref{p:natural} that we can identify ${}_{L^1(\G)}\CB(B(L^2(\G))_*,X)=X\rtimes^\mcF_\alpha\G$ for all $(X, \alpha)\in {}_{L^1(\G)}\Mod.$ This will be used without further mention in the following result.

\begin{Theorem}\label{crossedproductfiniterank}
    Let $\G$ be a locally compact quantum group. For any $(X, \alpha)\in {}_{L^1(\G)} \operatorname{NMod}^{\|\cdot\|}$, \begin{equation}\label{inclusion}
        X\bar{\rtimes}_\alpha \G\subseteq \overline{{}_{L^1(\G)}\mathcal{DF}(B(L^2(\G))_*,X)}^{w*}.
    \end{equation}
The converse inclusion holds if $\G$ is strongly regular.
\end{Theorem}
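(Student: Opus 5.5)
The plan is to work throughout with the identification $X\rtimes^\mcF_\alpha\G\cong{}_{L^1(\G)}\CB(B(L^2(\G))_*,X)$, $z\mapsto\Phi_z$, with $\Phi_z(\eta)=(\id\otimes\eta)(z)$, from Proposition \ref{p:natural}. This identification is a weak$^*$-homeomorphism, since both sides carry the weak$^*$-topology coming from $X_*\hat\otimes B(L^2(\G))_*$. Hence it suffices to compare generators on the two sides.

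For the inclusion \eqref{inclusion}, note that $X\bar{\rtimes}_\alpha\G$ is the weak$^*$-closed span of the elements $z=\alpha(x)(1\otimes\check y)$ with $x\in X$ and $\check y\in L^\infty(\check\G)$. It therefore suffices to show that each such $\Phi_z$ lies in ${}_{L^1(\G)}\mathcal{DF}(B(L^2(\G))_*,X)$. We compute
\[
\Phi_z(\eta)=(\id\otimes \check y\eta)\alpha(x)=u(\eta)\rhd x,\qquad u(\eta):=(\check y\eta)\vert_{L^\infty(\G)}\in L^1(\G),
\]
where $(\check y\eta)(b)=\eta(b\check y)$. Take $F=\C$, $u:B(L^2(\G))_*\to L^1(\G)\subseteq L^1(\G)_+$ as above, and $v(\nu,\lambda)=\nu\rhd x+\lambda x$.

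\begin{itemize}
\item $v$ is $L^1(\G)$-linear by the module property.
\item $u$ is $L^1(\G)$-linear because $\ww\in L^\infty(\G)\ovot L^\infty(\hat\G)$ commutes with $1\otimes\check y$. This gives $\Delta_l(b\check y)=\Delta_l(b)(1\otimes\check y)$.
\item Decomposability: $u^*(m,\lambda)=m\check y$ on $L^\infty(\G)\oplus_\infty\C$. Polarizing $m\mapsto 1^*m\check y$ writes $u^*$ as a combination of the maps $m\mapsto c^*mc$ with $c=1+i^k\check y\in L^\infty(\check\G)$, each extended by zero on $\C$. These maps are normal, completely positive and $\G$-equivariant, again because $c$ commutes with the relevant leg of $\ww$.
\end{itemize}
Taking weak$^*$-closures then gives \eqref{inclusion}.

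For the converse under strong regularity, let $\phi=v\circ u$ be a factorization as in Definition \ref{d2}. Fix a basis $f_k$ of $F$ with dual basis $f_k^*$, and put $u_k:=(\id\otimes f_k^*)u$. Then $\phi(\eta)=\sum_k v(u_k(\eta)\otimes f_k)$, and each $u_k$ is a linear combination of maps in ${}_{L^1(\G)}\CP(B(L^2(\G))_*,L^1(\G)_+)$.

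First, I would show that any $L^1(\G)$-linear $w:B(L^2(\G))_*\to L^1(\G)_+$ has vanishing $\C$-component. Indeed, $\omega\rhd(\nu,\lambda)$ has zero $\C$-part, and $L^1(\G)\rhd B(L^2(\G))_*$ is norm-dense by essentiality. So each $u_k$ is a combination of cp module maps into $L^1(\G)$. Their adjoints restricted to $L^\infty(\G)$ lie in $\CP^\sigma_{L^1(\G)}(L^\infty(\G),B(L^2(\G)))$.

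By Proposition \ref{prop4}, each such adjoint has the form $m\mapsto\sum_i\check y_i^*m\check y_i$ with $\check y_i\in L^\infty(\check\G)$ and $\sum_i\check y_i^*\check y_i$ strongly convergent. Consequently $u_k(\eta)$ is a finite combination of functionals of the form $\nu\mapsto\sum_i\eta(\check y_i^*\,\cdot\,\check y_i)\vert_{L^\infty(\G)}$.

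On the other side, $v(\,\cdot\otimes f_k)\vert_{L^1(\G)}$ is a module map $L^1(\G)\to X$. By Proposition \ref{p:natural} it equals $\Phi'_{s_k}$ for some $s_k\in\Sat(X,\alpha)$. Composing, I expect $\Phi_z=\phi$ with $z$ a finite combination of weak$^*$-convergent sums $\sum_i(1\otimes\check y_i^*)s_k(1\otimes\check y_i)$. These sums converge because of the strong convergence $\sum_i\check y_i^*\check y_i$ and the weak$^*$-Haagerup tensor product convergence recalled above.

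Finally, Proposition \ref{ndfubini} gives $\Sat(X,\alpha)(1\otimes L^\infty(\check\G))\subseteq X\bar{\rtimes}_\alpha\G$, and \eqref{j} shows this space is stable under left multiplication by $1\otimes L^\infty(\check\G)$. Hence $z\in X\bar{\rtimes}_\alpha\G$, and passing to weak$^*$-closures gives the converse inclusion.

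The main obstacle is this last identification step. One must verify carefully that the slice formula for $\Phi_z$ of $z=\sum_i(1\otimes\check y_i^*)s(1\otimes\check y_i)$ reproduces $\Phi'_s\circ u_k$, including the correct ordering of $\check y_i$ versus $\check y_i^*$. One must also justify the weak$^*$-convergence of the infinite sum inside $X\ovot_\mcF B(L^2(\G))$, which I would do by pairing against $X_*\hat\otimes B(L^2(\G))_*$ and using normality.
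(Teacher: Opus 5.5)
Your proposal is correct and follows essentially the same route as the paper. The forward inclusion uses the same $\check y$-twisted factorization through $L^1(\G)_+$ with polarization for decomposability. The converse uses the same ingredients: essentiality to kill the $\C$-component, Proposition \ref{prop4} to write each cp piece as $\sum_i \check y_i^*\,\cdot\,\check y_i$, and a weak$^*$-limit of finite conjugation sums.

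The one difference is that you place the relevant element in $\Sat(X,\alpha)$ and then need Proposition \ref{ndfubini}. The paper instead notes that $v((\nu,0)\otimes f_k)=\nu\rhd v(\eps\otimes f_k)$ with $\eps=(0,1)$. So the element is simply $\alpha(v(\eps\otimes f_k))\in\alpha(X)$, and \eqref{j} alone suffices.
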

\begin{proof}
    Let $x_1,\dotsc,x_n\in X$ and $\check{x}_1,\dotsc,\check{x}_n\in L^\infty(\check{\G})$. The map $\phi\in \ _{L^1(\G)}\CB(B(L^2(\G))_*,X)$ associated to $\sum_{j=1}^n\alpha(x_j)(1\ten \check{x}_j)\in X\bar{\rtimes}_\alpha \G$ is characterized by
$$\phi(\omega)=\sum_{j=1}^n(\id\ten \ \omega)(\alpha(x_j)(1\ten \check{x}_j))=\sum_{j=1}^n(\id\ten \ \check{x}_j\cdot\omega)\alpha(x_j)=\sum_{j=1}^n \pi(\check{x}_j\cdot\omega)\rhd x_j,$$
where $\pi:B(L^2(\G))_*\to L^1(\G)$ is the restriction map, and where the $L^1(\G)$-module action $\rhd$ corresponds to the coaction $\alpha$.
Now, for any $f\in L^1(\G)$ and $T\in B(L^2(\G))$, 
\begin{align*}\la T,\check{x}_j\cdot(f\rhd\omega)\ra&=
\la T\check{x}_j,f\rhd\omega\ra=
\la\Delta_l(T\check{x}_j), f\ten \omega\ra=
\la\Delta_l(T)(1\ten \check{x}_j), f\ten \omega\ra\\
&=\la \Delta_l(T),f\ten (\check{x}_j\cdot \omega)\ra=
\la T, f\rhd(\check{x}_j\cdot\omega)\ra,
\end{align*}
hence the map $B(L^2(\G))_*\ni \omega\mapsto\check{x}_j\cdot\omega\in B(L^2(\G))_*$ is an $L^1(\G)$-morphism. Since $\pi$ and the canonical inclusion $\iota:L^1(\G)\hookrightarrow L^1(\G)_+$ are also $L^1(\G)$-morphisms, we obtain an $L^1(\G)$-morphism $u:B(L^2(\G))_*\to L^1(\G)_+\pten \ \ell^1_n$ given by
$$u(\omega)=\sum_{j=1}^n\iota(\pi(\check{x}_j\cdot\omega))\ten e_j,\quad \omega \in B(L^2(\G))_*,$$
where $\{e_j\}_{j=1}^{n}$ denotes the canonical basis of $\ell^1_n:= \ell^1\text{-} \bigoplus_{j=1}^n \C$. Note that $\normof{u}_{\cb}\leq \sum_{j=1}^n\normof{\check{x}_j}$. 
Let $v:L^1(\G)_+\pten \ \ell^1_n\to X$ be given by
$$v(\sum_{j=1}^n (f_j,z_j)\ten e_j)=\sum_{j=1}^n \bigl(f_j\rhd x_j + z_jx_j\bigr), \ \ \ f_j\in L^1(\G), \ z_j\in\mathbb{C}.$$
Then $v$ is an $L^1(\G)$-morphism, $\normof{v}_{\cb}\leq\max_{1\leq j\leq n}\normof{x_j}$ and $\phi=v u$. Thus, $\phi$ has finite $L^1(\G)$-rank. Moreover, for every $f^*\in\ell^\infty_n=(\ell^1_n)^*$, the adjoint of the map
$$(\id\ten  f^*)u:B(L^2(\G))_*\ni \omega\mapsto\sum_{j=1}^nf^*(e_j)\iota(\pi(\check{x}_j\cdot\omega)) \in  L^1(\G)_+$$
is easily seen to be
$$L^\infty(\G)\oplus_\infty \C \ni (x,\lambda)\mapsto\sum_{j=1}^n f^*(e_j)x\check{x}_j\in B(L^2(\G)).$$
By the polarization identity with respect to the sesquilinear form $B(L^2(\G))^{\times 2}\ni (u,v)\mapsto u^*xv\in B(L^2(\G))$, we may write this map as
$$(x,\lambda)\mapsto\sum_{j=1}^n\frac{f^*(e_j)}{4}\sum_{k=0}^3i^k(1+(-i)^k\check{x}_j)^*x(1+(-i)^k\check{x}_j),$$
i.e., as a linear combination of completely positive $L^1(\G)$-module maps. Thus, $\phi\in \ _{L^1(\G)}\mathcal{DF}(B(L^2(\G))_*,X)$.

Since $X\bar{\rtimes}_\alpha \G$ is the weak*-closure of elements of the form $\sum_{j=1}^n\alpha(x_j)(1\ten \check{x}_j)$, and the isomorphism $X\rtimes_\alpha ^\mcF \G \cong {}_{L^1(\G)}\CB(B(L^2(\G))_*, X)$ is weak*-weak* homeomorphic, the inclusion \eqref{inclusion} follows.

We now prove the converse inclusion under the additional assumption that $\G$ is strongly regular.

Take $\phi\in {}_{L^1(\G)}\mathcal{DF}(B(L^2(\G))_*,X)$. Since the left hand side of \eqref{inclusion} is weak$^*$-closed, it is enough to prove $\phi\in X\bar{\rtimes}_\alpha \G$. By definition, there is a finite-dimensional operator space $F$ and completely bounded $L^1(\G)$-module maps $u: B(L^2(\G))_*\rightarrow L^1(\G)_+\wh{\otimes} F$, $v: L^1(\G)_+\wh{\otimes} F\rightarrow X$ such that $\phi=v u$ and additionally $(\id\otimes f^*)u \in \lin {}_{L^1(\G)} \CP(B(L^2(\G))_* ,L^1(\G)_+)$ for $f^*\in F^*$.
Choose a basis $\{e_p\}_{p=1}^{n}$ in $F$ with dual basis $\{e_p^*\}_{p=1}^{n}$ in $F^*$, and define $u_p=(\id\otimes e_p^*)u: B(L^2(\G))_*\to L^1(\G)_+$ and $v_p=v(\cdot\otimes e_p): L^1(\G)_+\to X$. Next, write $u_p=\sum_{k=0}^{3} i^k  u_{p,k}$ where $u_{p,k}\in {}_{L^1(\G)} \CP(B(L^2(\G))_*, L^1(\G)_+)$. Since $\phi=\sum_{p=1}^{n} \sum_{k=0}^{3} i^k v_p u_{p,k}$, it is enough to prove
\begin{equation}\label{eq4}
v_p \; u_{p,k}\in X\bar{\rtimes}_\alpha\G,\quad 1\le p \le n, \quad 0\le k \le 3.
\end{equation}
Consequently, we fix $p,k$ and prove \eqref{eq4} for this choice.

  Since $B(L^2(\G))_*\in {}_{L^1(\G)}\Mod$ is essential and $\omega \star L^1(\G)_+ \subseteq L^1(\G)$ for any $\omega\in L^1(\G)$, we have in fact $u_{p,k}\in {}_{L^1(\G)}\CP(B(L^2(\G))_*,L^1(\G))$. Thus, $u_{p,k}^*\in \CP_{L^1(\G)}^\sigma(L^\infty(\G), B(L^2(\G)))$ and Proposition \ref{prop4} allows us to find a family $\check{y}_{ j}\in L^{\infty}(\check{\G})\,(j\in J )$ such that $\sum_{j\in J } \check{y}_{ j}^* \check{y}_{ j}$ converges in SOT and
\[
u^*_{p,k}(x)=\sum_{j\in J } \check{y}_{ j}^* \,x\, \check{y}_{j},\quad x\in L^\infty(\G).
\]
 Let $\mcF$ be the directed set of finite, non-empty subsets of $J$. For $F\in \mcF$ define
$\phi_{p,k,F}: B(L^2(\G))_*\rightarrow X$ by
$\phi_{p,k,F}(\rho)= \sum_{j\in F} v_p(\iota(\pi(\check{y}_j \cdot \rho\cdot  \check{y}_j^*)))$ for $\rho\in B(L^2(\G))_*$. 
For $\rho\in B(L^2(\G))_*$ we have
\[
\phi_{p,k,F}(\rho)=
\sum_{j\in F} v_p (\iota( \pi(\check{y}_j\cdot \rho \cdot \check{y}_j^*)) )=
\sum_{j\in F} v_p ( \pi(\check{y}_j \cdot\rho \cdot\check{y}_j^*)\rhd  \eps)=
\sum_{j\in F} 
\pi(\check{y}_j\cdot \rho \cdot \check{y}_j^*)\vartriangleright
v_p ( \eps),
\]
where $\eps=(0,1)\in L^1(\G)_+$ is the unit. This map corresponds to $\sum_{j\in F} (1\otimes \check{y}_j^*) \alpha(v_p(\eps)) (1\otimes \check{y}_j)\in X\bar{\rtimes}_\alpha \G$. To conclude \eqref{eq4}, it is left to prove that $\phi_{p,k,F}\xrightarrow[F\in \mcF]{} v_p u_{p,k}$ weak$^*$. Equivalently, take $\Omega\in B(L^2(\G))_*\wh{\otimes} X_*$. We need to show
\begin{equation}\label{eq5}
\la \phi_{p,k,F} - v_p u_{p,k} , \Omega \ra \xrightarrow[F\in \mcF]{}0.
\end{equation}
Observe that
\begin{align*}
    \|\phi_{p,k,F}\|_{\cb}&\le \|v_p\|_{\cb}
\bigl\|B(L^2(\G))_*\ni \rho\mapsto \sum_{j\in F} \pi(\check{y}_j\cdot \rho\cdot \check{y}_j^*)\in L^1(\G)\bigr\|_{\cb}\\
&\le \|v_p\|_{\cb}
\bigl\|B(L^2(\G))_*\ni \rho\mapsto \sum_{j\in F} \check{y}_j\cdot \rho\cdot \check{y}_j^*\in B(L^2(\G))_*\bigr\|_{\cb}\\
&=  \|v_p\|_{\cb}\bigl\|B(L^2(\G))\ni x\mapsto \sum_{j\in F} \check{y}_j^*x\check{y}_j\in B(L^2(\G))\bigr\|_{\cb}\\
&= 
\|v_p\|_{\cb} \bigl\|\sum_{j\in F} \check{y}_j^* \check{y}_j\bigr\|\le 
\|v_p\|_{\cb} \bigl\|\sum_{j\in J} \check{y}_j^* \check{y}_j\bigr\|,
\end{align*}
hence the net $(\phi_{p,k,F})_{F\in \mcF}$ is uniformly bounded and by approximation it is enough to consider $\Omega=\Omega_1\otimes \Omega_2\in B(L^2(\G))_*\odot X_*$. With this reduction, we calculate
\[\begin{split}
&\quad\;
\la \phi_{p,k,F} - v_p u_{p,k} , \Omega \ra =
\la \sum_{j\in F}\iota \pi(\check{y}_j \cdot \check{y}_j^*) - u_{p,k} , 
\Omega_1  \otimes v_p^*(\Omega_2) \ra=
\la 
v_p^*(\Omega_2) , 
\bigl(\sum_{j\in F}\iota \pi(\check{y}_j \cdot \check{y}_j^*) - u_{p,k} \bigr)
\Omega_1  \ra\\
&=
\la 
P(v_p^*(\Omega_2)) , 
\bigl(\sum_{j\in F}\pi(\check{y}_j \cdot \check{y}_j^*) - u_{p,k} \bigr)
\Omega_1  \ra=
\bigl\la 
\sum_{j\in F} \check{y}_j^* P(v_p^*(\Omega_2))  \check{y}_j  - u_{p,k}^* \bigl(P(v_p^*(\Omega_2)) \bigr), \Omega_1 \bigr\ra 
\xrightarrow[F\in \mcF]{}0,
\end{split}\]
where $P: (L^1(\G)_+)^*\rightarrow L^\infty(\G)$ is the canonical projection. This shows the convergence \eqref{eq5} and ends the proof.
\end{proof}

\begin{Theorem}\label{BDAP}
    Let $\G$ be a locally compact quantum group. If $\G$ has the AP, then $B(L^2(\G))_*$ has the $L^1(\G)$-DAP. The converse holds if $\G$ is strongly regular. 
\end{Theorem}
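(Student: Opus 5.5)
The plan is to combine Theorem \ref{main'} with Theorem \ref{crossedproductfiniterank}, using the identification ${}_{L^1(\G)}\CB(B(L^2(\G))_*,X)\cong X\rtimes^\mcF_\alpha\G$ from Proposition \ref{p:natural}. This identification is a weak$^*$-homeomorphism, as already used in the proof of Theorem \ref{crossedproductfiniterank}. Under it, the $L^1(\G)$-DAP of $B(L^2(\G))_*$ says exactly that
$$\overline{{}_{L^1(\G)}\mathcal{DF}(B(L^2(\G))_*,X)}^{w*}=X\rtimes^\mcF_\alpha\G\quad\text{for all } (X,\alpha)\in {}_{L^1(\G)}\NMod^{\|\cdot\|}.$$

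First I reformulate Theorem \ref{main'} so that it concerns $\G$ itself rather than $\check{\G}$. I apply the theorem to the quantum group $\check{\G}$ in place of $\G$. Condition \eqref{i} then becomes the AP of $\check{\check{\G}}\cong\G$; the AP transfers along the isomorphism $x\mapsto u_\G x u_\G$, since that isomorphism intertwines the coproducts and hence the Fourier algebras and multiplier algebras. Condition \eqref{viii} becomes: $Y\rtimes^\mcF_\beta\check{\check{\G}}=Y\bar{\rtimes}_\beta\check{\check{\G}}$ for all $(Y,\beta)\in{}_{L^1(\check{\check{\G}})}\NMod^{\|\cdot\|}$. Using Remark \ref{doublecrossed}, every such $Y$ corresponds to an $(X,\alpha)\in{}_{L^1(\G)}\NMod^{\|\cdot\|}$ and conversely, with $\alpha=(1\otimes u_\G)\beta(\cdot)(1\otimes u_\G)$.

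The step I expect to need the most care is checking that conjugation by $1\otimes u_\G$ on the second leg maps $Y\rtimes^\mcF_\beta\check{\check{\G}}$ onto $X\rtimes^\mcF_\alpha\G$, and likewise for the weak$^*$-crossed products. This should follow from two facts: $u_\G$ implements the isomorphism $\check{\check{\G}}\cong\G$, so it carries $\check{\check{\vv}}$ (and hence $\Delta_l$ for $\check{\check{\G}}$) to the corresponding objects for $\G$; and it maps $L^\infty(\check{\check{\check{\G}}})$ onto $L^\infty(\check{\G})$. Granting this, I obtain that $\G$ has the AP if and only if $X\rtimes^\mcF_\alpha\G=X\bar{\rtimes}_\alpha\G$ for all $(X,\alpha)\in{}_{L^1(\G)}\NMod^{\|\cdot\|}$. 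This is also the form announced in the introduction.

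With this in hand, both directions are short.

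If $\G$ has the AP, then for every $X\in{}_{L^1(\G)}\NMod^{\|\cdot\|}$, Theorem \ref{crossedproductfiniterank} gives
$$X\rtimes^\mcF_\alpha\G=X\bar{\rtimes}_\alpha\G\subseteq\overline{{}_{L^1(\G)}\mathcal{DF}(B(L^2(\G))_*,X)}^{w*}\subseteq {}_{L^1(\G)}\CB(B(L^2(\G))_*,X)=X\rtimes^\mcF_\alpha\G.$$
The last inclusion holds because ${}_{L^1(\G)}\CB$ is weak$^*$-closed. All inclusions are therefore equalities, which is the $L^1(\G)$-DAP.

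Conversely, suppose $\G$ is strongly regular and $B(L^2(\G))_*$ has the $L^1(\G)$-DAP. The converse inclusion in Theorem \ref{crossedproductfiniterank} then gives
$$X\rtimes^\mcF_\alpha\G=\overline{{}_{L^1(\G)}\mathcal{DF}(B(L^2(\G))_*,X)}^{w*}\subseteq X\bar{\rtimes}_\alpha\G\subseteq X\rtimes^\mcF_\alpha\G$$
for all $X\in{}_{L^1(\G)}\NMod^{\|\cdot\|}$. Hence the two crossed products coincide, and the reformulated Theorem \ref{main'} yields the AP of $\G$.
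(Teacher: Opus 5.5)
Your proposal is correct and is essentially the paper's proof: in both directions you sandwich $X\rtimes^\mcF_\alpha\G\cong{}_{L^1(\G)}\CB(B(L^2(\G))_*,X)$ between $X\bar{\rtimes}_\alpha\G$ and $\overline{{}_{L^1(\G)}\mathcal{DF}(B(L^2(\G))_*,X)}^{w*}$ using Theorem \ref{crossedproductfiniterank}, and then conclude with Theorem \ref{main}, exactly as the paper does. The only addition is that you spell out how to read Theorem \ref{main'} for $\G$ rather than $\check{\G}$, which the paper does tacitly via the form stated in the introduction; your $u_\G$-conjugation argument works, but a simpler route is to apply Theorem \ref{main'} to $\mathbb{H}=\widehat{\G'}$, since then $\check{\mathbb{H}}=(\widehat{\mathbb{H}})'=\G''=\G$ exactly and no conjugation check is needed.
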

\begin{proof}
    Suppose that $\G$ has the AP. For any $(X, \alpha)\in {}_{L^1(\G)} \operatorname{NMod}^{\|\cdot\|}$, it follows from Theorem \ref{main} and Theorem \ref{crossedproductfiniterank} that
$${}_{L^1(\G)}\CB(B(L^2(\G))_*,X)=X\rtimes_\alpha^\mcF \G = X\bar{\rtimes}_\alpha\G\subseteq \overline{{}_{L^1(\G)}\mathcal{DF}(B(L^2(\G))_*,X)}^{w*},$$
so $B(L^2(\G))_*$ has the $L^1(\G)$-DAP. Conversely, assume that $\G$ is strongly regular and that $B(L^2(\G))_*$ has the $L^1(\G)$-DAP. If $(X, \alpha)\in {}_{L^1(\G)}\NMod^{\|\cdot\|}$, then Theorem \ref{crossedproductfiniterank} shows that
$$X\bar{\rtimes}_\alpha \G = \overline{{}_{L^1(\G)}\mathcal{DF}(B(L^2(\G))_*,X)}^{w*} = {}_{L^1(\G)}\CB(B(L^2(\G))_*,X)= X\rtimes_\alpha^\mcF \G.$$
Theorem \ref{main} implies that $\G$ has the AP.
    \end{proof}

Our next goal will be to prove that $\G$ has the AP if and only if $L^1(\check{\G})$ has the $L^1(\check{\G})$-DAP (Theorem \ref{LDAP}). To prove this, we will make a detour through some general theory, which is of independent interest.

Consider the half-lifted right Kac-Takesaki operator $\vV\in \M(C_0(\check{\G})\otimes C_0^u(\G))$. It can also be seen as a unitary in the von Neumann algebra $L^{\infty}(\check{\G})\bar\otimes C_0^u(\G)^{**}$. Consequently, we can look at the von Neumann algebraic version of the half-lifted comultiplication
\[
\Delta^{r,u,\vN}: L^{\infty}(\G)\ni x\mapsto \vV(x\otimes 1)\vV^*\in L^{\infty}(\G)\bar\otimes C_0^u(\G)^{**}.
\]
Similarly, we can lift the comultiplication $\Delta$ to
\[
\Delta^{u,r,\vN}: L^{\infty}(\G)\ni x\mapsto \Ww^* (1\otimes x) \Ww \in C_0^u(\G)^{**}\bar\otimes L^{\infty}(\G),
\]
using the half-lifted $\Ww\in \M(C_0^u(\G)\otimes C_0(\hat{\G}))\subseteq C_0^u(\G)^{**}\bar\otimes L^{\infty}(\hat{\G})$.

More generally, when $(M, \alpha)$ is a right $\G$-$W^*$-algebra, we can consider its \emph{universal lift} 
\[
\alpha^u: M\rightarrow M\bar\otimes C_0^u(\G)^{**},
\]
which is the unique normal, unital, injective $*$-homomorphism satisfying $(\alpha\otimes\id)\alpha^u=(\id\otimes\Delta^{r,u,\vN})\alpha$. This construction was introduced in \cite[Section 4.3]{DCK24} and plays an important role in \cite{DR25EW}*{Section 4}. In particular, the universal lift $\alpha^u$ always exists in the von Neumann algebraic setting; the notion of universal lift is very useful when there is a need to combine C$^*$ and W$^*$-techniques.

In what follows, we introduce an analogous construction for actions on dual operator spaces. However, unlike in the von Neumann algebraic case, the existence of $\alpha^u$ turns out to be a non-trivial property related to the AP, non-degeneracy, saturation and decomposable approximation property.

\begin{Def}
We say that $(X,\alpha)\in {}_{L^1(\G)} \NMod^{\|\cdot\|}$ (or simply $\alpha$) \emph{admits the universal lift} if there is a completely isometric, normal map $\alpha^u: X\rightarrow X\bar{\otimes}_{\mcF} C_0^u(\G)^{**}$ satisfying $(\alpha\otimes\id)\alpha^u=(\id\otimes \Delta^{r,u,\vN})\alpha$.
\end{Def}

\begin{Rem}
The equation $(\alpha\otimes\id)\alpha^u=(\id\otimes\Delta^{r,u,\vN})\alpha$ shows that if a universal lift $\alpha^u$ exists, it is unique.
\end{Rem}

Recall from Proposition \ref{p:natural} that we can identify ${}_{L^1(\G)}\CB(L^1(\G),X)=\Sat(X, \alpha)$ for all $(X, \alpha)\in {}_{L^1(\G)}\Mod.$ This will be used without further mention in the following result.

\begin{Prop}\label{prop5}
Let $(X,\alpha)\in {}_{L^1(\G)}\NMod^{\|\cdot\|}$. The following conditions are equivalent:
\begin{enumerate}[noitemsep]
\item $\alpha$ admits the universal lift.
\item ${}_{L^1(\G)}\mathcal{DF}(L^1(\G),X)=\alpha(X)$.
\end{enumerate}
\end{Prop}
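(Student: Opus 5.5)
The plan is to translate both conditions into statements about slices of $(\id\otimes\Delta^{r,u,\vN})\alpha(x)$ by functionals $\mu\in C_0^u(\G)^*$. Under the identification ${}_{L^1(\G)}\CB(L^1(\G),X)=\Sat(X,\alpha)$ of Proposition \ref{p:natural}, the element $\alpha(x)$ corresponds to the map $\nu\mapsto \nu\rhd x$. The first step is therefore to describe ${}_{L^1(\G)}\mathcal{DF}(L^1(\G),X)$ explicitly.

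Take a factorization $\phi=vu$ as in Definition \ref{d1}, with $u_p=(\id\otimes e_p^*)u$ and $v_p=v(\cdot\otimes e_p)$. Since $L^1(\G)$ is essential and $\omega\star L^1(\G)_+\subseteq L^1(\G)$, the same argument as in the proof of Theorem \ref{crossedproductfiniterank} shows that each $u_p$ takes values in $L^1(\G)$. Moreover, $v_p(\nu)=\nu\rhd v_p(\eps)$ for $\nu\in L^1(\G)$, where $\eps=(0,1)$ is the unit of $L^1(\G)_+$.

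Each $u_p$ is then a left $L^1(\G)$-module map $L^1(\G)\to L^1(\G)$, i.e.\ a right multiplier of $L^1(\G)$, and it lies in the span of the completely positive ones. The key input is the known description of completely positive multipliers of $L^1(\G)$: they are exactly the maps $\nu\mapsto (\nu\otimes\mu)\circ\Delta^{r,u,\vN}$ with $\mu\in C_0^u(\G)^*_+$ (Daws' theorem on completely positive multipliers, transported to the present left/right conventions). Since every $\mu\in C_0^u(\G)^*$ is a combination of four positive functionals, this yields
\[
{}_{L^1(\G)}\mathcal{DF}(L^1(\G),X)=\operatorname{span}\{\nu\mapsto ((\nu\otimes\mu)\circ\Delta^{r,u,\vN})\rhd x\mid \mu\in C_0^u(\G)^*,\ x\in X\}.
\]
The converse inclusion here is immediate: such a map factors through $L^1(\G)_+\pten\C$ with $u$ given by the multiplier and $v(\nu,\lambda)=\nu\rhd x+\lambda x$. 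In $\Sat(X,\alpha)$ the displayed maps correspond to the elements $z_{\mu,x}:=(\id\otimes\id\otimes\mu)(\id\otimes\Delta^{r,u,\vN})\alpha(x)$. Hence condition (2) is equivalent to
\[
z_{\mu,x}\in\alpha(X)\quad\text{for all } \mu\in C_0^u(\G)^*,\ x\in X.
\]
I expect this identification to be the main obstacle, specifically matching the multiplier-side conventions (left versus right, $\Delta^{r,u,\vN}$ versus $\Delta^{u,r,\vN}$) with the cited characterization of completely positive multipliers.

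For (1)$\Rightarrow$(2), slice the defining identity $(\alpha\otimes\id)\alpha^u=(\id\otimes\Delta^{r,u,\vN})\alpha$ with $\id\otimes\id\otimes\mu$. This gives
\[
z_{\mu,x}=\alpha\bigl((\id\otimes\mu)\alpha^u(x)\bigr)\in\alpha(X).
\]

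For (2)$\Rightarrow$(1), since $\alpha$ is injective we may define $T_x(\mu):=\alpha^{-1}(z_{\mu,x})$. Because $\alpha$ is a complete isometry, the map $x\mapsto T_x$ is a complete contraction into $\CB(C_0^u(\G)^*,X)\cong X\ovot_\mcF C_0^u(\G)^{**}$; call it $\alpha^u$. By construction $(\alpha\otimes\id)\alpha^u=(\id\otimes\Delta^{r,u,\vN})\alpha$. Let $\Lambda^{**}$ denote the normal extension of the reducing map to $C_0^u(\G)^{**}\to L^\infty(\G)$. It satisfies $(\id\otimes\Lambda^{**})\Delta^{r,u,\vN}=\Delta$, and hence $(\id\otimes\Lambda^{**})\alpha^u=\alpha$ (apply $\alpha\otimes\id$ and use injectivity). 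Since $\alpha$ is completely isometric and $\id\otimes\Lambda^{**}$ is completely contractive, $\alpha^u$ is a complete isometry.

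It remains to check normality. The image $\alpha(X)$ is weak$^*$-closed, so by the Krein–Smulian/closed range argument recalled in Section \ref{SectionPreliminaries}, $\alpha^{-1}:\alpha(X)\to X$ is weak$^*$-continuous. Since $\alpha$ and $\id\otimes\Delta^{r,u,\vN}$ are normal, each slice $x\mapsto T_x(\mu)$ is weak$^*$-continuous. Normality of $\alpha^u$ then follows by the same bounded-net argument used in the lemma characterizing ${}_{L^1(\G)}\NMod$ via weak$^*$-continuity of $\alpha$.
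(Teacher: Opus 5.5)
Your proposal is correct and follows the paper's argument. Both rest on Daws' description of completely positive multipliers, $u(\omega)=\omega\star\mu$ with $\mu\in C_0^u(\G)^*$, and on slicing $(\alpha\otimes\id)\alpha^u=(\id\otimes\Delta^{r,u,\vN})\alpha$ with $\id\otimes\id\otimes\mu$. The inclusion $\alpha(X)\subseteq{}_{L^1(\G)}\mathcal{DF}(L^1(\G),X)$, which your reformulation of (2) tacitly uses, follows by taking $\mu=\eps^u$, the universal counit.

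The only difference is in (2)$\Rightarrow$(1). The paper implements $\alpha$ by a unitary representation $\mathrm{U}$ and sets $\alpha^u(x)=\uU(x\otimes 1)\uU^*$, so normality and complete isometry come for free and (2) is used only to show that the slices lie in $X$. You instead define $\alpha^u$ through its slices $\alpha^{-1}(z_{\mu,x})$, which makes the range condition automatic but requires checking complete isometry (via $\Lambda^{**}$) and normality by hand.
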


\begin{proof} We start by noting that the inclusion $\alpha(X)\subseteq {}_{L^1(\G)}\mathcal{DF}(L^1(\G),X)$ is always true. Indeed, given $\alpha(x)\in\alpha(X)$, the associated map is $L^1(\G)\ni \omega\mapsto (\id\otimes\omega)\alpha(x)=\omega\rhd x\in X$. It can be written as the composition of
\[
L^1(\G)\ni \omega\mapsto (\omega,0)\otimes 1\in L^1(\G)_+\hat{\otimes}\C\quad\textnormal{and}\quad L^1(\G)_+\hat{\otimes}\C\ni (\omega,\lambda)\otimes 1\mapsto \omega\rhd x+\lambda x\in X,
\]
hence $\alpha(x)\in {}_{L^1(\G)}\mathcal{DF}(L^1(\G),X)$. 

$(1)\implies (2)$ Assume that $\alpha$ admits the universal lift. Let us prove the inclusion ${}_{L^1(\G)}\mathcal{DF}(L^1(\G),X)\subseteq \alpha(X)$. Arguing as in the proof of Theorem \ref{crossedproductfiniterank}, we see that it is enough to show that if $u: L^1(\G)\rightarrow L^1(\G)_+$ and $v: L^1(\G)_+\rightarrow X$ are completely bounded $L^1(\G)$-module maps and $u$ is completely positive, then $vu: L^1(\G)\rightarrow X$ is in $\alpha(X)$.

First observe that as $L^1(\G)$ is an essential $L^1(\G)$-module, the image of $u$ lies in $L^1(\G)$. Consequently, by \cite[Theorem 5.2]{D12}, there is $\mu\in C_0^u(\G)^* = (C_0^u(\G)^{**})_*$ such that $u(\omega)=\iota(\omega\star \mu)$ for $\omega\in L^1(\G)$. We obtain
\[\begin{split}
&\quad\;
vu(\omega)=v(\iota(\omega\star \mu))=(\omega\star \mu)\rhd v(\eps)=
(\id\otimes (\omega\star \mu))\alpha( v(\eps))=
(\id\otimes (\omega\otimes\mu)\Delta^{r,u,\vN})\alpha( v(\eps))\\
&=
(\id\otimes \omega\otimes\mu)
(\alpha\otimes \id) \alpha^u( v(\eps))=
(\id\otimes \omega)\alpha\bigl(
(\id\otimes\mu)\alpha^u(v(\eps))\bigr),
\end{split}\]
where $\eps\in L^1(\G)_+$ is the unit. This means that $vu$ corresponds to $\alpha\bigl( (\id\otimes\mu)\alpha^u(v(\eps)) \bigr)\in \alpha(X)$.

$(2)\implies (1)$ Assume that ${}_{L^1(\G)}\mathcal{DF}(L^1(\G),X)=\alpha(X)$. Without loss of generality, we may assume that $X\subseteq B(\mcH)$ is a weak$^*$-closed subspace and that there is a unitary representation $\mathrm{U}\in B(\mcH)\bar\otimes L^{\infty}(\G)$ such that $\alpha(x)=\mathrm{U}(x\otimes 1)\mathrm{U}^*$ for all $x\in X$. Consider the half-lifted unitary $\uU\in M(K(\mcH)\otimes C_0^u(\G))\subseteq B(\mcH)\ovot C_0^u(\G)^{**}$. 
Define the normal, completely isometric map
\[
\alpha^u: X\ni x\mapsto \uU(x\otimes 1)\uU^*\in B(\mcH)\bar\otimes_{\mcF} C_0^u(\G)^{**}.
\]
The crux is to show that $\alpha^u(X)\subseteq X\ovot_\mcF C_0^u(\G)^{**}$. To this end, fix $x\in X$ and $\mu\in C_0^u(\G)^* = (C_0^u(\G)^{**})_*$ and put $y:=(\id\otimes \mu)\alpha^u(x)\in B(\mcH)$. Define the completely bounded $L^1(\G)$-module maps
\[
u: L^1(\G)\ni \omega\mapsto (\omega\star \mu,0)\in L^1(\G)_+,\quad 
v: L^1(\G)_+\ni (\omega,z)\mapsto \omega\vartriangleright x+z x\in X.
\]
Since the functional $\mu$ can be written as a linear combination of states, we have $vu\in {}_{L^1(\G)}\mathcal{DF}(L^1(\G),X)$. By assumption, there is $\tilde{x}\in X$ with $vu(\omega)= (\id \otimes \omega)\alpha(\tilde{x})$ for all $\omega \in L^1(\G)$. Given $\omega\in L^ 1(\G)$, we then find
\[\begin{split}
&\quad\;
(\id\otimes \omega)
\bigl(\mathrm{U}
((\id\otimes\mu)\alpha^u(x)\otimes 1)
\mathrm{U}^*\bigr)=
(\id\otimes \omega\otimes  \mu )
(\mathrm{U}_{12} \uU_{13}x_1 \uU_{13}^* \mathrm{U}^*_{12})=
(\id\otimes (\omega\otimes\mu)\Delta^{r,u,\vN} )
(\mathrm{U}(x\otimes 1)\mathrm{U}^*)\\
&=
(\id\otimes(\omega\star \mu))\alpha(x)=
(\omega\star\mu)\vartriangleright x=
vu(\omega)=(\id\otimes\omega)\alpha(\tilde{x})=
(\id\otimes\omega)(\mathrm{U}( \tilde{x}\otimes 1)\mathrm{U}^*),
\end{split}\]
so that $y=(\id\otimes\mu)\alpha^u(x)=\tilde{x}\in X$.
Finally, we have
\[
(\alpha\otimes\id)\alpha^u(x)=
\mathrm{U}_{12} \uU_{13} (x\otimes 1\otimes 1) \uU_{13}^* \mathrm{U}_{12}^*=
(\id\otimes \Delta^{r,u,\vN})
(\mathrm{U}(x\otimes 1)\mathrm{U}^*)=
(\id\otimes \Delta^{r,u,\vN})
\alpha(x)
\]
and consequently $\alpha^u: X\to X\ovot_\mcF C_0^u(\G)^{**}$ is the universal lift of $\alpha$.
\end{proof}

Next we show that the existence of the universal lift is a mild assumption.

\begin{Prop}\label{prop7}
Take $(X,\alpha)\in {}_{L^1(\G)}\NMod^{\|\cdot\|}$ and assume that it is non-degenerate or saturated. Then $\alpha$ admits the universal lift.
\end{Prop}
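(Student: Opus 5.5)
The plan is to verify condition (2) of Proposition \ref{prop5}, namely ${}_{L^1(\G)}\mathcal{DF}(L^1(\G),X)=\alpha(X)$. The inclusion $\supseteq$ always holds, so only the reverse inclusion needs proof.

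\emph{Reduction.} As in the proofs of Theorem \ref{crossedproductfiniterank} and Proposition \ref{prop5}, decompose $\phi=vu$ into finitely many pieces $v_pu_{p,k}$, where $u_{p,k}\in{}_{L^1(\G)}\CP(L^1(\G),L^1(\G)_+)$. By essentiality of $L^1(\G)$, each $u_{p,k}$ takes values in $L^1(\G)$, so \cite[Theorem 5.2]{D12} gives $\mu\in C_0^u(\G)^*$ with $u_{p,k}(\omega)=\iota(\omega\star\mu)$. Setting $x:=v_p(\eps)$, the piece $v_pu_{p,k}$ is therefore the map
\[
T_{x,\mu}\colon L^1(\G)\ni\omega\mapsto(\omega\star\mu)\rhd x\in X .
\]
It thus suffices to show that $T_{x,\mu}\in\alpha(X)$ for all $x\in X$ and $\mu\in C_0^u(\G)^*$.

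\emph{Saturated case.} Put
\[
z_{x,\mu}:=(\id\otimes\id\otimes\mu)(\id\otimes\Delta^{r,u,\vN})\alpha(x).
\]
Its slices in the second leg are $(\id\otimes\omega)z_{x,\mu}=(\omega\star\mu)\rhd x\in X$, so $z_{x,\mu}\in X\ovot_\mcF L^\infty(\G)$, and $z_{x,\mu}$ corresponds to $T_{x,\mu}$ under the identification $\Sat(X,\alpha)\cong{}_{L^1(\G)}\CB(L^1(\G),X)$ of Proposition \ref{p:natural}. Since $T_{x,\mu}$ is $L^1(\G)$-linear, this already gives $z_{x,\mu}\in\Sat(X,\alpha)$. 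Explicitly, the check reduces to $\omega'\rhd((\omega\star\mu)\rhd x)=((\omega'\star\omega)\star\mu)\rhd x$, which holds by associativity of the convolution between $L^1(\G)$ and $C_0^u(\G)^*$, i.e.\ the identity $(\Delta\otimes\id)\Delta^{r,u,\vN}=(\id\otimes\Delta^{r,u,\vN})\Delta$. Saturation then gives $z_{x,\mu}\in\alpha(X)$, which settles this case.

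\emph{Non-degenerate case.} First take $x=\nu\rhd x'$ with $\nu\in L^1(\G)$ and $x'\in X$. Then
\[
(\omega\star\mu)\rhd(\nu\rhd x')=\omega\rhd\bigl((\mu\star\nu)\rhd x'\bigr),
\]
and $\mu\star\nu\in L^1(\G)$ because $L^1(\G)$ is an ideal in $C_0^u(\G)^*$. Hence $z_{x,\mu}=\alpha((\mu\star\nu)\rhd x')\in\alpha(X)$. Next, the map $x\mapsto z_{x,\mu}$ is weak$^*$-continuous, being a composition of the normal maps $\alpha$, $\id\otimes\Delta^{r,u,\vN}$ and a normal slice. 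Moreover, $\alpha(X)$ is weak$^*$-closed, since $\alpha$ is a weak$^*$-continuous complete isometry. Non-degeneracy says that elements $\nu\rhd x'$ span a weak$^*$-dense subspace of $X$, so $z_{x,\mu}\in\alpha(X)$ for every $x\in X$. Proposition \ref{prop5} then yields the universal lift.

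The main obstacle is bookkeeping rather than a conceptual difficulty: one must get the correct side and order of the convolution between $L^1(\G)$ and $C_0^u(\G)^*$ matching $\Delta^{r,u,\vN}$. This is needed both for the associativity used in the saturated case and for the ideal property $\mu\star\nu\in L^1(\G)$ used in the non-degenerate case.
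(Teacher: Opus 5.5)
Your proof is correct, but it takes a somewhat different route from the paper's. In the saturated case your argument is the second of the two given in the paper, though you do more than needed. Saturation alone gives ${}_{L^1(\G)}\mathcal{DF}(L^1(\G),X)\subseteq {}_{L^1(\G)}\CB(L^1(\G),X)=\Sat(X,\alpha)=\alpha(X)$. So the reduction to the maps $T_{x,\mu}$ and the appeal to \cite[Theorem 5.2]{D12} are superfluous there. The paper's first argument for this case uses $\G$-completeness instead.

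In the non-degenerate case the paper does not go through Proposition \ref{prop5}. It implements $\alpha$ by a unitary representation $\mathrm{U}$ and takes the candidate $\alpha^u(x)=\uU(x\otimes 1)\uU^*$. It then computes $(\id\otimes\mu)\alpha^u(\omega\rhd x)=(\mu\star\omega)\rhd x\in X$ and concludes by weak$^*$-continuity and weak$^*$-density of $L^1(\G)\rhd X$.

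You instead verify condition (2) of Proposition \ref{prop5}. This costs the finite-rank bookkeeping and the classification of completely positive multipliers of $L^1(\G)$. The core computation is the same in both: the ideal property of $L^1(\G)$ in $C_0^u(\G)^*$, associativity of the mixed convolution, and weak$^*$-density. Your element $(\mu\star\nu)\rhd x'$ is precisely $(\id\otimes\mu)\alpha^u(\nu\rhd x')$.

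The paper's route is shorter and avoids Daws' theorem. Yours has the merit of establishing directly the equality $\alpha(X)={}_{L^1(\G)}\mathcal{DF}(L^1(\G),X)$. The paper only obtains that later, in the proof of Theorem \ref{LDAP}, by combining Propositions \ref{prop5} and \ref{prop7}. In your version the unitary implementation is still used, but it is hidden inside the implication (2)$\Rightarrow$(1) of Proposition \ref{prop5}.
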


\begin{proof}
We may assume without loss of generality that $X\subseteq B(\mcH)$ as a weak$^*$-closed subspace and that there is a unitary $\G$-representation $\mathrm{U}\in B(\mcH)\bar\otimes L^{\infty}(\G)$ satisfying $\alpha(x)=\mathrm{U}(x\otimes 1)\mathrm{U}^*$ for all $x\in X$. We then define
\[
\alpha^u: X\ni x\mapsto \uU(x\otimes 1)\uU^*\in B(\mcH)\bar\otimes_{\mcF} C_0^u(\G)^{**}.
\]
It is sufficient to show that $\alpha^u(X)\subseteq X\ovot_\mcF C_0^u(\G)^{**}$. 

Given $\mu \in C_0^u(\G)^*$, we calculate
\begin{align*}
   (\id \otimes \mu) \alpha^u(\omega\rhd x) &= (\id \otimes \mu\otimes \omega)(\uU_{12} \mathrm{U}_{13} x_1 \mathrm{U}_{13} \uU_{12}^*)\\
   &= (\id \otimes \mu\otimes \omega)(\id \otimes \Delta^{u,r,\operatorname{vN}})(\mathrm{U}(x\otimes 1)\mathrm{U}^*) = (\mu\star \omega)\rhd x\in X.
\end{align*}
If $(X, \alpha)$ is non-degenerate, it thus follows that
$(\id \otimes \mu)\alpha^u(X) = (\id \otimes \mu)\alpha^u([L^1(\G)\rhd X]^{w*})\subseteq X.$ In particular, $\alpha^u(X)\subseteq X\ovot_\mcF C_0^u(\G)^{**}$.

Assume now that $(X, \alpha)$ is saturated. We present two arguments to show that the universal lift exists. Consider $(B(\mcH),\beta)\in {}_{L^1(\G)} \NMod^{\|\cdot\|}$ with the action given by $\beta(z)=\mathrm{U}(z\otimes 1)\mathrm{U}^*$. Then $X$ is an $L^1(\G)$-submodule of $B(\mcH)$. As noted after Definition \ref{def1}, since $(X, \alpha)$ is saturated, it is $\G$-complete. For $\omega\in L^1(\G), \mu \in C_0^u(\G)^*$ and $x\in X$, a similar calculation as before gives
\[
\omega\vartriangleright (\id \otimes \mu)\alpha^u(x)=
(\id\otimes\omega)\beta(
(\id\otimes\mu)\alpha^u(x)
)
=(\omega\star \mu)\vartriangleright x\in X,
\]
hence $\G$-completeness of $(X, \alpha)$ implies $(\id \otimes \mu)\alpha^u(x)\in X$, as desired. Alternatively, if $(X, \alpha)$ is saturated, then 
$\alpha(X)\subseteq {}_{L^1(\G)}{\mathcal{DF}}(L^1(\G),X)\subseteq {}_{L^1(\G)}\CB(L^1(\G),X)= \Sat(X, \alpha)= \alpha(X),$
so Proposition \ref{prop5} implies that the universal lift exists.
\end{proof}

\begin{Theorem}\label{LDAP}
    $\G$ has the AP if and only if $L^1(\check{\G})$ has the $L^1(\check{\G})$-DAP. 
\end{Theorem}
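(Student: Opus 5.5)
We apply the preceding machinery with the roles of $\G$ and $\check{\G}$ swapped. Via Proposition \ref{p:natural}, ${}_{L^1(\check{\G})}\CB(L^1(\check{\G}),Y)=\Sat(Y,\beta)$ for every $(Y,\beta)\in{}_{L^1(\check{\G})}\NMod^{\|\cdot\|}$. Proposition \ref{prop5}, applied to $\check{\G}$, gives the inclusions $\beta(Y)\subseteq {}_{L^1(\check{\G})}\mathcal{DF}(L^1(\check{\G}),Y)\subseteq \Sat(Y,\beta)$. Since $\beta$ is a weak$^*$-continuous complete isometry, $\beta(Y)$ is weak$^*$-closed, so
$$\beta(Y)\subseteq\overline{{}_{L^1(\check{\G})}\mathcal{DF}(L^1(\check{\G}),Y)}^{w*}\subseteq \Sat(Y,\beta).$$
The theorem therefore reduces to the following claim: $\G$ has the AP if and only if the weak$^*$-closure above fills $\Sat(Y,\beta)$ for every $Y$. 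Recall that $\check{\check{\G}}\cong\G$.

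\emph{Forward direction.} Assume $\G$ has the AP. By Theorem \ref{main} (applied to $\check{\G}$, so that condition \eqref{i} reads ``$\check{\check{\G}}\cong\G$ has the AP''), every $(Y,\beta)\in{}_{L^1(\check{\G})}\NMod^{\|\cdot\|}$ is saturated. Hence $\Sat(Y,\beta)=\beta(Y)$, and the chain of inclusions above collapses to equalities. This is exactly the $L^1(\check{\G})$-DAP.

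\emph{Converse.} Assume $L^1(\check{\G})$ has the $L^1(\check{\G})$-DAP. The goal is to verify one of the conditions of Theorem \ref{main} for $\check{\G}$, namely that every $(Y,\beta)\in{}_{L^1(\check{\G})}\NMod^{\|\cdot\|}$ is saturated. The key point is to show that $\overline{{}_{L^1(\check{\G})}\mathcal{DF}(L^1(\check{\G}),Y)}^{w*}=\beta(Y)$ always holds, because then the DAP gives $\Sat(Y,\beta)=\beta(Y)$. Note that exact equality ${}_{L^1(\check{\G})}\mathcal{DF}(L^1(\check{\G}),Y)=\beta(Y)$ is what Proposition \ref{prop5} ties to the existence of the universal lift, and Proposition \ref{prop7} guarantees that lift only for non-degenerate or saturated $Y$. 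So for general $Y$ one cannot directly claim equality before taking closures, and a different argument is needed.

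To handle this, I would use the description from the proof of Proposition \ref{prop5}. Each decomposable finite-rank map is a finite sum of maps of the form $\omega\mapsto(\omega\star\mu)\rhd y$, where $\mu\in C_0^u(\check{\G})^*$ (by \cite{D12}) and $y\in Y$. Realize $Y\subseteq B(\mcH)$ with $\beta$ implemented by a unitary representation $\mathrm U$ and its half-lift $\uU$. The computation in Proposition \ref{prop5} then identifies such a map with the element $\mathrm U((\id\otimes\mu)\alpha^u(y)\otimes1)\mathrm U^*$ of $\beta_{B(\mcH)}(B(\mcH))$, where $\alpha^u(y)=\uU(y\otimes 1)\uU^*$.

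The problem thus becomes: if the weak$^*$-limit of such elements lies in $Y\ovot_\mcF L^\infty(\check{\G})$, does it lie in $\beta(Y)$? It suffices to show that the weak$^*$-closed span of the elements $(\id\otimes\mu)\alpha^u(y)$ intersected with $Y$, after applying $\beta$, equals the relevant set. This is the main obstacle. A cleaner route, which I would try first, is to apply the DAP to $Y_{\operatorname{nd}}$ and to $\Sat(Y,\beta)$, both of which admit universal lifts by Proposition \ref{prop7}, and then use Proposition \ref{saturationnd} together with Lemma \ref{satnd} to transfer the conclusion.

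Concretely, set $S=\Sat(Y,\beta)$, which is saturated and hence has the universal lift. By Proposition \ref{prop5}, ${}_{L^1(\check{\G})}\mathcal{DF}(L^1(\check{\G}),S)=(\id\otimes\check\Delta)(S)$. Since every decomposable finite-rank map into $Y$, composed with $\beta$, is a decomposable finite-rank map into $S$, I would aim to show that the maps into $Y$ land in $\beta(\overline{Y'}^{w*})$, where $Y'$ is the span appearing in Lemma \ref{satnd}. Combined with the DAP this would force $S=S_{\operatorname{nd}}$. By condition \eqref{v} of Theorem \ref{main}, and then by its implication \eqref{v}$\implies$\eqref{i}, this yields that $\G$ has the AP.
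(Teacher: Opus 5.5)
Your forward direction is correct and is the paper's argument. The converse, however, has a genuine gap.

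\emph{The initial reduction.} You reduce to showing $\overline{{}_{L^1(\check{\G})}\mathcal{DF}(L^1(\check{\G}),Y)}^{w*}=\beta(Y)$ for \emph{every} $(Y,\beta)\in{}_{L^1(\check{\G})}\NMod^{\|\cdot\|}$. The weak$^*$-closure gains you nothing here. Since $\beta(Y)$ is already weak$^*$-closed, this is the same as ${}_{L^1(\check{\G})}\mathcal{DF}(L^1(\check{\G}),Y)=\beta(Y)$. By Proposition \ref{prop5}, that says every $Y$ admits the universal lift, and Theorem \ref{thm1} shows this is not automatic.

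\emph{The refined claim.} In the last paragraph you aim to prove ${}_{L^1(\check{\G})}\mathcal{DF}(L^1(\check{\G}),Y)\subseteq\beta(\overline{Y'}^{w*})=\Sat(Y,\beta)_{\operatorname{nd}}$, invoking the DAP only afterwards. This is false as a general lemma. Because $\beta(Y)$ always lies in ${}_{L^1(\check{\G})}\mathcal{DF}(L^1(\check{\G}),Y)$, the inclusion forces $Y=\overline{Y'}^{w*}$. But for saturated $Y$ one has $Y'=\operatorname{span}(L^1(\check{\G})\rhd Y)$, so $\overline{Y'}^{w*}=Y_{\operatorname{nd}}$. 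Hence the claim fails for every saturated degenerate $Y$. Such $Y$ exist exactly when $\G$ lacks the AP (condition \eqref{v} of Theorem \ref{main}, applied to $\check{\G}$). So your key step, required for all $Y$, is equivalent to the conclusion itself, and the proposal gives no way to extract it from the DAP.

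Applying the DAP to $\Sat(Y,\beta)$ is also vacuous. That module is saturated and has the universal lift, so by Lemma \ref{saturationsaturated} and Proposition \ref{prop5} the DAP holds for it automatically.

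\emph{The missing idea.} You never need to handle arbitrary $Y$. Take $(Y,\beta)$ non-degenerate. Then:
\begin{itemize}
\item Proposition \ref{prop7} provides the universal lift.
\item Proposition \ref{prop5} then gives ${}_{L^1(\check{\G})}\mathcal{DF}(L^1(\check{\G}),Y)=\beta(Y)$ exactly, and this set is weak$^*$-closed.
\item The DAP now yields $\Sat(Y,\beta)=\beta(Y)$.
\item Condition \eqref{iv} of Theorem \ref{main}, applied to $\check{\G}$, gives the AP of $\G$.
\end{itemize}
This is the paper's proof.

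Equivalently, the ``cleaner route'' you mention in passing works if you carry it out for $Y_{\operatorname{nd}}$ alone. Since $(Y_{\operatorname{nd}},\beta)$ is non-degenerate, the argument above shows it is saturated. Proposition \ref{saturationnd}(2),(3) then make $Y$ both saturated and non-degenerate, which is condition \eqref{ii}.
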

\begin{proof}
    Suppose that $\G$ has the AP and take $(X, \alpha)\in {}_{L^1(\check{\G})}\NMod^{\|\cdot\|}$. As shown in the proof of Proposition \ref{prop5}, we automatically have $\alpha(X)\subseteq {}_{L^1(\check{\G})}\mathcal{DF}(L^1(\check{\G}), X)$. Therefore, using Theorem \ref{main'} and Proposition \ref{p:natural}, we find
    $${}_{L^1(\check{\G})}\CB(L^1(\check{\G}), X) = \operatorname{Sat}(X, \alpha) = \alpha(X)\subseteq {}_{L^1(\check{\G})}\mathcal{DF}(L^1(\check{\G}), X),$$
    hence $L^1(\check{\G})$ has the $L^1(\check{\G})$-DAP. 

  Conversely, assume that $L^1(\check{\G})$ has the $L^1(\check{\G})$-DAP. Let $(X, \alpha)\in {}_{L^1(\check{\G})}\NMod^{\|\cdot\|}$ be non-degenerate. The combination of Proposition \ref{prop5} and Proposition \ref{prop7} allows us to conclude that $\alpha(X)= {}_{L^1(\check{\G})}\mathcal{DF}(L^1(\check{\G}),X)$.  Taking weak$^*$-closures, using that $\alpha(X)$ is weak$^*$-closed and using the assumption that $L^1(\check{\G})$ has the $L^1(\check{\G})$-DAP, we see that $(X, \alpha)$ is saturated.
  Condition \eqref{iv} in Theorem \ref{main} then implies that $\G$ has the AP.
\end{proof}

To end this section, we establish the existence of actions which do not admit a universal lift. Consequently (by Proposition \ref{prop7}), they are degenerate and not saturated. In fact, we will prove a stronger statement: $\G$ has the AP if and only if all completely isometric actions of the product quantum group $\hat{\G}\times\hat{\G}$ on dual operator spaces admit the universal lift (Theorem \ref{thm1}). We begin with an auxiliary lemma. Recall the notion of the Fourier-Stieltjes algebra $\B(\G)$ from Subsection \ref{prelim:locallycompactquantumgroups}.

\begin{Lem}\label{lemma5}
Let $\G$ be a locally compact group without the AP. If $0\neq x\in \B(\G)$, then $x\otimes 1\in \B(\G\times \G)$, but $x\otimes 1\notin \overline{\A(\G\times\G)}^{w *}$, where the closure is taken with respect to the weak$^*$-topology of $\M^l_{\cb}(\A(\G\times \G))$.
\end{Lem}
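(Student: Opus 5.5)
The statement is about a classical locally compact group $\G$ (so $L^\infty(\G)$ is commutative). The first claim, $x\otimes 1\in\B(\G\times\G)$, is elementary. Write $x(s)=\langle \pi(s)\xi,\eta\rangle$ for a unitary representation $\pi$ of $\G$. Then $(s,t)\mapsto x(s)$ is the coefficient $\langle(\pi\circ p_1)(s,t)\xi,\eta\rangle$ of the representation $\pi\circ p_1$ of $\G\times\G$, where $p_1$ is the first coordinate projection. Hence $x\otimes 1\in\B(\G\times\G)$.

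For the second claim I argue by contradiction. Suppose $x\otimes 1$ lies in the weak$^*$-closure of $\A(\G\times\G)$ in $\M_{\cb}(\A(\G\times\G))$. I use two facts about Fourier algebras of groups:
\begin{enumerate}[noitemsep]
\item Restriction to a closed subgroup maps $\A$ onto $\A$ and $\M_{\cb}$ into $\M_{\cb}$, weak$^*$-continuously.
\item Multiplication by a fixed element of $\A$ or $\B$ is weak$^*$-continuous on $\M_{\cb}$.
\end{enumerate}

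\textbf{Step 1: produce a function of the form $c\otimes 1$ with $c(e)\neq 0$.} Since $x\neq 0$, pick $s_0$ with $x(s_0)\neq 0$ and replace $x$ by its left translate $x_{s_0}=x(s_0\,\cdot)$, which again lies in $\B(\G)$. Translation by $(s_0,e)$ is a weak$^*$-homeomorphism of $\M_{\cb}(\A(\G\times\G))$ preserving $\A(\G\times\G)$. So we may assume $x(e)\neq 0$. Choose $a\in\A(\G)$ with $a(e)=1$. Then $ax\otimes 1=(a\otimes 1)(x\otimes 1)$ and $b:=ax\in\A(\G)$ with $b(e)\neq 0$.

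\textbf{Step 2: transfer the hypothesis.} Multiplication by $a\otimes 1\in\B(\G\times\G)$ preserves $\A(\G\times\G)$ and is weak$^*$-continuous by fact 2. Hence $b\otimes 1\in\overline{\A(\G\times\G)}^{w*}$.

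\textbf{Step 3: pass to the second factor.} Restrict to the closed subgroup $\{e\}\times\G$. By fact 1 this restriction maps $\overline{\A(\G\times\G)}^{w*}$ into $\overline{\A(\G)}^{w*}$ in $\M_{\cb}(\A(\G))$. It sends $b\otimes 1$ to $b(e)\,1$, so $1\in\overline{\A(\G)}^{w*}$, i.e.\ $\G$ has the AP. This contradicts the hypothesis.

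\textbf{Main obstacle.} The delicate point is the weak$^*$-continuity of restriction to a closed subgroup on $\M_{\cb}$, relative to the $Q$-predual. Equivalently, one needs that the predual map sends $Q(\A(\G))$ into $Q(\A(\G\times\G))$.

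I would first prove the bounded case: restriction is a complete contraction $\M_{\cb}(\A(\G\times\G))\to\M_{\cb}(\A(\G))$ (by Jolissaint/Bożejko–Fendler, or simply restricting Herz–Schur/Gilbert representations $\varphi(ts^{-1})=\langle\beta(s),\alpha(t)\rangle$). Then I would check continuity on bounded sets, using that the $Q$-weak$^*$ topology agrees with uniform convergence on compacta there. Finally I would conclude by Krein–Smulian.

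If this is troublesome, there is an alternative. The AP of $\G\times\G$ restricted to $\{e\}\times\G$ is essentially the known fact that the AP passes to closed subgroups (\cite{HK94}, or the corollary above via Vaes closed subgroups). I would adapt that proof directly to the net converging to $b\otimes1$ instead of to $1$.
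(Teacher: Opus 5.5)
Your first claim and Steps 1--2 are correct. Step 3, however, rests on a false ingredient: restriction to the closed subgroup $\{e\}\times\G$ is weak$^*$-continuous from $\M^l_{\cb}(\A(\G\times\G))$ to $\M^l_{\cb}(\A(\G))$ (for the $Q^l$-preduals) when $\G$ is discrete, but \emph{not} in general. Take $\G=\mathbb{R}$ and $\chi_n(s)=e^{ins}$.
\begin{itemize}
\item The sequence $\chi_n\otimes 1$ has norm $1$ in $\M^l_{\cb}(\A(\mathbb{R}^2))$.
\item For every $f\in L^1(\mathbb{R}^2)$ we have $\la\chi_n\otimes 1,f\ra=\int e^{ins}f(s,t)\,ds\,dt\to 0$ by Riemann--Lebesgue.
\item Since $L^1(\mathbb{R}^2)$ is norm dense in $Q^l(\A(\mathbb{R}^2))$ and the sequence is bounded, $\chi_n\otimes 1\to 0$ weak$^*$.
\item Yet its restriction to $\{0\}\times\mathbb{R}$ is constantly $1$.
\end{itemize}
So continuity fails already on bounded sets, and Krein--Smulian cannot help. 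The claim you planned to use is also false: on bounded sets the $Q^l$-weak$^*$ topology coincides with $\sigma(L^\infty,L^1)$, not with uniform convergence on compacta.

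The failure persists inside $\A$. Multiplying by $k(s/n)k(t/n)$, with $k(u)=\max(0,1-|u|)$, gives a bounded sequence in $\A(\mathbb{R}^2)$ tending weak$^*$ to $0$ whose restrictions tend weak$^*$ to $1$. Hence knowing $a_i\to b\otimes 1$ gives no information about $a_i|_{\{e\}\times\G}$, and Step 3 does not follow. Your fallback, adapting the heredity of the AP to closed subgroups, does not close the gap either. That heredity cannot be obtained by restricting a $Q^l$-weak$^*$-convergent net, for exactly this reason. So ``adapting it to $b\otimes 1$'' is a missing argument, not a routine change. As it stands, your proof is complete only for discrete $\G$.

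The missing idea is to replace evaluation at $e$ in the first variable, which is a functional outside $Q^l(\A(\G))$, by a weak$^*$-continuous slice. Choose $f\in L^1(\G)$ with $\int_\G xf\neq 0$.
\begin{itemize}
\item $f\otimes\id$ maps $\A(\G\times\G)\cong\A(\G)\hat{\otimes}\A(\G)$ into $\A(\G)$.
\item It sends $x\otimes 1$ to $(\int_\G xf)\,1$.
\item It is weak$^*$-continuous. Averaging the Schur-multiplier bounds for the functions $\varphi(s,\cdot)$ gives $\|(f\otimes\id)\varphi\|_{\cb}\le\|f\|_1\|\varphi\|_{\cb}$. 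Hence $g\mapsto f\otimes g$ extends to a bounded map $Q^l(\A(\G))\to Q^l(\A(\G\times\G))$ whose adjoint is $f\otimes\id$.
\end{itemize}
This proves the lemma directly and makes Steps 1--2 unnecessary.

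It is essentially the paper's proof, which is written for locally compact quantum groups. That generality is needed: the lemma is applied to $\G'$ for an arbitrary locally compact quantum group in Theorem~\ref{thm1}, where translations and point evaluations are unavailable. The paper proceeds as follows:
\begin{itemize}
\item It picks $\hat{Z}_1\in C_0(\hat{\G})$ and $\hat{\omega}_1\in L^1(\hat{\G})$ with $\la\Theta^l(x)\hat{Z}_1,\hat{\omega}_1\ra\neq 0$.
\item It reduces to nets $\rho_i\in L^1(\hat{\G})\odot L^1(\hat{\G})$.
\item It tests the assumed convergence against $\hat{Z}_1\otimes\hat{Z}_2$ and $\hat{\omega}_1\otimes\hat{\omega}_2$, using \cite[Proposition 3.9]{DKV24}.
\item The sliced net $\sum_n\la\Theta^l(\lambda_{\hat{\G}}(\rho_{1,n,i}))\hat{Z}_1,\hat{\omega}_1\ra\,\lambda_{\hat{\G}}(\rho_{2,n,i})\in\A(\G)$ then converges weak$^*$ to a nonzero multiple of $1$.
\end{itemize}
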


\begin{proof}
We have $x=\lambda^u_{\hat{\G}}(\mu)$ for some $\mu\in C_0^u(\hat{\G})^*$ and $1=\lambda_{\hat{\G}}^u(\hat{\eps}^u)$, where $\hat{\eps}^u\in C_0^u(\hat{\G})^*$ is the counit. Consequently $x\otimes 1=\lambda_{\widehat{\G\times \G}}^u(\mu \otimes \hat{\eps}^u)\in \B(\G\times \G)$ for $\mu \otimes\hat{\eps}^u\in C_0^u(\widehat{\G\times \G})^*=(C_0^u(\hat{\G})\otimes_{\operatorname{max}} C_0^u(\hat{\G}))^*$ \cite[Proposition 13.32]{diss}.

Aiming for a contradiction, let us assume that there is a net $(\rho_i)_{i\in I}$ in $L^1(\widehat{\G\times \G})=L^1(\hat{\G})\wh{\otimes}L^1(\hat{\G})$ such that $\lambda_{\widehat{\G\times \G}}(\rho_i)\xrightarrow[i\in I]{} x\otimes 1$ weak$^*$ in $\M^l_{\cb}(\A(\G\times \G))$. By a standard approximation argument, we can assume that $\rho_i=\sum_{n=1}^{N_i}\rho_{1,n,i}\otimes \rho_{2,n,i}\in L^1(\hat{\G})\odot L^1(\hat{\G})$ for all $i\in I$. Using the description of the predual of $\M^l_{\cb}(\A(\G\times \G))$ obtained in \cite[Proposition 3.9]{DKV24}, we have
\begin{equation}\label{eq7}
\la (\Theta^l(\lambda_{\widehat{\G\times \G}}(\rho_i)) \otimes\id) \hat{Z} - 
(\Theta^l(x\otimes 1) \otimes\id) \hat{Z},\hat{\omega}\ra \xrightarrow[i\in I]{}0
\end{equation}
for all $\hat{Z}\in C_0(\widehat{\G\times \G})\otimes K(\ell^2), \hat{\omega}\in L^1(\widehat{\G\times \G})\wh{\otimes} B(\ell^2)_*$. Since $x\neq 0$, also $\Theta^l(x)\neq 0$ (\cite[Corollary 4.4]{JNR09}) and we can choose $\hat{Z}_1\in C_0(\hat{\G})$ and $\hat{\omega}_1\in L^1(\hat{\G})$ such that $\la \Theta^l(x) \hat{Z}_1,\hat{\omega}_1\ra =\hat{\omega}_1\bigl((\mu\otimes\id)\hat{\Delta}^{u,r}(\hat{Z}_1)\bigr)\neq 0$. Now, for an arbitrary $\hat{Z}_2\in C_0(\hat{\G})\otimes K(\ell^2)$, $\hat{\omega}_2\in L^1(\hat\G)\widehat{\otimes} B(\ell^2)_*$ we deduce from \eqref{eq7}
\[\begin{split}
0&=\lim_{i\in I}\,
\la\sum_{n=1}^{N_i} (\Theta^l(\lambda_{\widehat{\G\times \G}}(\rho_{1,n,i}\otimes \rho_{2,n,i})) \otimes\id) (\hat{Z}_1\otimes \hat{Z}_2) - 
(\Theta^l(x\otimes 1) \otimes\id) (\hat{Z}_1\otimes \hat{Z}_2),\hat{\omega}_1\otimes \hat{\omega}_2\ra\\
&=
\lim_{i\in I}\,
\la\sum_{n=1}^{N_i}
\la \Theta^l(\lambda_{\hat{\G}}(\rho_{1,n,i}))\hat{Z}_1,\hat{\omega}_1\ra \,
 (\Theta^l(\lambda_{\hat{\G}}( \rho_{2,n,i})) \otimes\id) (  \hat{Z}_2) - 
\la \Theta^l(x)\hat{Z}_1,\hat{\omega}_1\ra \,
  \hat{Z}_2 ,  \hat{\omega}_2\ra.
\end{split}\]
This means that the net
\[
\Bigl(\;
\tfrac{1}{\la \Theta^l(x)\hat{Z}_1,\hat{\omega}_1\ra}
\sum_{n=1}^{N_i}
\la \Theta^l(\lambda_{\hat{\G}}(\rho_{1,n,i}))\hat{Z}_1,\hat{\omega}_1\ra \,
 \lambda_{\hat{\G}}( \rho_{2,n,i})
\;\Bigr)_{i\in I}
\]
in $\A(\G)$ converges weak$^*$ to $1\in \M^l_{\cb}(\A(\G))$, which gives a contradiction as $\G$ does not have the AP. 
\end{proof}

\begin{Theorem}\label{thm1}
Let $\G$ be a locally compact quantum group. The following conditions are equivalent:
\begin{enumerate}[noitemsep]
\item $\G$ has the AP.
\item Every $X\in {}_{L^1(\hat\G\times \hat\G)}\NMod^{\|\cdot\|}$ admits the universal lift.
\end{enumerate}
\end{Theorem}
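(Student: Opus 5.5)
For (1)$\implies$(2) I would reduce to Theorem \ref{main'} and Proposition \ref{prop7}. Put $\mathbb{H}:=\hat{\G}\times\hat{\G}$. Then $\check{\mathbb{H}}=(\hat{\mathbb{H}})'=(\G\times\G)'\cong \G'\times \G'$. The AP passes from $\G$ to $\G'$, since $\G'$ is isomorphic to the opposite of $\G$ through $j_\G$, and the multiplier pictures correspond. The AP also passes to products, by tensoring approximating nets in $\A(\G)\odot\A(\G)\subseteq \A(\G\times\G)$ and using the stable point-weak$^*$ characterization of \cite[Theorem 4.4]{DKV24}. Hence $\check{\mathbb{H}}$ has the AP. Theorem \ref{main'}\eqref{iii} then says every $X\in{}_{L^1(\mathbb{H})}\NMod^{\|\cdot\|}$ is non-degenerate, and Proposition \ref{prop7} yields the universal lift.

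For (2)$\implies$(1) I would argue by contraposition: assume $\G$ fails the AP and build a bad $\mathbb{H}$-module from Lemma \ref{lemma5}, which I expect to hold verbatim for quantum $\G$. Note that $\hat{\mathbb{H}}=\G\times\G$.
\begin{enumerate}[noitemsep]
\item Fix $0\neq x\in\B(\G)$, say $x=\lambda^u_{\hat\G}(\mu)$. By Lemma \ref{lemma5}, $b:=x\otimes 1\in\B(\G\times\G)$ lies outside the weak$^*$-closure of $\A(\G\times\G)$ in $\M^l_{\cb}(\A(\G\times\G))$.
\item Using Hahn--Banach and the description of $Q^l(\A(\G\times\G))$ from \cite[Proposition 3.9]{DKV24}, choose $\hat Z\in C_0(\mathbb{H})\otimes K(\ell^2)$ and $\hat\omega\in L^1(\mathbb{H})\hat\otimes B(\ell^2)_*$ such that $\hat\omega$ annihilates $(\Theta^l(a)\otimes\id)\hat Z$ for all $a\in\A(\G\times\G)$, but $\hat\omega((\Theta^l(b)\otimes\id)\hat Z)\neq0$. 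To get a single pair rather than a norm limit of sums of pairs, I would pass to $\ell^2\otimes\ell^2$ and use a block-diagonal $\hat Z$.
\item Let $X$ be the weak$^*$-closure of $\C\hat Z+\{(\Theta^l(a)\otimes\id)\hat Z\mid a\in\A(\G\times\G)\}$ inside $L^\infty(\mathbb{H})\ovot B(\ell^2)$, with the action $\Delta_{\mathbb{H}}\otimes\id$ (suitably flipped). This action is completely isometric and normal. $X$ is an $L^1(\mathbb{H})$-submodule, because $\omega\rhd(\Theta^l(a)\otimes\id)\hat Z=(\Theta^l(\lambda(\omega)a)\otimes\id)\hat Z$ and $\omega\rhd\hat Z=(\Theta^l(\lambda(\omega))\otimes\id)\hat Z$, using that $\A$ is an ideal in $\B$.
\item Since $X$ sits inside the $\mathbb{H}$-$W^*$-algebra $L^\infty(\mathbb{H})\ovot B(\ell^2)$, a universal lift of $X$ would have to be the restriction of the lift $(\Delta^{r,u,\vN}\otimes\id)$, by uniqueness. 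Slicing it by the appropriate functional $\mu\otimes\hat\eps^u\in C_0^u(\mathbb{H})^*$ acts as $\Theta^l(b)\otimes\id$, by the computation in the proof of Proposition \ref{prop7}. Hence $(\Theta^l(b)\otimes\id)\hat Z\in X$.
\end{enumerate}

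The main obstacle is the final contradiction, because of the extra scalar direction $\C\hat Z$, which corresponds to $1\in\M^l_{\cb}$. I need $\hat\omega$ to vanish on $\hat Z$ as well, that is, $b\notin\overline{\A(\G\times\G)}^{w*}+\C1$. I would first try to strengthen Lemma \ref{lemma5} to exactly this. Suppose $b-c1$ were a weak$^*$-limit of elements of $\A(\G\times\G)$. The slicing argument in that proof, run with $\hat Z_1,\hat\omega_1$ chosen so that $\la\Theta^l(x)\hat Z_1,\hat\omega_1\ra\neq c\,\hat\omega_1(\hat Z_1)$, again produces a net in $\A(\G)$ converging weak$^*$ to $1$. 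Such a choice exists whenever $x\notin\C1$, which is arranged by taking $x\in\A(\G)$ nonzero, provided $\G$ is not compact. In the compact case $\G$ has the AP anyway, so nothing is lost. With this strengthened separation, $\hat\omega$ kills all of $X$ but not $(\Theta^l(b)\otimes\id)\hat Z$, so no universal lift exists, contradicting (2).
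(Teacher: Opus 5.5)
Your proposal is correct. The direction (1)$\implies$(2) is the paper's argument: the AP for $\check{\mathbb H}=(\G\times\G)'$, then Theorem \ref{main'} and Proposition \ref{prop7}. For the product step, cite \cite[Proposition 7.22]{DKV24} directly. AP-nets are unbounded, so the product of two of them is not obviously weak$^*$-convergent.

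For (2)$\implies$(1) your route to the contradiction is genuinely different. Beware the notational clash: the paper writes $\Hh=\G\times\G$, so its $\hat\Hh$ is your $\mathbb H$.

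\emph{The paper's route.} It tests all $\hat Z$ at once in $\prod_S L^\infty(\hat\Hh)\ovot B(\ell^2)$, where $S$ is the unit ball of $C_0(\hat\Hh)\otimes K(\ell^2)$. It uses Proposition \ref{prop5} to get $\Upsilon\in X$ and extracts a net $c_i1+\lambda(\omega_i)\to\lambda^u(\mu)$ weak$^*$ in $\M^l_{\cb}(\A(\Hh'))$. It must then control the scalars $c_i$. To do so it takes $\mu=(\eps^u_{\hat\G}+\nu\Lambda_{\hat\G})\otimes\eps^u_{\hat\G}$ with $\|\nu\|<1$, so that $\lambda^u(\mu)$ is invertible in $\B(\Hh')$. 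It multiplies by the inverse and passes to the quotient by $\overline{\A(\Hh')}^{w*}$ to obtain $c_i\to c\neq 0$, and only then applies Lemma \ref{lemma5}.

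\emph{Your route.} You dualize first, so the scalar problem becomes $b\notin\overline{\A(\G\times\G)}^{w*}+\C1$. This is just Lemma \ref{lemma5} applied to $x-c1\in\B$, which is nonzero for every $c$ because $0\neq x\in\A\subseteq C_0$ and the group is non-compact. Hahn--Banach then gives a single separating pair $(\hat Z,\hat\omega)$ and a cyclic module $[\hat Z,\,L^1(\mathbb H)\rhd\hat Z]^{w*}$. Identifying a putative universal lift with the restriction of the ambient one, $y\mapsto(\Delta^{r,u,\vN}\otimes\id)(y)_{132}$, replaces Proposition \ref{prop5}.

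\emph{Comparison.} Your version avoids nets, the invertibility trick and the quotient topology, and gives a much smaller counterexample. Its only external input is that every element of $Q^l$ has the form $b\mapsto\la(\Theta^l(b)\otimes\id)\hat Z,\hat\omega\ra$. The paper uses the same input from \cite{DKV24} to pass from convergence against all such pairs to weak$^*$-convergence of an unbounded net. In exchange, the paper's construction needs no choice of separating functional.

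\emph{Bookkeeping point.} For a left $L^1(\mathbb H)$-module, the action on $L^\infty(\mathbb H)\ovot B(\ell^2)$ must be $\omega\rhd y=(\id\otimes\omega\otimes\id)(\Delta_{\mathbb H}\otimes\id)(y)$. These slices are $\Theta^l$ of elements of $\A(\check{\mathbb H})=\A(\G'\times\G')$, not of $\A(\G\times\G)$. So instead of ``flipping'' the action, run your Steps 1--2 and Lemma \ref{lemma5} with $\G'$ in place of $\G$. This costs nothing, since $\G'$ has the AP (resp.\ is compact) if and only if $\G$ does (resp.\ is). The paper does exactly this with $\Hh'$.
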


\begin{proof}
$(1)\implies (2)$ If $\G$ has AP, then so does $\G\times \G$ \cite[Proposition 7.22]{DKV24}, hence every $(X,\alpha)\in {}_{L^1( \hat{\G}\times \hat{\G})}\NMod^{\|\cdot\|}$ admits the universal lift by Proposition \ref{prop7} and Theorem \ref{main'}.

$(2)\implies (1)$ If $\G$ is compact, there is nothing to prove. We therefore assume $\G$ is non-compact.
To ease the notation, write $\Hh:=\G\times \G$. Define the set $S:=\{\hat{Z} \in C_0(\hat{\Hh})\otimes K(\ell^2)\mid \|\hat{Z}\|\le 1\}$ and consider the infinite product $\prod_{S} L^{\infty}(\hat{\Hh})\bar\otimes B(\ell^2)$. On each copy of $L^{\infty}(\hat{\Hh})\bar\otimes B(\ell^2)$ we put the amplified (normal, completely isometric) right $\hat{\H}$-action given by $(\Delta_{\hat{\Hh}}\otimes \id)(\cdot)_{132}$. Next, equip $\prod_{S} L^{\infty}(\hat{\Hh})\bar\otimes B(\ell^2)$ with the diagonal $\hat{\H}$-action, 
consider the element $
\Omega=(\hat{Z})_{\hat{Z}\in S}$ and the  weak$^*$-closed subspace
\[
X:=[\Omega,\omega\vartriangleright \Omega \mid \omega\in L^1(\hat{\Hh})]^{w*} \subseteq \prod_{S} L^{\infty}(\hat{\Hh})\bar\otimes B(\ell^2),
\]
together with the restricted action $X\curvearrowleft \hat{\Hh}$. Then $X\in {}_{L^1(\hat{\H})}\NMod^{\|\cdot\|}$.

Choose $ \nu\in L^1(\hat{\G})$ with $0<\|\nu\|<1$ and set $\mu:=(\eps_{\hat{\G}}^u+\nu\Lambda_{\hat{\G}})\otimes \eps_{\hat{\G}}^u\in C_0^u(\hat{\Hh})^*$, where $\Lambda_{\hat{\G}}: C_0^u(\hat{\G})\to C_0(\hat{\G})$ is the reducing map. Since $\G$ is non-compact $\nu\Lambda_{\hat{\G}}\notin \C \eps_{\hat{\G}}^u$. Next, define the element $$\Upsilon=\bigl( (\id\otimes\mu\otimes \id)(\Delta^{r,u}_{\hat{\Hh}}\otimes \id) (\hat{Z})\bigr)_{\hat{Z}\in S}\in \prod_{S} L^{\infty}(\hat{\Hh})\bar\otimes B(\ell^2)$$  and the map
$\phi: L^1(\hat{\Hh})\ni \omega\mapsto 
\omega\vartriangleright \Upsilon=
((\omega\star\mu)\rhd \hat{Z})_{\hat{Z}\in S}
\in X$: note that we use here the equation $( \Delta_{\hat{\Hh}}\otimes \id)\Delta_{\hat{\Hh}}^{r,u}= (\id \otimes \Delta_{\hat{\Hh}}^{r,u})\Delta_{\hat{\Hh}}$. 
Since $L^1(\hat{\Hh})$ is an ideal in $C_0^u(\hat{\Hh})^*$, the map $\phi$ is a well-defined element of ${}_{L^1(\hat{\Hh})}\CB(L^1(\hat{\Hh}),X)$. Similarly as in the proof of Proposition \ref{prop5}, we can write $\phi=vu$, where
\[
u: L^1(\hat{\Hh})\ni \omega\mapsto (\omega\star \mu,0)\in L^1(\hat{\Hh})_+,\quad 
v: L^1(\hat{\Hh})_+\ni (\omega,z)\mapsto \omega\vartriangleright \Omega + z\Omega\in X.
\]
This proves $\phi\in {}_{L^1(\hat{\Hh})}\mathcal{DF}(L^1(\hat{\Hh}),X)$. Since $X\curvearrowleft \hat{\Hh}$ admits the universal lift, Proposition \ref{prop5} allows us to choose $\tilde{x}\in X$ such that $\phi(\omega)=\omega\vartriangleright \tilde{x}$ for $\omega\in L^1(\hat{\Hh})$. Since the action on $\Prod_S L^{\infty}(\hat{\Hh})\bar\otimes B(\ell^2)$ is completely isometric, this implies $\Upsilon=\tilde{x}\in X$ and we conclude that
\[
\Upsilon=\bigl(
(\id\otimes\mu\otimes\id)(\Delta_{\hat{\Hh}}^{r,u}\otimes \id)(\hat{Z})\bigr)_{\hat{Z}\in S}=
{w^* \lim}_{i\in I}\bigl(c_i \hat{Z} + 
\omega_i\vartriangleright \hat{Z}\bigr)_{\hat{Z}\in S}
\]
for some $c_i\in \C, \omega_i\in L^1(\hat{\Hh})$. It follows that $ 
c_i \hat{Z} + 
\omega_i\vartriangleright \hat{Z}\xrightarrow[i\in I]{}(\id\otimes\mu\otimes\id)(\Delta_{\hat{\Hh}}^{r,u}\otimes \id)(\hat{Z})$ weak$^*$ in $L^{\infty}(\hat{\Hh})\bar\otimes B(\ell^2)$ for all $\hat{Z}\in C_0(\hat{\Hh})\otimes K(\ell^2)$. Recall that the dual of $\hat{\Hh}^{\operatorname{op}}$ is $\Hh'$ \cite[Section 4]{KV03}. Consequently, since the Banach $*$-algebras $L^1_{\sharp}(\Hh),L^1_{\sharp}(\Hh')$ are isomorphic via $\theta\mapsto \theta(J_{\vp_\H}\cdot^* J_{\vp_\H})$, we can identify $C_0^u(\hat{\Hh})=C_0^u(\hat{\Hh}^{\operatorname{op}})$. With this observation, we obtain
\[
\bigl\la 
c_i \hat{Z} + 
(\omega_i\otimes\id\otimes\id)
(\Delta_{\hat{\Hh}^{\operatorname{op}}}\otimes\id) (\hat{Z}) -
(\mu\otimes\id\otimes\id)(\Delta^{u,r}_{\hat{\Hh}^{\operatorname{op}}}\otimes \id)(\hat{Z}) , \hat{\omega}\bigr\ra 
\xrightarrow[i\in I]{} 0
\]
for all $\hat{Z}\in C_0(\hat{\Hh})\otimes K(\ell^2)$, $\hat{\omega}\in L^1(\hat{\Hh})\widehat{\otimes}B(\ell^2)_*$. By \cite[Theorem 3.9]{DKV24} it follows that
\[
c_i 1 + \lambda_{\hat{\Hh}^{\operatorname{op}}}(\omega_i)\xrightarrow[i\in I]{\textnormal{weak}^*}\lambda_{\hat{\Hh}^{\operatorname{op}}}^u(\mu)
\]
in $\M^l_{\cb}(\A(\Hh'))$. As $\B(\Hh')\cong C_0^u(\hat{\Hh}^{\operatorname{op}})^*$ is a unital Banach algebra and $\|\lambda^u_{\hat{\Hh}^{\operatorname{op}}}(\nu)\|_{B(\Hh')}<1$, the element $\lambda_{\hat{\Hh}^{\operatorname{op}}}^u(\mu)$ is invertible in $B(\Hh')$. Multiplication of $\M^l_{\cb}(\A(\Hh'))$ is separately weak$^*$-continuous \cite[Proposition 3.3]{DKV24}, hence
\begin{equation}\label{eq8}
c_i \lambda_{\hat{\Hh}^{\operatorname{op}}}^u(\mu)^{-1} + \lambda_{\hat{\Hh}^{\operatorname{op}}}^u(\mu)^{-1} \lambda_{\hat{\Hh}^{\operatorname{op}}}(\omega_i)\xrightarrow[i\in I]{\textnormal{weak}^*} 1.
\end{equation}
Equip $\M^l_{\cb}(\A(\Hh'))$ with the weak$^*$ topology, consider the quotient space $\M^l_{\cb}(\A(\Hh')) \, / \, \overline{\A(\Hh')}^{w*}$ and the canonical quotient map $P: \M^l_{\cb}(\A(\Hh'))\rightarrow \M^l_{\cb}(\A(\Hh')) \, / \, \overline{\A(\Hh')}^{w*}$. Assume by contradiction that $\G$ does not have the AP. Since $\A(\Hh')$ is an ideal in $\B(\Hh')$, \eqref{eq8} implies $c_i P(\lambda_{\hat{\Hh}^{\operatorname{op}}}^u(\mu)^{-1}) \xrightarrow[i\in I]{} P(1)$ with respect to the quotient topology. By \cite[Proposition 4.3, Proposition 7.22]{DKV24}, the quantum group $\Hh'$ does not have the AP, hence $P(1)\neq 0$ and it follows that the net $(c_i)_{i\in I}$ converges in $\C$ to some $c\neq 0$. Using again the convergence \eqref{eq8} we obtain
\[
\lambda_{\hat{\Hh}^{\operatorname{op}}}^u(\mu)^{-1} \lambda_{\hat{\Hh}^{\operatorname{op}}}(\omega_i)\xrightarrow[i\in I]{\textnormal{weak}^*}
 1 - 
c \lambda_{\hat{\Hh}^{\operatorname{op}}}^u(\mu)^{-1} =
\bigl(1- c \lambda_{\hat{\G}^{\operatorname{op}}}^u(\eps_{\hat{\G}}^u+\nu\Lambda_{\hat{\G}})^{-1}\bigr)\otimes 1,
\]
with the weak$^*$-convergence in $\M^l_{\cb}(\A(\Hh'))$. Observe then that $\lambda_{\hat{\Hh}^{\operatorname{op}}}^u(\mu)^{-1} \lambda_{\hat{\Hh}^{\operatorname{op}}}(\omega_i)\in \A(\Hh')$ and $1- c\lambda_{\hat{\G}^{\operatorname{op}}}^u(\eps_{\hat{\G}}^u+\nu\Lambda_{\hat{\G}})^{-1}\in \B(\G')$. Furthermore, as $\nu\neq 0$ and $\G$ is not compact, $1- c\lambda_{\hat{\G}^{\operatorname{op}}}^u(\eps_{\hat{\G}}^u+\nu\Lambda_{\hat{\G}})^{-1}\neq 0$. Consequently Lemma \ref{lemma5} gives us a contradiction. 
\end{proof}

\begin{Rem}
We do not know if we can replace $\hat{\G}\times \hat{\G}$ with $\hat{\G}$ in the formulation of Theorem \ref{thm1}.
\end{Rem}

\section{Exactness and equivariant projectivity}\label{SectionExactness}

Let $\G$ be a locally compact quantum group.
In this section, we investigate homological properties of the crossed product functor
$-\rtimes_\mcF \G$ on appropriate categories of $L^1(\G)$-operator modules. With the observation that $-\rtimes_\mcF \G\cong {}_{L^1(\G)}\CB(B(L^2(\G))_*,-)$ (Proposition \ref{p:natural}) and keeping in mind Proposition \ref{exactness homfunctor}, we are naturally led to study the projectivity of the left $L^1(\G)$-module $B(L^2(\G))_*$.

\subsection{Equivariant projectivity}

We begin with setting up a general context. Consider a Hilbert space $\mcH$ endowed with a left unitary $\G$-representation $\mathrm{U} \in L^\infty(\G)\ovot B(\mcH)$. Then $B(\mcH)$ becomes a right $L^1(\G)$-operator module for
$$x\lhd \omega  := (\omega \otimes \id)(\uu^*(1\otimes x)\uu), \quad x \in B(\mcH), \quad \omega \in L^1(\G),$$
and the predual $B(\mcH)_*$ becomes a left $L^1(\G)$-operator module for 
$$(\omega \rhd \rho)(x):= (\omega \otimes \rho)(\uu^*(1\otimes x)\uu), \quad \omega  \in L^1(\G), \quad \rho\in B(\mcH)_*, \quad  x \in B(\mcH).$$
The case where $\mcH = L^2(\G)$ and $\uu = \ww^{\G}$ is of primary interest.

We start by characterizing relative projectivity of $B(\mcH)_*\in {}_{L^1(\G)}\Mod$. Recall that if $A$ is a completely contractive Banach algebra, then $X\in {}_A\Mod$ is called \emph{relatively $A$-projective} \cite{Cra17}*{Section 2} if there exists a completely contractive $A$-module map $\Phi^+: X\to A_+\hat{\otimes}X$ which is right inverse to the extended module map $A_+\hat{\otimes} X\to X$, and where we regard $A_+\hat{\otimes} X$ as a left $A$-operator module via $a\rhd (s\otimes x)= (a\rhd s)\otimes x$ for $a\in A, s \in A_+$ and $x\in X$. If $X\in {}_A\Mod$ is essential, this is equivalent with the existence of a completely contractive $A$-module map $\Phi: X \to A\hat{\otimes} X$ which is right inverse to the module map $A\hat{\otimes} X\to X$.   

\begin{Prop}\label{relativeprojective}
    Let $\G$ be a locally compact quantum group and let $(\mcH,\uu)$ be a left unitary $\G$-representation. Then $B(\mcH)_*\in {}_{L^1(\G)}\Mod$ is relatively $L^1(\G)$-projective if and only if $\G$ is compact.
\end{Prop}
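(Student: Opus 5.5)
Both directions go through the dual picture. The dual of the $L^1(\G)$-module structure on $B(\mcH)_*$ is the right $L^1(\G)$-module $B(\mcH)$, with action given by the normal left coaction $\beta(x)=\uu^*(1\otimes x)\uu$.

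\emph{Compact $\Rightarrow$ relatively projective.} Assume $\G$ is compact, so the Haar state $h\in L^1(\G)$ is a normalized two-sided invariant state. I would build a right inverse $\Phi\colon B(\mcH)_*\to L^1(\G)\hat{\otimes}B(\mcH)_*\cong (L^\infty(\G)\ovot B(\mcH))_*$ to the module map by averaging: for $\rho\in B(\mcH)_*$ set
\[
\Phi(\rho):=(h\otimes\rho)\bigl(\uu^*(\,\cdot\,)\uu\bigr)\circ(\Delta\otimes\id),
\]
that is, $\langle \Phi(\rho), F\rangle=(h\otimes\id\otimes\rho)\bigl(\uu_{13}^*(\Delta\otimes\id)(F)\uu_{13}\bigr)$ up to a suitable leg arrangement. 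A cleaner route is to write $\Phi^*\colon L^\infty(\G)\ovot B(\mcH)\to B(\mcH)$ as $\Phi^*(F)=(h\otimes\id)(\uu F\uu^*)$ and check three things:
\begin{itemize}
\item it is normal, unital and completely positive, hence completely contractive;
\item it satisfies $\Phi^*(\beta(x))=x$, which comes from $\uu\,\uu^*(1\otimes x)\uu\,\uu^*=1\otimes x$ and $h(1)=1$, so $\Phi$ is right inverse to the module map;
\item it is an $L^1(\G)$-module map, i.e.\ it intertwines the action $F\lhd\omega=(\omega\otimes\id\otimes\id)(\Delta\otimes\id)(F)$ with $\lhd$ on $B(\mcH)$.
\end{itemize}
The third check is where invariance of $h$ and the representation identity $(\Delta\otimes\id)\uu=\uu_{13}\uu_{23}$ are used. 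Which one-sided invariance is needed depends on the leg conventions, and compactness supplies both, so either form is available. Since $B(\mcH)_*$ is essential, this map (or its unitized version) gives relative projectivity.

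\emph{Relatively projective $\Rightarrow$ compact.} Suppose $\Phi$ exists. Dualizing gives a normal completely contractive right $L^1(\G)$-module map $\Psi=\Phi^*\colon L^\infty(\G)\ovot B(\mcH)\to B(\mcH)$ with $\Psi\circ\beta'=\id$, where $\beta'$ is the module-map dual. In the essential case $\beta'$ is exactly $\beta$. Now fix a normal state $\rho$ on $B(\mcH)$ and define
\[
m(y):=\rho\bigl(\Psi(y\otimes1)\bigr),\qquad y\in L^\infty(\G).
\]
Module-linearity gives $\Psi((y\otimes 1)\lhd\omega)=\Psi(y\otimes 1)\lhd\omega$. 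Applying $\rho$ turns this into an invariance relation for the normal functional $m\in L^1(\G)$: $m\star\omega$ versus $\omega(1)$ times a fixed functional. The key sub-step is to arrange that $(y\otimes1)\lhd\omega=((\omega\otimes\id)\Delta(y))\otimes1$ and that $\Psi(1\otimes1)=\Psi(\beta(1))=1$. For a fixed-point argument I would replace $\rho$ by an $L^1(\G)$-invariance argument on the normal map $y\mapsto\Psi(y\otimes1)\in B(\mcH)$. Its image satisfies $\Psi(y\otimes1)\lhd\omega=\Psi(((\omega\otimes\id)\Delta(y))\otimes1)$, and composing with $\rho$ yields a normal functional $m$ on $L^\infty(\G)$ with $m(1)=1$.

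\emph{Main obstacle.} The hard step is extracting genuine invariance of $m$, i.e.\ $m((\omega\otimes\id)\Delta(y))=\omega(1)m(y)$, and then concluding compactness. My plan is to use $\Psi$'s module property with respect to the first-leg action, so that it becomes a statement about the map $y\mapsto\Psi(y\otimes 1)$ being $L^\infty(\G)$-valued into fixed points. Concretely, I would show that $T(y):=\Psi(y\otimes1)$ satisfies $T((\omega\otimes\id)\Delta(y))=\omega(1)T(y)$ by comparing with $\Psi\circ\beta=\id$ on scalars. This gives a normal left-invariant state on $L^\infty(\G)$, and by the standard fact that a locally compact quantum group with a normal invariant state is compact, the result follows. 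If the direct comparison fails, I would first pass through the known result (as in \cite{Cra17}) that relative projectivity of $L^1(\G)$ itself forces compactness, by restricting $\Phi$ to the submodule generated by a vector state.
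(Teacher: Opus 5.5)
Your half ``compact $\Rightarrow$ relatively projective'' is correct. It is also more hands-on than the paper, which handles both directions at once by transporting the property along the equivariant Morita equivalence between $(B(\mcH),\beta)$ and $(\C,\tau)$. Your map $E(F)=(h\otimes\id)(\uu F\uu^*)$ is normal ucp and satisfies $E\circ\beta=\id$. After substituting $F=\uu^*G\uu$ and using $(\Delta\otimes\id)\uu=\uu_{13}\uu_{23}$, its equivariance reduces to $(\id\otimes h)\Delta(y)=h(y)1$.

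The converse, however, has a genuine gap. Restricting $\Psi$ to $L^\infty(\G)\otimes 1$ only retains three facts about $T(y)=\Psi(y\otimes 1)$: it is normal, it is unital, and $\beta\circ T=(\id\otimes T)\Delta$. Equivariance gives $T((\omega\otimes\id)\Delta(y))=T(y)\lhd\omega$, and ``comparing with $\Psi\circ\beta=\id$ on scalars'' gives nothing beyond $T(1)=1$. These properties cannot force compactness: for $\uu=\ww$ the inclusion $L^\infty(\G)\subseteq B(L^2(\G))$ has all of them, for every $\G$.

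The identity $T((\omega\otimes\id)\Delta(y))=\omega(1)T(y)$ would mean that $T$ takes values in the fixed points of $\beta$, and it is in fact false. Take $\G=SU_q(2)$ with $0<q<1$, let $\uu$ be its fundamental representation (whose fixed points are $\C1$), and let $\Psi=E$ be the map from your first half. Then $\Psi(\gamma^*\gamma\otimes 1)$, with $\gamma$ the usual generator, is a non-scalar diagonal matrix. Composing with an arbitrary normal state $\rho$ does not repair this. Your fallback also fails, for two reasons. Relative projectivity of $L^1(\G)$ does not force compactness: $L^1(G)$ is relatively projective over itself for every classical locally compact group $G$. And relative projectivity is not inherited by submodules.

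The missing idea is to untwist by $\uu$ \emph{before} restricting.
Put $E'(G):=\Psi(\uu^*G\uu)$ on $L^\infty(\G)\ovot B(\mcH)$. Then $E'(1\otimes x)=\Psi(\beta(x))=x$. Since $E'$ is a unital complete contraction, it is ucp, so $1\otimes B(\mcH)$ lies in its multiplicative domain. Hence $E'$ is a $B(\mcH)$-bimodule map, $E'(y\otimes 1)$ commutes with $B(\mcH)$, and $E'=m\otimes\id$ for a normal state $m$ on $L^\infty(\G)$.
Rewriting the equivariance of $\Psi$ in terms of $E'$ (again via $(\Delta\otimes\id)\uu=\uu_{13}\uu_{23}$) gives $\beta(E'(G))=\uu^*\bigl(((\id\otimes m)\Delta\otimes\id)(G)\bigr)\uu$. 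Taking $G=y\otimes 1$ yields $(\id\otimes m)\Delta(y)=m(y)1$. So $m$ is a normal invariant state and $\G$ is compact. This is exactly the concrete content of the paper's reduction to $(\C,\tau)$.
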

\begin{proof} Since $B(\mcH)_*$ is essential as an $L^1(\G)$-module, its relative $L^1(\G)$-projectivity is equivalent with the existence of a completely contractive $L^1(\G)$-module map $\Phi: B(\mcH)_*\to L^1(\G)\hat{\otimes}B(\mcH)_* $ which is right inverse to the module map $L^1(\G)\hat{\otimes} B(\mcH)_*\to B(\mcH)_*: \omega\otimes \mu\mapsto (\omega\otimes \mu)\alpha_{\uu}$,  where $\alpha_{\uu}(x)= \uu^*(1\otimes x)\uu$ for $x\in B(\mcH)$. Dualizing, this is equivalent with the  existence of a normal $\G$-equivariant complete contraction $E: (L^\infty(\G)\ovot B(\mcH), \Delta\otimes \id)\to (B(\mcH), \alpha_{\uu})$ satisfying $E\circ \alpha_{\uu}= \id$, so in particular $E$ is ucp. Given a left $\G$-$W^*$-algebra $(M, \alpha)$, the property of admitting a normal, $\G$-equivariant ucp map $E: (L^\infty(\G)\ovot M, \Delta \otimes \id)\to (M, \alpha)$ satisfying $E\circ \alpha= \id_M$ is preserved under $\G$-equivariant Morita equivalence (the same proof strategy as in \cite{DR25}*{Theorem 4.1} applies). Since $(B(\mcH), \alpha_{\uu})$ is $\G$-equivariantly Morita equivalent with $(\C, \tau)$, we conclude that $B(\mcH)_*$ is relatively $L^1(\G)$-projective if and only if there exists a normal $\G$-equivariant state $(L^\infty(\G), \Delta)\to (\C, \tau)$, which is equivalent with the compactness of $\G$ \cite{BT03}*{Proposition 3.10}.
\end{proof}

\begin{Prop}\label{prop2}
Let $\uu\in L^{\infty}(\G)\bar\otimes B(\mcH)$ be a unitary representation of $\G$ on $\mcH$. Assume that there is
\begin{itemize}[noitemsep]
\item a family $(p_i)_{i\in I}$ of non-zero orthogonal projections in $B(\mcH)$ with $\sum_{i\in I} p_i=1$,
\item a net $(\zeta_\lambda)_{\lambda\in \Lambda}\subseteq \mcH$ satisfying $\|\zeta_\lambda\|=1$ and $\zeta_\lambda\xrightarrow[\lambda\in \Lambda]{}0$ weakly.
\end{itemize} 
Assume furthermore that
\begin{equation}\label{eq1}
\|(1\otimes p_i) \uu^* (\xi\otimes \zeta_\lambda)\|\xrightarrow[\lambda\in\Lambda]{}0
\end{equation}
for all $i\in I$ and all $\xi\in L^2(\G)$. Then $B(\mcH)_*$ is not projective in ${}_{L^1(\G)}\Mod$.
\end{Prop}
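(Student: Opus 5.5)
Argue by contradiction: assume $B(\mcH)_*$ is $L^1(\G)$-projective and test projectivity against the canonical complete quotient map
$$q: L^1(\G)_+\hat{\otimes} B(\mcH)_*\to B(\mcH)_*,\quad (\omega,\lambda)\otimes\rho\mapsto \omega\rhd\rho+\lambda\rho,$$
which is $L^1(\G)$-linear. It is a complete quotient map because it has the completely contractive linear (not module) right inverse $\rho\mapsto \eps\otimes\rho$. Apply projectivity to $\varphi=\id_{B(\mcH)_*}$ with $\epsilon=1$. This gives an $L^1(\G)$-module map $\Phi: B(\mcH)_*\to L^1(\G)_+\hat{\otimes} B(\mcH)_*$ with $q\Phi=\id$ and $\|\Phi\|_{\cb}<2$. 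Since $B(\mcH)_*$ is essential and $L^1(\G)\rhd L^1(\G)_+\subseteq L^1(\G)$, the map $\Phi$ takes values in $L^1(\G)\hat{\otimes}B(\mcH)_*$. Dualizing, we get a normal $\G$-equivariant completely bounded map
$$E=\Phi^*: (L^\infty(\G)\ovot B(\mcH),\Delta\otimes\id)\to (B(\mcH),\alpha_\uu)$$
with $E\circ\alpha_\uu=\id$, where $\alpha_\uu(x)=\uu^*(1\otimes x)\uu$. Projectivity thus yields a \emph{normal} map, whereas relative projectivity (Proposition \ref{relativeprojective}) would need only complete contractivity. We will use normality together with $\alpha_\uu$ being a right inverse, and the hypotheses will exhibit non-normal behaviour.

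Next, introduce $\zeta_\lambda$ by means of the vector functionals $\omega_\lambda:=\omega_{\zeta_\lambda}\in B(\mcH)_*$, which are states. Since $E\alpha_\uu=\id$,
$$1=\omega_\lambda(1)=\omega_\lambda\bigl(E(\alpha_\uu(1))\bigr)=\bigl\langle \alpha_\uu(1),\Phi(\omega_\lambda)\bigr\rangle.$$
Decompose $1=\sum_i p_i$, so that $\alpha_\uu(1)=\sum_i \uu^*(1\otimes p_i)\uu$ strongly. Normality of $E$ lets us write $\omega_\lambda\circ E=\Phi(\omega_\lambda)$ as a normal functional on $L^\infty(\G)\ovot B(\mcH)$. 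We want to show $\langle \uu^*(1\otimes p_i)\uu,\Phi(\omega_\lambda)\rangle\to 0$ for each fixed $i$, and then to control the tail uniformly. Hypothesis \eqref{eq1} says exactly that $\uu^*(1\otimes p_i)\uu$, evaluated against vectors of the form $\xi\otimes\zeta_\lambda$, tends to zero. Equivariance of $\Phi$ and an approximation of $\Phi(\omega_\lambda)$ should then reduce the pairing to such vector functionals: we would write $\Phi(\omega)$ for $\omega=\omega_{\zeta_\lambda}$ via $L^1(\G)$-linearity, $\Phi(\nu\rhd\rho)=\nu\rhd\Phi(\rho)$, and use that $\zeta_\lambda\to 0$ weakly kills any fixed compact or finite-rank part.

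The main obstacle is the uniformity in $i$. Pointwise vanishing for each $p_i$ does not directly contradict the total being $1$. To handle this, exploit that $\Phi$ is bounded and normal, so $\Phi^*$ maps the increasing net of finite partial sums $\sum_{i\in F}p_i$ (each lifted through $\alpha_\uu$) to something converging $\sigma$-weakly. Equivalently, pass to a weak$^*$ cluster point $\Psi$ of $\Phi(\omega_\lambda)$ in $(L^1(\G)\hat{\otimes}B(\mcH)_*)^{**}$. The steps are as follows:
\begin{enumerate}
\item Show, from \eqref{eq1} and equivariance, that $\Psi$ annihilates every $\uu^*(1\otimes p_i)\uu$ and hence its normal part vanishes on $\alpha_\uu(1)$.
\item Show that $\Psi$ is itself normal, i.e.\ comes from $\omega\circ E$ for a weak$^*$ limit $\omega$ of $\omega_\lambda$, which is $0$ on compacts since $\zeta_\lambda\to0$ weakly.
\item Conclude that $\omega_\lambda(1)=1$ cannot hold in the limit.
\end{enumerate}
The delicate point is justifying the interchange of the limit over $\lambda$ with the sum over $i$. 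We would first try to get it by choosing $\xi$ to approximate the first-leg part of $\Phi(\omega_\lambda)$, uniformly in $\lambda$, using $\|\Phi\|_{\cb}<2$.
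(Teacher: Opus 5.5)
\textbf{There is a genuine gap: your approach cannot work.} The only consequence of projectivity you use is a module splitting $\Phi$ of the canonical map $q\colon L^1(\G)_+\hat{\otimes}B(\mcH)_*\to B(\mcH)_*$. Equivalently, you use a normal equivariant $E$ with $E\circ\alpha_\uu=\id$.

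Such a splitting, even a completely contractive one, exists whenever $\G$ is compact, by Proposition~\ref{relativeprojective}. The hypotheses of the statement can hold for compact $\G$. Take $\G$ trivial, $\mcH=\ell^2(\N)$, $p_i$ the rank-one projections onto the basis vectors $e_i$, and $\zeta_n=e_n$. Then \eqref{eq1} holds and the conclusion is true (this is the case $A=\C$). Yet $\Phi(\rho)=(1,0)\otimes\rho$ and $E=\id$, so nothing contradictory can be extracted from $E$. The statement is needed precisely in such compact situations, e.g.\ in Corollary~\ref{projectivitytraceclass} for infinite compact $\G$ and in Theorem~\ref{regularprojectivity} to force finitely many irreducible summands.

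Concretely, your step (2) is false. A weak$^*$ cluster point $\omega$ of the vector states $\omega_{\zeta_\lambda}$ vanishes on compacts, so it is singular. Hence $\Psi=\omega\circ E$ is not normal even though $E$ is. Moreover, $\Psi(\alpha_\uu(1))=\omega(1)=1$ together with $\Psi(\uu^*(1\otimes p_i)\uu)=0$ for every $i$ is perfectly consistent. In the example above, $\sum_i\|p_ie_n\|^2=1$ for every $n$ while each term tends to $0$. The interchange of $\lim_\lambda$ and $\sum_i$ that you flag as delicate simply fails, and no choice of $\xi$ repairs it.

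\textbf{The missing idea is to test projectivity against a different map, one that detects the failure of normality.} The paper passes to the equivalent claim that $B(\mcH)\in\NMod_{L^1(\G)}$ is not weak$^*$-injective. It embeds $B(\mcH)$ into $\prod_F B(L^2(\G)\otimes\mcH_F)$ via
\[
x\mapsto\bigl((1\otimes p_F)\uu^*(1\otimes x)\uu(1\otimes p_F^*)\bigr)_F,
\]
where $F$ ranges over finite subsets of $I$, $\mcH_F=\bigoplus_{i\in F}p_i\mcH$, and the target action is induced by $\ww\otimes 1$. This map is normal, completely isometric and $L^1(\G)$-linear.

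Each coordinate involves only finitely many $p_i$, which is how the uniformity-in-$i$ problem is avoided. So \eqref{eq1} gives that the images of $x_\lambda=|\zeta\rangle\langle\zeta_\lambda|$ tend to $0$ $\sigma$-strong$^*$. On the other hand, $x_\lambda^*\zeta=\zeta_\lambda$ does not tend to $0$. Normal completely bounded maps are $\sigma$-strong$^*$-continuous (normal Wittstock--Stinespring). Therefore the inverse of the embedding admits no normal completely bounded extension to the product, and $B(\mcH)$ is not weak$^*$-injective.
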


\begin{proof}
We will prove the equivalent statement that $B(\mcH)\in \NMod_{L^1(\G)}$ is not weak$^*$-injective. The argument that follows is a modification of the proof contained in \cite{ER88}*{Section 3}, which shows that $B(\mathcal{K})\in {}_{\C}\NMod$ is not weak$^*$-injective when $\mcK$ is an infinite-dimensional Hilbert space.
Let $\mcF$ be the directed set of non-empty, finite subsets of $I$, ordered by inclusion (note that the assumption implies that $I$ is infinite). Given $F\in \mcF$, let us write $\mcH_F:= \bigoplus_{i\in F} p_i \mcH\subseteq \mcH$ and $p_F:=\sum_{i\in F}p_i$, which we see as a map $\mcH\rightarrow \mcH_F$. We then define the unital, completely positive map
    $$\Phi: B(\mcH)\to \prod_{F \in\mcF} B(L^2(\G)\otimes \mcH_F):  x \mapsto 
\bigl(
(1\otimes p_F)\uu^*(1\otimes x)\uu (1\otimes p_F^*)\bigr)_{F\in \mcF}.$$
   Endowing the Hilbert space $L^2(\G)\otimes \mcH_F$ with the $\G$-representation $\ww^{}\otimes 1_{\mcH_F}\in L^\infty(\G)\ovot B(L^2(\G)\otimes \mcH_F)$, we can view the von Neumann algebra $B(L^2(\G)\otimes \mcH_F)$ as an object of $\operatorname{NMod}_{L^1(\G)}$. Then $\prod_{F\in \mcF} B(L^2(\G)\otimes \mcH_F) \in \operatorname{NMod}_{L^1(\G)}$ for the diagonal action.
    
The map $\Phi$ is $L^1(\G)$-linear. Indeed, for $\omega\in L^1(\G)$ and $x\in B(\mcH)$ we have
\begin{align*}
\Phi(x)\lhd \omega&=\bigl(
(\omega\otimes\id\otimes\id)
\bigl(
(\ww^{*}\otimes 1_{\mcH_F})
(1\otimes 1\otimes p_F)\uu^*_{23}
x_3 \uu_{23}
(1\otimes 1\otimes p_F^*)
(\ww^{}\otimes 1_{\mcH_F})
\bigr)
\bigr)_{F\in\mcF}\\
&=
\bigl(
(\omega\otimes\id\otimes\id)
\bigl(
(1\otimes 1\otimes p_F)
\uu_{23}^* \uu_{13}^*
x_3 \uu_{13}
\uu_{23}
(1\otimes 1\otimes p_F^*)
\bigr)
\bigr)_{F\in\mcF}\\
&=
\bigl(
(1\otimes p_F)
\uu^* 
(1\otimes x\lhd\omega)
\uu
( 1\otimes p_F^*)
\bigr)_{F\in\mcF}=\Phi(x\lhd \omega).    
\end{align*}

It is easy to see that $\Phi$ is completely isometric and weak$^*$-continuous. Let $X=\Phi(B(\mcH))$ be the image of $\Phi$, which is an $L^1(\G)$-submodule of $\prod_{F\in \mcF}B(L^2(\G)\otimes \mcH_F)$. Since $\Phi$ is a complete isometry, $X$ is weak$^*$-closed and the inverse map $\Phi^{-1}: X\rightarrow B(\mcH)$ is weak$^*$-continuous.

Fix a unit vector $\zeta\in \mcH$ and define rank one maps $x_\lambda=|\zeta\ra\la \zeta_\lambda|\in B(\mcH)$. One easily checks that $(x_\lambda)_{\lambda\in \Lambda}$ converges to $0$ in the $\sigma$-strong topology, but not in the $\sigma$-strong$^*$ topology. Next we claim that $\Phi(x_\lambda)\xrightarrow[\lambda\in \Lambda]{} 0$ in the $\sigma$-strong$^*$-topology, where we regard $\prod_{F\in \mcF}B(L^2(\G)\otimes \mcH_F)$ as represented on $\bigoplus_{F\in \mcF}(L^2(\G)\otimes \mcH_F)$. As this net is uniformly bounded, it is enough to fix a finite set $\emptyset\neq F_0\subseteq I$, a vector $\xi\in L^2(\G)\otimes \mcH_{F_0}\subseteq \bigoplus_{F\in \mcF} (L^2(\G)\otimes \mcH_{F})$ and show that
\begin{equation}\label{eq2}
\|\Phi(x_\lambda)\xi\| \xrightarrow[\lambda\in\Lambda]{}0,\quad 
\|\Phi(x_\lambda^*)\xi\| \xrightarrow[\lambda\in \Lambda]{}0.
\end{equation}
For $\eps>0$, choose $\tilde{\xi}\in L^2(\G)\odot \mcH$ such that $\|\uu\xi-\tilde{\xi}\|\le \eps$. Then we have
\[\begin{split}
\|\Phi(x_\lambda) \xi\|&=
\bigl\|(\delta_{F,F_0} (1\otimes p_{F_0})\uu^* (1\otimes x_{\lambda}) \uu \xi )_{F\in \mcF}\bigr\|=
\bigl\|(1\otimes p_{F_0})\uu^*
\bigl(  (\id\otimes \la \zeta_{\lambda}|)\uu\xi \otimes \zeta\bigr)\bigr\|\\
&\le
\|(\id\otimes\la \zeta_\lambda|)\uu\xi\|\le 
 \eps+\|(\id\otimes \la \zeta_\lambda|) \tilde{\xi} \|\xrightarrow[\lambda\in\Lambda]{}\eps.
\end{split}\]
Since $\eps$ is arbitrary, this proves the first convergence in \eqref{eq2}. Next,
\[\begin{split}
&\quad\;
\|\Phi(x_\lambda^*) \xi\|=
\bigl\|(\delta_{F,F_0} (1\otimes p_{F_0})\uu^*(1\otimes  x_\lambda^*)\uu\xi )_{F\in \mcF}\bigr\|=
\bigl\|
(1\otimes p_{F_0}) \uu^* \bigl(
(\id\otimes \la \zeta|)\uu\xi \otimes \zeta_\lambda\bigr)\bigr\|\\
&=\bigl\|\sum_{i\in F_0} 
(1\otimes p_i )
\uu^* \bigl(
(\id\otimes \la \zeta|)\uu\xi \otimes \zeta_\lambda\bigr)\bigr\|\le
\sum_{i\in F_0}
\bigl\| 
(1\otimes p_i )
\uu^* \bigl(
(\id\otimes \la \zeta|)\uu\xi \otimes \zeta_\lambda\bigr)\bigr\|\xrightarrow[\lambda\in \Lambda]{}0,
\end{split}\]
where the last convergence holds by \eqref{eq1}.

We conclude that $\Phi^{-1}: X \to B(\mcH)$ is not continuous for the (restriction of) the $\sigma$-strong$^*$-topology. By the normal Wittstock-Stinespring decomposition (see e.g.~\cite{Pi20}*{Theorem 1.60}), the weak$^*$-continuous, $L^1(\G)$-linear complete isometry $\Phi^{-1}: X \to B(\mcH)$ does not admit any normal completely bounded extension $\prod_{F\in \mcF } B(L^2(\G)\otimes \mcH_F)\to B(\mcH)$. Indeed, if such an extension exists, then it is $\sigma$-strong$^*$-continuous and then $\Phi^{-1}$ would be $\sigma$-strong$^*$-continuous. Therefore, $B(\mcH)\in \operatorname{NMod}_{L^1(\G)}$ is not weak$^*$-injective.
\end{proof}

The assumption of Proposition \ref{prop2} is satisfied in several situations of interest.

\begin{Prop}\label{prop3}
Let $\G$ be a locally compact quantum group and let $\uu\in L^\infty(\G)\bar\otimes B(\mcH)$ be a unitary representation. Write $\uu=(\id\otimes\pi)\wW^{\G}$ for a non-degenerate representation $\pi: C_0^u(\hat{\G})\rightarrow B(\mcH)$. Assume that either of the following conditions hold:
\begin{enumerate}[noitemsep]
\item $C_{\uu}'\subseteq B(\mcH)$ is infinite-dimensional, where $C_{\uu}:= [(\omega\otimes\id)\uu\mid \omega\in L^1(\G)]$,
\item there is a strictly positive, self-adjoint operator $B$ on $\mcH$ and $c> 0$ such that $\dim (1_{[c,\infty)}(B)\mcH)=\infty$ and $\pi(\hat{\tau}^u_t(x))=B^{it} \pi(x) B^{-it}$ for all $x\in C_0^u(\hat{\G})$ and all $t\in\R$.
\end{enumerate}
Then $B(\mcH)_*$ is not projective in ${}_{L^1(\G)}\Mod$.
\end{Prop}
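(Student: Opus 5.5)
In both cases the plan is to verify the hypotheses of Proposition \ref{prop2}. We need a decomposition $1=\sum_{i\in I} p_i$ into non-zero orthogonal projections and a weakly null net of unit vectors $\zeta_\lambda$ such that $\|(1\otimes p_i)\uu^*(\xi\otimes\zeta_\lambda)\|\to 0$ for every $i$ and every $\xi\in L^2(\G)$. Proposition \ref{prop2} then yields non-projectivity directly.

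\emph{Case (1).} The commutant $C_\uu'$ is a von Neumann algebra commuting with all slices $(\omega\otimes\id)\uu$, so $\uu\in L^\infty(\G)\ovot C_\uu''$. Hence $1\otimes q$ commutes with $\uu$ for every projection $q\in C_\uu'$. Since $C_\uu'$ is infinite-dimensional, it contains an infinite family of non-zero pairwise orthogonal projections; complete it to a partition of unity $(p_i)_{i\in I}$ (absorbing the remainder into one of them). Because each $p_i$ commutes with the second leg of $\uu$, we get
\[
\|(1\otimes p_i)\uu^*(\xi\otimes\zeta)\|=\|\uu^*(\xi\otimes p_i\zeta)\|=\|\xi\|\,\|p_i\zeta\|.
\]
Index the net by $\Lambda=I$ along a sequence $i_1,i_2,\dots$ of distinct indices, choosing $\zeta_{n}\in p_{i_n}\mcH$ to be a unit vector. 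These vectors are orthonormal, hence weakly null, and $p_i\zeta_n=0$ once $i_n\neq i$. So \eqref{eq1} holds trivially, and this case should be routine.

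\emph{Case (2).} This is where the real work lies. Take the spectral projections of $B$ on dyadic-type intervals, $p_k=1_{[c\,2^{k},c\,2^{k+1})}(B)$, together with $1_{(0,c)}(B)$, grouping so that each piece is non-zero. Then pick unit vectors $\zeta_\lambda$ supported where $B$ is large: $\zeta_n\in 1_{[c_n,\infty)}(B)\mcH$ with $c_n\to\infty$. This is possible since $\dim 1_{[c,\infty)}(B)\mcH=\infty$, after refining if needed; if $B$ is bounded on that piece, instead take orthonormal vectors inside a single infinite-dimensional spectral piece. The vectors can be taken orthogonal, so they are weakly null. The key input is the covariance $\pi\circ\hat\tau^u_t=\operatorname{Ad}B^{it}\circ\pi$. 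Combined with the invariance of $\wW$ under scaling groups, $(\tau_t\otimes\hat\tau^u_t)\wW=\wW$, it gives
\[
(1\otimes B^{it})\uu(1\otimes B^{-it})=(\tau_{-t}\otimes\id)\uu .
\]
Equivalently, $\uu$ intertwines $P^{it}\otimes B^{it}$ with itself, where $P$ is the operator implementing $\tau$ on $L^2(\G)$, so that $\tau_t(x)=P^{it}xP^{-it}$. Thus $\uu^*$ commutes with $P\otimes B$.

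To obtain \eqref{eq1}, it suffices to treat $\xi$ a spectral vector of $P$ supported on a compact interval $[a,b]$, by density and uniform boundedness. The vector $\xi\otimes\zeta_n$ lies in the spectral subspace of $P\otimes B$ for $[a c_n,\infty)$, and $\uu^*$ preserves this subspace. Meanwhile $(1\otimes p_i)$ restricted to vectors of the form $\eta\otimes p_i\mcH$ with $P$-spectrum in $[a',b']$ has $P\otimes B$ spectrum bounded. The remaining difficulty is that $P$ has unbounded spectrum in the first leg, so $P\otimes B$ cannot by itself separate everything. I expect this to be the main obstacle. The approach I would try first is to rewrite $\|(1\otimes p_i)\uu^*(\xi\otimes\zeta_n)\|^2$ as $\langle \uu^*(\xi\otimes\zeta_n),(1\otimes p_i)\uu^*(\xi\otimes\zeta_n)\rangle$. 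Spectral calculus for the commuting pair $(P\otimes 1, 1\otimes B)$ shows that the part of $\uu^*(\xi\otimes\zeta_n)$ in the range of $1\otimes p_i$ must have first-leg $P$-spectrum $\ge a c_n/(c2^{i+1})\to\infty$. A weak-convergence and compactness argument, using the fact that $(\omega\otimes\id)\uu$ maps bounded $P$-spectral pieces in a controlled way, would then show that this mass tends to $0$.
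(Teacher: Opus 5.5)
Your case (1) is correct and is the paper's argument: projections in $C_\uu'$ commute with $\uu$, so $(1\otimes p_m)\uu^*(\xi\otimes\zeta_n)=0$ as soon as $n\neq m$.

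Case (2), however, has a genuine gap, and it starts with the partition. Spectral projections of $B$ on bounded intervals can have infinite rank, and then nothing forces \eqref{eq1}. For instance, whenever $\pi\circ\hat{\tau}^u_t=\pi$ (e.g.\ trivial scaling group, the situation of the next corollary), hypothesis (2) holds with $B=1$, $c=1$. Your dyadic partition then collapses to $\{1\}$, and $\|(1\otimes 1)\uu^*(\xi\otimes\zeta)\|=\|\xi\|$ never tends to $0$; your fallback for bounded $B$ fails in the same way. For unbounded $B$, invariance of $\uu$ under $\operatorname{Ad}(P^{it}\otimes B^{it})$ only pushes the relevant component of $\uu^*(\xi\otimes\zeta_n)$ towards large first-leg $P$-spectrum. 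It says nothing about the norm of that component, and the ``weak-convergence and compactness argument'' you defer to is exactly the missing content. The underlying obstruction is that $\zeta_n$ sits in the ket slot of $\uu^*$, where weak nullness only yields weak convergence.

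The idea you are missing is to transfer $\zeta_n$ to the bra slot using the antipode. Take rank-one projections $p_j=|\xi_j\ra\la\xi_j|$ for an orthonormal basis of $B$-analytic vectors, with $\xi_j\in 1_{[r^{-1},r]}(B)\mcH$ for some $r>1$. Then $\|(1\otimes p_j)\uu^*(\xi\otimes\zeta_n)\|=\|(\id\otimes\omega_{\xi_j,\zeta_n})(\uu^*)\xi\|$. Now use $(\id\otimes\omega)(\uu^*)=S((\id\otimes\omega)\uu)$, $S=R\tau_{-i/2}$, $R=J_{\widehat{\vp}}(\cdot)^*J_{\widehat{\vp}}$, and your covariance relation continued to $t=-i/2$. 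Mind the antilinear slot here: $\tau_t((\id\otimes\omega_{\eta,\zeta})\uu)=(\id\otimes\omega_{B^{it}\eta,B^{it}\zeta})\uu$ continues to $(\id\otimes\omega_{B^{-1/2}\eta,B^{1/2}\zeta})\uu$. Opposite powers are forced, since for the trivial representation every $B$ is admissible. This yields
\[
\|(\id\otimes\omega_{\xi_j,\zeta_n})(\uu^*)\xi\|=\bigl\|(\id\otimes\la B^{1/2}\zeta_n|)\,\uu^*(J_{\widehat{\vp}}\xi\otimes B^{-1/2}\xi_j)\bigr\|,
\]
which is a fixed vector sliced by the weakly null bras $B^{1/2}\zeta_n$. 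Approximating the fixed vector by an element of $L^2(\G)\odot\mcH$ gives norm convergence to $0$, provided $\sup_n\|B^{1/2}\zeta_n\|<\infty$.

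Be aware that the printed proof puts $B^{-1/2}$ on both vectors, which fails already for the trivial representation. That is what pairs the argument with the window $[c,\infty)$. Done correctly with the covariance as stated, the argument needs $\zeta_n\in 1_{(0,c]}(B)\mcH$, i.e.\ the spectral condition of (2) imposed on $B^{-1}$ rather than on $B$.
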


\begin{proof}
(1) Assume first that $C_{\uu}'\subseteq B(\mcH)$ is infinite-dimensional. Then we can choose a sequence $(p_n)_{n=1}^\infty$ of (non-zero) pairwise orthogonal projections in $C'_{\uu}$. By adding to this sequence $\I-\sum_{n=1}^{\infty} p_n$ if needed, we may assume that $\sum_{n=1}^{\infty} p_n=\I$. For $n\ge 1$, choose a unit vector $\zeta_n$ in $p_n \mcH$. Then the sequence $(\zeta_n)_{n=1}^\infty$ converges weakly to $0$. Take $m\ge 1, \xi\in L^2(\G)$. As
$(1\otimes p_m){\uu}^*(\xi \otimes \zeta_n)=
(1\otimes p_m)\uu^*(\xi \otimes p_m\zeta_n)=0$
for $n>m$, Proposition \ref{prop2} implies that $B(\mcH)_*$ is not $L^1(\G)$-projective.

(2) Let $\{\xi_j\}_{j\in J}$ be an orthonormal basis in $\mcH$, chosen such that\footnote{An orthonormal basis with this property always exists. Indeed, let $E_B$ be the spectral measure of $B$ \cite[Theorem 10.10]{Sol18}. Then $\bigl\{E_B\bigl(\,\bigl]\tfrac{1}{n+1},\tfrac{1}{n}\bigl]\cup \bigl[n,n+1\bigr[\,\bigr)\bigr\}_{n=1}^{\infty}$ is a family of pairwise orthogonal projections which sum up to $\I$; it is enough to choose orthonormal bases in their images and combine them together to an orthonormal basis of $\mcH$.} $\xi_j\in \bigcup_{r>1} 1_{[r^{-1},r]}(B) \mcH$ for each $j\in J$ (note that the assumption implies that $J$ is infinite). Set $p_j$ to be the rank one projection $|\xi_j\ra\la \xi_j|$. Next, let $\mcF$ be the directed set of finite, non-empty subsets of $J$, ordered by inclusion. Choose a sequence $(\zeta_n)_{n\in\N}$ of unit vectors with $1_{[c,\infty)}(B)\zeta_n=\zeta_n$ and $\zeta_n\xrightarrow[n\to\infty]{}0$ weakly. Fix $j\in J, \xi\in L^2(\G)$. It is left to show the convergence \eqref{eq1}.

Take $\eps>0$ and choose a vector $\tilde{\xi}\in L^2(\G)\odot \mcH$ such that $\|\tilde{\xi} - \uu^*(J_{\widehat{\vp}}\xi\otimes B^{-1/2}\xi_j)\|\le \eps$.
Let $S$ and $R$ be the antipode and unitary antipode on $\G$, respectively. Recall that $S=R\tau_{-i/2}$ \cite{VD14}*{Definition 2.23}. Since $(\tau_t\otimes \hat{\tau}^u_t)\wW^{\G}=\wW^{\G}$ for $t\in\R$, we also have $(\tau_t\otimes \oon{Ad}(B^{it}))\uu=\uu$. Next, using Remark 9.1 and Proposition 9.4 in \cite{K01} we have
\[\begin{split}
&\quad\;
(\id\otimes \omega_{\xi_j,\zeta_n})\uu^*=
S( (\id\otimes\omega_{\xi_j,\zeta_n})\uu)=
R \tau_{-i/2} ((\id\otimes\omega_{\xi_j,\zeta_n})\uu)\\
&=
R ((\id\otimes \omega_{B^{-1/2} \xi_j, B^{-1/2} \zeta_n})\uu)=
J_{\widehat{\vp}} (\id\otimes\omega_{B^{-1/2} \zeta_n, B^{-1/2} \xi_j})(\uu^*) J_{\widehat{\vp}},
\end{split}\]
where $\widehat{\vp}$ is the left Haar integral of $\hat{\G}$. With this preparation, we can calculate
\[\begin{split}
&\quad\;
\|(1\otimes p_j)\uu^*(\xi\otimes \zeta_n)\|=
\|(\id\otimes \la \xi_j| ) \uu^* (\xi\otimes \zeta_n)\|=
\|(\id\otimes \omega_{\xi_j,\zeta_n})(\uu^*) \xi \|=
\|
J_{\widehat{\vp}} (\id\otimes\omega_{B^{-1/2} \zeta_n, B^{-1/2} \xi_j})(\uu^*) J_{\widehat{\vp}}
 \xi \|\\
&=
\|(\id\otimes \la B^{-1/2} \zeta_n | )
\uu^* (J_{\widehat{\vp}}\xi \otimes B^{-1/2}\xi_j)\| \le c^{-1/2} \eps + 
\bigl\|(\id\otimes \la B^{-1/2} 1_{[c,\infty)}(B)\zeta_n | ) 
\tilde{\xi}\bigr\|\\
&=
c^{-1/2} \eps + 
\bigl\|(\id\otimes \la \zeta_n | ) 
(1\otimes B^{-1/2} 1_{[c,\infty)}(B) )\tilde{\xi}\bigr\|\xrightarrow[n\to\infty]{} c^{-1/2} \eps.
\end{split}\]
Since $\eps>0$ was arbitrary, Proposition \ref{prop2} ends the proof.
\end{proof}

We immediately obtain two corollaries.

\begin{Cor}
Let $\uu\in L^{\infty}(\G)\bar\otimes B(\mcH)$ be a unitary representation of a locally compact quantum group $\G$. If the scaling group of $\G$ is trivial and $\dim \mcH=\infty$, then $B(\mcH)_*$ is not projective in ${}_{L^1(\G)}\Mod$.
\end{Cor}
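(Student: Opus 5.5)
This is a direct application of Proposition \ref{prop3}(2), with $B=1$ and $c=1$. Write $\uu=(\id\otimes\pi)\wW^{\G}$ for the non-degenerate representation $\pi: C_0^u(\hat{\G})\to B(\mcH)$ provided by the universal property of $\wW^{\G}$. The plan is to check the three hypotheses of Proposition \ref{prop3}(2) in turn.

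First, $B=1$ is strictly positive and self-adjoint. Second, $1_{[1,\infty)}(B)=1$, so $\dim(1_{[1,\infty)}(B)\mcH)=\dim\mcH=\infty$ by assumption.

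Third, we need $\pi(\hat{\tau}^u_t(x))=\pi(x)$ for all $x\in C_0^u(\hat{\G})$ and $t\in\R$. This holds if the universal scaling group $\hat{\tau}^u$ is trivial, so it suffices to pass from triviality of $\tau$ on $\G$ to triviality of $\hat{\tau}^u$ on $C_0^u(\hat{\G})$.

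For this step I would use two standard facts.
\begin{itemize}
\item The scaling groups of $\G$ and $\hat{\G}$ are implemented by the same positive operator, so $\tau$ is trivial if and only if $\hat{\tau}$ is trivial.
\item The lifts satisfy $(\tau_t\otimes\hat{\tau}^u_t)\wW^{\G}=\wW^{\G}$; this identity was already used in the proof of Proposition \ref{prop3}.
\end{itemize}
If $\tau_t=\id$, the second fact gives $(\id\otimes\hat{\tau}^u_t)\wW^{\G}=\wW^{\G}$. Slicing the first leg by $\omega\in L^1(\G)$ shows that $\hat{\tau}^u_t$ fixes every element $(\omega\otimes\id)\wW^{\G}$. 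These slices are norm-dense in $C_0^u(\hat{\G})$, hence $\hat{\tau}^u_t=\id$. This density argument is the main step requiring care; the first fact above is then not even needed.

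With all three hypotheses verified, Proposition \ref{prop3}(2) gives that $B(\mcH)_*$ is not projective in ${}_{L^1(\G)}\Mod$.
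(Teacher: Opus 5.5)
Your proof is correct and follows the paper's route: the paper also obtains this corollary as a direct application of Proposition \ref{prop3}(2), implicitly with $B=1$. The only difference is in how triviality of $\hat{\tau}^u$ is justified. The paper cites the fact that $\G$ has trivial scaling group if and only if $\hat{\G}$ does. You instead derive $\hat{\tau}^u_t=\id$ directly from $(\tau_t\otimes\hat{\tau}^u_t)\wW^{\G}=\wW^{\G}$ together with density of the slices $(\omega\otimes\id)\wW^{\G}$ in $C_0^u(\hat{\G})$, which also makes explicit the passage to the universal level that the paper's one-line argument leaves implicit.
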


\begin{proof}
It is a direct consequence of Proposition \ref{prop3} (2), using that $\G$ has trivial scaling group if and only if $\hat{\G}$ has trivial scaling group \cite{KV03,KS23}.
\end{proof}

\begin{Cor}\label{projectivitytraceclass}
Let $\G$ be a locally compact quantum group and consider $L^2(\G)$ equipped with the left regular representation $\ww$. Then $B(L^2(\G))_*$ is projective in ${}_{L^1(\G)}\Mod$ if and only if $\G$ is finite.
\end{Cor}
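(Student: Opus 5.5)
Both directions should follow quickly from the preceding results, so I would split the proof into the case $\G$ finite and the case $\G$ infinite.

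\emph{Finite $\G$.} Here $L^1(\G)$ is a finite-dimensional unital algebra and $\G$ is compact. By Proposition \ref{relativeprojective}, $B(L^2(\G))_*$ is then relatively $L^1(\G)$-projective. This gives a completely contractive $L^1(\G)$-module right inverse $\Phi: B(L^2(\G))_*\to L^1(\G)\hat{\otimes} B(L^2(\G))_*$ of the module map, using essentiality. Since $\G$ is discrete, $L^1(\G)$ is unital, so $L^1(\G)\hat{\otimes} E$ is a free module for any operator space $E$. For finite-dimensional $E=B(L^2(\G))_*$, the space $E$ is projective as an operator space up to $\epsilon$: every complete quotient $Y\to E$ admits lifts of cb-norm $<1+\epsilon$, by choosing preimages of a basis and using finite-dimensionality. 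Hence $L^1(\G)\hat{\otimes} E$ is $L^1(\G)$-projective in the sense of Section \ref{operatormodules}. A retract of a projective module via the completely contractive module maps $\Phi$ and $m$ is again projective, so $B(L^2(\G))_*$ is projective.

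The step I expect to need most care is the lifting norms. Our definition of projectivity only allows $\|\tilde\varphi_\epsilon\|_{\cb}<\|\varphi\|_{\cb}+\epsilon$, so I must argue that $E$ (indeed $\C$, and then the free module) lifts with arbitrarily small loss. I would handle this by using that $B(L^2(\G))=M_n$ and that $S_1^n=(M_n)_*$ is a projective operator space in the $\epsilon$-sense. Alternatively, I would dualize: it suffices that $M_n$ is weak$^*$-injective in $\NMod_{L^1(\G)}$. This follows from injectivity of $M_n$ as an operator space together with averaging by the Haar state of the finite quantum group, which gives an equivariant conditional expectation in the style of the proof of Proposition \ref{relativeprojective}. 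Normality is automatic in finite dimensions.

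\emph{Infinite $\G$.} Conversely, suppose $B(L^2(\G))_*$ is projective. If $\G$ is not compact, projectivity implies relative projectivity, since the module map $L^1(\G)_+\hat{\otimes}X\to X$ is a complete quotient. Then Proposition \ref{relativeprojective} forces $\G$ to be compact, a contradiction. If $\G$ is compact but infinite, I would apply Proposition \ref{prop3}(1) with $\uu=\ww$. Here $C_{\ww}=C_0(\hat{\G})$, whose commutant is $L^\infty(\hat{\G})'=L^\infty(\check{\G})$, and this is infinite-dimensional because $L^2(\G)$ is infinite-dimensional when $\G$ is infinite (for a compact $\G$, $\check{\G}$ is discrete with infinitely many irreducibles). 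Thus $B(L^2(\G))_*$ is not projective, a contradiction.

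In fact, Proposition \ref{prop3}(1) covers any infinite $\G$ directly: if $\dim L^2(\G)=\infty$, then $L^\infty(\check{\G})$ is infinite-dimensional. So the compactness detour is optional, and the infinite case reduces to the elementary observation that $\G$ is finite if and only if $L^\infty(\check{\G})$ is finite-dimensional.
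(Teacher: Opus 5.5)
Your main line is the paper's proof. Necessity comes from Proposition \ref{prop3}(1) with $C_{\ww}'=L^\infty(\check{\G})$. Sufficiency comes from relative projectivity (Proposition \ref{relativeprojective}) together with projectivity of $B(L^2(\G))_*=S_1^n$ as an operator space \cite{Bl}; your free-module/retract argument is exactly the content of \cite{Cra17}*{Proposition 2.2}.

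Three side remarks are wrong as stated and should be dropped:
\begin{enumerate}
\item Lifting by choosing preimages of a basis does not give cb-norm close to $\|\varphi\|_{\cb}$ for a general finite-dimensional operator space (e.g.\ $\ell^\infty_3$ is not projective). So the specific fact about $S_1^n$ is genuinely needed.
\item In the dual variant, normality is not automatic. The Arveson--Wittstock extension $Z\to M_n$ need not be normal, since $Z$ is arbitrary and not finite-dimensional.
\item Projectivity only yields module right inverses of cb-norm $<1+\epsilon$, not the completely contractive one required for relative projectivity. So the compactness detour through Proposition \ref{relativeprojective} is unjustified as written. As you note, it is also unnecessary, because Proposition \ref{prop3}(1) already handles every infinite $\G$.
\end{enumerate}
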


\begin{proof}
If $B(L^2(\G))_*$ is projective in ${}_{L^1(\G)}\Mod$, then by Proposition \ref{prop3} (1) the von Neumann algebra
\[
{C_{\ww}'}=
[(\omega\otimes\id)\ww\mid \omega\in L^1(\G)]'=L^{\infty}(\check{\G})
\]
is finite-dimensional, hence $\G$ is finite.

Conversely, assume that $\G$ is finite, then $B(L^2(\G))_*$ is projective in ${}_{\C}\Mod$ \cite{Bl}*{Proposition 3.6, Proposition 3.7}. By Proposition \ref{relativeprojective}, $B(L^2(\G))_*$ is relatively projective in ${}_{L^1(\G)}\Mod$. Then \cite{Cra17}*{Proposition 2.2} shows that $B(L^2(\G))_*$ is projective in ${}_{L^1(\G)}\Mod$.
\end{proof}

\begin{Rem} The notion of injectivity in ${}_{L^1(\G)}\Mod$ behaves very differently than the notion of weak$^*$-injectivity in ${}_{L^1(\G)}\NMod$. Let us point out some striking differences.
\begin{itemize}[noitemsep]
    \item A right $\G$-$W^*$-algebra $(M, \alpha)$ is injective in ${}_{L^1(\G)}\Mod$ if and only if $(M\bar{\rtimes}_\alpha \G, \alpha^\rtimes)$ is injective in ${}_{L^1(\check{\G})}\Mod$ \cite{DR26}*{Theorem 5.1}. However, if the $\G$-$W^*$-algebra $(M, \alpha)\in {}_{L^1(\G)} \NMod$ is weak$^*$-injective, then it need not be true that $(M\bar{\rtimes}_\alpha \G, \alpha^\rtimes)\in {}_{L^1(\check{\G})} \NMod$ is weak$^*$-injective. Indeed, consider an infinite discrete quantum group $\G$ and $M= \ell^\infty(\G)$ with its natural translation action $\alpha= \Delta$. It follows from \cite{Cra17}*{Theorem 5.14} that $(\ell^\infty(\G), \Delta)\in {}_{\ell^1(\G)}\NMod$ is weak$^*$-injective. However, it follows from Corollary \ref{projectivitytraceclass} and from \eqref{crossed} that $(B(\ell^2(\G)), \check{\Delta}_r)\cong_{\check{\G}} (\ell^\infty(\G)\bar{\rtimes}_\Delta \G, \Delta^\rtimes) \in {}_{L^1(\check{\G})}\NMod$ is not weak$^*$-injective. We do not know of any counterexample for the converse implication in $\NMod$.
    \item In \cite{DR25}*{Theorem 4.1}, it was shown that injectivity of $\G$-$W^*$-algebras in ${}_{L^1(\G)}\Mod$ is preserved by equivariant Morita equivalence. This is no longer true for weak$^*$-injectivity. As an easy example, note that $(B(L^2(\G)), \Delta_r)$ is equivariantly Morita equivalent with $(\C, \tau)$. However, $(\C, \tau)\in {}_{L^1(\G)}\operatorname{NMod}$ is weak$^*$-injective if and only if $\G$ is compact, so any infinite compact quantum group $\G$ gives a counterexample by Corollary \ref{projectivitytraceclass}.
\end{itemize}

\end{Rem}

Under the additional assumption of regularity on $\G$, we can fully characterize when $B(\mcH)_*$ is $L^1(\G)$-projective using a completely different approach. For ease of reference, we will now work with \emph{right} $\G$-$W^*$-algebras instead. We need to do some preliminary work first. The following definition is a weak$^*$-continuous completely bounded version of  \cite{DCDR24}*{Definition 6.6}, which introduces the notions of amenability and inner amenability for $\G$-$W^*$-algebras.

\begin{Def}
    Let $(M, \alpha)$ be a right $\G$-$W^*$-algebra. 
    \begin{itemize}
        \item We call $(M, \alpha)$ \emph{W$^*$CB-amenable} if there exists a normal $\G$-equivariant completely bounded projection
    $(M\ovot L^\infty(\G), \id \otimes \Delta)\to (\alpha(M), \id \otimes \Delta)$.
    \item We call $(M, \alpha)$ \emph{W$^*$CB-inner amenable} if there exists a normal $\G$-equivariant completely bounded projection
    $(M\bar{\rtimes}_\alpha \G, \alpha_{\ad})\to (\alpha(M), \id \otimes \Delta).$
    \end{itemize}
\end{Def}

The proof of the following result follows as described at the end of \cite{DCDR24}*{Section 6}. We give the details for the convenience of the reader.
\begin{Lem}\label{duality} Let $(M, \alpha)$ be a right $\G$-$W^*$-algebra.
    \begin{enumerate}[noitemsep]
        \item If $(M, \alpha)$ is W$^*$CB-amenable, then $(M\bar{\rtimes}_\alpha \G, \alpha^\rtimes)$ is W$^*$CB-inner amenable.
        \item If $(M, \alpha)$ is W$^*$CB-inner amenable, then $(M\bar{\rtimes}_\alpha \G, \alpha^\rtimes)$ is W$^*$CB-amenable.
    \end{enumerate}
\end{Lem}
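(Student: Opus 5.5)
Both parts will be proved by transporting projections through the crossed product, using Takesaki–Takai duality \eqref{TTconcrete} and the inner-action isomorphism \eqref{crossed2}. Write $N:=M\bar{\rtimes}_\alpha\G$ with its $\check{\G}$-action $\alpha^\rtimes$. Recall that $\alpha(M)=\Fix(N,\alpha^\rtimes)$ and that $\alpha^\rtimes(N)\subseteq N\ovot L^\infty(\check{\G})$ is the image of the action.

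\emph{(1).} Let $E:(M\ovot L^\infty(\G),\id\otimes\Delta)\to(\alpha(M),\id\otimes\Delta)$ be a normal $\G$-equivariant cb projection. We need a normal $\check{\G}$-equivariant cb projection
$$(N\bar{\rtimes}_{\alpha^\rtimes}\check{\G},(\alpha^\rtimes)_{\ad})\to(\alpha^\rtimes(N),\id\otimes\check{\Delta}).$$
By \eqref{TTconcrete}, the source is $\check{\G}$-equivariantly isomorphic to $(M\ovot B(L^2(\G)),\id\otimes\check{\Delta}_r)$. Under this isomorphism $\alpha^\rtimes(N)$ corresponds to $N\subseteq M\ovot B(L^2(\G))$, since $z\mapsto\alpha^\rtimes(z)$.

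It therefore suffices to build a normal cb projection $\tilde E: M\ovot B(L^2(\G))\to N$ that is equivariant for $\id\otimes\check{\Delta}_r$. My candidate is
$$\tilde E(z)=\text{``}(E\otimes\id)\text{''}\ \text{applied to } \vv_{23}^*(\alpha\otimes\id)(z)\vv_{23},$$
but a more natural route uses the description \eqref{FubiniGWstar} of $N$ as the fixed points of the $\G$-action $\tilde\alpha$ on $M\ovot B(L^2(\G))$ from Proposition \ref{relations}. Averaging that action with $E$ gives
$$\tilde E(z):=(\alpha^{-1}\otimes\id)\bigl((E\otimes\id)(\tilde\alpha(z)_{\text{legs reordered}})\bigr).$$
Here $E\otimes\id$ on $M\ovot L^\infty(\G)\ovot B(L^2(\G))$ makes sense as the normal amplification. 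The steps are then the following.
\begin{enumerate}[noitemsep]
\item $\tilde E$ lands in $N$. This uses the $\G$-equivariance of $E$ together with the coaction identity, in the same way an invariant mean produces fixed points.
\item $\tilde E$ is identity on $N$, because $E\circ\alpha$ restricted appropriately is the identity.
\item $\tilde E$ commutes with $\id\otimes\check{\Delta}_r$. This holds because $\check{\vv}$ commutes with $\ww$ (respectively with $L^\infty(\G)$) in the relevant legs.
\end{enumerate}
Normality and complete boundedness are automatic.

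\emph{(2).} Now let $E:(N,\alpha_{\ad})\to(\alpha(M),\id\otimes\Delta)$ be a normal $\G$-equivariant cb projection. We need a normal $\check{\G}$-equivariant cb projection $(N\ovot L^\infty(\check{\G}),\id\otimes\check{\Delta})\to\alpha^\rtimes(N)$. Because $\alpha_{\ad}$ is inner, \eqref{crossed2} identifies $(N\ovot L^\infty(\check{\G}),\id\otimes\check{\Delta})$ with $(N\bar{\rtimes}_{\alpha_{\ad}}\G,(\alpha_{\ad})^\rtimes)$. This identification carries $\alpha^\rtimes(N)$ onto $(\alpha\otimes\id)(N)=(\id\otimes\Delta_l)(N)$, and the latter is $\alpha_{\ad}$-crossed-product data of the subalgebra $\alpha(M)$.

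By \eqref{FubiniGWstar}, $N\bar{\rtimes}_{\alpha_{\ad}}\G$ consists of the $\alpha_{\ad}$-equivariant elements of $N\ovot B(L^2(\G))$. I therefore apply $E\otimes\id_{B(L^2(\G))}$. The $\G$-equivariance of $E$ implies that $E\otimes\id$ maps $N\bar{\rtimes}_{\alpha_{\ad}}\G$ into
$$\{w\in\alpha(M)\ovot B(L^2(\G)) : (\id\otimes\Delta\otimes\id)(w)=(\id\otimes\id\otimes\Delta_l)(w)\}=(\alpha\otimes\id)(M\bar{\rtimes}_\alpha\G).$$
This map is a projection onto that space. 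Since it only touches the first legs, it commutes with the dual action, i.e.\ it is $\check{\G}$-equivariant.

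The main obstacle is in (1): making the averaging map well defined and checking that its range is exactly $N$ when $E$ is merely cb rather than ucp. In that step I would work with the Fubini/normal tensor amplification $E\otimes\id$ and use that $N$ is the fixed-point algebra, so that the equivariance identity for $E$ gives invariance directly.
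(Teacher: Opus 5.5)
Your part (2) is correct and is the paper's argument. $E\otimes\id$ restricted to $N\bar{\rtimes}_{\alpha_{\ad}}\G$ is the crossed product of $E$, it is a projection onto $(\alpha\otimes\id)(N)$, it commutes with the dual action, and \eqref{crossed2} transports it to a projection onto $\alpha^\rtimes(N)$. In part (1), your reduction via \eqref{TTconcrete} to a normal, $(\id\otimes\check{\Delta}_r)$-equivariant cb projection $M\ovot B(L^2(\G))\to N$ is also what the paper does.

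The gap is that the projection you propose in part (1) does not work. Put $E':=\alpha^{-1}\circ E$. After reordering legs, $\tilde{\alpha}(z)$ becomes $\ww_{23}(\alpha\otimes\id)(z)\ww_{23}^*$, so your map is $\tilde{E}(z)=(E'\otimes\id)\bigl(\ww_{23}(\alpha\otimes\id)(z)\ww_{23}^*\bigr)$. For $z\in N$ we have $(\alpha\otimes\id)(z)=\ww_{23}^*z_{13}\ww_{23}$, so the argument of $E'\otimes\id$ is just $z_{13}$. Hence $\tilde{E}(z)=(\Psi\otimes\id)(z)$ with $\Psi(m):=E'(m\otimes 1)$.

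But $E'$ is the identity on $\alpha(M)$, not on $M\otimes 1$. In fact equivariance gives $\alpha(E'(m\otimes 1))=E'(m\otimes 1)\otimes 1$, so $\Psi$ takes values in $\Fix(M,\alpha)$. Concretely, take $\G=G$ finite, $M=\ell^\infty(G)$, $\alpha=\Delta$, and the equivariant projection $E'(F)(s)=|G|^{-1}\sum_t F(st^{-1},t)$. Then $\tilde{E}(\Delta(f))=\bigl(|G|^{-1}\sum_t f(t)\bigr)1\neq\Delta(f)$, although $\Delta(f)\in N$. So your step 2 fails. The invariant-mean analogy breaks because $E$ is not a mean on the extra leg: it also acts on the $M$-leg and is normalized on $\alpha(M)$.

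Your justification of step 3 is also wrong. In the crossed-product leg, $\ww$ contributes $L^\infty(\hat{\G})$ and $\check{\vv}$ contributes $L^\infty(\G)'$, and these do not commute in general.

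The fix is to apply $E'$ to $(\id\otimes\Delta_l)(z)$ rather than to $\tilde{\alpha}(z)$. Set $P(z):=(E'\otimes\id)(\id\otimes\Delta_l)(z)$. This is the crossed product of $E'$, transported along $\id_M\otimes\Delta_l\colon M\ovot B(L^2(\G))\cong(M\ovot L^\infty(\G))\bar{\rtimes}_{\id\otimes\Delta}\G$ (cf. \eqref{crossed}). The required properties then follow:
\begin{itemize}
\item $P(z)\in N$, by equivariance of $E'$ together with $(\Delta\otimes\id)\Delta_l=(\id\otimes\Delta_l)\Delta_l$.
\item $P$ is the identity on $N$: there $(\id\otimes\Delta_l)(z)=(\alpha\otimes\id)(z)$, so $P(z)=z$ because $E'\circ\alpha=\id$.
\item $P$ is $\check{\G}$-equivariant, because $(\Delta_l\otimes\id)\check{\Delta}_r=(\id\otimes\check{\Delta}_r)\Delta_l$. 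This is an identity of normal $*$-homomorphisms, checked on the generators $L^\infty(\G)$ and $L^\infty(\check{\G})$ of $B(L^2(\G))$, not a commutation of unitaries.
\end{itemize}
Composing $P$ with \eqref{TTconcrete} then finishes (1) exactly as in the paper.
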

\begin{proof}
    (1) By assumption, there exists a normal $\G$-equivariant completely bounded map
    $$E: (M\ovot L^\infty(\G), \id \otimes \Delta)\to (M, \alpha)$$ such that $E\circ \alpha = \id_M.$
    Taking crossed products and keeping in mind the $\check{\G}$-equivariant isomorphism \eqref{crossed}, we obtain the $\check{\G}$-equivariant normal completely bounded projection
$$(M\ovot B(L^2(\G)), \id \otimes \check{\Delta}_r) \cong ((M \ovot L^\infty(\G))\bar{\rtimes}_{\id \otimes \Delta} \G, (\id \otimes \Delta)^\rtimes) \to (M\bar{\rtimes}_\alpha \G, \alpha^\rtimes).$$
Further composing with the $\check{\G}$-equivariant Takesaki-Takai isomorphism \eqref{TTconcrete}, we obtain the normal $\check{\G}$-equivariant completely bounded map
$((M\bar{\rtimes}_\alpha \G)\bar{\rtimes}_{\alpha^\rtimes} \check{\G}, (\alpha^\rtimes)_{\ad})\to (M\bar{\rtimes}_\alpha \G, \alpha^\rtimes)$ which maps $\alpha^\rtimes(z)\mapsto z$ for $z\in M\bar{\rtimes}_\alpha \G$. Thus, $(M\bar{\rtimes}_\alpha \G, \alpha^\rtimes)$ is W$^*$CB-inner amenable.

(2) By assumption, there is a $\G$-equivariant normal completely bounded map
$E: (M\bar{\rtimes}_\alpha \G, \alpha_{\ad})\to (M, \alpha)$ such that $E \circ \alpha = \id_M.$
Taking crossed products and composing with the $\check{\G}$-equivariant isomorphism \eqref{crossed2}, we obtain the $\check{\G}$-equivariant normal completely bounded map
$$((M\bar{\rtimes}_\alpha \G)\ovot L^\infty(\check{\G}), \id_{M\bar{\rtimes}_\alpha \G}\otimes \check{\Delta})\cong ((M\bar{\rtimes}_\alpha \G)\bar{\rtimes}_{\alpha_{\ad}} \G, (\alpha_{\ad})^\rtimes)\to (M\bar{\rtimes}_\alpha \G, \alpha^\rtimes)$$
which maps $\alpha^\rtimes(z)\mapsto z$ for all $z\in M\bar{\rtimes}_\alpha\G$. Thus, $(M\bar{\rtimes}_\alpha \G, \alpha^\rtimes)$ is W$^*$CB-amenable.
\end{proof}

\begin{Theorem}\label{regularprojectivity}
    Let $\G$ be a regular locally  compact quantum group and let $(\mcH,\uu)$ be a unitary right $\G$-representation. Then $B(\mcH)_*$ is $L^1(\G)$-projective if and only if $\mcH$ is finite-dimensional and $\G$ is compact.
\end{Theorem}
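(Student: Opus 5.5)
The argument runs in two directions. For the ``if'' direction, assume $\dim\mcH<\infty$ and $\G$ compact. Then $B(\mcH)_*$ is finite-dimensional, so it is projective in ${}_{\C}\Mod$ by \cite{Bl}*{Proposition 3.6, Proposition 3.7}. By (the right-handed version of) Proposition \ref{relativeprojective} it is relatively $L^1(\G)$-projective. Finally, \cite{Cra17}*{Proposition 2.2} upgrades this to $L^1(\G)$-projectivity, exactly as in the proof of Corollary \ref{projectivitytraceclass}. Regularity is not needed here.

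For the ``only if'' direction, I would first turn projectivity into an amenability statement. Put $\alpha(x)=\uu(x\otimes 1)\uu^*$, so that $(B(\mcH),\alpha)$ is an inner right $\G$-$W^*$-algebra. By the duality noted in Subsection \ref{operatormodules}, projectivity of $B(\mcH)_*$ means that $B(\mcH)$ is weak$^*$-injective in $\NMod_{L^1(\G)}$. Apply this to the weak$^*$-continuous equivariant complete isometry $\alpha:(B(\mcH),\alpha)\to(B(\mcH)\ovot L^\infty(\G),\id\otimes\Delta)$ and to $\phi=\id$. This gives a normal $\G$-equivariant completely bounded $E$ with $E\circ\alpha=\id$, and $\alpha\circ E$ is then a normal equivariant cb projection onto $\alpha(B(\mcH))$. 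So $(B(\mcH),\alpha)$ is W$^*$CB-amenable.

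By Lemma \ref{duality} (1), $(B(\mcH)\bar{\rtimes}_\alpha\G,\alpha^\rtimes)$ is W$^*$CB-inner amenable. Since $\alpha$ is inner, \eqref{innercrossed} identifies this crossed product $\check{\G}$-equivariantly with $(B(\mcH)\ovot L^\infty(\check{\G}),\id\otimes\check{\Delta})$. Slicing the first leg with a normal state then yields a normal $\check{\G}$-equivariant completely bounded projection of $(L^\infty(\check{\G})\bar{\rtimes}\check{\G},\check\Delta_{\ad})\cong B(L^2(\G))$ onto $\check{\Delta}(L^\infty(\check{\G}))$. Equivalently, it yields a normal cb conditional-expectation-type map $B(L^2(\G))\to L^\infty(\check{\G})$ that is equivariant for the adjoint action.

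The main obstacle is the next step: deducing that $\G$ is compact from the existence of such a normal projection, and this is where regularity is used. In the ucp setting one would invoke a Morita-invariance argument as in Proposition \ref{relativeprojective}, but here the maps are only cb. My first attempt is to restrict the map to the $C^*$-level. Regularity gives $[C_0(\G)\,C_0(\check\G)]=K(L^2(\G))$, so the projection restricts to a cb equivariant map on $K(L^2(\G))$. Composing with a normal vector functional, I would then try to produce a nonzero normal $\G$-invariant functional on $L^\infty(\G)$. By \cite{BT03}*{Proposition 3.10} such a functional forces compactness; here I would use a Jordan/Wittstock decomposition of normal functionals to pass from ``cb'' to ``state''.

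Once $\G$ is compact, the finite-dimensionality of $\mcH$ is the easy part. The representation $\uu$ decomposes into finite-dimensional irreducibles. If $\dim\mcH=\infty$, there are either infinitely many isotypic components or infinite multiplicity. In both cases the commutant $C_{\uu}'$ (after flipping $\uu$ to a left representation $\uu_{21}$) is infinite-dimensional, so Proposition \ref{prop3} (1) shows that $B(\mcH)_*$ is not projective, which is a contradiction.
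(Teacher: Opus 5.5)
Your ``if'' direction, the passage from weak$^*$-injectivity to W$^*$CB-amenability of $(B(\mcH),\alpha)$, and the use of Lemma \ref{duality}(1), \eqref{innercrossed} and slicing to reach W$^*$CB-inner amenability of $(L^\infty(\check{\G}),\check{\Delta})$ all agree with the paper. So does the closing Peter--Weyl/Proposition \ref{prop3}(1) step.

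\textbf{The gap.} The step you flag as the main obstacle is a genuine gap. You give no mechanism that turns the normal $\check{\G}$-equivariant cb projection $E\colon B(L^2(\G))\to L^\infty(\check{\G})$ into a nonzero normal $\G$-invariant functional on $L^\infty(\G)$. Restricting to $K(L^2(\G))$ and composing with a vector functional only produces functionals on $K(L^2(\G))$, and nothing makes them invariant.

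The natural candidate does exist. Since $\check{\vv}\in L^\infty(\G)'\ovot L^\infty(\check{\G})$, the action $\check{\Delta}_r$ is trivial on $L^\infty(\G)$. Equivariance therefore forces $E(x)=m(x)1$ for $x\in L^\infty(\G)$, with $m$ normal and $m(1)=1$.

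To prove $m$ invariant, you would write $(\id\otimes\omega)\Delta(x)=(\id\otimes\omega)(\vv(x\otimes 1)\vv^*)$ with $\vv\in L^\infty(\check{\G})\ovot L^\infty(\G)$, and move the legs of $\vv$ through $E$. That requires $E$ to be an $L^\infty(\check{\G})$-bimodule map. Tomiyama's theorem gives this for contractive projections, but nothing gives it for a merely completely bounded $E$, and regularity does not supply it either. So the central implication is unproven. You have also not shown that W$^*$CB-inner amenability of $(L^\infty(\check{\G}),\check{\Delta})$ by itself forces compactness of $\G$.

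\textbf{How the paper closes this step.} It extracts more from projectivity.
\begin{enumerate}
\item $L^1(\G)$-injectivity of $B(\mcH)$ gives amenability of $\G$ via \cite{DR26}, hence injectivity of $L^\infty(\check{\G})$.
\item $(B(\mcH),\alpha_\uu)$ is also W$^*$CB-inner amenable. This again comes from weak$^*$-injectivity, applied to the equivariant embedding $\alpha_\uu$ of $B(\mcH)$ into $(B(\mcH)\bar{\rtimes}_{\alpha_\uu}\G,\alpha_{\ad})$.
\item Lemma \ref{duality}(2), \eqref{innercrossed} and slicing then yield W$^*$CB-amenability of $(L^\infty(\check{\G}),\check{\Delta})$. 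That is, there is a normal equivariant cb map $L^\infty(\check{\G})\ovot L^\infty(\check{\G})\to L^\infty(\check{\G})$ that is left inverse to $\check{\Delta}$.
\end{enumerate}
With both projections and the injectivity of $L^\infty(\check{\G})$ in hand, Crann's argument from the proof of \cite{Cra17}*{Theorem 5.14} is run for $\check{\G}$. This is where regularity enters, and it shows that $\check{\G}$ is discrete, i.e.\ $\G$ is compact.

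To complete your proposal, you need either this route or an actual proof that your single projection yields a bimodule map or an invariant normal functional.
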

\begin{proof} Assume that $B(\mcH)_*$ is $L^1(\G)$-projective. Consider the induced $\G$-action $B(\mcH)\stackrel{\alpha_\uu}\curvearrowleft\G$ given by $\alpha_\uu(x)= \uu(x\otimes 1)\uu^*$ for $x\in B(\mcH)$, then $(B(\mcH), \alpha_\uu)\in {}_{L^1(\G)}\NMod$ is $L^1(\G)$-injective. From \cite[Proposition 3.10, Proposition 6.2]{DR26} we deduce that $\G$ is amenable, hence $L^\infty(\check{\G})$ is injective (= semidiscrete) \cite[Theorem 3.3]{BT03}.

    Since $(B(\mcH), \alpha_\uu)\in {}_{L^1(\G)}\NMod$ is also weak$^*$-injective, it clearly is W$^*$CB-amenable. Hence, by Lemma \ref{duality} and \eqref{innercrossed}, the $\check{\G}$-$W^*$-algebra $(B(\mcH)\ovot L^\infty(\check{\G}), \id \otimes \check{\Delta})\cong (B(\mcH)\bar{\rtimes}_{\alpha_{\uu}} \G, \alpha_{\uu}^\rtimes)$ is W$^*$CB-inner amenable. Consequently\footnote{If $(M, \alpha)$ is a right $\G$-$W^*$-algebra and $N$ is a von Neumann algebra, then W$^*$CB-inner amenability of $(N\ovot M, \id \otimes \alpha)$ implies W$^*$CB-inner amenability of $(M, \alpha)$. Indeed, take $E: ((N\ovot M)\bar{\rtimes}_{\id \otimes \alpha} \G, (\id \otimes \alpha)_{\ad})\to (N\ovot M, \id \otimes \alpha)$, a normal $\G$-equivariant CB map satisfying $E((\id \otimes \alpha)(z))= z$ for all $z\in N\ovot M$, fix a normal state $\omega\in N_*$ and consider the map $F: (M\bar{\rtimes}_\alpha \G, \alpha_{\ad})\to (M, \alpha): z \mapsto (\omega\otimes \id)(E(1\otimes z))$. It is  a normal $\G$-equivariant CB map satisfying $F(\alpha(m))= m$ for all $m\in M$.}, $(L^\infty(\check{\G}), \check{\Delta})$ is W$^*$CB-inner amenable, so there is a normal $\check{\G}$-equivariant completely bounded projection
    $B(L^2(\G))\cong L^\infty(\check{\G})\bar{\rtimes}_{\check{\Delta}} \check{\G} \to L^\infty(\check{\G}).$
By the same argument, since $(B(\mcH), \alpha_\uu)$ is also W$^*$CB-inner amenable, it follows that $(L^\infty(\check{\G}), \check{\Delta})$ is W$^*$CB-amenable, so there exists a normal $\check{\G}$-equivariant completely bounded map
    $$E: (L^\infty(\check{\G})\ovot L^\infty(\check{\G}), \id \otimes \check{\Delta})\to (L^\infty(\check{\G}), \check{\Delta}), \quad E\circ \check{\Delta}= \id.$$
    We then have all the necessary ingredients to make the argument in the proof of \cite{Cra17}*{Theorem 5.14} work (with $\G$ replaced by $\check{\G}$; it is here that the regularity assumption is used). Hence, we conclude that $\G$ must be compact. By the Peter-Weyl theorem \cite{NT14}*{Theorem 1.5.4}, $\uu$ decomposes as a direct sum $\bigoplus_{i \in I}\uu_i$ of irreducible $\G$-representations. Proposition \ref{prop3} forces $I$ to be finite. Since irreducible representations of a compact quantum group are necessarily finite-dimensional, it follows that $\mcH$ is finite-dimensional.

  Conversely, assume that $\G$ is compact and $\mcH$ is finite-dimensional. By (the right module version of) Proposition \ref{relativeprojective}, $B(\mcH)_*\in \Mod_{L^1(\G)}$ is relatively projective. On the other hand, since $\mcH$ is finite-dimensional, $B(\mcH)_*$ is projective as an operator space \cite{Bl}*{Proposition 3.7}. The $L^1(\G)$-projectivity of $B(\mcH)_*$ then follows from \cite{Cra17}*{Proposition 2.2}.
\end{proof}

To the authors' knowledge, Theorem \ref{regularprojectivity} is new even in the classical group setting.

The authors conjecture that the regularity assumption in Theorem \ref{regularprojectivity} is superfluous. To illustrate this, we verify the conjecture directly for the (non-regular) locally compact quantum group $E_q(2)$, the $q$-deformation of the double cover of the motion group. Its Pontryagin dual will be denoted by $\wh{E}_q(2)$. For a more detailed account on these quantum groups we refer to the articles \cite{W91, B92, J03, KS23} and the book \cite{Timm08}.

We will mostly follow the notation of \cite{J03}. Fix $0<q<1$. Consider the Hilbert space $\ell^2(\Z)$ with the standard orthonormal basis $\{e_k\}_{k\in\Z}$, the unitary operator $s e_k=e_{k+1}$ and the strictly positive, self-adjoint multiplication operator $me_k=q^k e_{k}$ on its natural domain. Next, define
\[
a:=m^{-1/2}\otimes m,\quad 
b:=m^{1/2}\otimes s
\]
as (unbounded) operators on $\ell^2(\Z)\otimes \ell^2(\Z)$. The operators $a,a^{-1},b$ are affiliated with $C_0(\widehat{E}_q(2))$, we have
\[
\Delta_{\wh{E}_q(2)}(a)=a\otimes a,\quad 
\Delta_{\wh{E}_q(2)}(b)=
a\otimes b \dot{+}b\otimes a^{-1}
\]
and one can describe $C_0(\widehat{E}_q(2))$ in terms of $a$ and $b$ (see \cite[Proposition 2.5.34, Theorem 2.5.21]{J03}, see also \cite[Theorem 1.2]{W91} for the meaning of a morphism applied to an affiliated element). 

We recall also that both $E_q(2), \widehat{E}_q(2)$ are strongly amenable, second countable and non-regular \cite[Corollary 2.8.25, Theorem 3.2.7, Theorem 3.2.29]{J03}.

\begin{Prop}\label{nonregularexample}
Let $0<q<1$ and let $\uu\in L^{\infty} ( E_q(2))\bar\otimes B(\mcH)$ be a unitary representation of $E_q(2)$. Then $B(\mcH)_*$ is not projective in ${}_{L^1(E_q(2))}\Mod$.
\end{Prop}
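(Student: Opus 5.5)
We want to show $B(\mcH)_*$ is never projective for $E_q(2)$; the natural tool is Proposition \ref{prop3}, and case (2) looks tailor-made since $E_q(2)$ has a non-trivial scaling group. Write $\uu=(\id\otimes\pi)\wW^{E_q(2)}$ for a non-degenerate representation $\pi: C_0^u(\widehat{E}_q(2))\to B(\mcH)$. Since $\widehat{E}_q(2)$ is (strongly) amenable, $C_0^u(\widehat{E}_q(2))=C_0(\widehat{E}_q(2))$. So $\pi$ is a representation of the C$^*$-algebra generated (in Woronowicz' sense) by $a,a^{-1},b$, and $\pi$ is determined by the images $A=\pi(a)$, $N=\pi(b)$. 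These are a strictly positive self-adjoint operator and a normal operator satisfying the $E_q(2)$-type commutation relations $AN A^{-1}=qN$ (up to the exact power conventions of \cite{J03}), with $\ker N$ possibly non-trivial.

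\textbf{Step 1 (scaling group).} From \cite{J03} (or \cite{KS23}), the scaling group of $\widehat{E}_q(2)$ acts by $\hat\tau_t(a)=a$ and $\hat\tau_t(b)=q^{2it}b$ (or a similar character). I would implement this on $\mcH$ by $B:=A^{\gamma}$ for a suitable real exponent $\gamma\neq 0$, fixed by the commutation relation. Then $\mathrm{Ad}(B^{it})$ fixes $A$ and rescales $N$ correctly, so $\pi\circ\hat\tau_t=\mathrm{Ad}(B^{it})\circ\pi$ on the generators. Using that morphisms are determined by their values on affiliated generators \cite{W91}, this holds on all of $C_0(\widehat{E}_q(2))$.

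\textbf{Step 2 (spectral condition).} Next verify $\dim 1_{[c,\infty)}(B)\mcH=\infty$ for some $c>0$. The relation $ANA^{-1}=qN$ means that $N$ shifts the spectral subspaces of $A$ by powers of $q$. On $\overline{\operatorname{ran}N}$ the spectrum of $A$ is therefore invariant under $A\mapsto q^{\pm1}A$, and since it is unbounded in both directions, both $1_{[c,\infty)}(B)$ and $1_{(0,c^{-1}]}(B)$ are infinite-dimensional there, whatever the sign of $\gamma$. The remaining case is $N=0$ (or the part on $\ker N$). There $\pi$ factors through the commutative algebra generated by $a$, i.e.\ $\uu$ restricted to that part is a representation of the quotient group $\mathbb{T}$ or $\Z$-type subgroup. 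In that case I would instead use Proposition \ref{prop3} (1): $C_\uu'$ contains $\pi(C_0(\widehat E_q(2)))'\supseteq$ the commutant of an abelian algebra, which is infinite-dimensional if $\dim\mcH=\infty$.

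\textbf{Finite-dimensional case.} When $\dim\mcH<\infty$ we need a separate argument. A finite-dimensional representation forces $N=0$, since a nonzero $N$ yields infinitely many spectral values of $A$. Then $B(\mcH)_*$ projective would give, via a finite-dimensional subrepresentation and Proposition \ref{relativeprojective}-style reasoning, that the trivial module $\C$ is weak$^*$-injective, which forces compactness. That is false, since $E_q(2)$ is not compact. Concretely, I would reduce to $(\C,\tau)$ by compressing to a one-dimensional $\G$-invariant piece of the abelian representation and apply the remark that $(\C,\tau)$ is weak$^*$-injective iff $\G$ is compact.

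\textbf{Main obstacle.} The main obstacle is Step 1/2 bookkeeping: getting the precise formula of $\hat\tau^u_t$ on $a,b$ from \cite{J03} and checking rigorously that $B^{it}$ implements it on unbounded affiliated generators. A cleaner alternative I would try first is to split $\mcH=\ker N\oplus\overline{\operatorname{ran}N}$ (both $\uu$-invariant) and handle the two pieces separately, as outlined above.
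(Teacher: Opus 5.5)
Your proposal is correct. In the infinite-dimensional case it takes a genuinely different route from the paper.

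\textbf{What the paper does.} It writes $\hat\tau_t=\operatorname{Ad}(a^{-2it})$ with $a^{-2it}\in M(C_0(\widehat{E}_q(2)))$, which gives $B=\pi(a^{-2})$. So your exponent is $\gamma=-2$, since $\hat\tau_t(b)=q^{-2it}b$. It then checks the spectral condition using structure theory. $C_0(\widehat{E}_q(2))$ is type I with countable spectrum, so $\pi$ is disintegrated into the explicitly classified irreducibles $\hat\omega_k,\hat\rho_k,\hat\rho'_k$ of \cite{J03}, and the spectral subspaces of $B$ are read off summand by summand. The same decomposition supplies the one-dimensional subrepresentation in the finite-dimensional case. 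From there, the compression/extension argument producing a normal invariant functional on $L^\infty(E_q(2))$ is exactly your reduction to $(\C,\tau)$.

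\textbf{What you do instead.} You use only the relation $A^{it}NA^{-it}=q^{it}N$ and the normality of $N$. On the reducing subspace $\overline{\operatorname{ran}N}$, the phase of $N$ is a unitary conjugating $A$ to $qA$. Hence all bands $1_{[q^{n+1},q^{n})}(A)$ are unitarily equivalent and nonzero. This gives the spectral condition for $B=A^{\gamma}$ with no classification, and it also shows $N=0$ whenever $\dim\mcH<\infty$.

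\textbf{What each route buys.} Yours is more elementary. It is also more robust in the abelian case. For $\pi=\bigoplus_{k<0}\hat\omega_k$ (each with multiplicity one), $B=\pi(a^{-2})$ has eigenvalues $q^{|k|}\to 0$. So $1_{[c,\infty)}(B)$ is finite-dimensional for every $c>0$, and Proposition \ref{prop3} (2) with this $B$ does not apply. The paper's sentence treating the character case passes over this. Your appeal to Proposition \ref{prop3} (1) there closes it cleanly: $C_{\uu}'\supseteq\pi(C_0(\widehat{E}_q(2)))'$ is the commutant of an abelian algebra on an infinite-dimensional space. Alternatively, one can take $B=1$ on the character part, since characters are $\hat\tau$-invariant. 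The price of your route is that Woronowicz's generation property does real work twice. You need it to pass from the generators $a,b$ to all of $C_0(\widehat{E}_q(2))$ in Step 1. You also need it to see that $\pi(C_0(\widehat{E}_q(2)))$ is abelian when $\pi(b)=0$, because then $\pi(a)$ and $0$ are affiliated with the commutative $C^*$-algebra of functions of $A$. The paper gets the second fact from the classification.

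\textbf{Small points.}
\begin{itemize}
\item The identification $C_0^u(\widehat{E}_q(2))=C_0(\widehat{E}_q(2))$ needs coamenability of $\widehat{E}_q(2)$. That is strong amenability of $E_q(2)$, not of $\widehat{E}_q(2)$; both hold by \cite{J03}.
\item There is no need to treat $\ker N$ separately once $N\neq 0$. $B=A^{\gamma}$ implements $\hat\tau$ on all of $\mcH$, and infinite-dimensionality on $\overline{\operatorname{ran}N}$ already gives the global spectral condition. Handling the two pieces separately would require an extra observation: compression by a projection in $C_{\uu}'$ makes $B(p\mcH)$ a retract of $B(\mcH)$ in $\NMod_{L^1(\G)}$.
\item The remark about $\mathbb{T}$- or $\Z$-type subgroups can be dropped; only commutativity of $\pi(C_0(\widehat{E}_q(2)))$ is used.
\end{itemize}
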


\begin{proof}
Assume first that $\dim \mcH=+\infty$. As explained in Subsection \ref{prelim:locallycompactquantumgroups}, the unitary representation $\uu$ of $E_q(2)$ corresponds to a non-degenerate representation $\pi: C_0(\widehat{E}_q(2))\rightarrow B(\mcH)$.
By \cite[Proposition 2.6.50]{J03}, \cite[Section 5.1]{KS23} we have
\[
\hat{\tau}_t(a)=a,\quad \hat{\tau}_t(b)=q^{-2it}b=a^{-2it} b a^{2it},
\]
hence $\hat{\tau}_t=\operatorname{Ad}(a^{-2it})$ for $t\in \R$ and $a^{-2it}\in M(C_0(\widehat{E}_q(2) ))$. We conclude that 
\begin{equation}\label{eq3}
\pi (\hat{\tau}_t(x))=
\pi( a^{-2it}) \pi(x) \pi(a^{2it})
\end{equation}
for $t\in \R$. Since $\pi$ is non-degenerate, the one-parameter group $(\pi(a^{-2it}))_{t\in\R}$ is strongly continuous, hence Stone's theorem gives us a strictly positive, self-adjoint operator $B$ on $\mcH$ such that $\pi(a^{-2it})=B^{it}$ (in fact $B=\pi(a^{-2})$). We will show that $\dim (1_{\left[1,\infty\right)}(B)\mcH)=\infty$.

Since $C_0(\widehat{E}_q(2))$ is separable, the representation $\pi$ decomposes into a direct sum $\pi\cong \bigoplus_{i\in I} \pi_i$, where each $\pi_i$ is a non-degenerate representation of $C_0(\widehat{E}_q(2))$ on a separable Hilbert space $\mcH_i$. The irreducible representations of $C_0(\widehat{E}_q(2))$ are completely classified \cite[Proposition 4.3.2]{J03}: every irreducible representation of $C_0(\widehat{E}_q(2))$ is unitarily equivalent to exactly one of the following:
\begin{itemize}[noitemsep]
\item $(k\in \Z)$ character $\hat{\omega}_k: C_0(\widehat{E}_q(2))\rightarrow \C$ given by $\hat{\omega}_k(a)=q^{k/2}, \hat{\omega}_k(b)=0$,
\item $(k\in \Z)$ irreducible representation $\hat{\rho}_k: C_0(\widehat{E}_q(2))\rightarrow B(\ell^2(\Z))$ given by $\hat{\rho}_k(a)=m,\, \hat{\rho}_k(b)=q^k s$,
\item $(k\in \Z)$ irreducible representation $\hat{\rho}'_k: C_0(\widehat{E}_q(2))\rightarrow B(\ell^2(\Z))$ given by $\hat{\rho}'_k(a)=q^{1/2}m,\, \hat{\rho}'_k(b)=q^k s$.
\end{itemize}
In particular, the image of every irreducible representation contains a non-zero compact operator. Indeed, it is clear for $\hat{\omega}_k$, while for $\hat{\rho}_k,\hat{\rho}_{k}'$ we see that the image of $f(a)\in C_0(\widehat{E}_q(2))$ is compact, where $f: \R_{\ge 0}\rightarrow\left[0,1\right]$ is a continuous function satisfying $f(1)=1=f(q^{1/2})$ and $f(t)=0$ when $|1-t|\ge  1-q$. Consequently $C_0(\widehat{E}_q(2))$ is type I \cite[Theorem 4.6.4]{Sakai98}.

The spectrum of $C_0(\widehat{E}_q(2))$ is countable, hence the disintegration theorem \cite[Theorem 8.6.6]{DixmierC} simplifies and for each $i\in I$ we can find multiplicities $M^1_{i,k},M^2_{i,k},M^3_{i,k}\in \{0,1,2,\dots,\infty\}$ together with a unitary equivalence
  \[
\pi_i\cong\bigoplus_{k\in\Z} M^1_{i,k} \cdot \hat{\omega}_k\oplus
 \bigoplus_{k\in\Z} M^2_{i,k} \cdot \hat{\rho}_k\oplus
 \bigoplus_{k\in\Z} M^3_{i,k} \cdot \hat{\rho}'_k.
 \]
Since $\dim \mcH=\infty$, either $\pi$ has an infinite-dimensional subrepresentation build from characters $\hat{\omega}_k$ or one of the representations $\hat\rho_k, \hat{\rho}'_k$ as a direct summand. In each of these cases, we see that the operator (unitarily equivalent with) $B$ has an infinite-dimensional spectral subspace corresponding to $\left[1,\infty\right)$. Proposition \ref{prop3} ends the proof in the infinite-dimensional case.\\

Consider now the case $\dim \mcH<+\infty$. The decomposition described above implies that $\uu$ admits a one-dimensional subrepresentation. More concretely, there is a one-dimensional subspace $\mcK\subseteq \mcH$ such that the orthogonal projection $P\in B(\mcH)$ onto $\mcK$ is an intertwiner of $\uu$. Define the normal state $\theta_2: B(\mcH)\ni T\mapsto PTP\in \C$, where we identify $PB(\mcH)P=B(\mcK)=\C$. It is $L^1(E_q(2))$-equivariant: for $T\in B(\mcH)$ and $\omega\in L^1(E_q(2))$,
\[
\theta_2\bigl( (\omega\otimes\id)( U^*(1\otimes T)U)\bigr)=
(\omega\otimes \id)( U^*(\I\otimes PTP)U)=(\omega\otimes \id)( \I\otimes PTP  )=\omega(\I)\theta_2(T).
\]
Assume by contradiction that $B(\mcH)_*$ is $L^1(E_q(2))$-projective; then $B(\mcH)$ is weak$^*$-injective in $\NMod_{L^1(E_q(2))}$ and the inclusion $\C\hookrightarrow B(\mcH)$ gives rise to a normal, unital, completely bounded $L^1(E_q(2))$-equivariant map $\theta_1\colon L^{\infty}(E_q(2))\rightarrow B(\mcH)$. The composition $m=\theta_2\theta_1: L^{\infty}(E_q(2))\rightarrow \C$ is a normal, unital, invariant functional. Since $m(\I)=1$, there is $x\in C_0(E_q(2))$ such that $m(x)\neq 0$. We obtain 
\[
m(x)1=(\id\otimes m)\Delta(x)\in C_0(E_q(2)),
\]
which is a contradiction.
\end{proof}

\subsection{Exactness of the crossed product functor}

We will need the following elementary lemma, generalizing \cite{Cra17}*{Proposition 3.1}, together with a general homological observation that leverages Proposition \ref{flatness}.

\begin{Lem}\label{selfinduced}
   Let $(X, \alpha)\in \NMod_{L^1(\G)}^{\|\cdot\|}$ be saturated. Then $X_*\in {}_{L^1(\G)}\Mod$ is induced, i.e.\ the canonical map $m: L^1(\G)\hat{\otimes}_{L^1(\G)}X_* \to X_*$ is a completely isometric surjection. 
\end{Lem}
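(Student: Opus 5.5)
Since $X_*$ is a left $L^1(\G)$-operator module, the module map $m: L^1(\G)\hat{\otimes}_{L^1(\G)}X_*\to X_*$ is a complete contraction. To show it is a completely isometric surjection, I will pass to adjoints: $m$ is a complete quotient map onto a dense range which is injective with closed range exactly when $m^*: X\to (L^1(\G)\hat{\otimes}_{L^1(\G)}X_*)^*$ is a completely isometric isomorphism. So the plan is to identify the dual space and check that $m^*$ is a complete isometry onto it. A complete isometry $m^*$ forces $m$ to be a complete quotient map, hence surjective. Once $m^*$ is also known to be surjective, $m$ is injective, since $\ker m$ is annihilated by the range of $m^*$.

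\textbf{Step 1 (dual of the module tensor product).} By the canonical identification recalled in Subsection \ref{operatormodules},
$$(L^1(\G)\hat{\otimes}_{L^1(\G)}X_*)^*\cong \CB_{L^1(\G)}(L^1(\G),X),$$
consisting of the right $L^1(\G)$-module maps $L^1(\G)\to X=(X_*)^*$. By the right-module version of Proposition \ref{p:natural}, this space is completely isometrically the saturation space of $X$. Here $X\in\NMod_{L^1(\G)}$ carries the left coaction $\alpha: X\to L^\infty(\G)\ovot_\mcF X$, which corresponds to the right module structure. Concretely, $z\in L^\infty(\G)\ovot_\mcF X$ with $(\id\otimes\alpha)(z)=(\Delta\otimes\id)(z)$ corresponds to $\omega\mapsto(\omega\otimes\id)(z)$. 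The weak$^*$-continuity of the module action on $X$ is exactly what makes $X$ the dual of $X_*$ as a module, so the identification of the pairing is legitimate.

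\textbf{Step 2 (identify $m^*$).} For $x\in X$, $\omega\in L^1(\G)$ and $\rho\in X_*$ I will compute
$$\langle m^*(x),\omega\otimes\rho\rangle=\langle x,\omega\rhd\rho\rangle=\langle x\lhd\omega,\rho\rangle.$$
Hence $m^*(x)$ is the map $\omega\mapsto x\lhd\omega$, which under Step 1 is the element $\alpha(x)$. Thus $m^*$ is the corestriction of $\alpha$ to the saturation space.

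\textbf{Step 3 (conclude).} Since $X\in\NMod^{\|\cdot\|}_{L^1(\G)}$, $\alpha$ is a complete isometry. Since $X$ is saturated, $\alpha(X)$ is the whole saturation space. Hence $m^*$ is a surjective complete isometry, and by the preliminary remark $m$ is a completely isometric surjection.

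The main obstacle is the bookkeeping in Step 1: making sure the left/right versions of Proposition \ref{p:natural} match the module convention used to define $L^1(\G)\hat{\otimes}_{L^1(\G)}X_*$. I will also need to confirm that the induced left action on $X_*$ is $\omega\rhd\rho=\rho(\cdot\lhd\omega)$, since Step 2 relies on this formula.
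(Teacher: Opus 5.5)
Your proposal is correct and follows essentially the same route as the paper: the paper identifies $(L^1(\G)\hat{\otimes}_{L^1(\G)}X_*)^*$ directly as the annihilator of the balancing relations inside $(L^1(\G)\hat{\otimes}X_*)^*\cong L^\infty(\G)\ovot_\mcF X$. This is exactly your space $\CB_{L^1(\G)}(L^1(\G),X)$, namely the set of $z$ with $(\Delta\otimes\id)(z)=(\id\otimes\alpha)(z)$, and the paper then observes that $m^*=\alpha$ is a completely isometric isomorphism by saturation. Your explicit deduction that $m$ is a completely isometric surjection once $m^*$ is a surjective complete isometry, together with your check that the dual right action on $(X_*)^*$ agrees with the original one on $X$, are details the paper leaves implicit.
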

\begin{proof} Let $q: L^1(\G)\pten X_*\to L^1(\G) \pten_{L^1(\G)} X_*$ be the canonical complete quotient map. Dualizing, we obtain the complete isometry $q^*: (L^1(\G) \pten_{L^1(\G)} X_*)^* \to (L^1(\G)\pten X_*)^*$. Its image consists exactly of those bounded functionals $\omega$ on $L^1(\G)\pten X_*$ satisfying the condition $\omega((f\star g)\otimes h) = \omega(f \otimes ((g\otimes h)\circ \alpha))$ for all $f,g \in L^1(\G)$ and $h\in X_*$. Under the canonical completely isometric identification $(L^1(\G)\hat{\otimes} X_*)^*\cong L^\infty(\G)\ovot_\mcF X$, this image therefore corresponds exactly to $\{z\in L^\infty(\G)\ovot_\mcF X\mid (\Delta \otimes \id)(z) = (\id \otimes \alpha)(z)\}= \alpha(X)$. Consequently, we see that $m^* = \alpha: X \cong (X_*)^*\to (L^1(\G)\hat{\otimes}_{L^1(\G)} X_*)^*\cong \alpha(X)$ is a completely isometric isomorphism. 
\end{proof}
In statement of the next result, ${}_A \NMod^{\|\cdot\|}$ refers to those $Y\in {}_A\NMod$ for which the canonical map
$$j_Y:Y\ni y\mapsto (a\mapsto a\rhd y)\in\mathcal{CB}(A,Y)$$
is a complete isometry.
\begin{Prop}\label{genflat}
     Let $A$ be a completely contractive Banach algebra with a complete quotient multiplication $m_A:A\pten A\twoheadrightarrow A$ such that $A$ is flat in ${}_A\Mod$,  and let $M \in {}_A \Mod$. If the functor ${}_A \CB(M,-)$ is $1$-exact on ${}_A \NMod^{\|\cdot\|}$ and $M$ is induced, then $M$ is flat in ${}_A \Mod$.
\end{Prop}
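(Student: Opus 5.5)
Since $M$ is induced, $M\cong A\pten_A M$ completely isometrically via the multiplication map. The aim is to show that $(-)\pten_A M$ is $1$-exact on $\Mod_A$. Preservation of complete quotient maps is automatic (as noted in Subsection \ref{operatormodules}), so it suffices to show that $\iota\otimes\id_M: X\pten_A M\to Y\pten_A M$ is a complete isometry with image equal to $\Ker(\phi\otimes \id)$ for every $1$-exact sequence $0\to X\xrightarrow{\iota} Y\xrightarrow{\phi} Z\to 0$ in $\Mod_A$. As in the proof of Proposition \ref{flatness}, this can be checked on duals. The dual sequence
\[
0\to Z^*\to Y^*\to X^*\to 0
\]
lies in ${}_A\NMod$, and $(W\pten_A M)^*\cong {}_A\CB(M,W^*)$. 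The difficulty is that $W^*$ need not lie in ${}_A\NMod^{\|\cdot\|}$, so the hypothesis does not apply directly.

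To remedy this, replace each $W\in\{X,Y,Z\}$ by the induced module $W\pten_A A$. By associativity \eqref{associativity} and inducedness of $M$,
\[
W\pten_A M\cong W\pten_A (A\pten_A M)\cong (W\pten_A A)\pten_A M.
\]
Since $A$ is flat in ${}_A\Mod$, the sequence $0\to X\pten_A A\to Y\pten_A A\to Z\pten_A A\to 0$ is again $1$-exact. It therefore suffices to prove exactness after applying ${}_A\CB(M,-)$ to the dual sequence
\[
0\to (Z\pten_A A)^*\to (Y\pten_A A)^*\to (X\pten_A A)^*\to 0.
\]
Here $(W\pten_A A)^*\cong {}_A\CB(A,W^*)$, which is a dual left $A$-module with weak$^*$-continuous action. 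The goal is then to verify that ${}_A\CB(A,W^*)\in {}_A\NMod^{\|\cdot\|}$, i.e.\ that $j$ is a complete isometry on it.

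\textbf{Main obstacle.} For $T\in {}_A\CB(A,W^*)$ one has $j(T)(a)=a\rhd T=T(\,\cdot\, a)$. Showing that $T\mapsto (a\mapsto T(\cdot\,a))$ is completely isometric amounts to showing that the map $A\pten A\to A$ used to recover $T$ from its translates is a complete quotient. Concretely, $\|T\|_{\cb}$ is computed on $A$, and every element of $A$ of norm $<1$ lifts to an element of $A\pten A$ of norm $<1$ because $m_A$ is a complete quotient map. Evaluating $T\big(\sum b_i a_i\big)=\sum (a_i\rhd T)(b_i)$ then bounds $\|T\|_{\cb}$ by $\|j(T)\|_{\cb}$, with matrix levels handled the same way. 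This uses the complete quotient assumption on $m_A$, and I expect checking the matrix-level bookkeeping to be the delicate part.

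With this verified, the hypothesis yields a $1$-exact sequence
\[
0\to{}_A\CB(M,(Z\pten_A A)^*)\to {}_A\CB(M,(Y\pten_A A)^*)\to {}_A\CB(M,(X\pten_A A)^*)\to 0.
\]
This is the dual of the sequence $0\to X\pten_A M\to Y\pten_A M\to Z\pten_A M\to 0$ under the identifications above. Taking preadjoints as in Proposition \ref{flatness} gives $1$-exactness of that sequence, so $M$ is flat in ${}_A\Mod$.
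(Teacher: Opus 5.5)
Your proposal is correct and follows essentially the paper's argument: pass to $W\pten_A A$ using flatness of $A$, note that $j$ on $(W\pten_A A)^*\cong {}_A\CB(A,W^*)$ is precomposition with the complete quotient map $m_A$ (the paper phrases this as $j=(\id\otimes_A m_A)^*$), so these duals lie in ${}_A\NMod^{\|\cdot\|}$; then apply the hypothesis and use associativity together with inducedness of $M$. The only difference is organizational: you dualize a $1$-exact sequence in $\Mod_A$ directly, whereas the paper first reduces via Proposition \ref{flatness} to $1$-exactness on ${}_A\NMod$ and runs the same argument on the preduals $X_*,Y_*,Z_*$.
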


\begin{proof} By Proposition \ref{flatness}, it suffices to show that ${}_A \CB(M,-)$ is $1$-exact on ${}_A \NMod$. To that end, let  
$$\begin{tikzcd}
0 \arrow[r] & X \arrow[r, "\iota"] & Y \arrow[r, "\phi"] & Z \arrow[r] & 0
\end{tikzcd}$$
be a 1-exact sequence in ${}_A \NMod$. By flatness of $A$, the sequence
$$\begin{tikzcd}
0 \arrow[r] & Z_*\pten_A A \arrow[r, "\phi_*\ten \id_A"] & Y_*\pten_A A \arrow[r, "\iota_*\ten \id_A"] & X_*\pten_A A \arrow[r] & 0
\end{tikzcd}$$
is 1-exact in $\Mod_A$ (as it is 1-exact in $\Mod_{\mathbb{C}}$ and the arrows are morphisms in $\Mod_A$). Hence, so too is its dual sequence
$$\begin{tikzcd}
0 \arrow[r] & (X_*\pten_A A)^*\arrow[r] & (Y_*\pten_A A)^* \arrow[r] & (Z_*\pten_A A)^*\arrow[r] & 0
\end{tikzcd}$$
Under the completely isometric identification $\mathcal{CB}(A,(X_*\pten_A A)^*)=((X_*\pten_A A)\pten A)^*$ and associativity of module tensor products \eqref{associativity}, one sees that $j_{(X_*\pten_A A)^*}=(\id_{X_*}\ten_A m_A)^*$. Therefore, as $A$-module tensor products of complete quotient morphisms are complete quotient maps, $(X_*\pten_A A)^*\in {}_A \NMod^{\|\cdot\|}$, and similarly, $(Y_*\pten_A A)^*,(Z_*\pten_A A)^*\in {}_A \NMod^{\|\cdot\|}$. Applying 1-exactness of ${}_A \CB(M,-)$ on ${}_A \NMod^{\|\cdot\|}$, and taking the predual of the resulting sequence yields the 1-exact sequence
$$\begin{tikzcd}
0 \arrow[r] & (Z_*\pten_A A)\pten_A M \arrow[r] & (Y_*\pten_A A)\pten_A M \arrow[r] & (X_*\pten_A A)\pten_A M\arrow[r] & 0
\end{tikzcd}.$$
Appealing to associativity of module tensor products \eqref{associativity} and the fact that $M$ is induced, the sequence
$$\begin{tikzcd}
0 \arrow[r] & Z_*\pten_A M \arrow[r, "\phi_*\ten \id_M"] & Y_*\pten_A M \arrow[r, "\iota_*\ten \id_M"] & X_*\pten_A M \arrow[r] & 0
\end{tikzcd}$$
is 1-exact. Equivalently, 
$$\begin{tikzcd}
0 \arrow[r] & {}_A \CB(M,X) \arrow[r, "\iota_*"] & {}_A \CB(M,Y) \arrow[r, "\phi_*"] & {}_A \CB(M,Z) \arrow[r] & 0
\end{tikzcd}$$
is 1-exact.
\end{proof}

\begin{Theorem}\label{exactnness} Let $\G$ be a locally compact quantum group.
    \begin{enumerate}[noitemsep]
        \item $-\rtimes^\mcF \G$ is $1$-exact on ${}_{L^1(\G)}\Mod$ if and only if $\G$ is finite.
        \item $-\rtimes^\mcF \G$ is $1$-exact on ${}_{L^1(\G)}\NMod$ if and only if $\G$ is amenable.
        \item If $\G$ is amenable, then $-\rtimes^\mcF \G$ is $1$-exact on ${}_{L^1(\G)}\NMod^{\|\cdot\|}$. The converse holds if, in addition, $\hat{\G}$ is amenable.
    \end{enumerate}
\end{Theorem}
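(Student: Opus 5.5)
Via the natural isomorphism $-\rtimes^\mcF\G\cong{}_{L^1(\G)}\CB(B(L^2(\G))_*,-)$ of Proposition \ref{p:natural}, every item reduces to a homological property of $M:=B(L^2(\G))_*\in{}_{L^1(\G)}\Mod$. Part (1) is then immediate. By Proposition \ref{exactness homfunctor}(3), the functor is $1$-exact on ${}_{L^1(\G)}\Mod$ if and only if $M$ is $L^1(\G)$-projective, and by Corollary \ref{projectivitytraceclass} this happens if and only if $\G$ is finite.

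For (2), Proposition \ref{flatness} shows that $1$-exactness on ${}_{L^1(\G)}\NMod$ is equivalent to flatness of $M$ in ${}_{L^1(\G)}\Mod$. Since $X$ is flat exactly when $X^*$ is injective in $\Mod_A$, this is in turn equivalent to $L^1(\G)$-injectivity of $B(L^2(\G))=M^*$ in $\Mod_{L^1(\G)}$, with the right module structure coming from $\Delta_l$. It then remains to invoke the known characterization that $B(L^2(\G))$ is injective in $\Mod_{L^1(\G)}$ if and only if $\G$ is amenable. This is the $L^1(\G)$-variant of \cite[Theorem 5.5]{CN16}, or \cite{DR26}*{Proposition 3.10, Proposition 6.2}, both already used in Corollary \ref{cor2} and Theorem \ref{regularprojectivity}. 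I would only check that the module structure there matches the one induced by $\ww$.

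For (3), the forward direction follows from (2), because ${}_{L^1(\G)}\NMod^{\|\cdot\|}$ is a subcategory of ${}_{L^1(\G)}\NMod$. For the converse I would apply Proposition \ref{genflat} with $A=L^1(\G)$ and $M=B(L^2(\G))_*$, which requires three hypotheses. First, the multiplication of $L^1(\G)$ must be a complete quotient map; this holds since $L^1(\G)\pten_{L^1(\G)}L^1(\G)\cong L^1(\G)$ by Lemma \ref{selfinduced} applied to the saturated module $(L^\infty(\G),\Delta)$. Second, $M$ must be induced. Here I use Lemma \ref{selfinduced} for $(B(L^2(\G)),\Delta_l)$, which is saturated since it is a $\G$-$W^*$-algebra and satisfies \eqref{fixed}; I would need the obvious left/right adaptation of that lemma. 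Third, $L^1(\G)$ must be flat in ${}_{L^1(\G)}\Mod$, which is equivalent to $L^\infty(\G)$ being injective in $\Mod_{L^1(\G)}$.

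I expect this third hypothesis to be the main obstacle, and it is exactly where amenability of $\hat{\G}$ should enter. In the known results (Crann's work), $L^\infty(\G)$ is $L^1(\G)$-injective if and only if $\hat{\G}$ is amenable, or equivalently if and only if $\G$ is inner amenable/coamenable-type; I would cite that result, or rederive it from an invariant-mean-type conditional expectation $B(L^2(\G))\to L^\infty(\G)$ built from amenability of $\hat{\G}$. Once these three hypotheses hold, Proposition \ref{genflat} gives flatness of $M$, and part (2) then yields amenability of $\G$.
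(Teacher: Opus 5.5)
Your proposal is correct and follows the paper's proof essentially step for step: (1) uses Proposition \ref{exactness homfunctor} with Corollary \ref{projectivitytraceclass}; (2) uses Proposition \ref{flatness} with the characterization of amenability via injectivity of $B(L^2(\G))$; and (3) uses Lemma \ref{selfinduced} for inducedness, flatness of $L^1(\G)$ when $\hat{\G}$ is amenable (the paper cites exactly \cite[Theorem 5.1]{Cra17} here), and then Proposition \ref{genflat}. Deriving the complete quotient property of the multiplication from Lemma \ref{selfinduced} applied to $(L^\infty(\G),\Delta)$ is a valid way to fill in what the paper only asserts, but you should drop the parenthetical equating amenability of $\hat{\G}$ with inner amenability or coamenability of $\G$, since these are not equivalent conditions and are not needed for the argument.
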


\begin{proof} (1) Since $-\rtimes^\mcF \G\cong {}_{L^1(\G)}\CB(B(L^2(\G))_*,-)$ (Proposition \ref{p:natural}), this follows by the combination of Proposition \ref{exactness homfunctor} and Corollary \ref{projectivitytraceclass}.

(2) This follows by Proposition \ref{flatness} and the fact that $B(L^2(\G))_*$ is $L^1(\G)$-flat if and only if $B(L^2(\G))$ is injective in ${}_{L^1(\G)}\Mod$, which is equivalent with amenability of $\G$ (see \cite[Theorem 5.5]{CN16} or \cite{DR26}*{Proposition 6.2}, c.f. Corollary \ref{cor2}).

    (3) The first implication follows immediately from the previous point. Suppose that $\hat{\G}$ is amenable and  that $-\rtimes^\mcF \G\cong {}_{L^1(\G)} \CB(B(L^2(\G))_*, -)$ is 1-exact on ${}_{L^1(\G)}\NMod^{\|\cdot\|}$. By Lemma \ref{selfinduced}, $B(L^2(\G))_*$ is induced as an $L^1(\G)$-operator module. By \cite[Theorem 5.1]{Cra17}, $L^1(\G)$ is $L^1(\G)$-flat, and since it carries a complete quotient multiplication, Proposition \ref{genflat} implies the $L^1(\G)$-flatness of $B(L^2(\G))_*$, and hence amenability of $\G$.
\end{proof}

\textbf{Acknowledgements}.  A portion of the work of JC was completed while in residence at Institut Mittag-Leffler in Djursholm, Sweden, during the Operator Algebras and Quantum Information Program 2026. He is grateful for the hospitality and support from the Swedish Research Council under grant no. 2021-06594. JC was partially supported by NSERC Discovery Grant RGPIN-2023-05133. The work in this research was initiated when JDR was supported by Fonds voor Wetenschappelijk Onderzoek (Flanders),
via an FWO Aspirant fellowship, grant 1162524N. The work of JK was partially supported by FWO grant 1246624N. 
\bibliographystyle{plain}
\bibliography{bibliography}

\end{document}